\documentclass[12pt, reqno]{amsart}
\title{Categorification and mirror symmetry for Grassmannians}
\date{15 Feb 2026}
\author{Bernt Tore Jensen}
\address{BTJ: Dept of Mathematical Sciences, Norwegian University of Science and Technology, 
Gj\o vik, Teknologvn. 22,2815 Gj\o vik, Norway}
\email{bernt.jensen@ntnu.no}

\author{Alastair King}
\address{AK: Dept of Mathematical Sciences, University of Bath, Bath BA2 7AY, U.K.}
\email{a.d.king@bath.ac.uk}

\author{Xiuping Su}
\address{XS: Dept of Mathematical Sciences, University of Bath, Bath BA2 7AY, U.K.}
\email{xs214@bath.ac.uk}

\usepackage[a4paper, hmargin=2.75cm, vmargin=2.25cm]{geometry}
\usepackage{amssymb,amsthm,amsfonts,amscd,array}
\usepackage{tikz}
\usetikzlibrary{calc}
\usetikzlibrary{decorations.markings, arrows.meta, arrows}
\tikzset{
  equals/.style={double=none, double distance=2pt}, 
  cdarr/.style={->},
  quivarr/.style={-latex,blue,thick},
}   
\usepackage{color}
\usepackage[linktocpage,pdfstartview=FitH,colorlinks=true,linkcolor=blue,citecolor=magenta]{hyperref}

\usepackage{cleveref}
\newcommand{\itmref}[1]{(\ref{#1})}


%
%
\theoremstyle{definition}
\newtheorem{theorem}{Theorem}[section]
\newtheorem{proposition}[theorem]{Proposition}
\newtheorem{lemma}[theorem]{Lemma}
\newtheorem{corollary}[theorem]{Corollary}
\newtheorem{remark}[theorem]{Remark}
\newtheorem{example}[theorem]{Example}


\theoremstyle{definition}
\newtheorem{definition}[theorem]{Definition}

\numberwithin{equation}{section}
\numberwithin{figure}{section}

\renewcommand{\setminus}{\smallsetminus}
\renewcommand{\subset}{\subseteq}
\renewcommand{\leq}{\leqslant}
\renewcommand{\geq}{\geqslant}
\newcommand{\subspc}{\leq}
\newcommand{\submod}{\leq}
\newcommand{\isom}{\cong}
\newcommand{\st}{:}
\renewcommand{\ker}{\operatorname{Ker}}
\newcommand{\cok}{\operatorname{Cok}}
\newcommand{\img}{\operatorname{Im}}
\newcommand{\dual}{^\vee}
\newcommand{\op}{^{\mathrm{op}}}

\newcommand{\cob}[1]{d}
\newcommand{\bdry}{\partial}
\newcommand{\Matr}[2]{\operatorname{M}_{#1}(#2)} 
\newcommand{\compo}{\circ}
\newcommand{\idfun}{\operatorname{id}}
\newcommand{\idmap}{\operatorname{id}}
\newcommand{\moreq}{\simeq}
\newcommand{\euler}{\chi}
\newcommand{\Grot}{K}
\newcommand{\dyn}[1]{\mathbf{#1}} 
\newcommand{\motsum}{\sum^{\small\mathsf{mot}}} 

\newcommand{\lra}{\longrightarrow}
\newcommand{\lraa}[1]{\stackrel{#1}{\longrightarrow}}
\newcommand{\from}{\leftarrow}
\newcommand{\ShExSeq}[3]{0 \lra {#1} \lra {#2}\lra {#3} \lra 0} 
\newcommand{\quadand}{\quad\text{and}\quad}

\newcommand{\CC}{\mathbb{C}} 
\newcommand{\RR}{\mathbb{R}}
 
\newcommand{\ZZ}{\mathbb{Z}} 
\newcommand{\NN}{\mathbb{N}}
\newcommand{\tensR}{\otimes_\ZZ \RR}
%
%
\newcommand{\Hom}{\operatorname{Hom}}
\newcommand{\sHom}{\operatorname{\underline{Hom}}}
\newcommand{\sEnd}{\operatorname{\underline{End}}}
\newcommand{\End}{\operatorname{End}}
\newcommand{\Ext}{\operatorname{Ext}}
\newcommand{\Tor}{\operatorname{Tor}}

\newcommand{\funP}{\mathsf{P}}
\newcommand{\funJ}{\mathsf{J}}
\newcommand{\funG}{\mathsf{G}} 
\newcommand{\funK}{\mathsf{K}}

\newcommand{\unitmap}[1]{\eta_{#1}}
\newcommand{\counitmap}[1]{\varepsilon_{#1}}
\newcommand{\epsemb}[1]{{\epsilon_{#1}}}
%
%
\newcommand{\CM}{\operatorname{CM}}
\newcommand{\GP}{\operatorname{GP}}
\newcommand{\add}{\operatorname{add}} 

\newcommand{\dimv}{\operatorname{\mathbf{dim}}}

\newcommand{\dg}{\operatorname{deg}}
\newcommand{\len}{\operatorname{len}}
\newcommand{\rnk}[1]{\operatorname{rk}_{#1}} 
\newcommand{\rkk}{\operatorname{rk}} 

%
%
\newcommand{\labset}[1]{[#1]}
\newcommand{\labsubset}[2]{\binom{\labset{#1}}{#2}}
\newcommand{\val}[1]{\operatorname{val}_{#1}}
\newcommand{\Val}[1]{\operatorname{Val}_{#1}}
\newcommand{\cluschar}[1]{\Psi_{#1}} 
 
\newcommand{\maxNC}{\mathbf{S}}
\newcommand{\maxdiag}{\operatorname{MaxDiag}}
\newcommand{\minor}[1]{\Delta_{#1}}
\newcommand{\qminor}[1]{\Delta_{#1}}
\newcommand{\partn}[1]{\lambda_{#1}}
\newcommand{\Gr}{{\operatorname{Gr}}}
\newcommand{\Gro}{\Gr^{\circ}}
\newcommand{\qvGr}[1]{\operatorname{Gr}_{#1}}
\newcommand{\quotGr}[1]{\operatorname{Gr}^{#1}}
\newcommand{\qvGrsH}[3]{\qvGr{#1} \sHom(#2, #3)}
\newcommand{\perm}{\pi}
\newcommand{\Mlat}{\mathbb{M}}
\newcommand{\Mzero}{\mathsf{M}_0}
\newcommand{\vstar}{*}
\newcommand{\Nstar}{\mathsf{N}_\vstar}
\newcommand{\modJ}{\mathbf{J}_\vstar}
\newcommand{\modP}{\mathbf{P}_\vstar}

\newcommand{\Rspan}{\RR_{\geq 0} \operatorname{-span}}
\newcommand{\ConvHull}{\operatorname{ConvHull}}
\newcommand{\ConvHullBar}{\overline{\operatorname{ConvHull}}}

\newcommand{\Ladj}{\mathsf{L}}
\newcommand{\Radj}{\mathsf{R}}
\newcommand{\proj}{\operatorname{Proj}}
\newcommand{\Ffunct}{\operatorname{\mathsf{F}}}
\newcommand{\efunct}{\operatorname{\mathsf{e}}}
\newcommand{\efun}{\mathsf{e}}

\newcommand{\rrad}{\operatorname{rad}}
\newcommand{\mmod}{\operatorname{mod}}
\newcommand{\wtmod}{\operatorname{\mathsf{Wt}}}
\newcommand{\fd}{\operatorname{fd}}
\newcommand{\wtcom}{\operatorname{wt}}
\newcommand{\wt}{\operatorname{\mathsf{wt}}}
\newcommand{\wthat}{\operatorname{\widetilde{\mathsf{wt}}}}
\newcommand{\betahat}{\operatorname{\widetilde{\beta}}}
\newcommand{\betahatdual}{\betahat{}\dual}

\newcommand{\pbfun}[2]{{^{#1}{#2}}}

\newcommand{\ptbar}{\widetilde{\mathcal{P}}}
\newcommand{\ptfn}{\mathcal{P}}
\newcommand{\fp}{\mathcal{F}}
\newcommand{\flowp}{F}
\newcommand{\partfun}{P}
\newcommand{\Psitil}{\Psi^\dagger}

\newcommand{\olA}{\overline{A}}
\newcommand{\olT}{\overline{T}}
\newcommand{\olQ}{\overline{Q}}
\newcommand{\projdim}{\operatorname{proj.dim}}
\newcommand{\injdim}{\operatorname{inj.dim}}
\newcommand{\gldim}{\operatorname{gl.dim}}
\newcommand{\Irr}{\mathbf{IrrC}}
\newcommand{\Iso}{\operatorname{Iso}}
\newcommand{\Inj}{\operatorname{Inj}}
\renewcommand{\top}{\operatorname{top}}
\newcommand{\soc}{\operatorname{soc}}
\newcommand{\trop}{\operatorname{Trop}}
\newcommand{\GL}[1]{\operatorname{GL}(#1)}
\newcommand{\SL}[1]{\operatorname{SL}(#1)}

\newcommand{\cone}{\operatorname{\mathsf{Cone}}}
\newcommand{\monoid}{\operatorname{\mathsf{Mon}}}
\newcommand{\nob}[1]{\Delta_{\mathrm{NO}}(#1)}
\newcommand{\noc}[1]{\cone_{\mathrm{NO}}(#1)}
\newcommand{\nom}[1]{\monoid_{\mathrm{NO}}(#1)}
\newcommand{\gvb}[1]{\Delta_{\mathrm{GV}}(#1)}
\newcommand{\kab}[1]{\Delta_{\kapvec}(#1)}
\newcommand{\gvc}[1]{\cone_{\mathrm{GV}}(#1)}
\newcommand{\gvm}[1]{\monoid_{\mathrm{GV}}(#1)}

\newcommand{\gtc}{\cone_{\mathrm{GT}}}
\newcommand{\gtm}{\monoid_{\mathrm{GT}}}

\newcommand{\clualgA}{\mathcal{A}}
\newcommand{\clusalg}[2]{\mathcal{C}(#1,#2)}

\newcommand{\unirad}{\mathcal{N}}

\newcommand{\match}{\mathfrak{m}}
\newcommand{\matmod}[1]{N_{#1}}
\newcommand{\flow}{\mathfrak{f}}
\newcommand{\flowpath}{\mathfrak{p}}
\newcommand{\torus}{\mathbb{T}}
\newcommand{\cantor}[1]{\torus_{#1}}
\newcommand{\cluX}{\mathbb{X}}
\newcommand{\cluA}{\mathbb{A}}
\newcommand{\ring}{\mathcal{R}}
\newcommand{\fieldfract}{\mathcal{K}}
\newcommand{\sumrange}[1]{D_{#1}}

\newcommand{\tropmut}{\mathsf{L}}
\newcommand{\tropAmut}[2]{\tropmut_{#2}}
\newcommand{\tropAmuthat}[2]{\widetilde{\tropmut}_{#2}}
\newcommand{\kapvec}{\boldsymbol{\kappa}}

\newcommand{\algC}{C}
\newcommand{\algB}{B}
\newcommand{\algA}{A}
\newcommand{\BCmod}{{}_CB}
\newcommand{\ol}{\overline}
\newcommand{\hC}{\mathbf{c}}
\newcommand{\compC}{\mathbf{C}}

\newcommand{\Yhat}{\widehat{Y}} 

\newcommand{\resfun}{\Ffunct}

\newcommand{\xmut}[1]{\xi_{#1}}
\newcommand{\amut}[1]{\alpha_{#1}}

\newcommand{\MRden}[1]{J_{#1}}
\newcommand{\MRnum}[1]{\widehat{J}_{#1}}

\newcommand{\twi}{\mathrm{twi}}

\newcommand{\procov}[1]{P{#1}}

\newcommand{\superpot}{W}

\newcommand{\syzT}{\Syz T}
\newcommand{\Syz}{\Omega}
\newcommand{\coSyz}{\Sigma}
\newcommand{\CTOsyz}[1]{\mathbf{\Omega}{#1}}
\newcommand{\CTOcosyz}[1]{\mathbf{\Sigma}{#1}}

\newcommand{\preproj}{\Pi}
\newcommand{\pifun}{\pi}
\newcommand{\omfun}{\omega}
\newcommand{\rep}{\operatorname{Rep}}
\newcommand{\sub}{\operatorname{Sub}}

\newcommand{\IrrSub}{\operatorname{Irr}_{\sub}}
\newcommand{\IrrCM}{\operatorname{Irr}_{\CM}}

\newcommand{\preprojQ}{\mathbf{Q}}

\newcommand{\grid}{\Gamma}
\newcommand{\rectCTO}{T^{\square}}
\newcommand{\vempty}{\emptyset}

\newcommand{\yvar}{x}
\newcommand{\xvar}{x}
\newcommand{\cogvec}[1]{\gamma'(#1)}

\newcommand{\Fpolyd}[1]{\mathsf{F}'_{#1}}
\newcommand{\extd}[1]{#1^\diamond}
\newcommand{\opmod}{\op}
\newcommand{\nethat}{\operatorname{n\widehat{e}t}}
\newcommand{\net}{\operatorname{net}}

\newcommand{\neckI}{\mathcal{I}}
\newcommand{\neckJ}{\mathcal{J}}

\begin{document}
\begin{abstract}

The homogeneous coordinate ring $\mathbb{C}[\operatorname{Gr}(k,n)]$ of the Grassmannian 
is a cluster algebra, with an additive categorification $\operatorname{CM}C$. 
In particular, every $M$ in $\operatorname{CM}C$ has a cluster character 
$\Psi_M\in\mathbb{C}[\operatorname{Gr}(k,n)]$.
  
We work initially in a more general Frobenius 2-CY subcategory $\operatorname{GP}B$ of $\operatorname{CM}C$,
where $B$ is an algebra defined simply relative to $C$, but equivalent to a choice of Grassmann necklace.
For any cluster tilting object $T$ in $\operatorname{GP}B$, with $A=\operatorname{End}(T)^{\mathrm{op}}$, 
we define two new cluster characters, a generalised partition function 
$\mathcal{P}^T_M\in\mathbb{C}[K(\operatorname{CM}A)]$, 
whose leading exponent is a $g$-vector/index of $M$,
and a generalised flow polynomial $\mathcal{F}^T_M\in\mathbb{C}[K(\operatorname{fd}A)]$,
whose leading exponent is $\boldsymbol{\kappa}(T,M)$,
an invariant introduced in an earlier paper.
These (formal) polynomials are related by applying a map 
$\operatorname{\mathsf{wt}}\colon K(\operatorname{CM}A)\to K(\operatorname{fd}A)$
to their exponents.

When $B=C$, in the $\mathbb{X}$-cluster chart corresponding to $T$, 
we can show that the function $\Psi_M$ 
becomes $\mathcal{F}^T_M$. 
Furthermore when $T$ mutates, $\mathcal{F}^T_M$ undergoes $\mathbb{X}$-mutation
and $\boldsymbol{\kappa}(T,M)$ undergoes tropical $\mathbb{A}$-mutation.
We also show that the monoid of g-vectors can be described by inequalities obtained by tropicalising Marsh--Rietsch's superpotential for Grassmannians and give a module-theoretic interpretation of the inequalities. 
This provides a categorical incarnation of Grassmannian mirror symmetry, in the sense of Rietsch--Williams. 
In the process, we also prove that the Newton--Okounkov body constructed by Rietsch--Williams 
can be described using $\boldsymbol{\kappa}(T, M)$.

\end{abstract}
\maketitle
\setcounter{tocdepth}{1}
\tableofcontents

\newpage
\section{Introduction and main results}

\subsection{Background}\label{subsec:background}

Let $\Gr(k,n)$  be the Grassmannian of subspaces of $\CC^n$ with $k$-dimensional quotient 
and let $\CC[\Gr(k, n)]$ be its homogeneous coordinate ring. 
As representations of $\GL{n}$ (for example, by the Borel--Weil Theorem), we have
\begin{equation}\label{eq:irrep-decomp}
  \CC[\Gr(k,n)]=\bigoplus_{r\geq 0} V_{r \omega_k},
\end{equation}
where  $\omega_k$ is the $k$th fundamental weight and $V_{r \omega_k}$ is the irreducible 
representation with highest weight $r \omega_k$. 
Fomin--Zelevinksy showed in \cite{FZ1} that $\CC[\Gr(2,n)]$ is 
a cluster algebra of finite type $\dyn{A}_{n-3}$. 
Scott \cite{Sc} extended this result, proving that all $\CC[\Gr(k, n)]$ have cluster structures,
in which the Pl\"ucker coordinates, i.e.~the minors $\minor{J}$ for each $k$-subset $J$ of $\{1,\ldots,n\}$, 
are cluster variables.
There is a rich combinatorics of clusters of minors controlled by plabic graphs (see e.g.~\cite{OPS}),
but there are usually also higher degree cluster variables (i.e.~when $2< k <n-2$).

Following e.g.~\cite{MS,RW}, 
we write $\labset{n}$ for $\{1,\ldots,n\}$
and $\labsubset{n}{k}$ for the collection of all $k$-subsets of $\labset{n}$.
Throughout this paper, 
\[
 \ring= \CC[[t]] \quadand \fieldfract=\ring[t^{-1}]. 
\]  
Note that $\ring$ is a complete local ring
and $\fieldfract$ is its field of fractions.

In \cite[\S3]{JKS1}, we introduced an algebra 
\begin{equation}\label{eq:algC}
  \algC=\algC(k,n),
\end{equation}
which can be described as the path $\ring$-algebra of the circular double quiver with $n$ vertices, 
with relations $xy=t=yx$ and $x^k=y^{n-k}$, 
where $x$ and~$y$ label clockwise and anti-clockwise arrows respectively,
as illustrated here for $n=5$.
\begin{equation}\label{eq:circle-quiv}
\begin{tikzpicture}[scale=1.0,baseline=(bb.base)]
\draw (0,0) node (bb) {};
\pgfmathsetmacro{\stang}{90}
\pgfmathsetmacro{\incang}{72}
\newcommand{\vrad}{1.5}
\newcommand{\nrad}{1.85}
\newcommand{\xrad}{1.8}
\newcommand{\yrad}{0.7}
\foreach \j in {1,...,5}
{ \coordinate (v\j) at (\stang+\incang-\incang*\j:\vrad);
  \draw (v\j) node (v\j) {\small $\bullet$};  }  
\foreach \t/\h in {5/1,1/2, 2/3, 3/4, 4/5}
{ \draw[quivarr] (v\t) to [bend left=25] (v\h);
  \draw[quivarr] (v\h) to [bend left=23] (v\t);
  \draw (\stang+0.5*\incang-\incang*\h:\xrad) node {$x_{\h}$};
  \draw (\stang+0.5*\incang-\incang*\h:\yrad) node {$y_{\h}$};  }
\foreach \j in {0,...,4}
  \draw (\stang-\incang*\j:\nrad) node {$\j$};
\end{tikzpicture}
\end{equation}
More precisely, we label the vertices clockwise by $1,\dots, n=0\in \ZZ_n$,  
the arrow from $i-1$ to $i$ by $x_{i}$ and the arrow from $i$ to $i-1$ by $y_{i}$. 
Furthermore, the given relations start at every vertex; for example, when $(k,n)=(2,5)$,
the relations starting from 0 are $x_5y_5=t=y_1x_1$ and $x_2x_1=y_3y_4y_5$. 

In \cite[\S9]{JKS1}, building on work of Geiss--Leclerc--Schr\"oer~\cite{GLS08},
we showed how the cluster structure on $\CC[\Gr(k, n)]$ can be categorified
by the category $\CM\algC$ of Cohen--Macaulay $\algC$-modules,
that is, $\algC$-modules that are free over $\ring$.
More precisely, we defined a cluster character
\[
  \Psi\colon \CM\algC \to \CC[\Gr(k, n)]\colon M\mapsto \Psi_M
\]
by homogenising an inhomogeneous cluster character from \cite{GLS08}.
Now any $M\in \CM\algC$ has a well-defined rank $\rnk{\algC} M$ 
(see \cite[Def.~3.5]{JKS1} and \S\ref{subsec:ranks}),
which satisfies
\[
  \rnk{\algC} M=\rnk{\ring} e_i M
\] 
for any vertex idempotent $e_i\in\algC$. 
The rank 1 modules $M_J$ 
are indexed by $J\in \labsubset{n}{k}$
(see \cite[\S5]{JKS1} or \S\ref{subsec:nota-conv}(1) for details of their construction)
and we have
\begin{equation}\label{eq:Psi-minor}
  \Psi_{M_J} = \minor{J}.
\end{equation}

In \cite{JKS2}, we found that $\CM\algC$ 
also categorifies the quantum cluster algebra  $\CC_q[\Gr(k, n)]$,
in the sense that it knows the quasi-commutation rules.
More precisely, we defined an invariant $\kappa(M,N)$ for modules $M,N$ in $\CM\algC$. 
When $I, J\in\labsubset{n}{k}$ are non-crossing, 
 the quantum minors $\qminor{I}$ and $\qminor{J}$ quasi-commute
\cite[Thm.~1.1]{LZ},
and  we have \cite[Thm.~6.5]{JKS2} that
\[
  q^{\kappa(M_I, M_J)} \qminor{I} \qminor{J}
  =q^{\kappa(M_J, M_I)} \qminor{J} \qminor{I}.
\]
We also observed \cite[Lem.~7.1]{JKS2} the apparently independent fact that 
\[
  \kappa(M_I, M_J)=\maxdiag(\partn{I}\setminus\partn{J}),
\] 
where the right-hand side is a combinatorial invariant that plays a key role in Rietsch--Williams'
work \cite{RW} on mirror symmetry for Grassmannians.
Here $\partn{I}, \partn{J}$ are certain Young diagrams labelled by $I,J$
and the invariant is the maximal length of the diagonals in $\partn{I}\setminus\partn{J}$.

In \cite{RW}, Rietsch--Williams uncovered an instance of mirror symmetry for Grassmannians 
by studying their Newton--Okounkov (NO) bodies,
which are certain convex sets (in this case, rational polytopes) that encode the leading exponents 
$\val{G}(f)$ of all functions $f\in \CC[\Gr(k, n)]$, when expressed in the network (or $\cluX$-cluster) chart
associated to a plabic graph $G$.
On the mirror side, there is a superpotential $\superpot$ 
and \cite[Thm~16.18]{RW} shows that,
by expressing $\superpot$ in the dual $\cluA$-cluster chart associated to $G$
and tropicalising, one obtains the linear inequalities that describe the NO-body.
Furthermore, in \cite[Thm~15.1]{RW}, the leading exponents of minors $\minor{J}$
are shown to be given by 
\begin{equation}\label{eq:valG=maxdiag}
  \val{G}(\minor{J})=(\maxdiag(\partn{I}\setminus\partn{J})\st I\in\maxNC),
\end{equation}
where $\maxNC\subset\labsubset{n}{k}$ is the maximal non-crossing 
set of face labels of $G$.

One key feature of categorification is that clusters are upgraded to 
cluster tilting objects $T$ in $\CM\algC$. 
In particular, a cluster of minors $\{\minor{I} \st I\in\maxNC\}$,
for a maximal non-crossing set $\maxNC$, is upgraded to
\begin{equation}\label{eq:CTO-maxNC}
 T_\maxNC = \textstyle\bigoplus_{I\in\maxNC} M_I.
\end{equation}
Thus the leading exponent of $\minor{J}$ can be expressed as a vector invariant
$\kapvec(T_\maxNC,M_J)$,
which is a dimension vector for the algebra $\algA=\End(T_\maxNC)\op$,
whose Gabriel quiver $Q$ is (opposite) dual to the plabic graph $G$
\cite[Thm.~10.3]{BKM}.
In other words, $\kapvec(T_\maxNC,M_J)$ is an element of the Grothendieck group 
$\Grot(\fd\algA)\isom\ZZ^{Q_0}$
of finite-dimensional $\algA$-modules.

This observation was the starting point for the current paper,
since a natural generalisation of \eqref{eq:valG=maxdiag} would be 
\begin{equation}\label{eq-int:valG=kappadvat}
  \val{G}(\Psi_M)=\kapvec(T,M),
\end{equation}
where $M$ is any module in $\CM\algC$, 
while $G$ stands for an arbitrary $\cluX$-cluster chart (as in \cite[Rem~6.17]{RW})
and $T$ is the corresponding (reachable) cluster tilting object in $\CM\algC$.
In fact, \eqref{eq-int:valG=kappadvat} yields a description of all the integral
points in the cone on the NO-body.

One of the main goals of this paper is to prove \eqref{eq-int:valG=kappadvat} for general $M$ and $T$.
We do this by defining a generalised flow polynomial $\fp^T_M$ whose leading exponent is $\kapvec(T,M)$
and then show that $\Psi_M$ becomes $\fp^T_M$ when written in a suitably defined network chart.
In fact, these flow polynomials will be defined in much greater generality,
in the context of general positroid subvarieties of $\Gr(k,n)$,
related to the cell decomposition of
the totally non-negative Grassmannian \cite{Pos}.
However, we can only show the second part for $\Gr(k,n)$ itself.

We now explain the main topics and results of the paper.
Sections~\ref{Sec:3}--\ref{sec:clusalg} \&~\ref{Sec:11}
apply to the category $\GP\algB$, that we will introduce in Section~\ref{Sec:2} and
which is shown in separate work~\cite{JRS} to categorify general positroid varieties.

\subsection{Positroids and necklace algebras} \label{subsec:pos-neck}

Positroid varieties are characterised by various sorts of combinatorial data, including 
Grassmann necklaces~\cite[\S16]{Pos}.
One of our first results (\Cref{cor:neck-alg}) is that 
this necklace data is equivalent to specifying 
a `necklace algebra' $\algB$, 
which is defined in a quite simple way as
an $\ring$-order in 
$\algC[t^{-1}]\isom \Matr{n}{\fieldfract}$, lying over $\algC$, that is,
\begin{equation}\label{eq-intro:CinB}
  \algC\subset \algB\subset \algC[t^{-1}],
\end{equation}
with the additional property that $\algB$ is rigid as a left $\algC$-module.
There are two important subcategories associated to $B$,
\[
  \GP\algB \subset \CM\algB \subset \CM\algC .
\]
When $\algB$ is a necklace algebra, we show (\Cref{Thm:GPB3}) 
that $\algB$ is Iwanaga--Gorenstein of injective dimension at most~$2$, 
and that $\GP\algB$ is the category of Gorenstein projective $\algB$-modules 
and is Frobenius $2$-Calabi--Yau.
Furthermore, $\GP\algB$ has a cluster structure (\Cref{rem:cluster-structure})
and, in particular, a collection of cluster tilting objects $T$ related by mutation.


\subsection{Generalised partition functions and flow polynomials} \label{subsec:flow-part}

Let $G$ be a plabic graph of `rank' $n$ and `helicity' $k$,
and let $Q$ be its dual quiver with faces (cf. \cite{BKM}, \cite{CKP}).
As described by Muller-Speyer~\cite{MS},
the homogeneous network chart associated to $G$ can be given by the partition function map 
\begin{equation}\label{eq:netchart}
  \nethat_G\colon \CC[\Gr(k,n)] \to \CC[\torus_G]
  \colon \minor{J}\mapsto \partfun_J = \sum_{\mu\, :\,\bdry\mu=J} \xvar^\mu,
\end{equation}
which corresponds geometrically to an embedding of a torus $\torus_G$ into the affine cone on the Grassmannian.
The image of the positive real part of $\torus_G$ is the positroid cell in the decomposition.

Note that the $\mu$ in the sum are perfect matchings on $G$, which can be viewed
as elements of the character lattice $\Mlat$ of $\torus_G$.
Indeed we prefer to write the coordinate ring $\CC[\torus_G]$ as
the formal Laurent polynomial ring $\CC[\Mlat]$, 
so that $\mu\in\Mlat$ becomes the formal monomial $\xvar^\mu\in \CC[\Mlat]$.


The homogeneous chart $\nethat_G$ becomes the more usual network chart $\net_G$
expressed in terms of flow polynomials $F_J$ (cf.~\cite[\S6]{RW}) 
by making a dehomogenising monomial change of variables, given by a lattice map 
\begin{equation}\label{eq-int:wt-map}
\wtcom\colon \Mlat\to \Nstar\subset \ZZ^{Q_0},
\end{equation}

\noindent
where $\Nstar$ is the sublattice of vectors that vanish 
at a chosen boundary vertex $\vstar$
(see \Cref{sec:classical} for details).
More precisely, $\net_G=\CC[\wtcom] \compo\nethat_G$, 
that is, the flow polynomials are $F_J=\CC[\wtcom] \partfun_J \in \CC[\Nstar]$.
In particular, all exponents are in $\NN^{Q_0}$.


In the general setting of a cluster tilting object $T$ in $\GP\algB$, 
a key part of the categorification process is to interpret the map 
$\wtcom\colon \Mlat\to \ZZ^{Q_0}$ in \eqref{eq-int:wt-map}
as a natural map between Grothendieck groups associated to $\algA=\End(T)\op$
\[
  \wt\colon \Grot(\CM\algA)\to \Grot(\fd\algA),
\] 
which is induced by an exact functor $\wtmod\colon \CM\algA\to\fd\algA$,
introduced in Section~\ref{Sec:4}.
In addition, we define an isomorphism $\nu\colon\Grot(\CM\algA)\to\Mlat$ 
(see \Cref{Prop:equivof2exactseq}).

We observe (\Cref{Lem:kerwt}) that 
\begin{equation}\label{eq-int:kap-wt}
  \kapvec(T,M)=\wt [T, M],
\end{equation}
where $[T,M]=[\Hom_\algB(T,M)]\in \Grot(\CM\algA)$ is the \emph{$g$-vector} of $M\in \CM\algB$.
Note that what we call the $g$-vector here is often called the `index',
while `$g$-vector' is reserved for the mutable part (e.g.~\cite[Def~7.9]{FZ4} or \cite[\S4]{FK}).
The $g$-vector $[T,M]$ can be expressed in components in the basis of indecomposable projective $A$-modules,
and so can also be identified with the class of an $\add T$-presentation of $M$ in $\Grot(\add T)=\Grot(\proj\algA)$.

Furthermore, following \cite{CKP},
the boundary value map $\mu\mapsto\bdry\mu$ is replaced by
the restriction functor $\efunct\colon \CM\algA\to\CM\algB \colon X\mapsto eX$,
so the condition $\bdry\mu=J$ is replaced by $eX=M_J$.
Note that, to interpret $eX=M$ correctly, we must use the right adjoint $\Radj = \Hom_\algB(T,-)$ to $\efunct$
and only consider $X\submod\Radj M$, so that \emph{a priori} $eX\submod M$.

Thus one might try to generalise the partition function $\partfun_J\in\CC[\Mlat]$ in \eqref{eq:netchart}
to arbitrary $M\in\CM\algB$ by writing
\begin{equation}\label{eq-int:mot-pf}
  \ptfn^T_M \, =\, \sum_{X :\,eX=M} \xvar^{[X]},
\end{equation}
where the exponents are in $\Grot(\CM\algA)$.
One problem is that this sum can be infinite,
so must be interpreted motivically, that is,
counting families of submodules by their Euler characteristic~$\euler$.

The condition $eX=M$ turns out to be equivalent to requiring that $\Radj M/X$
is a quotient of the stable Hom space $\sHom_\algB(T,M)$,
so \eqref{eq-int:mot-pf} can be written as 
\begin{equation}\label{eq-int:gen-pf}
  \ptfn^T_M =\xvar^{[T, M]}\sum_{d} \euler \bigl( \quotGr{d}\sHom_\algB(T,M) \bigr)\xvar^{-\beta(d)},
\end{equation}
where $\quotGr{d}H$ is the (quiver) Grassmannian of quotients of $H$ 
of class $d\in\Grot(\fd\algA)$ 
and $\beta\colon \Grot(\fd\algA)\to \Grot(\CM\algA)$ is induced by CM-approximation or, 
equivalently, projective resolution
(see \S\ref{sec:gen-pt-fn} for more details,
including \Cref{rem:ptfn-F-poly}). 
In particular, we can write \[[X]=[T,M] - \beta[\Radj M/X].\]

We also obtain a generalised flow polynomial, by a dehomogenising monomial change of variables, as before.
\begin{equation}\label{eq:int-flowp}
\fp^T_M=\CC[\wt] \ptfn^T_M = \xvar^{\kapvec(T, M)} \Fpolyd{\sHom_\algB(T,M)}(\yvar), 
\end{equation}
where $\Fpolyd{H}(\yvar)$ is the F-polynomial for quotients of $H$.
Note that \eqref{eq-int:gen-pf} bears a strong resemblance to the more 
familiar cluster character formula of Fu--Keller~\cite{FK}, that is, 
\begin{equation}\label{eq:int-FK-CC}
  \Phi^T_{M}
 =\xvar^{[T, M]}\sum_{d} \euler \bigl( \qvGr{d}\Ext_\algB^1(T, M) \bigr)\xvar^{-\beta(d)},
\end{equation}
where $\qvGr{d}E$ is the Grassmannian of submodules of $E$ 
of class $d\in\Grot(\fd\algA)$. 

In fact, we prove two formulae that relate $\ptfn^T_{M}$ and $\Phi^T_{M}$, namely
\begin{equation}\label{eq:PF-formulae}
\ptfn^T_M = \CC[-1]\frac{ \Phi^T_{\Syz M} }{ \Phi^T_{\procov{M}} }
  = \CC[\zeta]\, \Phi^{\CTOcosyz{T}}_M,
\end{equation}
where $0 \to \Syz M \to \procov{M} \to M \to 0$ is a syzygy sequence for $M$ (i.e.~$PM$ is projective)
and $\CC[-1]$ and $\CC[\zeta]$ are monomial changes of variables
(see \Cref{Thm:char-parfunA} and \Cref{thm:PF-zeta-Phi} for details). 
In particular, this shows that $\ptfn^T_{M}$ is a cluster character and hence $\fp^T_M$ is also.

\subsection{Network and cluster charts} \label{subsec:net-chart}

To go further, we need to restrict to the case $B=C$,
in which case $\GP\algB=\CM\algC$.
Our first major result (\Cref{cor:partfun-map}) is that, 
for any cluster tilting object $T$ in $\CM C$, there is a map
\begin{equation}\label{eq-int:hom-net-chart}
  \Xi^T\colon \CC[\Gr(k,n)] \to \CC[\Grot(\CM A)],
\end{equation}
for $\algA=\End(T)\op$, with the property that, for all $M\in\CM\algC$,
\[
  \Xi^T\bigl(\cluschar{M}\bigr)=\ptfn^T_{M}.
\]
Thus $\Xi^T$ generalises the homogeneous network chart $\nethat_G$ from \eqref{eq:netchart}
and consequently
\begin{equation}\label{eq-int:net-chart}
\pbfun{\wt}{\Xi^T} = \CC[\wt]\compo\Xi^T\colon \CC[\Gr(k,n)]  \to \CC[\Grot(\fd\algA)]
\colon \cluschar{M}\mapsto \fp^T_{M}
\end{equation}
generalises the inhomogeneous network chart $\net_G$.
The main stepping stone to this result is (\Cref{Thm:PsiandTheta}) that the $\cluA$-cluster chart associated to $T$ is a map
\begin{equation} \label{eq:int-UpsT}
  \Upsilon^{T} \colon \CC[\Gr(k, n)]\to \CC[\Grot(\CM\algA)]
\end{equation}
with the property that, for all $M\in\CM\algC$,
\[ \Upsilon^{T}\bigl(\cluschar{M}\bigr)=\Phi^{T}_{M}.\]
Note that \emph{a priori} this formula only holds for reachable rigid $M$,
since $\Phi^{T}$ is a cluster character.
This result is proved by lifting a similar result of Geiss--Leclerc--Schr\"oer~\cite{GLS12}.
Then \Cref{cor:partfun-map} follows by setting $\Xi^T=\CC[\zeta]\compo \Upsilon^{\CTOcosyz{T}}$
and using \Cref{thm:PF-zeta-Phi}.

We can also use these results to study the twist map
\[ \twi\colon \Gro(k, n) \to \Gro(k, n) \]
defined by Muller--Speyer \cite{MS} more generally for open positroid varieties,
that is, where the frozen variables are non-zero.
We show (\Cref{Thm:twistchar}) that, for any $M\in\CM\algC$, 
\[
\twi(\Psi_M)=\frac{\Psi_{\Syz M}}{\Psi_{\procov{M}}}, 
\]
generalising \cite[Thm~12.2]{CKP}, that is, the case when $M$ has rank~1.

\subsection{$g$-vectors and bases} \label{subsec:gen-bas-gvec}

The fact that $g$-vectors are the leading exponents of cluster characters, 
in both cluster charts and homogeneous network charts, 
leads us to look more closely at the set of all $g$-vectors in $\Grot(\CM A)$,
which is \emph{a priori} a monoid (i.e.~closed under addition and with a zero element)
\[
  \gvm{T} = \{ [T, M] \st M\in\CM C \} 
\]
and to investigate its relationship with $\nom{T}$,
the monoid of leading exponents of all functions $f\in \CC[\Gr(k,n)]$ in the network chart $\Xi^T$.

We use the irreducible components 
of representation varieties of preprojective algebras
to say what it means for a $g$-vector $[T, M]$ to be generic.
It is known, from Lusztig~\cite{Lus00} and Geiss--Leclerc--Schroer \cite{GLS12},
that choosing modules $M$ whose cluster characters $\cluschar{M}$ are generic in a component
gives a basis of $\CC[\Gr(k,n)]$, called the dual semicanonical basis,
indexed by the components.
We show more generally (\Cref{Thm:GrassGeneric}) that,
it is sufficient that each $[T, M]$ is generic in a component for the $\cluschar{M}$
to give a basis.
A key ingredient (\Cref{prop:ghgen-inj}) is that generic $g$-vectors
for different components are distinct, so that the $\cluschar{M}$ have distinct leading exponents.

\newcommand{\gvec}{\gamma}

Having this basis enables us to prove
(\Cref{thm:subQComp-too}) that, in fact, every $g$-vector is generic.
Hence (\Cref{cor:bottom-line}) it is sufficient to choose, for each $\gvec\in \gvm{T}$, a representative~$M$ 
with $[T,M]=\gvec$ to ensure that the cluster characters $\cluschar{M}$ form a basis.
This both realises and extends the objective of Fomin-Zelevinsky \cite[\S7]{FZ4}
in introducing $g$-vectors to index cluster monomials (i.e. part of a basis) in a cluster algebra.

Then we can also deduce (\Cref{Thm:NOcone}) that
\begin{equation*}\label{eq:int-gvecs-r-generic}
  \nom{T} 
 = \gvm{T}.
\end{equation*}

\subsection{Tropical $\cluA$-mutation} \label{subsec:mut-kappa}
We can get a better handle on $g$-vectors by using the lattice isomorphism
\begin{equation}\label{eq-int:wthat}
  \wthat\colon \Grot(\CM\algA) \to \ZZ\oplus \Nstar\colon [Z]\mapsto (\rkk[Z],\wt[Z]),
\end{equation}
where $\Nstar\subset \Grot(\fd\algA)$ is as in \eqref{eq-int:wt-map}.
In particular, $\wthat[T,M]=(\rnk{\algC} M,\kapvec(T, M))$.

When $T$ mutates to $T'=\mu_i(T)$, we can interpret tropical $\cluA$-mutation
(\Cref{def:tropAmut}) as a piecewise linear map 
\[
\tropAmut{Q}{i}\colon \Grot(\fd\algA) \to \Grot(\fd\algA'),
\]
where $A'=\End(T')\op$, and show (\Cref{Cor:mutkappa}) that
\[ \tropAmut{Q}{i} \bigl( \kapvec(T, M) \bigr) = \kapvec(T', M), \]
when $M$ is suitably generic.
Then $\tropAmut{Q}{i}$ and $\wthat$ induce a piecewise linear map 
\[
  \tropAmuthat{Q}{i}\colon K(\CM A)\to K(\CM A') \colon [T,M]\mapsto [T',M].
\]
As a consequence, we can prove (\Cref{rem:rat-pol-cone2}) that $\gvm{T}$ is 
(the integral points of) a rational polyhedral cone,
following the inductive strategy of Rietsch--Williams~\cite{RW}.
The induction starts from the rectangles cluster tilting object $\rectCTO$ (\Cref{def:rect-clus}) 
and we show (\Cref{Prop:kappaGT}) that 
\begin{equation} \label{eq:gzmonoid}
\wthat\gvm{\rectCTO} = \gtm, 
\end{equation}
the monoid of (cumulative) Gelfand--Tsetlin patterns,
which is defined by finitely many integral inequalities, 
so is, by construction, a rational polyhedral
cone $\gtc$.

\subsection{Valuations and bodies} \label{subsec:val-bodies}

Suppose $G_\maxNC$ is a plabic graph 
whose face labels are a maximal non-crossing collection $\maxNC$.
Then $T_\maxNC=\bigoplus_{J\in \maxNC} M_J$ is 
the corresponding cluster tilting object in $\CM\algC$.
As in \cite{RW}, a general $\cluX$-seed will also be denoted $G$
and is related to a plabic seed $G_\maxNC$ by a sequence of $\cluX$-mutations,
which inductively determine an $\cluX$-chart $\net_G$.
Applying the same sequence of mutations to $T_\maxNC$ associates a cluster tilting object $T$ to $G$.

We know (\Cref{prop:class-flow-poly}) that the generalised network chart $\pbfun{\wt}{\Xi^T}$
in \eqref{eq-int:net-chart}
coincides with the classical network chart $\net_G$
when $G$ is a plabic seed.
On the other hand, we can show (\Cref{thm:Xflow}) that $\pbfun{\wt}{\Xi^T}$ undergoes $\cluX$-mutation 
when $T$ mutates and so, 
for all associated $G$ and $T$, we have
\[ 
  \net_G=\pbfun{\wt}{\Xi^T}, 
\]
that is, \eqref{eq-int:net-chart} provides an explicit formula for the general $\cluX$-chart.

In \cite{RW}, Rietsch--Williams define the valuation $\val{G}(f)$, for non-zero $f\in \CC[\Gr(k, n)]$,
to be the minimal exponent (in $\Nstar$) of $\net_G(f)$ and 
we know (\Cref{Thm:char-flopol}) that the minimal exponent of 
$\fp^{T}_M$ is $\kapvec (T, M)$.
Hence we conclude (\Cref{Thm:KVal}) that,
for any $M\in \CM\algC$,
\begin{equation*}
  \val{G}(\cluschar{M}) = \kapvec (T, M).
\end{equation*}
In particular, since $\cluschar{M_I}=\minor{I}$,
\begin{equation*}
  \val{G}(\minor{I}) = \kapvec(T, M_I),
\end{equation*}
which recovers \eqref{eq:valG=maxdiag} when $G$ is a plabic seed.
Thus we have proved \eqref{eq-int:valG=kappadvat} as intended, 
and as conjectured in  \cite[Remark~7.3]{JKS2}.
Using $\val{G}$, 
Rietsch--Williams \cite[(8.2)]{RW} also define the \emph{Newton--Okounkov body} 
in $\Nstar\tensR$,
\[
  \nob{G} = \ConvHullBar\, \bigcup_r\frac{1}{r} \bigl\{ \val{G}(f)  \st  f\in \CC[\Gr(k, n)]_r \setminus 0 \bigr\} .
\]
On the other hand, using the $\kapvec$-invariant, we can likewise define a body
\[
  \kab{T} = \ConvHullBar\, \bigcup_r\frac{1}{r} 
  \bigl\{ \kapvec(T,M) \st M\in\CM\algC,\, \rnk{\algC} M = r \bigr\}
  \subset \Nstar\tensR,
\]
which is the degree 1 slice of the cone $\wthat\gvc{T}$.
Clearly \eqref{eq-int:valG=kappadvat} indicates a relationship between these cones/bodies 
and indeed we can show (\Cref{prop:nob=kab}) that 
\begin{equation*}
  \nob{G}=\kab{T}.
\end{equation*}  
The proof uses the existence of bases of $\CC[\Gr(k, n)]$ 
consisting of cluster characters~$\cluschar{M}$ with distinct leading exponents~$[T,M]$,
as described in \S\ref{subsec:gen-bas-gvec}.

\subsection{Mirror symmetry} \label{subsec:mir-sym}
Knowing that $\gvm{T}$ is rational polyhedral, the natural question is: what
inequalities define it?
It is known \cite[Lem~16.2]{RW} that 
\begin{equation}\label{eq-int:GT-SP}
\gtc=\cone_W(\rectCTO),
\end{equation}
where, in general, the \emph{superpotential cone} $\cone_W(T)$
is obtained from the Marsh--Rietsch \cite{MR} superpotential $W$ 
by writing it in the $\cluA$-cluster chart associated to $T$ and then tropicalising, 
that is, using the exponents to determine the defining inequalities.

In \Cref{Prop:superpot}, we give a new proof of  \eqref{eq-int:GT-SP},
by writing the terms in $W$ as ratios of cluster characters.
Given \eqref{eq-int:GT-SP} and \Cref{Cor:mutkappa}, we immediately deduce (\Cref{thm:trop-GT}) that 
\begin{equation*}
 \wthat\gvc{T} = \cone_W(T).
\end{equation*}
for all reachable cluster tilting objects $T$,
because both sides mutate in the same way.

Hence we can use the dual of the isomorphism $\wthat$ 
to rewrite $W$ (\Cref{thm:Wformula}) 
and thereby find the inequalities defining $\gvc{T}$ 
inside $\Grot(\CM\algA)$ (\Cref{thm:coneGV-SP}).
More precisely, these inequalities are obtained by tropicalising the 
following superpotential, written in $\cluX$-cluster coordinates, that is, in $\CC[\Grot(\fd\algA)]$.
\begin{equation}\label{eq:int-UpsW2}
  W_\cluX(T) = \sum_{i=1}^n \yvar^{[S_i]} \Fpolyd{\Ext^1(T,\extd{P_i})} (\yvar) 
\end{equation}
Here $\extd{P_i}$ is an extension of~$P_i$ by a simple $\algC$-module~$\extd{S_i}=S_{i+k}$, 
that is, we have a short exact sequence
\begin{equation*}
  0\lra P_i \lra \extd{P_i} \lra \extd{S_i} \lra 0
\end{equation*}
and $\Fpolyd{E} (\yvar)$ is the quotient F-polynomial of $E$
(see \S\ref{subsec:cat-superpot} for more details).

Explicitly, the inequalities defining $\gvc{T}$ are
\begin{equation}\label{eq:int-gv-faces1}
  ([S_i]+[V])(x) \geq 0, \quad\text{for all quotients $V$ of $\Ext^1(T,\extd{P_i})$, for $i=1,\ldots,n$.} 
\end{equation}
Alternatively, we can write the inequalities  as
\begin{equation}\label{eq:int-gv-faces2}
  [U](x) \geq 0, \quad\text{for all $U\subspc\Ext^1(\extd{S_i},T)$, for $i=1,\ldots,n$,} 
\end{equation}
where $[U]\in\Grot(\fd\algA\op)$, which we can also view as dual to $\Grot(\CM\algA)$.

Note that the proof very much follows the strategy of \cite{RW} and it would be very interesting to
have a more categorical proof of the characterisation of $g$-vectors
given by \eqref{eq:int-gv-faces1} or \eqref{eq:int-gv-faces2}.
Note also that the superpotential in \eqref{eq:int-UpsW2} should
coincide with the Gross--Hacking--Keel--Kontsevich superpotential~\cite{GHKK}, 
as described in~\cite[Thm~6.5]{LFS} (see also \cite[App ~D]{SW}).

\subsection{Notation and conventions}\label{subsec:nota-conv}

As in \cite{JKS1}, the Pl\"ucker label $I$ on a rank 1 $\algC$-module $M_I$
specifies that the \emph{clockwise} arrows $x_i$ in \eqref{eq:circle-quiv} act as $1$, for $i\in I$.
The opposite convention is used in \cite{CKP}.

A plabic graph $G$ is assumed to come from a (consistent) Postnikov diagram,
associated to a positroid variety. 
The \emph{type} of $G$ is the (decorated) strand permutation $\perm$.
When the variety is $\Gr(k,n)$, this is the uniform permutation 
$\perm_{k,n}\colon j\mapsto j+k$ mod $n$.

We obtain a cluster $\maxNC$ of Pl\"ucker labels from a Postnikov diagram 
using \emph{left target} labelling (cf.~\Cref{fig:rs-labelling}),
so that the boundary labels are a necklace, in the sense of \cite[\S16]{Pos} or \cite[\S2.1]{MS}.
Consequently, we take the \emph{opposite} dual quiver to obtain the Gabriel quiver of $\End(T_\maxNC)\op$
(cf.~\eqref{eq:CTO-maxNC}).
Other conventions are used in~\cite{BKM, CKP}.

\subsection*{Acknowledgements} 

We wish to thank Timothy Magee, Matthew Pressland and Konstanze Rietsch for 
many enlightening discussions, especially about mirror symmetry.
We are also grateful to the Heilbronn Institute for Mathematical Research for 
a Focused Research Grant (2021) that enabled these discussions to take place.

\newpage
\section{Frobenius subcategories $\GP\algB$ in $\CM\algC$}\label{Sec:2}

We know from \cite{JKS1} that $\CM\algC$ is a (stably) 2-Calabi-Yau (2-CY) Frobenius category.
In this section, we find many other such categories $\GP\algB$ as full subcategories of $\CM\algC$.

\subsection{Adjoint functors}
\newcommand{\catC}{\mathcal{C}}
\newcommand{\catD}{\mathcal{D}}

We list some basic facts about adjoint functors, for later use.

\begin{lemma}\label{Lem:adjprops} 
\cite[Prop 3.4.1]{Bo}
Let $\Ffunct\colon \catC\to \catD$ be a functor with a left adjoint $\Ladj\colon \catD\to \catC$ 
and a right adjoint $\Radj\colon \catD\to \catC$. 
\begin{enumerate}
\item\label{itm:adjp1} 
The following are equivalent.
\begin{enumerate}
\item The functor $\Ffunct$ is faithful.
\item The unit $\unitmap{}\colon \idfun_{\catC} \to \Radj \Ffunct$ 
 is a pointwise monomorphism, 
 i.e.~$X\to \Radj \Ffunct X$ is mono for any $X\in  \catC$. 
\item The counit $\counitmap{}\colon \Ladj \Ffunct \to  \idfun_{\catC} $ 
is a pointwise epimorphism, 
i.e.~$\Ladj \Ffunct Y \to  Y$ is epi for any $Y\in  \catC$. 
\end{enumerate}
\item\label{itm:adjp3} 
The following are equivalent.
\begin{enumerate}
\item The functor $\Ffunct$ is fully faithful.
\item The unit $\unitmap{}\colon \idfun_{\catC} \to \Radj \Ffunct$ is an isomorphism.
\item The counit $\counitmap{}\colon \Ladj \Ffunct \to  \idfun_{\catC}$ is an isomorphism.
\end{enumerate}
\end{enumerate}
\end{lemma}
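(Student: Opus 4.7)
The plan is to translate faithfulness (resp.~full faithfulness) of $\Ffunct$ on hom-sets into the corresponding mono/iso (resp.~epi/iso) conditions on the unit and counit, by means of the adjunction isomorphisms. Parts \itmref{itm:adjp1} and \itmref{itm:adjp3} go by the same mechanism, so I would run them together.

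First, fix $X,Y \in \catC$ and consider the map $\Ffunct_{X,Y}\colon \Hom_\catC(X, Y) \to \Hom_\catD(\Ffunct X, \Ffunct Y)$. The adjunction $\Ffunct \dashv \Radj$ provides a natural bijection
\[
  \Phi_{X,Y}\colon \Hom_\catD(\Ffunct X, \Ffunct Y) \;\xrightarrow{\sim}\; \Hom_\catC(X, \Radj\Ffunct Y),
\]
and the standard description of $\Phi_{X,Y}$ in terms of the unit identifies the composite $\Phi_{X,Y}\circ \Ffunct_{X,Y}$ with postcomposition by $\unitmap{Y}$. Hence $\Ffunct_{X,Y}$ is injective (resp.~bijective) for every $X$ if and only if $(\unitmap{Y})_*\colon \Hom_\catC(X,Y) \to \Hom_\catC(X, \Radj\Ffunct Y)$ is. Injectivity of $(\unitmap{Y})_*$ for all $X$ is just the definition of $\unitmap{Y}$ being a monomorphism; bijectivity for all $X$ forces $\unitmap{Y}$ to be an isomorphism by the usual Yoneda argument (take $X = \Radj\Ffunct Y$ to produce an inverse). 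This handles both unit statements simultaneously.

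The counit statements run dually via $\Ladj \dashv \Ffunct$. The natural bijection $\Hom_\catD(\Ffunct X, \Ffunct Y) \cong \Hom_\catC(\Ladj\Ffunct X, Y)$ identifies $\Ffunct_{X,Y}$ with precomposition $(\counitmap{X})^*$. Fixing $X$ and varying $Y$, injectivity of $(\counitmap{X})^*$ for all $Y$ is the definition of $\counitmap{X}$ being an epimorphism, and bijectivity for all $Y$ yields an isomorphism by the symmetric Yoneda argument (take $Y = \Ladj\Ffunct X$).

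The only thing to keep track of is the direction of variance and which side one composes on; there is no substantive obstacle, which is why the authors content themselves with a citation to \cite{Bo}. The main use of the lemma in what follows is to certify that various restriction and induction functors between module categories are faithful or fully faithful by direct inspection of the relevant unit or counit, in particular when comparing $\GP\algB$ to $\CM\algC$.
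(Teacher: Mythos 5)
Your argument is correct: via the adjunction bijections, $\Ffunct_{X,Y}$ is identified with postcomposition by $\unitmap{Y}$ (using naturality of the unit) and with precomposition by $\counitmap{X}$ (using naturality of the counit), so faithfulness and full faithfulness translate exactly into the mono/epi and isomorphism conditions, the isomorphism case following from the standard Yoneda argument you sketch. The paper gives no proof of its own, simply citing \cite[Prop.~3.4.1]{Bo}, and your proof is the standard one found there; note only that the epimorphism in \itmref{itm:adjp1}(c) is meant in the categorical sense (as the paper's remark after the lemma emphasises), which is precisely what your precomposition argument delivers.
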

Recall that, in an additive category, a morphism $f\colon X\to Y$ is an epimorphism if 
$gf=0$ implies that $g=0$, for any $g\colon Y\to Z$. 
For example, in the category of torsion-free modules for any $\ring$-algebra, 
an epimorphism is a map with torsion cokernel, not necessarily a surjection.

\subsection{The category $\CM\algB$}\label{subsec:CMB}

Note that $\algC[t^{-1}]$ is isomorphic to the matrix algebra $\Matr{n}{\fieldfract}$, by \cite[(3.6)]{JKS1}.
Let $\algB$ be an $\ring$-order with 
\begin{equation}\label{eq:CinB}
  \algC\subset \algB\subset \algC[t^{-1}]. 
\end{equation}
Consequently, $e_iBe_i=e_iCe_i=\ring$, for any vertex idempotent $e_i$ of $C$.

Recall that a $\algC$-module is Cohen--Macaulay if and only if $\algC$ is a free $\ring$-module. 
We also denote by $\CM\algB$  the category of  $\algB$-modules that are free over $\ring$.

\begin{lemma}\label{Lem:restriFF} \cite[Chap.~23, Exer.~2]{CR}  
The restriction functor $\resfun\colon \CM\algB \to \CM\algC$ is fully faithful. 
In particular, all $C$-summands of $M\in\CM\algB$ are $B$-summands.
\end{lemma}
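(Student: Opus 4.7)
The plan is to unwind full faithfulness as the statement that, for all $M,N\in\CM\algB$, the natural inclusion
\[
  \Hom_\algB(M,N)\hookrightarrow \Hom_\algC(M,N)
\]
(which makes sense since $\algC\subset\algB$) is an equality. Injectivity is automatic, so the content is surjectivity: every $\algC$-linear map between $\algB$-modules that are $\ring$-free is automatically $\algB$-linear.

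The key observation I would exploit is that, by \eqref{eq:CinB}, $\algB$ sits inside the localisation $\algC[t^{-1}]$, and at the same time, because $M$ and $N$ are $\ring$-free, the canonical maps $M\hookrightarrow M[t^{-1}]$ and $N\hookrightarrow N[t^{-1}]$ are injective. So, given a $\algC$-linear $f\colon M\to N$, I would first extend by localisation to the $\algC[t^{-1}]$-linear map $f[t^{-1}]\colon M[t^{-1}]\to N[t^{-1}]$. Since $\algB\subset\algC[t^{-1}]$, this extension is automatically $\algB$-linear. Now observe that both $M$ and $N$ are $\algB$-submodules of their localisations (the $\algB$-action on $M$ agrees with the restriction of the $\algC[t^{-1}]$-action on $M[t^{-1}]$, because $\algB\cdot M\subset M$ by hypothesis and $M\hookrightarrow M[t^{-1}]$ is injective). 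Thus the restriction of $f[t^{-1}]$ to $M$, which agrees with $f$, lands in $N$ and is $\algB$-linear, giving surjectivity.

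For the second assertion, a direct sum decomposition $M=M_1\oplus M_2$ in $\CM\algC$ is the same data as an idempotent endomorphism $e\in\End_\algC(M)$. By the full faithfulness just established, $e$ lies in $\End_\algB(M)$, so the decomposition is already a decomposition of $\algB$-modules, and each $M_i$ is an $\algB$-summand.

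I do not expect a serious obstacle: the entire argument rests on the elementary but essential input that $\ring$-freeness provides torsion-free embeddings into $t$-localisations, together with the set-up \eqref{eq:CinB} that $\algB$ sits between $\algC$ and $\algC[t^{-1}]$. The mildest care needed is just to verify that the localised map restricts back to $M$ with image in $N$, which is immediate because $f[t^{-1}]\vert_M = f$ already maps into $N$.
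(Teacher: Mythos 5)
Your proof is correct and is essentially the paper's own argument: since $M,N$ are $\ring$-free they embed in $M\otimes_\ring\fieldfract$ and $N\otimes_\ring\fieldfract$, and both $\Hom_\algB(M,N)$ and $\Hom_\algC(M,N)$ are identified with the maps of these localisations (over $\algB[t^{-1}]=\algC[t^{-1}]$) carrying $M$ into $N$, which is exactly how the paper proves full faithfulness. The second assertion is also handled the same way in the paper, by observing that any $\algC$-idempotent in $\End_\algC(M)=\End_\algB(M)$ is a $\algB$-idempotent.
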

 
 \begin{proof}
 If $M, N\in \CM\algB$, then  $M\leq M\otimes_\ring \fieldfract$, $N\leq N\otimes_\ring \fieldfract$
 and 
 \[
 \Hom_\algB(M, N)=\{f\colon M\otimes_\ring \fieldfract\to N\otimes_\ring \fieldfract \st f(M)\subset N\}=\Hom_\algC(M, N).
 \]
Thus $\resfun$ is fully faithful. 
For the last part, note that, since $\Hom_C(M,M)=\Hom_B(M,M)$, any $C$-idempotent is a $B$-idempotent.
\end{proof}

Since the restriction functor $\Ffunct$ is equal to $\BCmod\otimes_\algB -$, 
it has right adjoint $\Radj$ equal to $\Hom_\algC(\algB, -)$.

\begin{lemma}\label{Lem:HomFaith}
The counit $\counitmap{M}\colon \Ffunct\Radj M\to M$ is injective with finite-dimensional cokernel.
Thus $\Radj\colon \CM \algC\to \CM\algB$ is faithful. 
\end{lemma}

\begin{proof}
Note that $\Radj$ does map $\CM \algC$ to $\CM\algB$,
because any $M\in \CM \algC$ is free as an $\ring$-module
and $\Hom_\algC(\algB, M)\leq\Hom_\ring(\algB, M)$, which is thus free as an $\ring$-module.

The counit $\counitmap{M}\colon \Ffunct\Radj M\to M$ can be identified with the map
\[
\Hom_\algC(\algB, M)\to \Hom_\algC(\algC, M)\cong M \colon f\mapsto f(1)
\]
induced by the inclusion $\algC\subset \algB$. 
This map is injective, because $\rnk{\ring}\algC=\rnk{\ring}\algB$, so $\algB/\algC$ is torsion,
while $M$ is torsion-free, so $\Hom_\algC(\algB/\algC, M)=0$.
Furthermore, as an immediate consequence of \cite[Lemma~5.5]{JKS2}, we have
\[ \rnk{\ring}\Hom_\algC(\algB, M)
= (\rnk{\algC} \algB) (\rnk{\algC} M)
= (\rnk{\algC} \algC) (\rnk{\algC} M) 
= \rnk{\ring}\Hom_\algC(\algC, M).
\]
Hence  $\Ffunct\Radj M$ is effectively a submodule  of $M$ 
with the same rank as $M$, so the quotient is finite-dimensional.
Thus $\counitmap{M}$ is an epimorphism in $\CM\algC$,
so $\Radj$ is faithful, by \Cref{Lem:adjprops}\itmref{itm:adjp1}.
\end{proof}

\begin{lemma}\label{Prop:CMFactoo}
If $M\in \CM\algC$ is a quotient of some $N\in \CM\algB$, then $M\in \CM\algB$.
\end{lemma}

\begin{proof}
For a module $M$ in $\CM\algC$ to be in $\CM\algB$, it is necessary and sufficient that the counit 
$\counitmap{M}\colon  \resfun\Radj M \to M$ is an isomorphism.
This is necessary, because, by \Cref{Lem:restriFF}, when $M\in \CM\algB$, 
\begin{equation}\label{eq:FRM=M}
  \Hom_\algC(\algB, M)
  = \Hom_\algB(\algB, M)
   \isom  M.
\end{equation}

So suppose that $M\in\CM\algC$ and $q\colon N\to M$ is a surjection with $N\in \CM\algB$.
The counits give a commutative square
\[
\begin{tikzpicture}[xscale=2,yscale=1.5] 
\draw (2,2) node (b2) {$\Ffunct \Radj N$};
\draw (3,2) node (b3) {$\Ffunct \Radj M$};
\draw (2,1) node (c2) {$N$};
\draw (3,1) node (c3) {$M$.};
\foreach \t/\h in {b2/b3, c2/c3} 
   \draw[cdarr] (\t) to (\h);
\draw[cdarr] (b3) to node [right]{$\counitmap{M}$} (c3);
\draw[cdarr] (b2) to node [left]{$\counitmap{N}$} (c2);
\draw[cdarr] (c2) to node [above]{$q$} (c3);
\end{tikzpicture}
\]
Now $\counitmap{N}$ is an isomorphism, because $N\in \CM\algB$,
so $\counitmap{M}$ is surjective, because $q$ is surjective.
But $\counitmap{M}$ is also injective, by \Cref{Lem:HomFaith},
so it is an isomorphism, that is, $M\in \CM\algB$, as required.
\end{proof}

\begin{remark}\label{rem:addBfactor}
Note that any $\algB$-module, considered as a $\algC$-module, 
is a quotient of some $\algB'\in \add \BCmod$,
namely its $\algB$-module projective cover, considered as a $\algC$-module.
\Cref{Prop:CMFactoo} shows that the converse is 
true for modules in $\CM\algC$, although it is not true for general $\algC$-modules
(cf. \cite[Ch.~37, Ex.~10]{CR}).
\end{remark}

Although restriction $\resfun\colon \CM\algB \to \CM\algC$ is necessarily exact,
so induces a map $\Ext^1_\algB(M,N)\to \Ext^1_\algC(M, N)$,
for $M, N\in \CM\algB$,
this map may not be an isomorphism,
i.e.~$\resfun$ may not be `full exact'.
However, we do have the following.

\begin{lemma} \label{lem:ExtBinExtC}
For any $M, N\in \CM\algB$, we have
$\Ext^1_\algB(M,N)\subset \Ext^1_\algC(M, N)$.
Thus $\Ext^1_\algB(M,N)$ is finite-dimensional.
\end{lemma}

\begin{proof}
Any extension in $\CM\algB$ is an extension in $\CM\algC$. 
However, as restriction is fully faithful, 
if an extension splits in $\CM\algC$, then it also splits in $\CM\algB$. 
Hence we have the claimed inclusion.
On the other hand, $\Ext^1_\algC(M, N)$ is always finite-dimensional, by \cite[Cor.~4.6]{JKS1},
so the second part follows from the first.
\end{proof}

To see when the inclusion in \Cref{lem:ExtBinExtC} is an equality, we have the following.

\begin{lemma} \label{lem:ExtB=ExtC}
For $M, N\in \CM\algB$, if $\Ext^1_\algC(\algB, N)=0$, 
then $\Ext^1_\algB(M,N)= \Ext^1_\algC(M, N)$.
\end{lemma}

\begin{proof}
We need to prove that, if we have an extension in $\CM\algC$ of the form
\begin{equation}\label{eq:NXMseq}
 \ShExSeq {N} {X} {M},
\end{equation}
then $X\in\CM\algB$.
Applying the counit $\counitmap{}\colon  \resfun\Radj  \to \idfun$ to this extension
gives the following commutative diagram 
\[
\begin{tikzpicture}[xscale=2,yscale=1.5] 
\pgfmathsetmacro{\eps}{0.2}
\draw (1,2) node(b1) {$\Ffunct \Radj N$};
\draw (2,2) node(b2) {$\Ffunct \Radj X$};
\draw (3,2) node(b3) {$\Ffunct \Radj M$};
\draw (1,1) node (c1) {$N$};
\draw (2,1) node (c2) {$X$};
\draw (3,1) node (c3) {$M$};
\draw (0+\eps,1) node (c0) {$0$};
\draw (4-\eps,1) node (c4) {$0$,};
\draw (0+\eps,2) node (b0) {$0$};
\draw (4-\eps,2) node (b4) {$0$};
\foreach \t/\h in {b1/b2, b2/b3, c1/c2, c2/c3} 
   \draw[cdarr] (\t) to (\h);
\draw[cdarr] (b3) to node [auto]{$\counitmap{M}$}(c3);
\draw[cdarr] (b2) to node [auto]{$\counitmap{X}$} (c2);
\draw[cdarr] (b1) to node [auto]{$\counitmap{N}$} (c1);
\foreach \t/\h in {b0/b1, b3/b4, c0/c1, c3/c4} 
   \draw[cdarr] (\t) to (\h);
\end{tikzpicture}
\]
where the top sequence is exact, because $\Ext^1_\algC(B, N)=0$.
But $\counitmap{M}$ and $\counitmap{N}$ are isomorphisms, 
because $M, N\in \CM\algB$, and so $\counitmap{X}$ is an isomorphism,
that is, $X\in\CM\algB$, as required.
\end{proof}

\begin{remark}\label{rem:no-converse}
Conversely, if $\Ext^1_\algC(\algB, N)\neq 0$, then $M=\algB$ provides an example where
the inclusion in \Cref{lem:ExtBinExtC} is strict.
More concretely, for $(k,n)=(2,5)$, there is a short exact sequence  
\begin{equation}\label{eq:extinCnotB}
  0\to M_{25}\to M_{12}\oplus M_{45}\to M_{14}\to 0
\end{equation} 
in $\CM\algC$ and an algebra $\algB$, as in \eqref{eq:algB}, for which $M_{45}$ is not a $\algB$-module,
but all other rank 1 modules are. In this case, $M_{14}$ is a projective $\algB$-module.
\end{remark}

\subsection{The category $\GP\algB$} \label{subsec:GPB}

We will now assume that $\algB$ is rigid as a left $\algC$-module. 
By a temporary abuse of notation (since we don't yet know that $B$ is Iwanaga--Gorenstein), 
we define the full subcategory
\begin{equation}\label{def:GPB}
 \GP\algB = \{ M \in \CM\algB  \st \Ext^i_\algB(M, \algB)=0,\,  \forall i>0\}. 
\end{equation}
Note that \emph{a priori} a Gorenstein projective $\algB$-module 
satisfies this Ext-vanishing condition, but is not necessarily in $\CM\algB$.
However, we will show (in \Cref{Thm:GPB3}) that actually it is.
Observe that $\GP\algB$ is non-trivial, since $\add \algB\subset \GP\algB$,
and it is an extension-closed subcategory of $\CM\algB$. 
While $\CM\algB$ may not be an extension-closed subcategory of $\CM\algC$,
we do have the following. 

\begin{proposition} \label{Thm:GPB1}
If $M\in\CM\algB$ and $N\in \GP\algB$, then $\Ext^1_\algB(M, N)=\Ext^1_\algC(M, N)$. 
In particular, $\GP\algB$ is an extension-closed subcategory of $\CM\algC$.
\end{proposition}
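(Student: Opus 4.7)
The forward inclusion $\Ext^1_\algB(N,M) \subset \Ext^1_\algC(N,M)$ is Lemma~\ref{lem:ExtB-ExtC}, so the substance is in the reverse inclusion. My plan is to show that any extension $\eta\colon 0 \to M \to E \to N \to 0$ in $\CM\algC$ has $E \in \CM\algB$; once this is established, Lemma~\ref{Lem:restriFF} forces the structure maps to be $\algB$-linear, so $\eta$ represents a class in $\Ext^1_\algB(N,M)$.

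To arrange this, take a surjective $\add\algB$-approximation $0 \to K \to B' \to N \to 0$ of $N$ in $\CM\algB$ (possible since $N \in \CM\algB$), and pull $\eta$ back along $B' \to N$ to produce $\eta'\colon 0 \to M \to E' \to B' \to 0$ in $\CM\algC$. If $\eta'$ splits, then $E' \cong M \oplus B' \in \CM\algB$, and since $E$ is the quotient $E'/K$ in $\CM\algC$, Proposition~\ref{Prop:CMFac} gives $E \in \CM\algB$. So the whole problem reduces to showing $\Ext^1_\algC(B',M) = 0$, or equivalently $\Ext^1_\algC(\algB,M) = 0$, since $B' \in \add\algB$.

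For this I invoke the stably 2-Calabi--Yau property of $\CM\algC$ recalled at the start of this section (from \cite{JKS1}), which gives Serre duality $\Ext^1_\algC(X,Y) \cong D\Ext^1_\algC(Y,X)$ for $X,Y \in \CM\algC$. It therefore suffices to show $\Ext^1_\algC(M,\algB) = 0$. Take a partial presentation $0 \to L' \to B'' \to M \to 0$ of $M$ in $\CM\algB$, with $B'' \in \add\algB$ and $L' \in \CM\algB$, and apply both $\Hom_\algC(-,\algB)$ and $\Hom_\algB(-,\algB)$. The rigidity of $\algB$ as a left $\algC$-module gives $\Ext^1_\algC(B'',\algB) = 0$, while $B''$ is a projective $\algB$-module so $\Ext^1_\algB(B'',\algB) = 0$. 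By Lemma~\ref{Lem:restriFF} the Hom-terms over $\algB$ and $\algC$ coincide, so the two resulting cokernel expressions agree and $\Ext^1_\algC(M,\algB) = \Ext^1_\algB(M,\algB)$, which vanishes because $M \in \GP\algB$.

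The ``in particular'' statement is then routine: if $M, N \in \GP\algB$ and $0 \to M \to E \to N \to 0$ is exact in $\CM\algC$, the first part of the proposition places the sequence in $\CM\algB$, so $E \in \CM\algB$; applying $\Hom_\algB(-,\algB)$ and using $\Ext^i_\algB(M,\algB) = \Ext^i_\algB(N,\algB) = 0$ for all $i > 0$ forces $\Ext^i_\algB(E,\algB) = 0$ for all $i > 0$, so $E \in \GP\algB$. The main obstacle throughout is the Serre duality step: without exchanging the roles of $\algB$ and $M$ via the 2-CY property, I see no direct way to exploit the $\GP\algB$-hypothesis, which concerns $\Ext^*_\algB(M,\algB)$ rather than $\Ext^*_\algC(\algB,M)$.
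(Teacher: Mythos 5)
Your proposal is correct, and its skeleton is the same as the paper's: pull the $\CM\algC$-extension back along a surjection $B'\to N$ with $B'\in\add\algB$, show the pullback splits by using the $2$-CY symmetry of $\CM\algC$ to trade $\Ext^1_\algC(B',M)$ for $\Ext^1_\algC(M,B')$, and then get the middle term into $\CM\algB$ via \Cref{Prop:CMFac}. Where you genuinely diverge is in the identification $\Ext^1_\algC(M,\algB)=\Ext^1_\algB(M,\algB)$: the paper first proves the proposition in the special case $M\in\add\BCmod$ (there the pullback splits directly by rigidity of $\algB$ over $\algC$) and then quotes that special case with the roles of the two arguments swapped, whereas you compute both Ext groups as the cokernel of the same map $\Hom(\algB'',\algB)\to\Hom(L',\algB)$ arising from a $\algB$-projective presentation of $M$, using rigidity of $\algB$ on the $\algC$-side, projectivity of $\algB''$ on the $\algB$-side, and \Cref{Lem:restriFF} to match the Hom terms. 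This dimension-shift comparison is a clean substitute for the paper's two-stage bootstrap; note that what you prove is itself the instance ``$N=M$, $M=\algB$'' of the proposition, a reading the paper records just after \Cref{Thm:GPB2}. What the paper's route buys is that the special case $M\in\add\BCmod$ is established along the way and reused; what your route buys is that the splitting criterion is obtained in one computation without first proving a preliminary case of the statement.

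One small repair: \Cref{Prop:CMFac} applies to factors of modules in $\add\BCmod$, not of arbitrary modules in $\CM\algB$, so from $E'\isom M\oplus B'\in\CM\algB$ you should add the one-line observation that $E'$ is a factor of its $\algB$-projective cover $B''\in\add\BCmod$, hence $E=E'/K$ is a factor of $B''$ and the proposition then gives $E\in\CM\algB$ --- exactly the sentence the paper inserts at the corresponding point. With that, and with the duality $\Ext^1_\algC(X,Y)\isom D\Ext^1_\algC(Y,X)$ quoted as you do (vanishing transfers because these spaces are finite dimensional), the argument is complete, and your treatment of the ``in particular'' statement via the long exact sequence for $\Hom_\algB(-,\algB)$ matches the paper's use of extension-closure of $\GP\algB$ in $\CM\algB$.
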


\begin{proof}
Since we are assuming that $\algB$ is rigid as a $\algC$-module, we can apply \Cref{lem:ExtB=ExtC}
in the case $N=B$, to deduce that, for any $M\in\CM\algB$,
\begin{equation}\label{eq:Exts=}
  \Ext^1_\algB(M, \algB) = \Ext^1_\algC(M, \algB) = \Ext^1_\algC(\algB, M),
\end{equation}
where the second equality follows by the 2-CY property of $\CM\algC$.

But, if $N\in \GP\algB$, then \eqref{eq:Exts=} implies that $\Ext^1_\algC(\algB, N)=0$,
so we can apply \Cref{lem:ExtB=ExtC} again
to get the required equality.

In particular, if both $M$ and $N$ are in $\GP\algB$, 
then the fact that $X$ in \eqref{eq:NXMseq} is in $\CM\algB$ means that it is actually in $\GP\algB$, 
using the long exact sequence for $\Ext$.
\end{proof}

\Cref{Thm:GPB1} shows that, in the example in \eqref{eq:extinCnotB}, the module $M_{25}$ is not in $\GP\algB$,
although it is in $\CM\algB$.
The following result provides a simpler description of $\GP\algB$. 

\begin{proposition}\label{Thm:GPB2} 
$\GP\algB = \{ M\in \CM\algB\st \Ext^1_\algB(M, \algB)=0\}$.
\end{proposition}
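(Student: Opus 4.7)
The inclusion $\GP\algB\subset\{M\in\CM\algB : \Ext^1_\algB(M,\algB)=0\}$ is immediate from \eqref{def:GPB}, so it remains to prove the reverse. Given $M\in\CM\algB$ with $\Ext^1_\algB(M,\algB)=0$, the plan is to establish the closure property that $\Ext^1_\algB(\Syz M,\algB)=0$ as well, where $0\to\Syz M\to\procov{M}\to M\to 0$ is a partial presentation with $\procov{M}\in\add\BCmod$. Note that $\Syz M\in\CM\algB$, being a submodule of the $\ring$-free module $\procov{M}$ over the DVR $\ring=\CC[[t]]$, and hence itself $\ring$-free. Iterating the closure property then yields $\Ext^1_\algB(\Syz^j M,\algB)=0$ for every $j\geq 0$, and the standard dimension-shifting
\[
  \Ext^{j+1}_\algB(M,\algB)\isom\Ext^j_\algB(\Syz M,\algB)\isom\cdots\isom\Ext^1_\algB(\Syz^j M,\algB),
\]
valid because $\Ext^i_\algB(\procov{M},\algB)=0$ for $i\geq 1$, completes the proof.

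The core step is thus to show $\Ext^1_\algB(\Syz M,\algB)=0$. Since $\algB\in\GP\algB$ tautologically, \Cref{Thm:GPB1} identifies $\Ext^1_\algB(\Syz M,\algB)$ with $\Ext^1_\algC(\Syz M,\algB)$. Stable 2-Calabi--Yau duality in $\CM\algC$ then gives $\Ext^1_\algC(\Syz M,\algB)\isom D\,\Ext^1_\algC(\algB,\Syz M)$, reducing matters to proving $\Ext^1_\algC(\algB,\Syz M)=0$. Applying the left-exact functor $\Radj=\Hom_\algC(\algB,-)$ to the partial presentation and invoking the rigidity of $\algB$ as a $\algC$-module (which gives $\Ext^1_\algC(\algB,\procov{M})=0$), one obtains an exact sequence
\[
  \Radj\procov{M}\lra\Radj M\lra\Ext^1_\algC(\algB,\Syz M)\lra 0.
\]
Since $\procov{M},M\in\CM\algB$, the counits $\Ffunct\Radj\procov{M}\to\procov{M}$ and $\Ffunct\Radj M\to M$ are isomorphisms (\Cref{Lem:adjprops}\itmref{itm:adjp3}), and consequently, as an underlying map of $\ring$-modules, $\Radj\procov{M}\to\Radj M$ agrees with the original surjection $\procov{M}\to M$, hence is itself surjective. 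This forces $\Ext^1_\algC(\algB,\Syz M)=0$.

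The only real work is the chain of identifications $\Ext^1_\algB=\Ext^1_\algC\isom D\,\Ext^1_\algC(\algB,-)$, which transports the vanishing question to a cokernel that collapses via the adjunction counits; everything else is a routine assembly of \Cref{Thm:GPB1}, the rigidity of $\algB$, and the 2-CY property of $\CM\algC$. No further obstacle is expected.
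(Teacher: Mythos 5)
Your proof is correct and takes essentially the same route as the paper: you reduce $\Ext^1_\algB(\Syz M,\algB)=0$ to $\Ext^1_\algC(\algB,\Syz M)=0$ via \Cref{Thm:GPB1} and the 2-CY symmetry, kill the latter by applying $\Hom_\algC(\algB,-)$ to the partial presentation (rigidity gives $\Ext^1_\algC(\algB,\procov{M})=0$, and the induced map on Hom's is surjective — the paper gets this from $f$ being an $\add\BCmod$-approximation, you get it from the natural identification $\Hom_\algC(\algB,X)\isom X$ on $\CM\algB$, which is the same full-faithfulness fact as in \eqref{eq:FRM=M}), and then conclude by inductive dimension shifting. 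The only quibble is that your citation of \Cref{Lem:adjprops}\itmref{itm:adjp3} should really be to the unit isomorphism supplied by \Cref{Lem:restriFF}, a purely cosmetic point.
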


Note that, the condition here could equally be $\Ext^1_\algC(M, \algB)=0$, by \Cref{Thm:GPB1}, 
or $\Ext^1_\algC(\algB,M)=0$, by the 2-CY property of $\CM\algC$.

\begin{proof} 
Clearly any $M\in \GP\algB$, as defined in \eqref{def:GPB},
satisfies this weaker condition.

For the converse, if $M\in \CM\algB$ satisfies $\Ext^1_\algB(M, \algB)=0$,
then also $\Ext^1_\algC(M, \algB)=0$, by \Cref{Thm:GPB1}, since $\algB\in\GP\algB$.
Consider a partial presentation of $M$ in $\CM\algB$, 
\[
  0 \lra \Syz M \lra \algB' \lraa{f} M \lra 0,
\]
which is also an $\add\BCmod$-approximation in $\CM\algC$,
as restriction is fully faithful.
Applying $\Hom_\algC(\algB, -)$ gives
\[
\Hom_\algC(\algB, \algB') \lraa{f_*} \Hom_\algC(\algB, M)
 \lra \Ext^1_\algC(\algB, \Syz M) \lra \Ext^1_\algC(\algB, \algB'),
\]
where $f_*$ is surjective, because $f$ is an $\add\BCmod$-approximation,
and $\Ext^1_\algC(\algB, \algB')=0$, because $\algB$ is rigid by assumption. 
Hence $\Ext^1_\algC(\algB, \Syz M)=0$, and so,  as $\CM\algC$ is 2-CY,
$\Ext^1_\algC(\Syz M, \algB)=0$.  
Thus, e.g.~by \Cref{Thm:GPB1},
\[
  \Ext^2_\algB(M, \algB)= \Ext^1_\algB(\Syz M, \algB)=0.
\]
In particular, $\Syz M$ also satisfies the right-hand side condition,
so we can proceed inductively to show that 
$\Ext^i_\algB(M, \algB)=\Ext^{i-1}_\algB(\Syz M, \algB)=0$, for all $i>0$,
and so $M\in \GP\algB$, as required.
\end{proof}

\begin{remark}\label{rem:CMQCB}
Combining \Cref{rem:addBfactor} 
and \Cref{Thm:GPB2} yields a characterisation of $\GP\algB$ within $\CM\algC$, using just $\BCmod$. 
More precisely $M\in \CM\algC$ is in $\GP\algB$
if and only if $M$ is a quotient of some $\algB'\in\add\BCmod$ and $\Ext^1_C(B,M)=0$.
\end{remark}

\begin{lemma} \label{lem:GPBBapp} 
For $M\in \CM\algC$, let $K$ be the kernel of an $\add\BCmod$-approximation of $M$ in $\CM\algC$.  
Then $K\in\GP\algB$.
In particular, if $M\in \CM\algB$, then any syzygy $\Syz_\algB M\in\GP\algB$.
\end{lemma}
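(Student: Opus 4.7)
The plan is to apply \Cref{Thm:GPB2}, which reduces $K\in\GP\algB$ to showing $K\in\CM\algB$ together with $\Ext^1_\algB(K,\algB)=0$. First I would reduce to the case where the given $\add\BCmod$-approximation $f\colon B'\to M$ is surjective; this is harmless since replacing $M$ by $\img f$ leaves $K=\ker f$ unchanged. Then $M$ is a factor of $B'\in\add\BCmod$, so $M\in\CM\algB$ by \Cref{Prop:CMFac}. Since $M,B'\in\CM\algB$, \Cref{Lem:restriFF} promotes the $\algC$-map $f$ to a $\algB$-map, and hence $K=\ker f$ is a $\algB$-submodule of $B'$. As an $\ring$-submodule of the free $\ring$-module $B'$ it is itself free, whence $K\in\CM\algB$.

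Next I would apply $\Hom_\algC(\algB,-)$ to the short exact sequence $0\to K\to B'\to M\to 0$ in $\CM\algC$, extracting the four-term piece
\[
  \Hom_\algC(\algB,B') \lra \Hom_\algC(\algB,M) \lra \Ext^1_\algC(\algB,K) \lra \Ext^1_\algC(\algB,B').
\]
The first map is surjective by the approximation property applied to the test object $\algB$ itself, and the last term vanishes because $B'\in\add\BCmod$ and $\algB$ is rigid as a left $\algC$-module. Hence $\Ext^1_\algC(\algB,K)=0$, and the stable $2$-Calabi--Yau property of $\CM\algC$ yields $\Ext^1_\algC(K,\algB)=0$. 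Applying \Cref{Thm:GPB1} (with $\algB\in\GP\algB$) or \Cref{lem:ExtB-ExtC} gives $\Ext^1_\algB(K,\algB)=\Ext^1_\algC(K,\algB)=0$, and \Cref{Thm:GPB2} finishes the main claim.

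For the ``in particular'' part I would start with a partial presentation $0\to\Syz_\algB M\to P\to M\to 0$ in $\CM\algB$ with $P$ a projective $\algB$-module, and verify that, restricted to $\CM\algC$, the map $P\to M$ is an $\add\BCmod$-approximation: any $\algC$-map from $B''\in\add\BCmod$ to $M\in\CM\algB$ is already a $\algB$-map by \Cref{Lem:restriFF}, so it factors through $P$ by $\algB$-projectivity. The main statement then gives $\Syz_\algB M\in\GP\algB$. The main obstacle throughout is ensuring that the $\algC$-module kernel $K$ inherits a compatible $\algB$-structure at all; this is precisely what the initial reduction to a surjective approximation secures, after which the Ext-vanishing follows routinely from the approximation property, rigidity, and $2$-Calabi--Yau duality.
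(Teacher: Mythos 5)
Your proof is correct and follows essentially the same route as the paper's: reduce to the image of the approximation, use \Cref{Prop:CMFac} and full faithfulness to get $K\in\CM\algB$, then the long exact sequence together with rigidity of $\BCmod$, the $2$-CY property and \Cref{Thm:GPB2} to conclude, with the ``in particular'' part handled by observing that a partial presentation in $\CM\algB$ is an $\add\BCmod$-approximation in $\CM\algC$. You merely spell out details the paper delegates to ``as in the proof of \Cref{Thm:GPB2}''.
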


\begin{proof}
Let $M\in \CM\algC$ have an $\add\BCmod$ approximation
\[
 0 \lra K \lra \algB' \lraa{f} M,
\]
which is also an $\add\BCmod$ approximation of $\img f$ in $\CM\algC$. 
By \Cref{Prop:CMFactoo}, we have $\img f\in \CM\algB$, so also $K\in \CM\algB$.

As in the proof of \Cref{Thm:GPB2}, 
we deduce that $\Ext^1_\algC(\algB, K)=0$, because $\BCmod$ is rigid, and so 
$\Ext^1_\algB(K, \algB)=0$, because $\CM\algC$ is 2-CY.
Hence $K\in \GP\algB$, by \Cref{Thm:GPB2}.

For the last part, note again that a partial presentation of $M$ in $\CM\algB$
is also an $\add\BCmod$-approximation in $\CM\algC$, as restriction is fully faithful.
\end{proof}

\begin{lemma} \label{Lem:injdim}
If $M$ is a finitely-generated $\algB$-module, then $\Ext^i_\algB(M,\algB)=0$ for all $i> 2$.
Thus $\injdim {}_\algB\algB\leq 2$.
\end{lemma}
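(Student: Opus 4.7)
The plan is to take a length-two projective $\algB$-presentation of $M$ and show that the second syzygy already lies in $\GP\algB$; then $\Ext^i_\algB(M,\algB)$ vanishes for $i>2$ by dimension shifting and the definition of $\GP\algB$ in \eqref{def:GPB}.

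First, I would set up the resolution. Since $\algB$ is an $\ring$-order in $\Matr{n}{\fieldfract}$, it is finitely generated free over the DVR $\ring$ and hence a (left) Noetherian ring. A finitely generated $\algB$-module $M$ therefore admits a partial projective presentation
\[
  P_1 \lraa{g} P_0 \lraa{f} M \lra 0
\]
with $P_0, P_1$ finitely generated projective $\algB$-modules. Each $P_i$ is a summand of some $\algB^N$, so is $\ring$-free, and in particular $P_i\in\CM\algB$.

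Next, let $K_1=\ker f$ and $K_2=\ker g$. As a submodule of the $\ring$-free module $P_0$, $K_1$ is $\ring$-torsion-free; it is finitely generated over $\algB$ (because $\algB$ is Noetherian) and hence over the DVR $\ring$, so $K_1$ is $\ring$-free. Thus $K_1\in\CM\algB$, and $K_2$ is a syzygy of $K_1$ in $\CM\algB$. By \Cref{lem:GPBBapp}, this gives $K_2\in\GP\algB$. (This is the one step where the structural results of the section are really used; the rest is formal.)

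Finally, I would dimension-shift. Since $P_0$ and $P_1$ are projective, $\Ext^i_\algB(M,\algB)\isom\Ext^{i-2}_\algB(K_2,\algB)$ for $i\geq 2$. But $K_2\in\GP\algB$, so by \eqref{def:GPB} we have $\Ext^j_\algB(K_2,\algB)=0$ for all $j>0$. Taking $i>2$ yields $\Ext^i_\algB(M,\algB)=0$, as required. The bound $\injdim{}_\algB\algB\leq 2$ follows from the standard fact that over a Noetherian ring the injective dimension of a module is determined by Ext against finitely generated modules. The only conceivable obstacle—locating the first syzygy inside $\CM\algB$ so that \Cref{lem:GPBBapp} applies—is handled by the DVR argument above, so no further work is needed.
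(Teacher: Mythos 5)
Your argument is correct and is essentially the paper's own proof: the paper likewise notes that the first syzygy lies in $\CM\algB$, applies \Cref{lem:GPBBapp} to place the second syzygy in $\GP\algB$, and concludes by dimension shifting. The only difference is that you spell out the routine details (why the first syzygy is $\ring$-free and why vanishing of $\Ext$ against finitely generated modules bounds $\injdim {}_\algB\algB$), which the paper leaves implicit.
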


\begin{proof}
Note that the first syzygy $\Syz M\in \CM\algB$, 
so, by \Cref{lem:GPBBapp}, the second syzygy $\Syz^2 M\in\GP\algB$.
Hence, for $i>2$, we have 
$\Ext^i_\algB(M, \algB)=\Ext^{i-2}_\algB(\Syz^2 M, \algB)=0$,
as required.
\end{proof}

Now we can see that $\GP\algB$, as defined in \eqref{def:GPB}, 
is indeed the category of Gorenstein-projective $\algB$-modules,
by temporarily making an \emph{a priori} stronger rigidity assumption.
However, we will see in \Cref{cor:LRrigid} that it is actually 
sufficient to  just assume that $B$ is rigid as a left $C$-module.

\begin{theorem}\label{Thm:GPB3}
Let $\algB$ be an $\ring$-order, as in \eqref{eq:CinB},
which is rigid as both a left and right $\algC$-module.
Then the following hold.
\begin{enumerate}
\item\label{itm:GP3-1}
 $\algB$ is Iwanaga--Gorenstein of injective dimension at most $2$. 
\item\label{itm:GP3-2}
 $\GP\algB$ is the category of Gorenstein projective $\algB$-modules.
\item\label{itm:GP3-3}
 $\GP\algB$ is a Frobenius $2$-CY category.
\end{enumerate}
\end{theorem}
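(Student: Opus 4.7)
The three parts build on the lemmas already established in the section, so the plan is to assemble them.

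For \itmref{itm:GP3-1}, \Cref{Lem:injdim} already provides $\injdim {}_\algB\algB \leq 2$, and its derivation (via \Cref{lem:GPBBapp}, \Cref{Thm:GPB2}, \Cref{Thm:GPB1}) used only the left-rigidity of $\algB$ as a $\algC$-module. Running the same chain of arguments with the opposite algebra $\algB\op\subset \algC\op[t^{-1}]$, and invoking the assumed right-rigidity of $\algB$ as a $\algC$-module, yields the symmetric bound $\injdim \algB_\algB \leq 2$. Together these give that $\algB$ is Iwanaga--Gorenstein of injective dimension at most $2$.

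For \itmref{itm:GP3-2}, since $\algB$ is now known to be Iwanaga--Gorenstein, the classical category of finitely generated Gorenstein projective $\algB$-modules is characterised as $\{M \text{ f.g.}\st \Ext^i_\algB(M,\algB)=0\text{ for all }i>0\}$ (this is a standard consequence of Iwanaga--Gorensteinness, using complete resolutions). The only discrepancy with our \eqref{def:GPB} is the a priori extra requirement $M\in\CM\algB$. But any module satisfying the Ext-vanishing condition admits a complete projective resolution, so it embeds into a finitely generated free $\algB$-module, which is $\ring$-free, and hence is itself $\ring$-torsion free. So such an $M$ lies in $\CM\algB$ automatically, giving the desired identification.

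For \itmref{itm:GP3-3}, \Cref{lem:GPBBapp} shows that $\add\algB\subset \GP\algB$ provides enough projectives, and the objects of $\add\algB$ are injective in $\GP\algB$ by the very Ext-vanishing condition in \eqref{def:GPB}. For enough injectives, the plan is to apply $(-)^* = \Hom_\algB(-,\algB)$: this restricts to a duality $\GP\algB \leftrightarrow \GP\algB\op$ (both well-defined by \itmref{itm:GP3-1}), and given $M\in\GP\algB$ a projective cover of $M^*$ in $\GP\algB\op$ dualises to a short exact sequence $0 \to M \to \algB'' \to M' \to 0$ with $\algB''\in\add\algB$ and $M'\in\GP\algB$. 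That every injective of $\GP\algB$ lies in $\add\algB$ follows by splitting a projective cover. This is precisely the Frobenius structure, and it is the classical conclusion of Buchweitz/Happel for IG rings. Finally, the $2$-CY property of the stable category is inherited from $\CM\algC$: \Cref{Thm:GPB1} gives $\Ext^1_\algB(M,N)=\Ext^1_\algC(M,N)$ for $M,N\in\GP\algB$, so the $2$-CY duality of $\CM\algC$ (established in \cite{JKS1}) restricts to the required functorial isomorphism $\Ext^1_\algB(M,N)\cong D\Ext^1_\algB(N,M)$.

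The main technical obstacle is the enough-injectives half of the Frobenius structure in \itmref{itm:GP3-3}, since nothing earlier in the section is phrased dually; however, by \itmref{itm:GP3-1} the hypotheses are left--right symmetric, so this reduces to the dual of \Cref{lem:GPBBapp} applied to $\algB\op$. The remaining work is largely bookkeeping against the classical theory of Gorenstein projective modules over Iwanaga--Gorenstein algebras.
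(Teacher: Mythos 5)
Your proposal is correct and follows essentially the same route as the paper: part (1) by running the \Cref{lem:GPBBapp}/\Cref{Lem:injdim} argument on both sides using the two rigidity hypotheses, part (2) by invoking the classical fact (Buchweitz) that over an Iwanaga--Gorenstein ring the Ext-vanishing condition forces an embedding into a projective, hence $\ring$-freeness and membership in $\CM\algB$, and part (3) by inheriting the $2$-CY symmetry from $\CM\algC$ via \Cref{Thm:GPB1}. The only difference is cosmetic: where the paper simply cites Buchweitz for the Frobenius property, you sketch the standard duality argument for enough injectives, which is fine.
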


\begin{proof}
(1) By \Cref{Lem:injdim}, we have $\injdim {}_\algB\algB\leq 2$.
Since $B$ is also rigid as a right $C$-module, the arguments in \Cref{lem:GPBBapp}
and \Cref{Lem:injdim} apply on the right and show that also $\injdim \algB_\algB\leq 2$. 
Thus $\algB$ is Iwanaga--Gorenstein. 

(2) Let $M$ be a Gorenstein projective $\algB$-module, that is, $\Ext^i_\algB(M, \algB)=0$ for all $i>0$. 
As $\algB$ is Iwanaga--Gorenstein, the category of Gorenstein projective $\algB$-modules is Frobenius
\cite{Bu} (cf.~\cite[Cor~3.7]{JKS1}).
Hence there is an embedding $M\hookrightarrow \algB'$ with $\algB'\in \add\algB$,
so $M\in \CM \algB$.
Thus $M$ satisfies \eqref{def:GPB}, so $M\in \GP\algB$. 
Thus $\GP\algB$ is after all the category of Gorenstein projective $\algB$-modules
and so it is Frobenius. 

(3) By \Cref{Thm:GPB1} and the fact $\CM\algC$ is 2-CY, 
we have for any $M, N\in \GP\algB$, 
\[ \Ext^1_B(M, N)\isom  \Ext^1_B(N, M).\]
Therefore $\GP \algB$ is also 2-CY. 
\end{proof}
 
\begin{remark}\label{rem:cluster-structure}
It is known (see \cite[Prop 4.2]{JKS2}, based on \cite[Prop 8.2]{GLS08})
that $\CM\algC$ has a cluster structure in the sense of \cite[\S II.1]{BIRS}.
In particular, mutation of cluster tilting objects reflects Fomin-Zelevinsky mutation of quivers.
Alternatively, the Gabriel quiver of any cluster tilting object has no loops or 2-cycles
at mutable vertices.

In fact, it can be shown that $\GP\algB$ is always functorially finite and admits a cluster structure
(see \cite{JRS}). 
\end{remark}

\subsection{Algebras from Grassmann necklaces}\label{subsec:necklace}

\newcommand{\cycint}[2]{[#1, #2]}
\newcommand{\cycCOint}[2]{[#1, #2)}
\newcommand{\cycOCint}[2]{(#1, #2]}

We shall now see that all examples of the algebras~$\algB$, as in \S\ref{subsec:GPB}, 
are constructed from Grassmann necklaces.

Recall, from \cite[Def. 4.1]{OPS} or \cite[Def.~16.1]{Pos}, 
that a sequence $I_1, \dots, I_n$ 
in $\labsubset{n}{k}$ is a \emph{Grassmann necklace} 
if, for all $i\in\labset{n}$,
\begin{equation}\label{eq:neck-cond}
I_i\setminus \{i\}\subset I_{i+1},
\end{equation} 
which, in particular, implies that $I_{i+1}=I_i$ when $i\not\in I_i$.
Here $\labset{n}$ is cyclically ordered and
thus identified with $\ZZ_n$ as an additive group.
Furthermore, intervals such as $\cycint{i}{j}$ 
are interpreted cyclically, that is, $\cycint{i}{j}=\{i, i+1, \dots, j\}$,
even if $j<i$.

For any sequence $\neckI=(I_i)_{i\in\labset{n}}$,
there is a unique $\algC$-module $\algB_\neckI$ with $\algC\subset \algB_\neckI\subset \algC[t^{-1}]$
such that $e_{\ell}\in\algC$ is an $\ring$-module generator of $e_{\ell}(\algB_\neckI)e_{\ell}$ and 
\begin{equation}\label{eq:def-B_I}
 (\algB_\neckI)e_{\ell} \isom M_{I_{\ell+1}}.
\end{equation}
Then $\algB_\neckI$ is a subalgebra of $\algC[t^{-1}]$ precisely when
\begin{equation}\label{eq:alg-cond}
 I_i \leq_i I_j \quad\text{for all $i,j\in\labset{n}$,}
\end{equation}
where $\leq_i$ is lex-order with $\labset{n}$ reordered to start at $i$,
as in \cite[Def.~4.2]{OPS}.

\begin{remark}\label{rem:neck-lab}
In \eqref{eq:neck-cond}, the terms in a necklace are indexed by the set $\labset{n}$
that indexes the arrow pairs in the quiver \eqref{eq:circle-quiv}.
However, the terms are more naturally indexed by the vertices of the quiver 
and (unfortunately) it is the term $I_{\ell+1}$ that is associated to the vertex $\ell$ 
in the labelling of \eqref{eq:circle-quiv}.
This explains the apparent mismatch in \eqref{eq:def-B_I}.
\end{remark}

By \cite[Lemma~4.4]{OPS}, the algebra condition \eqref{eq:alg-cond}
holds when $\neckI=(I_i)$ is a necklace,
so, in that case, we will call $\algB_\neckI$ a \emph{necklace algebra}.
However, \cite[Lemma~4.4]{OPS} also shows that, 
when $\neckI$ is a necklace, the collection $\{I_i\}$
is `weakly separated', as in \cite{LZ} or \cite[Def~3.1]{OPS}, 
or `non-crossing', as in \cite[Def~3]{Sc} or \cite[Def~5.5]{JKS1}.
In other words, by \cite[Prop.~5.6]{JKS1},
$\algB_\neckI$ is rigid as a left $\algC$-module.

Note that the necklace condition \eqref{eq:neck-cond} is stronger than the algebra condition \eqref{eq:alg-cond}.
For example, for $(k,n)=(2,5)$, the sequence $(I_i) = (13,24,34,45,51)$
satisfies \eqref{eq:alg-cond}, but is not weakly separated.
On the other hand, as we will shortly see, for 
$\algB_\neckI$ to be an algebra that is rigid as a left $\algC$-module
amounts to precisely the necklace condition.

\begin{lemma}\label{lem:necklacecond}
Suppose $I, J\in \labsubset{n}{k}$ and $i\neq j\in \labset{n}$. 
If $I$, $J$ are weakly separated, with $I\leq_i J$ and $J\leq_j I$, 
then $J\setminus I\subset \cycCOint{j}{i}$ and $I\setminus J\subset \cycCOint{i}{j}$.
\end{lemma}

Note that the converse is straightforward.

\begin{proof} 
If $I=J$, then $I\setminus J$ and $J\setminus I$ are both empty and so the result is immediate. 
Now assume that $I\neq J$. 
As $I$ and $J$ are weakly separated, there exist (disjoint) cyclic intervals 
$\cycint{a}{b}$ and $\cycint{c}{d}$ such that 
\begin{enumerate}
\item $I\setminus J\subset \cycint{a}{b}$ with $a, b\in I\setminus J$,
\item  $J\setminus I\subset \cycint{c}{d}$ with $c, d\in J\setminus I$.
\end{enumerate}
As $I <_i J$, we must have $d<i\leq a$ in the cyclic order, otherwise either
\begin{itemize}
\item[(a)] $a \in I\setminus J$, but $y<_i a$, for any $y\in J\setminus I$, or
\item[(b)] $d \in J\setminus I$, but $d<_i x$, for any $x\in  I\setminus J$.
\end{itemize}
Either case  contradicts $I<_iJ$. 
Similarly, $b<j\leq c$ in the cyclic order. 
Therefore, $I\setminus J\subset \cycCOint{i}{j}$ and 
$J\setminus I\subset \cycCOint{j}{i}$, as claimed.
\end{proof}

\begin{lemma}\label{prop:necklacecond1}
The following are equivalent.
\begin{enumerate}
\item\label{itm:neck1}
The sequence $(I_i)$ is a necklace.
\item\label{itm:neck2}
For all $i, j$, we have $I_i\setminus I_j\subset \cycCOint{i}{j}$.
\item\label{itm:neck3}
For all $i, j$, we have $I_i\leq_i I_j$ 
and $I_i$ and $I_j$ are weakly separated.
\end{enumerate}
\end{lemma}

\begin{proof}
$\itmref{itm:neck1} \Rightarrow \itmref{itm:neck2}$
is the main content of the proof of \cite[Lem 4.4]{OPS}, 
which is the statement that $\itmref{itm:neck1} \Rightarrow \itmref{itm:neck3}$, while
$\itmref{itm:neck2} \Rightarrow \itmref{itm:neck3}$ 
is a straightforward conclusion, since the assumption also gives
$I_j\setminus I_i\subset \cycCOint{j}{i}$.

$\itmref{itm:neck2} \Rightarrow \itmref{itm:neck1}$ 
is a specialisation, as $I_{i}\setminus I_{i+1}\subset \cycCOint{i}{i+1}=\{i\}$
if and only if $I_{i}\setminus \{i\}\subset I_{i+1}$,
while $\itmref{itm:neck3} \Rightarrow \itmref{itm:neck2}$ is a direct application of \Cref{lem:necklacecond}.
\end{proof}

\begin{proposition}\label{cor:neck-alg}
Let $B$ be a (finitely-generated) $\ring$-order with $\algC\subset \algB\subset \algC[t^{-1}]$.
Then $\algB$ is rigid as a left $\algC$-module if and only if $\algB$ is a necklace algebra. 
\end{proposition}

\begin{proof}
This is essentially the content of $\itmref{itm:neck1} \Leftrightarrow \itmref{itm:neck3}$
in \Cref{prop:necklacecond1}.
\end{proof}

Let $\sigma$ be a decorated permutation \cite[Def.~13.3]{Pos}.
Denote by $\sigma^{-1}$ the inverse decorated permutation,
which has the colours of the fixed points swapped. 
Decorated permutations can be represented graphically by chord diagrams \cite[\S16]{Pos},
with chords $(i, \sigma(i))$, whose orientation corresponds to the colour when $\sigma(i)=i$.
There is a bijection between decorated permutations
and necklaces \cite[Lem.~16,2]{Pos},
so we can talk about `the necklace defined by $\sigma$'.

Recall from \cite[\S17]{Pos} that an ordered pair of chords $(i, \sigma(i))$, $(j, \sigma(j))$ with $i\neq j$
is a \emph{crossing} if $j\in \cycint{\sigma(i)}{i}$ and $\sigma(j)\in \cycint{i}{\sigma(i)}$
(see \Cref{fig:crossing}).  
The crossing is said to be \emph{simple} if there is no other chord $(l, \sigma(l))$ 
such that $l\in \cycint{j}{i}$ and $\sigma(l)\in \cycint{\sigma(j)}{\sigma(i)}$. 
In this case, the pair of chords $(i, \sigma(j))$ and $(j, \sigma(i))$ 
is called a \emph{simple alignment}.
Note that, if $\sigma(i)=j$ and $\sigma(j)=i$,
then this can be interpreted as a crossing in two ways.

\begin{figure}[h]
\begin{tikzpicture} [scale=1, 
 chord/.style={thick, teal, -latex},
 bdry/.style={thick, blue}]
\pgfmathsetmacro{\cRad}{1}
\pgfmathsetmacro{\shRad}{0.45*\cRad}
\pgfmathsetmacro{\labRad}{1.2}
\pgfmathsetmacro{\BlabRad}{1.4}
\pgfmathsetmacro{\Vstep}{-0.5}
\pgfmathsetmacro{\Hstep}{2.3}
\pgfmathsetmacro{\Hextra}{0.7}
\begin{scope} [shift={(0,0)}]
\draw [chord] (-135:\cRad) to (45:\cRad);
\draw [chord] (-45:\cRad) to (135:\cRad);
\draw [bdry] (0,0) circle (\cRad);
\draw (-45:\labRad) node {\small $j$};
\draw (-135:\labRad) node {\small $i$};
\draw (135:\BlabRad) node {\small $\sigma(j)$};
\draw (45:\BlabRad) node {\small $\sigma(i)$};
\end{scope}
\begin{scope} [shift={(1*\Hstep,\Vstep)}]
\draw [chord] (-135:\cRad) to [out=45,in=-45] (135:\cRad);
\draw [chord] (-45:\cRad) to [out=135,in=-135] (45:\cRad);
\draw [bdry] (0,0) circle (\cRad);
\end{scope}
\begin{scope} [shift={(2*\Hstep+\Hextra,0)}]
\draw [chord] (-135:\cRad) to [out=45,in=135] (0:\cRad);
\draw [chord] (0:\cRad) to [out=-135,in=-45] (135:\cRad);
\draw [bdry] (0,0) circle (\cRad);
\end{scope}
\begin{scope} [shift={(3*\Hstep+\Hextra,\Vstep)}]
\draw [chord] (-135:\cRad) to [out=45,in=-45] (135:\cRad);
\draw [chord] (0:\cRad) to [out=-135,in=-90] (0:\shRad) to [out=90,in=135] (0:\cRad);
\draw [bdry] (0,0) circle (\cRad);
\end{scope}
\begin{scope} [shift={(4*\Hstep+2*\Hextra,0)}]
\draw [chord] (180:\cRad) to [out=-45,in=135] (0:\cRad);
\draw [chord] (0:\cRad) to [out=-135,in=45] (180:\cRad);
\draw [bdry] (0,0) circle (\cRad);
\end{scope}
\begin{scope} [shift={(5*\Hstep+2*\Hextra,\Vstep)}]
\draw [chord] (180:\cRad) to [out=-45,in=-90] (180:\shRad) to [out=90,in=45] (180:\cRad);
\draw [chord] (0:\cRad) to [out=-135,in=-90] (0:\shRad) to [out=90,in=135] (0:\cRad);
\draw [bdry] (0,0) circle (\cRad);
\end{scope}
\end{tikzpicture}
\caption{Crossings and their corresponding alignments}
\label{fig:crossing}
\end{figure}

Recall that, for the algebra $C=C(k, n)$ as in \eqref{eq:algC}, 
a rank one (left) $C$-module $M_I$ is indexed by the $k$-set $I$ consisting 
of the indices of the $x$-arrows that act as isomorphisms.
From the definition, we immediately see that 
\[
C(k, n)\op\cong C(n-k, n),
\]
where the isomorphism fixes the idempotents $e_i$ and swaps the $x$- and $y$-arrows. 
Note that a right $C$-module is a left $C\op$-module. 
So for right $C(k,n)$-modules we can use the indexing convention of left
$C(n-k,n)$-modules.  
That is, we denote a rank one right $C$-module by $M^J$, 
where $J$ is the set of the indices of $y$-arrows that act as isomorphisms. 
For example, we have 
\begin{equation} \label{eq:projs}
Ce_i=M_{\cycint{i+1}{i+k}} \quadand e_iC=M^{\cycint{i+1}{i+n-k}}.
\end{equation}

\begin{example}\label{ex:Corder}
Recall from \cite[\S3]{JKS1} that $C[t^{-1}]\cong M_{n}(\ring[t^{-1}])$,
the $n\times n$ matrix algebra over $\ring[t^{-1}]$, in such a way that 
the idempotent $e_{i-1}$ maps to the elementary matrix at $(i,i)$.
However, this identification requires some choices
(effectively the choice of $x$ and $y$).
For example, for $(k,n)=(2,5)$, 
\[
C=\begin{pmatrix}
R& tR  & t^2R & t^2R & tR \\
R& R  &  tR    & tR & tR \\
R& R  & R     & R &  R\\
R& tR & tR    & R &  R \\
R& tR & t^2R & tR  & R
\end{pmatrix}
\text{with }
x=\begin{pmatrix}
0&0 &0 &0 &t\\
1&0 &0 &0 &0\\
0&1 &0 &0 &0\\
0&0 &t &0 &0\\
0&0 &0 &t &0\\
\end{pmatrix}
\text{and }
y=\begin{pmatrix}
0&t &0 &0 &0\\
0&0 &t &0 &0\\
0&0 &0 &1 &0\\
0&0 &0 &0 &1\\
1&0 &0 &0 &0\\
\end{pmatrix}.
\]
By considering the left action of $x$ and the right action of $y$ on $C$, 
we can check directly that the index sets of the column modules 
are $12, 23, 34, 45, 51$ and the index sets of the row modules 
are $123, 234, 345, 451, 512$, confirming \eqref{eq:projs}.
For example, to see that the third column module $Ce_2=M_{34}$,
observe that multiplication by $x$ is an isomorphism
from $e_2Ce_2=R$ to $e_3Ce_2=tR$ 
and from $e_3Ce_2=tR$ to $e_4Ce_2=t^2R$.
\end{example}

Let $B$ be an $\ring$-order with $\algC\subset \algB\subset \algC[t^{-1}]$.  
If $C_i$ is the index $k$-set of the left $C$-module $Be_{i-1}$,
then $\BCmod=B_\neckI$ with $\neckI=\{C_1, \dots, C_n\}$. 
Similarly, if $R_i$ is the index $(n-k)$-set of the right $C$-module $e_{i-1}B$
and $\neckJ=\{R_1, \dots, R_n\}$,
then we will write $B_C=B^\neckJ$.
When $\neckJ$ is a necklace, we call $B$ a \emph{right necklace algebra}.

\begin{proposition}\label{prop:LRrigid}
Let $B$ be a (left) necklace algebra with $\BCmod=B_\neckI$, 
for $\neckI$ defined by a decorated permutation $\sigma$. 
Then $B$ is also a right necklace algebra with $B_C=B^\neckJ$, 
for $\neckJ$ defined by $\sigma^{-1}$. 
\end{proposition}

\begin{proof}
If $B=C$, then, by \eqref{eq:projs},
$\neckI$ is the necklace defined by $\pi_{k, n}: i\mapsto i+k$
and $\neckJ$ is the necklace defined by $\pi_{n-k, n}=\pi^{-1}_{k, n}$,
so the result holds in this case. 
For ease of exposition, we choose an identification $C[t^{-1}]\cong M_{n}(\ring[t^{-1}])$, as in \Cref{ex:Corder},
so that $C$ and $B$ are explicit matrix algebras.
In particular, the natural indexing of rows and columns of a matrix by $1,\ldots,n$ 
matches the traditional labelling of the terms in a necklace,
avoiding the issue from \Cref{rem:neck-lab}.

By \cite[Thm 17.8]{Pos}, 
any necklace of type $(k, n)$ can be obtained from 
the necklace defined by $\pi_{k,n}$ by a sequence of operations replacing a 
simple crossing in a decorated permutation $\sigma$ by the corresponding simple alignment,
with suitable orientations of loops when they appear (see \Cref{fig:crossing}).
So, to complete the proof, it suffices to show that statement in the proposition remains true after 
each such replacement. 

Suppose the statement holds for some $B$,
that is, $\BCmod=B_\neckI$ for $\neckI=\{C_1, \dots, C_n\}$, the necklace given by a decorated permutation $\sigma$, 
and $B_C=B^\neckJ$ for $\neckJ=\{R_1, \dots, R_n\}$, the necklace given by $\sigma^{-1}$.
Suppose that the ordered pair of chords $(i, \sigma(i))$, $(j, \sigma(j))$ is any crossing,
although it is sufficient for the induction to only consider simple ones.  
From the properties of necklaces \cite[\S16]{Pos},
we have 
\begin{enumerate}
\item 
  $\sigma(j)\in C_s$, for $s\in \cycOCint{j}{\sigma(j)}$,
  and $\sigma(i)\not\in C_s$, for $s\in \cycOCint{\sigma(i)}{i}$,
\item 
 $j\in R_s$, for $s\in \cycOCint{\sigma(j)}{j}$,
 and $i\not\in R_s$, for $s\in \cycOCint{i}{\sigma(i)}$.
\end{enumerate}
Let $X$ be the submatrix with rows $\cycOCint{\sigma(j)}{\sigma(i)}$ and columns $\cycOCint{j}{i}$. 
Let $B'$ be obtained from $B$ by replacing $X$ by $t^{-1}X$
and let $R_s'$ and $C_s'$ be the index sets of the row and column modules of $B'$. 
Then 
\begin{enumerate}
\item 
$C'_s=(C_s\setminus\{\sigma(j)\})\cup \{\sigma(i)\}$, for $s\in \cycOCint{j}{i}$,
\item
$R'_s=(R_s\setminus\{j\})\cup\{i\}$, for $s\in \cycOCint{\sigma(j)}{\sigma(i)}$,
\end{enumerate}
while the other column/row modules remain unchanged. 
Thus $\BCmod'$ comes from the necklace defined by the (decorated) permutation $\sigma'$ obtained from $\sigma$ 
with the images of $i$ and $j$ swapped, that is, 
with the crossing replaced by the corresponding alignment,
and $B'_C$ comes from the necklace defined by $(\sigma')^{-1}$. 
This completes the proof.
\end{proof}

\begin{remark}\label{rem:indexing}
Despite appearances, the construction in the proof of \Cref{prop:LRrigid}
has full dihedral symmetry, not just cyclic symmetry,
when the terms in the necklace are indexed by the quiver vertices, as in \Cref{rem:neck-lab}.
More precisely, $B'$ is obtained from $B\subset C[t^{-1}]$ by replacing 
$e_\ell B e_m$ by $t^{-1}(e_\ell B e_m)$
where $\ell$ is any vertex between the arrow pairs indexed by $\sigma(j)$ and $\sigma(i)$, 
in the quiver \eqref{eq:circle-quiv}
and $m$ is any vertex between the arrow pairs indexed by $j$ and $i$.

Note that the construction also gives an inductive way to write $e_\ell \algB e_m=t^{-b_{\ell m}}(e_\ell \algC e_m)$,
for some $b_{\ell m}\geq 0$, with $b_{\ell m}=0$ when $\ell=m$.
It would be interesting to find precisely what constraints the $b_{\ell m}$ satisfy 
and explicitly how they are determined by $\sigma$.
In particular, \Cref{prop:LRrigid} should then be transparent.
\end{remark}

\begin{corollary}\label{cor:LRrigid}
Let $B$ be an $\ring$-order with $C\subseteq B\subseteq C[t^{-1}]$.
Then $B$ is rigid as a left $C$-module
if and only if it is rigid as a right $C$-module.
\end{corollary}

\begin{proof}
By \Cref{cor:neck-alg}, if $B$ is rigid as a left $C$-module, then $B$ is a (left) necklace algebra. 
By \Cref{prop:LRrigid}, $B$ is also a right necklace algebra and hence rigid as a right $C$-module,
again by \Cref{cor:neck-alg}.
The converse holds similarly.
\end{proof}

As a consequence,  the assumption of \Cref{Thm:GPB3}
can be weakened to $B$ being rigid on just one side.

\subsection{Syzygies, cosyzygies and stable Hom}\label{subsec:syz-cosyz}

For any $M\in \CM\algB$, we will denote by $\sHom_\algB(T, M)$ 
the space of stable homomorphisms, that is,
the space of homomorphisms modulo those that factor through $\add\algB$.
As earlier, the syzygy $\Syz M$ is the kernel in a partial presentation of $M$,
that is, a short exact sequence
\[
 \ShExSeq{\Syz M}{\procov{M}}{M},
\]
where $\procov{M}\in \add \algB$ is projective.
This presentation may not be minimal, so $\Syz M$ is only defined up to projective summands.

On the other hand, for $M\in\GP\algB$, we denote by $\coSyz M\in\GP\algB$,
the cokernel in a `partial copresentation' of $M$, that is, a short exact sequence
\[
 \ShExSeq{M}{\procov{\coSyz M}}{\coSyz M},
\]
where $\procov{\coSyz M}$ is projective, and hence also injective, in $\GP\algB$.
Thus, when $M\in\GP\algB$, we can take this as a partial presentation of $\coSyz M$,
so that $\Syz\coSyz M=M$.
Note that $\coSyz M$ may not be the coszygy of $M$ in $\CM\algB$, because $\procov{\coSyz M}$
may not be injective in $\CM\algB$.

\begin{lemma} \label{Lem:ExtSHom}
Let $X\in \GP\algB$ and let $M\in \CM\algB$.
\begin{enumerate}
\item\label{itm:ESH1}
There is a commutative diagram
\begin{equation}\label{eq:syzcosyz}
\begin{tikzpicture} [xscale=2.0, yscale=1.5, baseline=(bb.base)]
\pgfmathsetmacro{\eps}{0.2}
\coordinate (bb) at (0,0.5);
\draw (-1+\eps,1) node (b0) {$0$};
\draw (0,1) node (b1) {$\Syz M$};
\draw (1,1) node (b2) {$\algB'$};
\draw (2,1) node (b3) {$ \coSyz \Syz M$};
\draw (3-\eps,1) node (b4) {$0$};
\draw (-1+\eps,0) node (c0) {$0$};
\draw (0,0) node (c1) {$\Syz M$};
\draw (1,0) node (c2) {$\procov{M}$};
\draw (2,0) node (c3) {$M$};
\draw (3-\eps,0) node (c4) {$0$};
\foreach \t/\h in {b0/b1, b1/b2, b2/b3, b3/b4, c0/c1, c1/c2, c2/c3, c3/c4, b2/c2} 
   \draw[cdarr] (\t) to (\h);
\draw[equals] (b1) to (c1);
\draw[cdarr] (b3) to node [right]{ $f$} (c3);
\end{tikzpicture}
\end{equation}
of partial presentations such that $\coSyz\Syz M \in \GP\algB$.
\item\label{itm:ESH2}
There are isomorphisms
\begin{align*}
  \Ext^i_\algB(X,f) & \colon \Ext^i_\algB(X,\coSyz \Syz M)\to \Ext^i_\algB(X,M),\quad\text{for $i>0$, and} \\
  \sHom_\algB(X,f) &\colon \sHom_\algB(X,\coSyz \Syz M)\to \sHom_\algB(X,M).
\end{align*}
\item\label{itm:ESH3}
Moreover, $\coSyz$ induces an isomorphism
$\sEnd_\algB X\isom \sEnd_\algB (\coSyz X)$, such that 
\[
  \Ext^1_\algB(X, \Syz M) \isom \sHom_\algB(X, M)\isom \Ext^1_\algB(\coSyz X, M),
\]
as $\sEnd_\algB X$-modules. 
Thus $\sHom_\algB(X, M)$ is finite dimensional.
\end{enumerate}
\end{lemma}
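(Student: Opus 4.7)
The plan is to proceed part by part, using the Frobenius $2$-CY structure of $\GP\algB$ from \Cref{Thm:GPB3} together with standard dimension-shifting.

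For part~\itmref{itm:ESH1}, I would begin from a partial presentation $\ShExSeq{\Syz M}{\procov{M}}{M}$ of $M$. By \Cref{lem:GPBBapp}, $\Syz M\in \GP\algB$, so it admits a partial copresentation $\ShExSeq{\Syz M}{\algB'}{\coSyz\Syz M}$ with $\coSyz\Syz M\in \GP\algB$ and $\algB'\in\add \algB$. Since $\procov{M}\in\add \algB$ is injective in the Frobenius category $\GP\algB$, the identity on $\Syz M$ extends to a map $\algB'\to \procov{M}$, which then induces $f\colon \coSyz\Syz M\to M$ making \eqref{eq:syzcosyz} commute.

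For part~\itmref{itm:ESH2}, I would apply $\Hom_\algB(X,-)$ to both rows of \eqref{eq:syzcosyz} and compare long exact sequences. Since $X\in\GP\algB$ and $\procov{M},\algB'\in\add \algB$, we have $\Ext^i(X,\procov{M})=\Ext^i(X,\algB')=0$ for $i\geq 1$, so the connecting maps yield natural isomorphisms $\Ext^i(X,M)\isom \Ext^{i+1}(X,\Syz M)\isom \Ext^i(X,\coSyz\Syz M)$, and naturality forces the composite to agree with $\Ext^i(X,f)$. For stable Hom, the standard Frobenius dimension-shift identifies $\sHom(X,M)$ with $\Ext^1(X,\Syz M)$: using projectivity in $\add \algB$ and surjectivity of $\procov{M}\to M$, any map $X\to M$ that factors through some object of $\add \algB$ also factors through $\procov{M}$, so the image of $\Hom(X,\procov{M})\to\Hom(X,M)$ is exactly the submodule of $\add \algB$-factoring maps. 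The same argument applied to the top row gives $\sHom(X,\coSyz\Syz M)\isom \Ext^1(X,\Syz M)$ compatibly with~$f$.

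For part~\itmref{itm:ESH3}, $\coSyz$ is the shift functor in the triangulated stable category of the Frobenius category $\GP\algB$, hence an autoequivalence giving $\sEnd X\isom \sEnd\coSyz X$. The first isomorphism $\Ext^1(X,\Syz M)\isom \sHom(X,M)$ is from part~\itmref{itm:ESH2}. For the second, I would apply $\Hom_\algB(-,M)$ to a partial copresentation $\ShExSeq{X}{\procov{\coSyz X}}{\coSyz X}$; since $\procov{\coSyz X}\in\add \algB$ is projective, $\Ext^1(\procov{\coSyz X},M)=0$, and the same factorisation argument identifies the image of $\Hom(\procov{\coSyz X},M)\to\Hom(X,M)$ with the $\add \algB$-factoring maps, yielding $\sHom(X,M)\isom \Ext^1(\coSyz X,M)$. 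All three identifications arise from functorial constructions, hence are isomorphisms of $\sEnd X$-modules. Finite-dimensionality of $\sHom(X,M)$ then follows from \Cref{lem:ExtB-ExtC}, since $\Ext^1_\algC(X,\Syz M)$ is finite-dimensional. The main subtle point is the naturality check that the connecting isomorphisms in part~\itmref{itm:ESH2} compose to $\Ext^i(X,f)$ and are compatible with composition in $\sEnd X$, but this reduces to a routine diagram chase using the commutativity of \eqref{eq:syzcosyz}.
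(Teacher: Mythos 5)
Your argument is correct, and for part~\itmref{itm:ESH1}, part~\itmref{itm:ESH2} and the first isomorphism in part~\itmref{itm:ESH3} it is essentially the paper's proof: construct the copresentation of $\Syz M$ in the Frobenius category $\GP\algB$, extend along it using injectivity of $\add\algB$ to get $f$, then dimension-shift with $\Hom(X,-)$ using $\Ext^{i}(X,\algB)=0$, and identify $\sHom(X,M)\isom\Ext^1(X,\Syz M)$ by the observation that any map $X\to M$ factoring through $\add\algB$ lifts through the epimorphism $\procov{M}\to M$. Where you genuinely diverge is the second isomorphism in part~\itmref{itm:ESH3}: the paper applies $\Hom(\coSyz X,-)$ to the diagram \eqref{eq:syzcosyz} to get $\Ext^1(\coSyz X,M)\isom\Ext^1(\coSyz X,\coSyz\Syz M)$ and then transports through the stable equivalence $\coSyz$ back to $\Ext^1(X,\Syz M)$, whereas you dimension-shift directly from the copresentation $0\to X\to\procov{\coSyz X}\to\coSyz X\to 0$ by applying $\Hom(-,M)$. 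That route is valid and arguably more direct, but note that your phrase ``the same factorisation argument'' hides a genuinely dual ingredient: to show that a map $X\to M$ factoring through some $P\in\add\algB$ lies in the image of $\Hom(\procov{\coSyz X},M)\to\Hom(X,M)$, you must extend $X\to P$ along the inflation $X\to\procov{\coSyz X}$, which uses injectivity of $\add\algB$ in $\GP\algB$ (available by \Cref{Thm:GPB3}), not projectivity and lifting through an epimorphism as in the first identification. With that point made explicit, your proof is complete; the remaining naturality and $\sEnd X$-equivariance checks are routine in both treatments, and finite-dimensionality via \Cref{lem:ExtB-ExtC} matches the paper.
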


\begin{proof}
(1) The lower row is the partial presentation of $M$, 
and so $\Syz M\in \GP\algB$, by \Cref{lem:GPBBapp}. 
The extension on top can be constructed in $\GP\algB$, since it is a Frobenius category. 
Then there is a map $B'\to \procov{M}$ making the left square commute, 
because $B$ is injective in $\GP\algB$. 
So we obtain the map $f$, and the diagram exists. 

(2) That $\Ext^i_\algB(X,f)$ are isomorphisms follows using the diagram of
long exact sequences, obtained by applying $\Hom_\algB(X,-)$, and using that
$\Ext^i_\algB(X,B)=0$ for $i>0$, since $X\in \GP\algB$.
It remains to prove that $\sHom_\algB(X,f)$ is an isomorphism. 
If a map $X\to M$ factors through $\add B$, then it factors through 
the surjection $g\colon \procov{M}\to M$, since $B$ is projective in $\CM \algB$. 
So the connecting homomorphism induces a natural isomorphism 
\begin{equation} \label{eq:helpisom}
\sHom_\algB(X,M)\isom \Ext^1_\algB(X,\Omega M).
\end{equation}
Consequently, $\sHom_\algB(X,f)$ is an isomorphism, as required. 

(3) The first isomorphism is \eqref{eq:helpisom}.
Applying $\Hom_\algB(\coSyz X,-)$, 
we obtain a diagram of long exact sequences, and since $\Ext^i_\algB(\coSyz X,B)=0$ for all 
$i\geq 1$, we see that 
\[
  \Ext^1_\algB(\coSyz X, M)\isom \Ext^1_\algB(\coSyz X,\coSyz\Syz M).
\]
Now $\coSyz$ is an equivalence on the stable category, and so 
\[
  \Ext^1_\algB(\coSyz X, \coSyz\Syz M)=\Ext^1_\algB(X, \Syz M)=\sHom_\algB(X,M),
\]
by the first isomorphism.
\end{proof}

\begin{remark}
The lemma is standard and straightforward when $M$ is in 
the Frobenius category $\GP \algB$, but we emphasise that the lemma is proved for 
$M$ in $\CM B$. 
\end{remark}

\begin{corollary}\label{cor:syz-inj-proj}
If $J$ is injective in $\CM\algB$, then $\Syz J$ is projective.
\end{corollary}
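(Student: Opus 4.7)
The approach is to show that $\Syz J$ is injective in the Frobenius category $\GP\algB$; since projectives and injectives coincide there (by \Cref{Thm:GPB3}), this forces $\Syz J$ to be projective, as required. We already know $\Syz J\in\GP\algB$ from \Cref{lem:GPBBapp}, so the statement makes sense and what we need is $\Ext^1_\algB(X,\Syz J)=0$ for every $X\in\GP\algB$.

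To achieve this vanishing, the key step is to translate the Ext-group into a stable-Hom group via \Cref{Lem:ExtSHom}\itmref{itm:ESH3}, which with $M=J$ yields an isomorphism
\[
  \Ext^1_\algB(X,\Syz J)\isom \sHom_\algB(X,J).
\]
It then suffices to show that every morphism $f\colon X\to J$ with $X\in\GP\algB$ factors through some object of $\add\algB$. Since $\GP\algB$ is Frobenius, $X$ admits an embedding into an object $\algB^X\in\add\algB$, with cokernel $\coSyz X\in\GP\algB\subset\CM\algB$; this embedding is therefore an admissible mono in $\CM\algB$, and the injectivity of $J$ in $\CM\algB$ extends $f$ along it to a map $\algB^X\to J$, delivering the desired factorisation.

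The main point that has to be got right, though it is not really an obstacle, is the placement of $J$: injectivity of $J$ in $\CM\algB$ vanishes extensions with $J$ in the \emph{second} argument of $\Ext^1_\algB$, whereas the splitting criterion for $\ShExSeq{\Syz J}{\procov{J}}{J}$ would require vanishing with $\Syz J$ in the second argument. The stable-Hom isomorphism from \Cref{Lem:ExtSHom} is precisely the device that swaps these two slots via the syzygy/cosyzygy shift, and the Frobenius structure on $\GP\algB$ supplies the embedding into $\add\algB$ on which the injectivity hypothesis can finally be used. One has to resist the more naive temptation to apply injectivity of $J$ directly to the presentation of $J$: injectivity provides extensions of maps \emph{into} $J$, not sections of surjections \emph{onto} $J$.
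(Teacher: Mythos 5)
Your proof is correct and is essentially the paper's own argument: both reduce the statement to showing $\Syz J$ is injective in the Frobenius category $\GP\algB$ and both invoke \Cref{Lem:ExtSHom}\itmref{itm:ESH3}. The only cosmetic difference is that you stop at $\sHom_\algB(X,J)$ and kill it by extending maps along the embedding $X\hookrightarrow \algB^X$, whereas the paper follows the same isomorphism chain one step further to $\Ext^1(\coSyz X, J)=0$ — the same use of injectivity of $J$ in $\CM\algB$.
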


\begin{proof}
We know that $\Syz J\in\GP\algB$, by \Cref{lem:GPBBapp}, and that $\GP\algB$
is Frobenius, by \Cref{Thm:GPB3}(\ref{itm:GP3-3}).
Hence it is sufficient to show that $\Syz J$ is injective in $\GP\algB$.
On the other hand, by \Cref{Lem:ExtSHom}(\ref{itm:ESH3}), for any $X\in\GP\algB$,
\[
 \Ext^1_\algB(X, \Syz J) \isom \Ext^1_\algB(\coSyz X, J)=0,
\]
as $J$ is injective in $\CM\algB$, and so $\Syz J$ is injective in $\GP\algB$, as required.
\end{proof}

\section{Endomorphism algebras of cluster tilting objects in $\GP\algB$} \label{Sec:3}

In this and following sections, until stated otherwise, 
we work in the generality of \S\ref{subsec:GPB}, that is, 
the $\ring$-order $\algB$ 
is rigid as a left $\algC$-module.
Hence the category $\GP\algB$ is Frobenius 2-CY and
admits cluster tilting objects as part of a cluster structure.
We are also interested in the larger category $\CM\algB$.

Recall, from \Cref{Lem:restriFF}, that $\CM\algB$ is a full subcategory of $\CM\algC$, 
so the functors $\Hom_\algB(-, -)$ and $\Hom_\algC(-, -)$ agree on $\CM\algB$. 
We will often write them both as just $\Hom(-, -)$, 
although we may sometimes keep the subscripts for emphasis. We might also 
drop the subscripts on $\Ext^1(-, -)$ when \Cref{Thm:GPB1} applies.

When $T$ is a cluster tilting object in $\GP\algB$,
a significant role is played by the endomorphism algebra
 \[
  \algA=\End (T)\op
\]
and the category $\CM\algA$ of Cohen--Macaulay $\algA$-modules, 
that is, as before, $\algA$-modules that are free over $\ring$.

Note that all indecomposable projective $\algB$-modules
must be (isomorphic to) summands of any cluster tilting object.
Thus, without loss of generality, we can (and will) assume that 
$\algB$ is a summand of $T$ 
and that only the remaining mutable summands are multiplicity-free.
This differs from the usual assumption that $T$ itself is basic,
but has no effect on the Morita equivalence class of $\algA$, 
that is, on the category $\CM\algA$.

By this assumption, $\algA$ has a special idempotent 
\[
  e\colon T\to T
\] 
given simply by projection onto $\algB$.
Furthermore we have canonical identifications
\begin{equation}\label{eq:ABT}
e\algA e  = \End (B)\op= \algB 
\quadand
e\algA = \Hom(B,T) = T.
\end{equation}

\begin{example}\label{ex:plabic}
A special case of the above is when $\algB$ is the boundary algebra of
a dimer model on a disc (as in \cite{CKP, Pr} or \cite{BKM}, when $B=C$).
In other words, $\algB$ is the necklace algebra associated to a 
\emph{connected} Postnikov diagram or plabic graph.
Here the connectedness implies that $\algB$ is basic.

In this case, the \emph{dimer algebra} $\algA$ is constructed first 
and $\algB$ and $T$ are defined by \eqref{eq:ABT}.
The fact that $\algB$ is the corresponding necklace algebra
follows from \cite[Prop.~8.2]{CKP}.
The fact that $T$ is a cluster tilting object in $\GP\algB$, with $\End(T)\op=A$,
is proved in \cite[Thm.~4.5]{Pr}.
\end{example}

\begin{remark}\label{rem:gab-quiv}
As part of the categorical cluster structure, one associates to $T$ 
a quiver $Q=(Q_0,Q_1)$, whose vertices $i$ (in $Q_0$) index 
a complete set of (non-isomorphic) indecomposable summands $T_i$ of $T$
and whose arrows $i\to j$ (in $Q_1$) correspond to irreducible maps $T_j\to T_i$ in $\add T$.
In this paper, we will indicate this relationship by writing
\begin{equation}\label{eq:T-moreq}
    T\moreq  \textstyle\bigoplus_{i\in Q_0} T_i.
\end{equation}
When $T$ mutates to $\mu_k(T)=(T/T_k)\oplus T_k^*$, 
the quiver~$Q$ undergoes Fomin--Zelevinsky mutation at $k\in Q_0$.

Alternatively, $Q$ is the Gabriel quiver of $\algA$.
Thus $\algA$ contains an idempotent $e_i$, for each $i\in Q_0$,
given by projection $T\to T$ onto the summand $T_i$
and $Ae_i=\Hom(T,T_i)$ is the corresponding indecomposable projective $\algA$-module,
whose top is the simple $\algA$-module $S_i$. 

Note that, if $T$ (and thus $\algA$) is not basic, then 
$\sum_{i\in Q_0} e_i\neq 1$ and $\algA$ is not a quotient of the path algebra of $Q$.
However, any $A$-module $M$ still determines a representation of $Q$,
for which the space at $i\in Q_0$ is $e_i M=\Hom_\algA(Ae_i,M)$.
Furthermore, the Grothendieck group $\Grot(\fd\algA)$ of finite-dimensional $\algA$-modules
is still canonically identified with $\ZZ^{Q_0}$ 
via the basis $\{[S_i]\st i\in Q_0\}$ or, equivalently, by the map 
\begin{equation}\label{eq:dimvec}
  [M]\mapsto \dimv[M]:=(\dim e_iM)_{i\in Q_0}.
\end{equation}
We may still refer to $\dimv[M]$ as the \emph{dimension vector} of $M$.
\end{remark}

Following the terminology of \cite{BKM} and \cite{CKP}, but in our more general context,
the vertices of~$Q$ indexing summands of~$\algB$ will be called \emph{boundary} vertices,
while those indexing the other summands will be called \emph{interior} vertices.
It is precisely the interior vertices that are `mutable' in the sense that $T_k^*$ and
thus $\mu_k$ are defined, while the boundary vertices are `frozen' and $\mu_k$ is not defined for them.

Let $\efunct$ be the restriction functor induced by $e$ 
\[
  \efunct\colon \mmod\algA\to \mmod\algB
  \colon X\mapsto e X.
\]
As $\efunct$ can be interpreted as $e\algA \otimes_\algA  -$,
it has right adjoint
\[
  \Radj\colon \mmod\algB\to\mmod\algA
  \colon M\mapsto \Hom_\algB(T , M),
\]
which restricts to a standard equivalence
\begin{equation}\label{eq:add-proj}
  \Radj= \Hom_\algB(T,-)\colon \add T \to \proj \algA.
\end{equation}
As $\efunct$ can be also be interpreted as $\Hom_\algA (\algA e, -)$, it has a left adjoint
\[
  \Ladj\colon \mmod\algB\to\mmod\algA
  \colon M \mapsto \algA e\otimes_\algB M.
\]
Note that $\efunct$ and $\Radj$ restrict to $\CM$-modules,
that is, to functors  $\efunct\colon\CM\algA\to\CM\algB$ and $\Radj\colon\CM\algB\to\CM\algA$.
However, $\Ladj$ needs to be modified, by dividing out by torsion, to restrict similarly (cf. \cite[\S5]{CKP}).

\subsection{The right adjoint}
We observe first that the counit $\counitmap{M}\colon \efunct \Radj M \to M$ is always an isomorphism,
because $e\Hom_\algB(T , M)=\Hom_\algB(B , M)=M$. 
Thus, by \Cref{Lem:adjprops}\itmref{itm:adjp3}, the functor $\Radj$ is fully faithful.
In fact, we can identify the essential image of $\Radj$,
starting as follows.

\begin{lemma} \label{Lem:appseq}
Every $M\in \CM\algB$ has an exact $\add T$-presentation, that is,
there is a short exact sequence
\begin{equation}\label{eq:addT->M}
0 \lra T'' \lra T' \lra M \lra 0
\end{equation}
with $T', T'' \in \add T$.
\end{lemma}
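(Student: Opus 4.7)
The plan is to start from a projective partial presentation of $M$ in $\CM\algB$, in which the syzygy automatically lies in the Frobenius subcategory $\GP\algB$, and then ``swap'' that syzygy for a module in $\add T$ via a pushout against a left $\add T$-approximation.

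First, I would take any partial presentation
\[ \ShExSeq{\Syz M}{\procov{M}}{M} \]
with $\procov{M}\in\add\algB$. By \Cref{lem:GPBBapp}, $\Syz M\in\GP\algB$, and by the standing assumption that $\algB$ is a summand of $T$, we already have $\procov{M}\in\add T$. So the syzygy is the only term that needs to be replaced.

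Second, since $T$ is a cluster tilting object in the Frobenius $2$-CY category $\GP\algB$ (see \Cref{Thm:GPB3} and \Cref{rem:cluster-structure}), the standard cluster tilting machinery provides a left $\add T$-approximation of $\Syz M$ giving a short exact sequence
\[ \ShExSeq{\Syz M}{T^0}{T^1} \]
in $\GP\algB$ with $T^0,T^1\in\add T$. I would then form the pushout of the two sequences along their common left-hand term $\Syz M$. Standard homological algebra yields a module $Y\in\CM\algB$ fitting into two short exact sequences
\[ \ShExSeq{T^0}{Y}{M} \qquad\text{and}\qquad \ShExSeq{\procov{M}}{Y}{T^1}. \]

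Third, to conclude, I use rigidity: both $\procov{M}$ and $T^1$ lie in $\add T$, so $\Ext^1_\algB(T^1,\procov{M})=0$ (computed either in $\GP\algB$ or in $\CM\algB$, these Ext groups coinciding by \Cref{Thm:GPB1} since $\procov{M}\in\GP\algB$). Hence the second sequence splits and $Y\isom \procov{M}\oplus T^1\in\add T$. Setting $T'=Y$ and $T''=T^0$ in the first sequence gives the desired $\add T$-presentation of $M$.

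The only substantive input is the existence of the coresolution $\ShExSeq{\Syz M}{T^0}{T^1}$ with terms in $\add T$. This is the expected main obstacle, in the sense that it is not purely formal: it relies on $\add T$ being functorially finite in $\GP\algB$ and on the $2$-CY duality $\Ext^1(T,-)\isom\Ext^1(-,T)\dual$ to ensure that the cokernel of a left $\add T$-approximation again lies in $\add T$. Once this is granted (as part of the cluster tilting package for the Frobenius $2$-CY category $\GP\algB$), the rest is a routine pushout-and-splitting argument.
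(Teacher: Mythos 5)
Your proof is correct, but it takes a genuinely different route from the paper's. The paper argues directly on $M$: it takes a surjective right $\add T$-approximation $T'\to M$ and shows its kernel lies in $\add T$; since $M$ is only in $\CM\algB$ and not in $\GP\algB$, this needs the comparison results of \Cref{Sec:2} (apply $\Hom_\algC(T,-)$ to get $\Ext^1_\algC(T,T'')=0$, use the 2-CY property of $\CM\algC$ to get $\Ext^1_\algC(T'',\algB)=0$, then \Cref{Thm:GPB2} places $T''$ in $\GP\algB$ and \Cref{Thm:GPB1} together with cluster-tiltingness places it in $\add T$). You instead shift the problem onto $\Syz M$, which lies in $\GP\algB$ by \Cref{lem:GPBBapp}, invoke the standard $\add T$-coresolution $0\to \Syz M\to T^0\to T^1\to 0$ for a cluster tilting object in the Frobenius $2$-CY category $\GP\algB$, and finish by a pushout and a splitting; that last part is routine and correct (only note that $\Ext^1_\algB(T^1,\procov{M})=0$ is just rigidity of $T$, or injectivity of $\procov{M}$ in $\GP\algB$ --- \Cref{Thm:GPB1} compares $\Ext_\algB$ with $\Ext_\algC$, not ``Ext in $\GP\algB$'' with ``Ext in $\CM\algB$''). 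What your approach buys is that the only non-formal input is confined to $\GP\algB$, where it is part of the cluster-structure package the paper assumes (\Cref{rem:cluster-structure}; the remark following \Cref{Lem:appseq} records the coresolution statement), so the $\CM\algB$ case becomes a formal consequence; what the paper's approach buys is a self-contained derivation from its own \Cref{Sec:2} results. Be aware that if you unwind your black box in this setting, the same delicacy reappears: a left $\add T$-approximation of $\Syz M$ is monic (it factors an admissible mono into a projective-injective), but one still has to check that its cokernel lies in $\GP\algB$ before rigidity and 2-CY can place it in $\add T$ --- for instance by constructing the coresolution through the cosyzygy of $\Syz M$ and a right $\add T$-approximation, whose kernel is then handled exactly as in the paper's proof.
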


\begin{proof}
Let $g\colon T'\to M$ be an $\add T$-approximation, 
which exists, because $\Hom_B(T,M)$ is a free $\ring$-module of finite rank,
and is surjective, because $B\in \add T$. 
Denote $\ker g$ by $T''$
and note that $T''\in \CM\algB$ because $T''\leq T'\in \CM\algB$.
We want to show that $T''\in \add T$. 

By \Cref{Thm:GPB1}, $\GP\algB$ is a full exact subcategory of $\CM\algC$, so
\[ 
  \Ext^1_\algC(T, T)=\Ext^1_\algB(T, T)=0.
\]
As $\CM\algB$ is a full subcategory of $\CM\algC$, 
applying $\Hom_\algC(T, -)$ to \eqref{eq:addT->M}
and using that $g_*\colon \Hom_\algC(T,T')\to \Hom_\algC(T, M)$ is surjective, we get
\begin{equation}\label{eq:extT}
  \Ext^1_\algC(T, T'')=0.
\end{equation} 
In particular, since $\CM\algC$ is 2-CY and $B\in \add T$, we get
\[
  \Ext^1_\algC(T'',B)=\Ext^1_\algC(\algB, T'')=0.
\] 
Hence \Cref{Thm:GPB2} implies that $T''\in \GP\algB$
and so \Cref{Thm:GPB1} and \eqref{eq:extT} imply that $\Ext^1_\algB(T, T'')=0$.
But $T$ is cluster tilting in~$\GP\algB$,
so this implies that $T''\in\add T$, as required.
\end{proof}

\begin{remark}
This result is familiar when $M$ is in~$\GP\algB$.
To get the same for~$\CM\algB$, we need to use the more delicate relationships with $\CM\algC$ and $\GP\algB$ 
from \Cref{Sec:2}.  
Note that, a similar argument without this delicacy shows that every $M\in \CM\algB$ also
has an exact $\add T$-presentation on the other side, that is, a short exact sequence
\begin{equation}\label{eq:addT->M-other}
0 \lra M \lra T' \lra T'' \lra 0,
\end{equation}
with $T', T'' \in \add T$.
\end{remark}

\begin{remark}
Since the restriction functor from $\CM\algB\to\CM\algC$ is fully faithful, 
by \Cref{Lem:restriFF}, we also have $\algA= \End_\algC(T)\op$ and 
so have a well-defined functor 
\[\Hom_\algC(T, -)\colon \CM\algC \to \CM\algA,\]
which coincides with $\Radj$ on $\CM\algB$.
However, the functor $\Hom_\algC(T, -)$ is typically only faithful, because the  
counit map is only pointwise epi.
Indeed $e\Hom_C(T, M)=\Hom_C(B, M)$ is necessarily in $\CM\algB$ and 
so only \emph{a priori} a submodule of $\Hom_C(C, M)=M$, for $M\in\CM\algC$.

Actually, this is a rather general fact. That is, for any $M\in\CM\algC$,  
we can show that the counit $M\otimes \Hom_\algC(M, -) \to \idfun$ is pointwise epi, 
so that 
\[
  \Hom_\algC(M, -)\colon \CM\algC \to \CM \End_\algC (M)\op
\]
is faithful, by \Cref{Lem:adjprops}\itmref{itm:adjp1}.
\end{remark}

\begin{proposition} \label{prop:globdim}
The global dimension of $\algA$ is at most $3$.
\end{proposition}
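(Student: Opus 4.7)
The plan is to build an explicit projective resolution of length at most three for any finitely generated $\algA$-module; since $\algA$ is a Noetherian $\ring$-algebra, this suffices to bound $\gldim \algA$. The two ingredients are the equivalence $\Radj = \Hom_\algB(T, -)\colon \add T \to \proj \algA$ from \eqref{eq:add-proj} and the exact $\add T$-presentations produced by \Cref{Lem:appseq}.

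First, for any finitely generated $X \in \mmod \algA$, I would take a projective presentation of the form
\begin{equation*}
  \Hom_\algB(T, T_1) \lraa{\Hom_\algB(T, \tilde{f})} \Hom_\algB(T, T_0) \to X \to 0,
\end{equation*}
where $\tilde{f}\colon T_1 \to T_0$ is the morphism in $\add T \subseteq \CM \algB$ corresponding to the chosen map between projectives under the equivalence $\Radj$. Set $K = \ker \tilde{f}$. Since $T_1 \in \CM \algB$ is free over the DVR $\ring$, so is its $\algB$-submodule $K$, so $K \in \CM \algB$. Applying the left-exact functor $\Hom_\algB(T, -)$ to $K \hookrightarrow T_1$ identifies $\Hom_\algB(T, K)$ with the kernel of $\Hom_\algB(T, \tilde{f})$, which extends our presentation to
\begin{equation*}
  0 \to \Hom_\algB(T, K) \to \Hom_\algB(T, T_1) \to \Hom_\algB(T, T_0) \to X \to 0.
\end{equation*}

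Next, by \Cref{Lem:appseq} applied to $K \in \CM \algB$, there is an exact $\add T$-presentation $0 \to T_3 \to T_2 \to K \to 0$. Applying $\Radj$ and using that $T$ is rigid in $\GP \algB$, so that $\Ext^1_\algB(T, T_3) = 0$, produces the short exact sequence
\begin{equation*}
  0 \to \Hom_\algB(T, T_3) \to \Hom_\algB(T, T_2) \to \Hom_\algB(T, K) \to 0,
\end{equation*}
which shows $\Hom_\algB(T, K)$ has projective dimension at most one. Splicing yields a projective resolution of $X$ of length three, so $\projdim X \leq 3$, as required.

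The substantive input is \Cref{Lem:appseq} itself; everything else is formal given that lemma together with the equivalence $\Radj$ and the rigidity of $T$. The only subtlety I anticipate is the reduction to finitely generated modules, which is justified because $\algA$, being finitely generated as a module over the Noetherian ring $\ring = \CC[[t]]$, is itself Noetherian, so $\gldim \algA$ is determined by the projective dimensions of finitely generated modules.
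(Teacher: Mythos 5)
Your proof is correct and follows essentially the same route as the paper: take a projective presentation of $X$ via the equivalence $\Radj\colon\add T\to\proj\algA$, identify its kernel with $\Radj K$ for the kernel $K\in\CM\algB$ of the corresponding map in $\add T$ (using left exactness of $\Radj$), and then use \Cref{Lem:appseq} together with rigidity of $T$ to see that $\Radj K$ has projective dimension at most one, giving $\projdim X\leq 3$. The only cosmetic difference is that the paper passes through $\efunct$ and the counit isomorphism to produce the map in $\add T$, whereas you invoke the equivalence \eqref{eq:add-proj} directly; your added remarks (freeness of $K$ over the DVR $\ring$, reduction to finitely generated modules) are fine and merely make explicit points the paper leaves implicit.
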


\begin{proof}
Let $X\in \mmod\algA$. By the equivalence \eqref{eq:add-proj},
we can choose $g\colon T''\to T'$ in $\add T$ to get a projective presentation of $X$
\begin{equation}\label{eq:p-pres-X}
 \Radj T'' \lraa{\Radj g} \Radj T' \lra X \lra 0.
\end{equation}
Since $\counitmap{}\colon\efunct\Radj \to\idfun$ is a natural isomorphism,
applying $\efunct$ to \eqref{eq:p-pres-X} gives a exact sequence
\begin{equation}\label{eq:p-pres-eX}
 T'' \lraa{g} T' \lra \efun X \lra 0.
\end{equation}
If $K''=\ker g$, then also $\Radj K''=\ker \Radj g$, since $\Radj$ is left exact.
But $K''\in\CM\algB$, so, by \Cref{Lem:appseq}, 
$K''$ has an exact $\add T$-presentation of the form \eqref{eq:addT->M}. 
Applying $\Radj$ to the presentation gives a projective resolution of $\Radj K''$.
Hence
\[ \projdim \Radj K'' \leq 1\] 
and so $\projdim X\leq 3$.
As $X\in \mmod\algA$ was arbitrary, this means that $\gldim\algA\leq 3$, as required.
\end{proof}

\begin{lemma} \label{lem:eta-ker-coker}
For any $X\in \mmod\algA$, there is an exact sequence
\begin{equation}\label{eq:eta-ker-coker}
  0\lra \Ext^1_\algB(T,\efunct\Syz^2 X) 
  \lra X \lraa{\unitmap{X}} \Radj\efunct X 
  \lra \Ext^1_\algB(T,\efunct\Syz X) \to 0.
\end{equation}
\end{lemma}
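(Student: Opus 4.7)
The plan is to produce the four--term exact sequence by dimension shifting the unit, using a projective presentation of $X$ over $\algA$.

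First I would start from an exact sequence
\[
 0 \lra \Syz X \lra \Radj T' \lra X \lra 0
\]
in $\mmod\algA$, obtained from a projective cover $\Radj T' \twoheadrightarrow X$ with $T' \in \add T$, as permitted by \eqref{eq:add-proj}. Applying the functor $\efunct$, which is exact because $\efunct = \Hom_\algA(\algA e,-)$ with $\algA e$ a projective $\algA$--module, I obtain a short exact sequence $0 \to \efunct\Syz X \to T' \to \efunct X \to 0$ in $\CM\algB$. Applying $\Radj = \Hom_\algB(T,-)$ then yields a long exact sequence in $\Ext$, and since $T$ is rigid in $\GP\algB$ (so $\Ext^1_\algB(T,T')=0$), this truncates to
\[
 0 \lra \Radj\efunct\Syz X \lra \Radj T' \lra \Radj\efunct X \lra \Ext^1_\algB(T,\efunct\Syz X) \lra 0.
\]

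Next I would assemble the commutative diagram with vertical unit maps $\unitmap{\Syz X}$, $\idmap_{\Radj T'}$, $\unitmap{X}$ from the first sequence to this last one; commutativity of the right square uses naturality of $\unitmap{}$, and the middle vertical is the identity since $\counitmap{}$, hence $\unitmap{\Radj T'}$, is an isomorphism. Applying the snake lemma (after truncating the bottom sequence at its image, to put it in the form of a short exact sequence), the fact that the middle vertical is an isomorphism forces
\[
  \ker\unitmap{X} \;\isom\; \cok\unitmap{\Syz X}, \qquad
  \cok\unitmap{X} \;\isom\; \Ext^1_\algB(T,\efunct\Syz X).
\]
The cokernel identification is immediate from the four--term sequence above; the kernel identification is the content of the connecting homomorphism.

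Finally, I would iterate: applying exactly the same argument to $\Syz X$ in place of $X$ (using a projective cover of $\Syz X$ whose kernel is $\Syz^2 X$) yields $\cok\unitmap{\Syz X} \isom \Ext^1_\algB(T,\efunct\Syz^2 X)$. Substituting gives $\ker\unitmap{X} \isom \Ext^1_\algB(T,\efunct\Syz^2 X)$, and splicing with the cokernel formula produces the claimed four--term exact sequence. The only mild obstacle is being careful that $\efunct$ really is exact in this setting and that the snake lemma is applied to an honest short exact sequence on the bottom row (which is handled by factoring the bottom map through its image, whose cokernel is $\Ext^1_\algB(T,\efunct\Syz X)$); everything else is a formal diagram chase, powered by the rigidity of $T$ and the Ext--vanishing from \Cref{Thm:GPB1}.
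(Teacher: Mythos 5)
Your proposal is correct and follows essentially the same route as the paper: dimension shifting along an $\add T$-presentation, using exactness of $\efunct$, the counit identification $\efunct\Radj T'\isom T'$, rigidity of $T$ (so $\Ext^1_\algB(T,T')=0$), naturality of $\unitmap{}$, and the Snake Lemma. The only difference is bookkeeping: the paper uses a two-term presentation $\Radj T''\to\Radj T'\to X\to 0$ and reads off $\ker\unitmap{X}$ and $\cok\unitmap{X}$ from a single diagram, whereas you identify $\cok\unitmap{X}\isom\Ext^1_\algB(T,\efunct\Syz X)$ and $\ker\unitmap{X}\isom\cok\unitmap{\Syz X}$ in one step and then iterate on $\Syz X$, which amounts to the same argument.
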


\begin{proof}
Consider the presentations \eqref{eq:p-pres-X} and \eqref{eq:p-pres-eX}.
As before, let $K''=\ker g$, so that $\Radj K''=\ker \Radj g$, 
and also let $K'=\img g=\ker (T' \to\efun X)$.
Notice that, taking syzygies from \eqref{eq:p-pres-X}, 
we have $K'=\efunct\Syz X$ and $K''=\efunct\Syz^2 X$, because $\efunct$ is exact.
Then we can construct 
the following commutative diagram of exact sequences,
in which the middle equality is really $\unitmap{\Radj T'}$, 
after we have used $\counitmap{T'}$ to identify $\efunct \Radj T'$ with $T'$.
\[
\begin{tikzpicture}[xscale=2.5, yscale=1.6,
 equals/.style={double=none, double distance=2pt}]
\draw (0.2,2) node (b0) {$0$};
\draw (1,2) node(b1) {$\Radj T''/\Radj K''$};
\draw (2,2) node(b2) {$\Radj T'$};
\draw (3,2) node(b3) {$X$};
\draw (3.8,2) node (b4) {$0$};
\draw (0.2,1) node (c0) {$0$};
\draw (1,1) node(c1) {$\Radj K'$};
\draw (2,1) node(c2) {$\Radj T'$};
\draw (3,1) node(c3) {$\Radj \efunct X$};
\draw (4,1) node(c4) {$\Ext^1_\algB(T, K')$};
\draw (4.9,1) node (c5) {$0$.};
\foreach \t/\h in {b0/b1, b1/b2, b2/b3, b3/b4, c0/c1, c1/c2, c2/c3, c3/c4, c4/c5, b1/c1} 
  \draw[cdarr] (\t) to (\h);
 \draw[cdarr] (b3) to node [right] {\small $\unitmap{X}$} (c3);
 \draw[equals] (b2) to (c2);
 \end{tikzpicture}
\]
The first vertical map has cokernel $\Ext^1_\algB(T, K'')$ and, by the Snake Lemma, 
this cokernel is isomorphic to $\ker\unitmap{X}$.
On the other hand, the image of $\unitmap{X}$ coincides with the image of 
the map $\Radj T'\to \Radj \efunct X$, so $\cok\unitmap{X}\isom\Ext^1_\algB(T, K')$.
\end{proof}

\begin{remark}\label{rem:eta-ker-coker}
\Cref{lem:eta-ker-coker}, with \Cref{lem:ExtBinExtC},
implies that $\ker\unitmap{X}$ and $\cok\unitmap{X}$
are finite dimensional (i.e.~torsion) modules.
Furthermore, they vanish on applying~$\efunct$,
because $\efunct \Ext^1_\algB(T,-)=\Ext^1_\algB(B,-)=0$.
This is consistent with the fact that $\efunct\unitmap{X}\colon \efun X\to \efun X$
is the identity map, after using $\counitmap{\efun X}$ to identify 
$\efunct \Radj \efunct X$ with $\efun X$.
Note also that \Cref{lem:eta-ker-coker} immediately implies that, if $\efun X=0$, then $X$ is torsion.
\end{remark}

\begin{proposition} \label{prop:projdim}
If $X\in \CM \algA$, then $\projdim X\leq 1$ if and only if 
$X\isom \Hom_\algB(T, M)$, for some $M\in \CM\algB$
(and then necessarily $M\isom eX$).
\end{proposition}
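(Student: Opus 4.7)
The plan is to prove each direction by combining the $\add T$-presentation of \Cref{Lem:appseq} with the control on the unit map $\unitmap{X}$ provided by \Cref{lem:eta-ker-coker}.

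For the ``if'' direction, suppose $X\isom \Radj M=\Hom_\algB(T,M)$ for some $M\in\CM\algB$. First I would apply \Cref{Lem:appseq} to get a short exact sequence
\[
  0\lra T''\lra T'\lra M\lra 0
\]
with $T',T''\in\add T$. Applying $\Radj=\Hom_\algB(T,-)$ yields an exact sequence, since $\Ext^1_\algB(T,T'')=0$, and under the equivalence \eqref{eq:add-proj} this is a projective resolution of $X$ of length $\leq 1$. The identification $M\isom eX$ is then immediate, since the counit $\counitmap{M}\colon\efunct\Radj M\to M$ is an isomorphism.

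For the ``only if'' direction, suppose $\projdim X\leq 1$ with a projective resolution
\[
  0\lra P_1\lra P_0\lra X\lra 0.
\]
Using the equivalence \eqref{eq:add-proj}, I would write $P_i=\Radj T_i$ for $T_i\in\add T$, and set $M=eX$. Since $X\in\CM\algA$ is free over $\ring$ and $e$ is idempotent, $eX$ is an $\ring$-summand of $X$ and hence lies in $\CM\algB$ (the $B$-module structure coming from $eAe=B$). It remains to show that the unit $\unitmap{X}\colon X\to\Radj\efunct X=\Radj M$ is an isomorphism.

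This is where \Cref{lem:eta-ker-coker} does the work: the kernel and cokernel of $\unitmap{X}$ are $\Ext^1_\algB(T,\efunct\Syz^2 X)$ and $\Ext^1_\algB(T,\efunct\Syz X)$ respectively. Since $\projdim X\leq 1$, we may take $\Syz^2 X=0$ and $\Syz X$ projective, so $\efunct\Syz X\in\add T$, whence both Ext groups vanish by rigidity of $T$. Thus $\unitmap{X}$ is an isomorphism and $X\isom\Radj M$, as required. The one subtlety to be careful about is that $X$ is only assumed to be in $\CM\algA$, not in some nicer subcategory, but the exactness of $\efunct$ and the fact that $\Ext^1_\algB(T,-)$ vanishes on $\add T$ are enough; I do not expect a genuine obstacle here, as all the necessary machinery has already been established.
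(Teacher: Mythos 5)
Your proposal is correct and follows essentially the same route as the paper: the forward direction via the $\add T$-presentation of \Cref{Lem:appseq} applied under $\Hom_\algB(T,-)$, and the converse via \Cref{lem:eta-ker-coker}, noting that $\projdim X\leq 1$ forces $\Syz^2 X=0$ and $\efunct\Syz X\in\add T$, so that $\unitmap{X}$ is an isomorphism and $eX\in\CM\algB$. The extra justification you give for $eX\in\CM\algB$ is harmless but not needed beyond what the paper states.
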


\begin{proof}
As already used in the proof of \Cref{prop:globdim}, 
any $M\in \CM\algB$ has a presentation~\eqref{eq:addT->M}, by \Cref{Lem:appseq},
and applying $\Hom_\algB(T, -)$ gives a projective resolution of $\Hom_\algB(T, M)$, 
since $\Ext^1(T,T'')=0$, so $\projdim \Hom_\algB(T, M)\leq 1$. 

Conversely, if $X$ has $\projdim X\leq 1$,
then $\Syz X$ is projective, so $\efunct \Syz X\in\add T$, and $\Syz^2 X=0$.
Hence \Cref{lem:eta-ker-coker} implies that $\unitmap{X}\colon X \to \Radj \efunct X$ 
is an isomorphism.
If $X\in \CM\algA$, then $eX\in\CM\algB$, which proves the claim.
\end{proof}

Thus $\Radj\colon\CM\algB\to\CM\algA$ is an equivalence of $\CM\algB$ with the full subcategory of
$\CM\algA$ consisting of modules with $\projdim \leq 1$.
In the other direction, we have

\begin{proposition}\label{prop:rest-fful}
For $X\in\CM\algA$, the unit map 
$\unitmap{X}\colon X\to \Hom_\algB(T, eX)$
is injective. 
Thus the restriction functor $\efunct\colon \CM\algA\to \CM\algB$ is faithful. 
\end{proposition}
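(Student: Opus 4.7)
The plan is to combine \Cref{lem:eta-ker-coker} with a torsion-freeness argument to establish injectivity of $\unitmap{X}$, and then deduce faithfulness of $\efunct$ from \Cref{Lem:adjprops}\itmref{itm:adjp1} applied to the adjunction $\efunct \dashv \Radj$.

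First I would apply \Cref{lem:eta-ker-coker} to identify
\[
  \ker \unitmap{X} \isom \Ext^1_\algB(T,\efunct\Syz^2 X),
\]
where $\Syz^2 X$ is a second syzygy of $X$ in $\mmod\algA$. Next I would verify that $\efunct\Syz^2 X\in\CM\algB$, so that \Cref{lem:ExtB-ExtC} applies to the right-hand side. This relies on the observation that $\algA=(\End T)\op$ is a free $\ring$-module (since $T$ is free over the PID $\ring$ and submodules of free $\ring$-modules are free), so every projective $\algA$-module is free over $\ring$. Taking $X\in\CM\algA$, the kernel in a projective presentation is then again free over $\ring$, so $\Syz X\in\CM\algA$, and iterating gives $\Syz^2 X\in\CM\algA$ and hence $\efunct \Syz^2 X\in\CM\algB$.

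Then the finite-dimensionality of $\Ext^1_\algC(T,\efunct\Syz^2 X)$ noted after \Cref{lem:ExtB-ExtC}, together with the inclusion $\Ext^1_\algB(T,-)\subset\Ext^1_\algC(T,-)$, shows that $\ker\unitmap{X}$ is finite-dimensional over~$\CC$. Since $\ring=\CC[[t]]$ is local, any finite-dimensional $\CC$-vector space that is also an $\ring$-module is a torsion $\ring$-module. But $X\in\CM\algA$ is by definition free over $\ring$, hence $\ring$-torsion-free, so the submodule $\ker\unitmap{X}\subset X$ must vanish. This proves injectivity.

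For the final assertion, since $\unitmap{X}$ is mono for every $X\in\CM\algA$, \Cref{Lem:adjprops}\itmref{itm:adjp1}, applied to the adjoint pair $\efunct\dashv\Radj$ restricted to $\CM\algA\to\CM\algB$, gives that $\efunct$ is faithful on $\CM\algA$. The only real obstacle is the small bookkeeping step of keeping the modules in $\CM\algA$ and $\CM\algB$ so that \Cref{lem:ExtB-ExtC} applies; after that, the proof is a direct assembly of the cited lemmas and a one-line torsion argument.
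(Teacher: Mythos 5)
Your proof is correct and follows essentially the same route as the paper: \Cref{lem:eta-ker-coker} shows $\ker\unitmap{X}$ is a finite-dimensional (hence torsion) module, which must vanish inside the torsion-free $X$, and faithfulness then follows from \Cref{Lem:adjprops}\itmref{itm:adjp1}. Your extra detour through the $\ring$-freeness of $\algA$ to see that $\efunct\Syz^2 X\in\CM\algB$ is fine but not needed, since in the proof of \Cref{lem:eta-ker-coker} the module $\efunct\Syz^2 X$ is realised as a kernel $K''\leq T''$ with $T''\in\add T$, so it lies in $\CM\algB$ automatically and the finite-dimensionality of $\Ext^1_\algB(T,-)$ is exactly the content of \Cref{rem:eta-ker-coker} via \Cref{lem:ExtB-ExtC}.
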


\begin{proof}
By \Cref{rem:eta-ker-coker}, we know that $\ker\unitmap{X}$ is torsion,
but, if $X\in\CM A$, then $X$ is torsion-free, so $\ker\unitmap{X}=0$, as required.
That $\efunct$ is faithful follows by \Cref{Lem:adjprops}\itmref{itm:adjp1}. 
\end{proof}

\subsection{The left adjoint}
\newcommand{\incmap}{\mathsf{j}}

Define $\algA e X \submod X$ to be the image of the $(\Ladj,\efunct)$ counit map
\[
  \mu_X\colon \algA e\otimes _\algB eX\to X,
\]
given by multiplication.
As the notation suggests, we treat $e X$ as an $\ring$-submodule of~$X$, 
so that $\algA e X$ is the $\algA$-submodule generated by it.
In the case $X=\Hom(T, M)$, we can identify $e X=\Hom(B, M)=M$ 
and we will then write $\algA e X$ as 
\[
  \algA\cdot M\subspc \Hom(T, M).
\]
\begin{remark}\label{rem:AeX=A.eX}
Thus, in general, we have $\algA e X \submod X$ and $\algA\cdot eX \subspc \Hom(T, eX)=\Radj\efunct X$,
but we can readily see that the unit map $\unitmap{X}\colon X \to \Radj \efunct X$
identifies $\algA e X$ with $\algA\cdot eX$,
using the following natural diagram and noting that $\efunct \unitmap{X}$ is an isomorphism 
(cf. \Cref{rem:eta-ker-coker}).
\[
\begin{tikzpicture}[xscale=2.5, yscale=1.6]
\draw (1,2) node(b1) {$\Ladj \efunct X$};
\draw (2,2) node(b2) {$X$};
\draw (1,1) node(c1) {$\Ladj \efunct \Radj \efunct X$};
\draw (2,1) node(c2) {$\Radj \efunct X$.};
\foreach \t/\h/\lab in {b1/b2/$\mu_X$, c1/c2/$\mu_{\Radj \efunct X}$} 
  \draw[cdarr] (\t) to node [above] {\small \lab}(\h);
\foreach \t/\h/\lab/\side in {b1/c1/$\Ladj \efunct \unitmap{X}$/left, b2/c2/$\unitmap{X}$/right} 
 \draw[cdarr] (\t) to node [\side] {\small \lab} (\h);
 \end{tikzpicture}
\]
In other words, if we use $\unitmap{X}\colon X \to \Radj \efunct X$
to identify $X$ with a submodule of $\Radj \efunct X$, 
then we can consider that
\[
  \algA\cdot \efun X \submod X \submod  \Hom(T,\efun X).
\]
\end{remark}

\begin{remark}\label{rem:eta=inc}
Conversely, if $X\subspc \Hom(T,M)$,
then we can consider that $e X\subspc M$.
If $e X= M$, then the inclusion map $\incmap\colon X\to \Radj M$ 
is identified with the unit map $\unitmap{X}\colon X \to \Radj \efunct X$.
More precisely, $e X= M$ means that
$\efunct\incmap\colon \efunct X\to \efunct\Radj M$ is an isomorphism or, equivalently
(as $\counitmap{M}$ is always an isomorphism), that 
$\theta_X= \counitmap{M}\compo \efunct\incmap\colon \efunct X\to \efunct\Radj M\to M$
is an isomorphism.
Then there is a natural diagram
\[
\begin{tikzpicture}[xscale=2.5, yscale=1.6,
 equals/.style={double=none, double distance=2pt}]
\draw (1,2) node(b1) {$X$};
\draw (2,2) node(b2) {$\Radj \efunct X$};
\draw (1,1) node(c1) {$\Radj M$};
\draw (2,1) node(c2) {$\Radj \efunct \Radj M$};
\draw (3,1) node(c3) {$\Radj M$};
\foreach \t/\h/\lab in {b1/b2/$\unitmap{X}$, c1/c2/$\unitmap{\Radj M}$, c2/c3/$\Radj \counitmap{M}$} 
  \draw[cdarr] (\t) to node [above] {\small \lab}(\h);
\foreach \t/\h/\lab/\side in {b1/c1/$\incmap$/left, b2/c2/$\Radj\efunct\incmap$/right,
   b2/c3/$\Radj\theta_X$/above right} 
 \draw[cdarr] (\t) to node [\side] {\small \lab} (\h);
 \end{tikzpicture}
\]
in which the bottom composite is the identity map, as for a general adjunction.
\end{remark}

\begin{proposition}\label{Thm:EndA}
The following hold. 
\begin{enumerate}
\item\label{itm:EA5}
$A\cdot M$ is the kernel of the canonical quotient $\Hom_\algB(T, M)\to \sHom_\algB(T, M)$.
\item\label{itm:EA6}
For $X \submod \Hom_\algB(T, M)$, we have $eX=M$ if and only if $\algA\cdot M \submod X$.
\end{enumerate}
\end{proposition}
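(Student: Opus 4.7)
The plan is to compute $\algA\cdot M\subspc \Hom_\algB(T,M)$ explicitly via the counit of the $(\Ladj,\efunct)$-adjunction and then derive both parts from this description.

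For part \itmref{itm:EA5}, I would first unpack the definition: taking $X=\Radj M=\Hom_\algB(T,M)$, one has $\algA\cdot M=\img(\mu_{\Radj M})$, where
\[
  \mu_{\Radj M}\colon \algA e\otimes_\algB \Hom_\algB(\algB,M)\lra \Hom_\algB(T,M)
\]
is the counit, after using $e\Radj M=\Hom_\algB(\algB,M)=M$. The key step will be to exploit \eqref{eq:ABT}, which identifies $e\algA$ with $T$ as a $\algB$-$\algA$-bimodule, to obtain the dual identification $\algA e\isom \Hom_\algB(T,\algB)$. Under this, $\mu_{\Radj M}$ becomes the composition map $\phi\otimes g\mapsto g\compo\phi$. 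The image of composition is exactly the subspace of maps $T\to M$ factoring through $\algB$, and since any factorisation through an object of $\add\algB$ decomposes as a finite sum of factorisations through $\algB$, this coincides with the subspace of maps factoring through $\add\algB$---which by definition is the kernel of $\Hom_\algB(T,M)\to\stablehom_\algB(T,M)$.

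For part \itmref{itm:EA6}, my plan is to exploit that $M=\Hom_\algB(\algB,M)\subspc \Hom_\algB(T,M)$ is fixed by the left action of $e\in\algA$: each $m\in M$ corresponds to the composite $T\lraa{e}\algB\lraa{m}M$, and pre-composing with $e\colon T\to T$ leaves it unchanged because $e\compo e=e$. This gives $eM=M$ and, for any $\algA$-submodule $X\subspc \Hom_\algB(T,M)$, the nested inclusions $eX\subspc e\Hom_\algB(T,M)=M$. Then both implications follow directly: if $\algA\cdot M\subspc X$, applying $e$ gives $M=eM\subspc e(\algA\cdot M)\subspc eX\subspc M$, so $eX=M$; conversely, $eX=M$ gives $M=eX\subspc X$, and the $\algA$-invariance of $X$ forces $\algA\cdot M\subspc X$.

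The main obstacle will be the careful handling of the bimodule conventions in \itmref{itm:EA5}: one must be explicit that $\algA e$ and $\Hom_\algB(T,\algB)$ are canonically isomorphic as $\algB$-$\algA$-bimodules (dual to $e\algA\isom T$) and that under this isomorphism the adjunction counit becomes ordinary composition. Once this identification is set up, the rest of \itmref{itm:EA5} is routine, and \itmref{itm:EA6} is essentially formal from the idempotent calculus.
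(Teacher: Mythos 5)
Your proposal is correct and follows essentially the same route as the paper: for \itmref{itm:EA5} you identify the counit with the composition map $\Hom_\algB(T,\algB)\otimes_\algB\Hom_\algB(\algB,M)\to\Hom_\algB(T,M)$ and its image with the maps factoring through $\add\algB$, exactly as in the paper's proof, and for \itmref{itm:EA6} you use the same two observations (that $e(\algA\cdot M)=M$ gives the forward direction, and that $\algA\cdot M$ is the $\algA$-submodule generated by $eX=M\subset X$ gives the converse, which the paper packages in a naturality square for the counit but with the same content). The only nitpick is a bookkeeping slip: $\algA e\isom\Hom_\algB(T,\algB)$ is an isomorphism of $\algA$-$\algB$-bimodules (dual to the $\algB$-$\algA$-bimodule isomorphism $e\algA\isom T$), not of $\algB$-$\algA$-bimodules.
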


\begin{proof}
\itmref{itm:EA5} 
$\algA\cdot M$ is the image of the counit map $\algA e\otimes _B e\Hom(T, M)\to \Hom(T, M)$,
which more explicitly is the composition map 
\[
  \Hom(T, B) \otimes_B \Hom(B, M) \to \Hom(T, M).
\]
Thus $\algA\cdot M$ consists of sums of maps that factor through $\algB$.
Generally, this is the same as maps that factor through $\add \algB$,
that is, in this case, that factor through projectives. 
By definition, this is the kernel of the quotient $\Hom_\algB(T, M)\to \sHom_\algB(T, M)$.

\itmref{itm:EA6} 
First note that $e\algA\cdot M = \algB\otimes_\algB M = M$.
Hence, if $\algA\cdot M \submod X \submod \Hom_\algB(T, M)$, 
then $M\submod eX\submod M$, that is, $eX=M$.

Conversely, suppose $X \submod \Hom_\algB(T, M)$ and $eX=M$, 
that is, we have an inclusion $\incmap\colon X\to \Radj M$ such that
$\efunct\incmap\colon \efunct X\to \efunct\Radj M$ is an isomorphism.
Then we have a natural diagram
\[
\begin{tikzpicture}[xscale=2.5, yscale=1.6]
\draw (1,2) node(b1) {$\Ladj \efunct X$};
\draw (2,2) node(b2) {$X$};
\draw (1,1) node(c1) {$\Ladj \efunct \Radj M$};
\draw (2,1) node(c2) {$\Radj M$};
\foreach \t/\h/\lab in {b1/b2/$\mu_X$, c1/c2/$\mu_{\Radj M}$} 
  \draw[cdarr] (\t) to node [above] {\small \lab}(\h);
\foreach \t/\h/\lab/\side in {b1/c1/$\Ladj \efunct \incmap$/left, b2/c2/$\incmap$/right} 
 \draw[cdarr] (\t) to node [\side] {\small \lab} (\h);
\end{tikzpicture}
\]
showing that $\incmap$ identifies $AeX$ with $\algA\cdot M$
and thus $\algA\cdot M\subspc X$, as required.
\end{proof}

\begin{remark}
In the dimer model context of \Cref{ex:plabic},
some of the results here are already proved in \cite[\S5]{CKP}.
For example, \Cref{Thm:EndA} is \cite[Prop.~5.4, Cor.~5.11]{CKP},
while \Cref{prop:rest-fful} is \cite[Lemma~5.3]{CKP}.
\end{remark}

\subsection{Ranks of Cohen--Macaulay modules}\label{subsec:ranks}

By the definition of $\algB$, we have 
\begin{equation}\label{eq:BCoK}
\algB \otimes_\ring \fieldfract=\algC \otimes_\ring \fieldfract \isom \Matr{n}{\fieldfract},
\end{equation}
the $n\times n$ matrix algebra over $\fieldfract$.
So we can define the rank of a $\algB$-module $M$ in the same way as that of a $\algC$-module, 
that is, 
\[
\rnk{\algB} M=\len_{\algB \otimes_\ring \fieldfract }(M\otimes_\ring \fieldfract),
\]
the length of $M\otimes_\ring \fieldfract$ over $\algB \otimes_\ring \fieldfract$.
Indeed, by \eqref{eq:BCoK} and \cite[Def.~3.5 \emph{et seq.}]{JKS1}, we have
\begin{equation}\label{eq:rkBCR}
  \rnk{\algB} M = \rnk{\algC} M = \rnk{\ring}(e_i M),
\end{equation}
for any vertex idempotent $e_i$.

\begin{lemma}
The algebra $\algA\otimes_\ring\fieldfract$ is simple, isomorphic to $\Matr{r}{\fieldfract}$,
where $r=\rnk{\algB} T$. 
\end{lemma}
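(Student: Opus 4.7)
The plan is to reduce the computation to a Wedderburn-style calculation after base change to the fraction field.

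First, I would observe that since $T\in\CM\algB$, it is finitely generated and free over $\ring$, so $T\otimes_\ring\fieldfract$ is a finite-dimensional $\fieldfract$-vector space carrying the structure of a left module over $\algB\otimes_\ring\fieldfract\isom\Matr{n}{\fieldfract}$, which we already know by \eqref{eq:BCoK}. Since $\Matr{n}{\fieldfract}$ is a simple $\fieldfract$-algebra, its unique simple module (up to isomorphism) is $\fieldfract^n$, and every finitely generated module is a direct sum of copies of it. By the definition of rank used in \S\ref{subsec:ranks} and formula \eqref{eq:rkBCR}, the number of such copies is precisely $\rnk{\algB}T=r$, so
\[
  T\otimes_\ring\fieldfract\isom (\fieldfract^n)^{\oplus r}
\]
as a left $\Matr{n}{\fieldfract}$-module.

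Next I would show that forming endomorphism rings commutes with this base change, that is,
\[
  \End_\algB(T)\otimes_\ring\fieldfract \isom \End_{\algB\otimes_\ring\fieldfract}(T\otimes_\ring\fieldfract).
\]
This is a standard localisation argument: $\ring\to\fieldfract$ is flat, and $T$ is finitely presented over $\algB$ (indeed over $\ring$), so Hom commutes with the localisation and the natural map above is an isomorphism.

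Finally, by Schur's lemma and Morita theory applied to $\Matr{n}{\fieldfract}$, we have $\End_{\Matr{n}{\fieldfract}}(\fieldfract^n)\isom\fieldfract$, and hence
\[
  \End_{\Matr{n}{\fieldfract}}\bigl((\fieldfract^n)^{\oplus r}\bigr)\isom \Matr{r}{\fieldfract}.
\]
Taking opposites and using that $\Matr{r}{\fieldfract}\op\isom\Matr{r}{\fieldfract}$ via the transpose (since $\fieldfract$ is commutative), we conclude that
\[
  \algA\otimes_\ring\fieldfract = \End_\algB(T)\op\otimes_\ring\fieldfract \isom \Matr{r}{\fieldfract},
\]
which is simple, as required.

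I don't expect a serious obstacle here: the argument is essentially a base-change and Wedderburn computation. The only point that needs a little care is the commutation of $\End$ with $-\otimes_\ring\fieldfract$, but this follows immediately from finite generation of $T$ over the Noetherian ring $\ring$ together with flatness of $\ring\to\fieldfract$. Everything else is formal once we know $T\otimes_\ring\fieldfract$ is a free $\Matr{n}{\fieldfract}$-module of rank $r$ in the appropriate sense.
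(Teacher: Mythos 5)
Your argument is correct and is essentially the paper's own proof, which consists of exactly the chain $\End_\algB(T)\op\otimes_\ring\fieldfract\isom\End_{\Matr{n}{\fieldfract}}(T\otimes_\ring\fieldfract)\op\isom\Matr{r}{\fieldfract}$ that you spell out; you have simply made explicit the intermediate facts (flat base change for $\End$, $T\otimes_\ring\fieldfract\isom(\fieldfract^n)^{\oplus r}$ with $r=\rnk{\algB}T$, and $\Matr{r}{\fieldfract}\op\isom\Matr{r}{\fieldfract}$) that the paper leaves implicit. No gaps.
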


\begin{proof}  
$\algA\otimes_\ring\fieldfract=\End_\algB(T)\op \otimes_\ring \fieldfract\isom 
\End_{\Matr{n}{\fieldfract}} (T \otimes_\ring \fieldfract)\op \isom \Matr{r}{\fieldfract}$.
\end{proof}

Thus $\algA$ is an $\ring$-order and any $\algA$-module is an $\ring$-module,
so we can define the rank of an $\algA$-module $X$ in the same way as earlier:
\[
  \rnk{\algA} X=\len_{A\otimes_\ring\fieldfract} (X\otimes_\ring\fieldfract), 
\]
The functor $-\otimes_\ring\fieldfract$ is exact, 
so, over any of the algebras $\ring$, $\algA$, $\algB$ or $\algC$, 
we see that $\rnk{}$ is additive on short exact sequences of modules.
Furthermore, over any of these algebras, $\rnk{}M=0$ 
if and only if $M$ is torsion, i.e.~finite dimensional.

\begin{lemma} \label{Lem:ranks}
Suppose $M\in \CM\algB$ and $X\in \CM\algA$. 
Then 
\begin{enumerate} 
\item\label{itm:rks1}
  $\rnk{\algA} \Hom(T, M) = \rnk{\algB} M$. 
\item\label{itm:rks3} 
  $\rnk{\algB} \efun X = \rnk{\algA} X$.
\item\label{itm:rks4} 
 $\rnk{\ring} e_i X = (\rnk{\algA} X)(\rnk{\algB} T_i)$. 
\end{enumerate}
\end{lemma}

\begin{proof} 
\itmref{itm:rks1} 
By definition, 
\[
  \rnk{\algA} \Hom(T, M) = \len_{\Matr{r}{\fieldfract}}  (\Hom(T, M)\otimes_\ring\fieldfract),
\]
where $r=\rnk{\algB} T$. But
\[
  \Hom_\algB(T, M)\otimes_\ring \fieldfract\isom 
  \Hom_{\algB\otimes_\ring \fieldfract}(T\otimes_\ring \fieldfract, M\otimes_\ring \fieldfract),
\]
which has $\dim_\fieldfract$ equal to $(\rnk{\algB} T)(\rnk{\algB} M)$
and so $\len_{\Matr{r}{\fieldfract}}$ equal to $\rnk{\algB} M$, as required.

\smallskip
\itmref{itm:rks3} 
By \itmref{itm:rks1}, $\rnk{\algB} \efun X=\rnk{\algA} \Hom(T,  \efun X)$.
On the other hand, \Cref{lem:eta-ker-coker} implies that $\unitmap{X}\colon X\to \Hom(T,  \efun X)$
becomes an isomorphism after applying $-\otimes_\ring\fieldfract$,
because $Y\otimes_\ring \fieldfract=0$, when $Y$ is torsion.
Hence $\rnk{\algA} \Hom(T, \efun X)=\rnk{\algA}  X$, giving the result.

\smallskip
\itmref{itm:rks4} 
By \Cref{prop:rest-fful} and \Cref{rem:eta-ker-coker}, 
we know that $X \submod  \Hom(T,\efun X)$, with finite-dimensional quotient. 
Hence, as a consequence of \cite[Lemma~5.5]{JKS2},
\[
  \rnk{\ring} e_i X =  \rnk{\ring} \Hom(T_i,\efun X) = (\rnk{\algB} T_i)(\rnk{\algB} \efun X),
\] 
so the result follows from \itmref{itm:rks3}.
\end{proof}

\section{Weight modules}\label{Sec:4}

We continue with the setting in \Cref{Sec:3}
and continue to write $\Hom(-, -)$ for $\Hom_\algB(-, -)=\Hom_\algC(-, -)$
on $\CM\algB\subset\CM\algC$.

We choose a vertex $\vstar$ (e.g.~$0$) of the circular double quiver \eqref{eq:circle-quiv} 
and let $e_\vstar\in\algC$ be the associated idempotent, 
which is also an idempotent of $\algB$. 
Since $\algB e_\vstar$ is an indecomposable summand of $\algB$,
it is also an indecomposable summand of $T$, 
and so there is a corresponding vertex, also denoted $\vstar$, in the Gabriel quiver $Q$ of $\algA$.
We can and will assume that $\algB e_\vstar$ is the summand $T_\vstar$ chosen
in \Cref{rem:gab-quiv}.
Note that, consequently, $\algA$ also has an idempotent denoted by $e_\vstar$,
but it should be clear which is meant in any situation.

Define functors 
\[ \begin{aligned}
  \funJ & \colon \CM\algB\to \CM\algB \colon M\mapsto \Hom_\ring(e_\vstar\algB, e_\vstar M) \\
  \funP &\colon \CM\algB\to \CM\algB \colon M\mapsto \algB e_\vstar\otimes_\ring e_\vstar M.
\end{aligned} \] 
Then the natural maps $\funP M\to M$ and $M\to \funJ M$, become isomorphisms under $e_\vstar$,
so their kernels have rank 0 and hence are 0 (cf. \cite[Lem~5.1]{JKS2}).
In other words, we have natural embeddings
\begin{equation}\label{eq:PtoJ}
  \funP M\subspc M \subspc \funJ M 
  \quad\text{with}\quad 
  e_\vstar\funP M=e_\vstar M =e_\vstar \funJ M.
\end{equation}
If we define
\begin{equation}\label{eq:defPJ}
 \modP = Be_\vstar
\quadand
 \modJ=\Hom_\ring(e_\vstar\algB, \ring),
\end{equation}
then, using \eqref{eq:rkBCR} and \eqref{eq:CinB}, 
we have $\rnk{\algB} \modP = \rnk{\ring} e_\vstar Be_\vstar = 1$,
and similarly $\rnk{\algB} \modJ=1$.
Furthermore, 
\begin{equation}\label{eq:isomPJ-rkB}
 \funP M = \modP \otimes_\ring e_\vstar M\isom \modP^{\rnk{\algB} M}
\quadand
 \funJ M = \modJ \otimes_\ring e_\vstar M \isom \modJ^{\rnk{\algB} M}.
\end{equation}
Note that, while $\modP$ is a projective $\algB$-module,
$\modJ$ is not an injective $\algB$-module, 
but it is injective in $\CM\algB$.

\begin{definition}\label{def:KTM}
For $M\in\CM\algB$, let $\epsemb{M}\colon M\to \funJ M$ be the natural embedding. 
Applying $\Hom(T, -)$ gives a short exact sequence, as in \cite[\S5]{JKS2},
\begin{equation}\label{eq:coker-epsM}
 0 \lra \Hom(T, M) \lraa{\epsemb{M}_*} \Hom(T, \funJ M) 
   \lra \funK(T, M) \lra 0,
\end{equation}
which defines the finite dimensional $\algA$-module $\funK(T, M)$.
The class of $\funK(T, M)$ in the Grothendieck group $\Grot(\fd\algA)$
is denoted $\kapvec(T, M)$.
\end{definition}

Note that $\funK(T, M)$ is finite dimensional, because it is a submodule of $\Hom(T, \funJ M/M)$
and $\funJ M/M$ is a finite dimensional $\algB$-module,
since $\rnk{\algB}\funJ M = \rnk{\algB}M$,
by \eqref{eq:isomPJ-rkB}.

More generally, as in \cite[\S5]{JKS2}, we can replace $T$ by $N$ in \eqref{eq:coker-epsM} 
to define $\funK(N, M)$ for any $N\in\CM\algB$.
Then $\funK(N, M)$ is just a finite dimensional vector space
and we define $\kappa(N,M)=\dim\funK(N, M)$.
Thus, making the identification $\Grot(\fd\algA)\isom\ZZ^{Q_0}$,
as in \Cref{rem:gab-quiv}, we have
\[
  \kapvec(T, M) = \bigl( \kappa(T_i,M) \bigr)_{i\in Q_0}.
\]
Furthermore,
\begin{equation}\label{eq:KP*M}
 \funK(T_\vstar, M)=0,
\end{equation}
because $\funJ M/M$ vanishes at $\vstar$. 
More precisely, as $T_\vstar=\algB e_\vstar$, for $e_\vstar\in\algB$,
\[
  \funK(T_\vstar, M)\submod \Hom(T_\vstar, \funJ M/M) 
  = e_\vstar(\funJ M/M) = e_\vstar\funJ M/ e_\vstar M = 0.
\]
In other words, the $\algA$-module $\funK(T, M)$ vanishes at $\vstar$,
i.e.~$e_\vstar\funK(T, M)=0$, for $e_\vstar\in\algA$.

For $X\in \CM\algA$, the unit map 
$\unitmap{X}\colon X\to \Hom(T, \efun X)$
is injective, by \Cref{prop:rest-fful}.
We define $\funG X$ to be its cokernel,
giving a short exact sequence
\begin{equation}\label{eq:coker-eta}
 0 \lra X \lraa{\unitmap{X}} \Hom(T, \efun X) \lra \funG X \lra 0 .
\end{equation}
In fact, by \Cref{rem:AeX=A.eX}, $\funG X$ is a quotient of $\sHom(T, \efun X)$.

Define the \emph{weight module} $\wtmod X$ to be 
the cokernel of the composite $\epsemb{\efun X}_*\compo \unitmap{X}$, 
so that the Third Isomorphism Theorem yields
the following commutative diagram
in which the two rows and two columns
are short exact sequences.
\begin{equation}\label{eq:wtmod-diag}
\begin{tikzpicture}[xscale=2.8, yscale=1.6,baseline=(bb.base)]
\coordinate (bb) at (0,2);
\pgfmathsetmacro{\xoff}{0.4}
\pgfmathsetmacro{\xofff}{0.5}
\pgfmathsetmacro{\yoff}{0.7}
\draw (1,3) node (a3) {$X$}; 
\draw (1,2) node (a2) {$X$}; 
\draw (2,3) node(b3) {$\Hom(T, \efun X)$};
\draw (2,2) node(b2) {$\Hom(T, \funJ \efun X)$};
\draw (2,1) node(b1) {$\funK(T, \efun X)$};
\draw (3,1) node(c1) {$\funK(T, \efun X)$};
\draw (3,2) node(c2) {$\wtmod X$};
\draw (3,3) node(c3) {$\funG X$};
\foreach \t/\h in {b3/b2, b2/b1, c3/c2, c2/c1, a2/b2, b2/c2, b3/c3} 
   \draw[cdarr] (\t) to (\h);
\foreach \t/\h in {a3/a2, b1/c1} 
  \draw[equals] (\t) to (\h);
\draw[cdarr] (a3) to node [above] {$\unitmap{X}$} (b3);
\draw[cdarr] (b3) to node [right] {$\epsemb{\efun X}_*$} (b2);
\draw (1-\xoff,3) node (z3) {$0$}; 
\draw (1-\xoff,2) node (z2) {$0$}; 
\draw (3+\xofff,3) node (zz3) {$0$}; 
\draw (3+\xofff,2) node (zz2) {$0$}; 
\draw (2,3+\yoff) node (b4) {$0$}; 
\draw (3,3+\yoff) node (c4) {$0$}; 
\draw (2,1-\yoff) node (b0) {$0$}; 
\draw (3,1-\yoff) node (c0) {$0$}; 
\foreach \t/\h in {z2/a2, z3/a3, c4/c3, b4/b3, c1/c0, b1/b0, c3/zz3, c2/zz2} 
   \draw[cdarr] (\t) to (\h);
\end{tikzpicture}
\end{equation}

In \Cref{sec:classical}, we will make the connection to dimer models and perfect matchings
and show that the dimension vector of $\wtmod X$ is the weight of a corresponding flow,
when $X$ is a matching module. 
This explains the terminology ``weight module''. 
Both $\funK(T, \efun X)$ and $\wtmod X$ depend on the choice of the boundary vertex $\vstar$
in the definition of the functor~$\funJ$. 
In practice, this vertex can be any in the circular double quiver \eqref{eq:circle-quiv}. 

\begin{remark}\label{rem:overBorC}
For any $M\in\CM\algB$, we have, by Hom-tensor adjunction, natural isomorphisms
\[ 
  \Hom_\algB(T, \funJ M) \isom  \Hom_\ring(e_\vstar T, e_\vstar M) \isom  \Hom_\algC(T, \funJ_\algC M), 
\]
where $\funJ_\algC M= \Hom_\ring(e_\vstar\algC, e_\vstar M)$.
Hence (cf. \cite[Lemma~5.4]{JKS2}), $\funK(T, M)$ is also the cokernel of the restriction map 
$ 
  \Hom_\algB(T, M) \to  \Hom_\ring(e_\vstar T, e_\vstar M)
$ 
and of the natural map 
$ 
  \Hom_\algC(T, M) \to \Hom_\algC(T, \funJ_\algC M)
$. 
In particular, $\funK(T, M)$ can be computed from just the $\algC$-module structure of $M$ (and $T$)
and $\wtmod X$ can be computed from just the $\algC$-module structure of $\efun X$.
\end{remark}

\begin{lemma} \label{Lem:somefacts}
Let $X, Y\in \CM\algA$.
\begin{enumerate}
\item\label{itm:facts1}
$\funG X$, $\wtmod X$ and $\funK(T, \efun X)$ are (left) $\algA$-modules,
which are functorial in $X$.
\item\label{itm:facts2}
$\funG X$ is finite dimensional and  $\efun\funG X=0$. 
\item\label{itm:facts3}
$\wtmod X$ is finite dimensional and
$\efun \wtmod X =\efun \funK(T, \efun X) = \funJ \efun X/\efun X$. 
Hence $\wtmod X$ is not supported at vertex $\vstar$. 
\item\label{itm:facts5} 
If $Y\subspc X$ and $\efun Y=\efun X$, then $\funG X$ is a quotient of  $\funG Y$ 
and $\wtmod X$ is a quotient of $\wtmod Y$.
Both kernels are isomorphic to $X/Y$.
\end{enumerate}
\end{lemma}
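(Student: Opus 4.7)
The plan is to treat the four parts in order, using the diagram \eqref{eq:wtmod-diag} as the master diagram and the cokernel identification from \Cref{lem:eta-ker-coker}. Part \itmref{itm:facts1} is formal: each of $\funG X$, $\wtmod X$, $\funK(T,\efunct X)$ is defined as the cokernel of an $\algA$-module homomorphism, and functoriality in $X$ follows from the naturality of $\unitmap{}$, of $\efunct$, and of $\epsemb{}_*$.

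For \itmref{itm:facts2}, I would invoke \Cref{lem:eta-ker-coker} to identify $\funG X=\cok\unitmap{X}$ with $\Ext^1_\algB(T,\efunct\Syz X)$; by \Cref{lem:ExtB-ExtC} this embeds into the finite-dimensional $\Ext^1_\algC(T,\efunct\Syz X)$, so $\funG X$ is finite dimensional. Applying the exact functor $\efunct$ to \eqref{eq:coker-eta} and using the triangle identity $\counitmap{\efunct X}\compo\efunct\unitmap{X}=\idfun$ (with $\counitmap{\efunct X}$ always an isomorphism) shows that $\efunct\unitmap{X}$ is an isomorphism, and hence $\efunct\funG X=0$.

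For \itmref{itm:facts3}, the right-hand column of \eqref{eq:wtmod-diag} is a short exact sequence $0\to\funG X\to\wtmod X\to\funK(T,\efunct X)\to 0$ whose outer terms are finite dimensional: $\funG X$ by \itmref{itm:facts2}, and $\funK(T,\efunct X)$ because \eqref{eq:coker-epsM} embeds it in $\Hom(T,\funJ\efunct X/\efunct X)$, with $\funJ\efunct X$ and $\efunct X$ of equal rank by \eqref{eq:isomPJ-rkB}. Hence $\wtmod X$ is also finite dimensional. Applying $\efunct$ to this column and to the middle row of \eqref{eq:wtmod-diag}, while using $\efunct\funG X=0$ and identifying $\efunct\Hom(T,\funJ\efunct X)$ with $\funJ\efunct X$ via $\counitmap{}$, yields $\efunct\wtmod X\isom\efunct\funK(T,\efunct X)\isom\funJ\efunct X/\efunct X$, which vanishes at $\vstar$ by \eqref{eq:PtoJ}.

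Finally, for \itmref{itm:facts5}, the inclusion $Y\hookrightarrow X$ together with $\efunct Y=\efunct X$ produces commuting squares between the defining sequences of $\funG Y,\funG X$ and of $\wtmod Y,\wtmod X$, in which the middle vertical maps are the identity on $\Hom(T,\efunct X)$ and on $\Hom(T,\funJ\efunct X)$ respectively. A Snake Lemma chase, with left vertical the injection $Y\hookrightarrow X$ of cokernel $X/Y$ and middle vertical an isomorphism, then delivers surjections $\funG Y\twoheadrightarrow\funG X$ and $\wtmod Y\twoheadrightarrow\wtmod X$, each with kernel $X/Y$. The only genuinely substantive step is the finite-dimensionality of $\funG X$ in \itmref{itm:facts2}, which relies on the delicate transit between $\Ext^1_\algB$ and $\Ext^1_\algC$; the remaining arguments are routine diagram chases once the identifications in \eqref{eq:wtmod-diag} have been made.
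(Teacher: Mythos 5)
Your proof is correct and follows essentially the same route as the paper: it rests on the same master diagram \eqref{eq:wtmod-diag} and on \Cref{lem:eta-ker-coker} for the finiteness and vanishing statements in \itmref{itm:facts2}, then deduces \itmref{itm:facts3} by applying the exact functor $\efunct$ to the relevant sequences. The only differences are cosmetic: you obtain $\efunct\funG X=0$ from the adjunction triangle identity rather than from $\efunct\Ext^1_\algB(T,-)=\Ext^1_\algB(\algB,-)=0$, and you phrase \itmref{itm:facts5} as a Snake Lemma chase where the paper invokes the Third Isomorphism Theorem — the same argument in different clothing.
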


\begin{proof} 
\itmref{itm:facts1} 
The right action of $A=\End(T)\op$ on $T$ makes $\Hom(T,M)$ 
a left $A$-module for any $B$-module $M$.
Furthermore, both $\unitmap{X}$ and $\epsemb{\efun X}_*$ are $A$-module maps.
Thus $\funG X$, $\wtmod X$ and $\funK(T, \efun X)$ are all defined as cokernels of 
$A$-module maps, hence they are all $A$-modules.
All three are functorial because they are cokernels of natural transformations.

\medskip\itmref{itm:facts2} 
Since $\funG X=\cok\unitmap{X}$, both follow from \Cref{lem:eta-ker-coker}
(see \Cref{rem:eta-ker-coker}).

\medskip\itmref{itm:facts3} 
First $\wtmod X$ is an extension of $\funK(T, \efun X)$ by $\funG X$,
and these are both finite-dimensional, by \Cref{def:KTM} and \itmref{itm:facts2}.

Next, since $\efunct$ is exact, applying it to 
the right-hand vertical sequence in \eqref{eq:wtmod-diag}
gives $\efunct \wtmod X\isom \efunct \funK(T, \efun X)$
under the induced canonical map,
because $\efunct \funG X=0$ by \itmref{itm:facts2}.
Then applying $\efunct$ to the middle vertical sequence in \eqref{eq:wtmod-diag},
gives that $\efun \funK(T, \efun X) = \funJ \efun X/\efun X$.

Finally recall, from \eqref{eq:PtoJ}, that $\funJ \efun X$ and $\efun X$ agree at vertex $\vstar$,
so their quotient vanishes there.

\medskip\itmref{itm:facts5} 
This follows from the Third Isomorphism Theorem, 
using the top and middle rows of \eqref{eq:wtmod-diag}, respectively.
\end{proof}

\begin{lemma} \label{Lem:kerwt} 
For any $M\in\CM\algB$, we have $\wtmod \Hom(T, M)=\funK(T, M)$.
In particular, $\wtmod \Hom(T, \modJ)=\funK(T, \modJ)=0$. 
\end{lemma}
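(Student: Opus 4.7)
The plan is to unwind the definition of $\wtmod$ when $X = \Hom(T, M) = \Radj M$, and then separately run a direct computation for $M = \modJ$.

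First I would observe that the unit map $\unitmap{\Radj M}\colon \Radj M \to \Radj\efunct\Radj M$ is always an isomorphism. Indeed, the counit $\counitmap{M}\colon \efunct\Radj M \to M$ is an isomorphism (as recorded at the start of \S3.1: $e\Hom_\algB(T,M) = \Hom_\algB(\algB,M) = M$), so $\Radj\counitmap{M}$ is an isomorphism, and the triangle identity $\Radj\counitmap{M}\compo\unitmap{\Radj M} = \idmap_{\Radj M}$ forces $\unitmap{\Radj M}$ to be its inverse. Consequently $\funG X = \cok \unitmap{X} = 0$ for $X = \Radj M$.

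Now setting $X = \Radj M$ in diagram \eqref{eq:wtmod-diag}, the right-hand vertical short exact sequence $0 \to \funK(T,\efunct X)\to \wtmod X \to \funG X \to 0$ collapses to an isomorphism $\wtmod X \isom \funK(T, \efunct X)$. Using the identification $\efunct\Radj M \isom M$ supplied by $\counitmap{M}$, this reads $\wtmod \Hom(T,M) = \funK(T,M)$, proving the first equality. Equivalently, one can argue directly: since $\unitmap{X}$ is an iso, the composite $\epsemb{\efunct X}_*\compo \unitmap{X}$ defining $\wtmod X$ has the same cokernel as $\epsemb{M}_*$, which is exactly $\funK(T,M)$ by \eqref{eq:coker-epsM}.

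For the specialisation $M = \modJ$, the strategy is to show that the canonical embedding $\epsemb{\modJ}\colon \modJ \to \funJ\modJ$ is already an isomorphism, so that $\epsemb{\modJ}_*$ is an isomorphism and $\funK(T,\modJ) = \cok \epsemb{\modJ}_* = 0$. By \eqref{eq:isomPJ-rkB} together with $\rnk{\algB}\modJ = 1$ (established just after \eqref{eq:defPJ}), we already have $\funJ\modJ \isom \modJ$. More concretely, $e_\vstar\modJ = \Hom_\ring(e_\vstar\algB e_\vstar, \ring) = \Hom_\ring(\ring,\ring) = \ring$, so
\[
  \funJ\modJ = \Hom_\ring(e_\vstar\algB, e_\vstar\modJ) = \Hom_\ring(e_\vstar\algB, \ring) = \modJ,
\]
and $\epsemb{\modJ}$ is visibly the identity.

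There is no serious obstacle here: the lemma is a bookkeeping exercise with the adjunction $(\efunct,\Radj)$ and the explicit formula for $\modJ$. The only point requiring care is to correctly match $\unitmap{\Radj M}$ with the identity on $\Hom(T,M)$ under the canonical identification $\efunct\Radj M \isom M$, so that $\wtmod\Radj M$ is genuinely $\cok\epsemb{M}_*$ rather than a twisted variant of it.
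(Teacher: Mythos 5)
Your proof is correct and follows essentially the same route as the paper: observe that the unit $\unitmap{X}$ is an isomorphism for $X=\Hom(T,M)$, so $\funG X=0$ and the right-hand column of \eqref{eq:wtmod-diag} collapses to $\wtmod X\isom\funK(T,M)$, and then note that $\epsemb{\modJ}$ is an isomorphism so $\funK(T,\modJ)=0$ by \eqref{eq:coker-epsM}. Your extra details (the triangle-identity argument for the unit and the explicit computation $\funJ\modJ=\modJ$) just make explicit what the paper asserts.
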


\begin{proof} 
If $X= \Hom(T, M)$, then $\efun X=M$ and 
$\unitmap{X}\colon X\to \Hom(T, \efun X)$ is an isomorphism. 
Hence $\funG X=0$ and so the map $\wtmod X\to \funK(T, \efun X)$
in \eqref{eq:wtmod-diag} is an isomorphism.

When $M=\modJ$, the map $\epsemb{M}\colon M\to \funJ M$ in \Cref{def:KTM}
is an isomorphism and so $\funK(T, M)=0$, by \eqref{eq:coker-epsM}.
\end{proof}

It follows immediately from \eqref{eq:isomPJ-rkB} 
that $\wtmod \Hom(T, \funJ M)=0$, for any $M\in\CM\algB$.
 
\begin{proposition}\label{Lem:exactWt}
The functor $\wtmod\colon \CM\algA\to \fd \algA$ is exact.
\end{proposition}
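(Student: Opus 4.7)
The plan is to use the middle row of diagram \eqref{eq:wtmod-diag}, which reads
\[
0 \lra X \lra \Hom(T, \funJ \efun X) \lra \wtmod X \lra 0,
\]
and combine it with a Snake Lemma argument applied to a given short exact sequence in $\CM\algA$. The central technical step is to verify that the functor $F\colon X \mapsto \Hom(T, \funJ \efun X)$ is itself exact on $\CM\algA$; once this is in hand, everything else is a diagram chase.

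To establish exactness of $F$, I would invoke the adjunction identification from \Cref{rem:overBorC}, $\Hom_\algB(T, \funJ M) \isom \Hom_\ring(e_\vstar T, e_\vstar M)$, which exhibits $F$ as a composition of three exact functors. First, restriction $\efun\colon \CM\algA \to \CM\algB$ is exact. Second, the idempotent functor $M \mapsto e_\vstar M$ on $\CM\algB$ is exact, since $e_\vstar M$ is a direct summand of $M$. Third, $\Hom_\ring(e_\vstar T, -)$ is exact because $e_\vstar T$ is a direct $\ring$-summand of the free $\ring$-module $T$, hence itself finitely generated free over the local ring $\ring$.

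Given a short exact sequence $0 \to X' \to X \to X'' \to 0$ in $\CM\algA$, I then form the commutative diagram with this sequence on top and its image $0 \to F(X') \to F(X) \to F(X'') \to 0$ on the bottom, joined by the injections $X \hookrightarrow F(X)$ from \eqref{eq:wtmod-diag}, whose cokernels are the weight modules. Since all three vertical maps are injective, the Snake Lemma collapses to yield the desired short exact sequence $0 \to \wtmod X' \to \wtmod X \to \wtmod X'' \to 0$. Finite-dimensionality of each $\wtmod X$ is already guaranteed by \Cref{Lem:somefacts}\itmref{itm:facts3}, so $\wtmod$ is indeed exact as a functor into $\fd\algA$. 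The only step requiring genuine argument is the exactness of $F$, which the adjunction identification resolves immediately; the rest is formal.
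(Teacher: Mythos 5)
Your proof is correct, and it follows the same overall skeleton as the paper: both arguments reduce everything to the exactness of the functor $X\mapsto \Hom(T,\funJ\efunct X)$ and then run the Snake Lemma against the natural transformation $\idfun\to \Hom(T,\funJ\efunct -)$ whose cokernel is $\wtmod$. The difference lies in how that exactness is justified. The paper stays inside $\CM\algB$: it notes that $\funJ\efunct$ applied to a short exact sequence gives a sequence that is \emph{split}, because $\funJ\efunct X\isom\modJ^{\,\rkk[X]}$ is injective in $\CM\algB$, and split exactness survives the (in general non-exact) functor $\Hom(T,-)$. You instead use the adjunction identification of \Cref{rem:overBorC} to rewrite the composite as $X\mapsto \Hom_\ring(e_\vstar T, e_\vstar X)$, which is exact for elementary reasons ($e_\vstar(-)$ is an idempotent summand functor and $e_\vstar T$ is finitely generated free over the discrete valuation ring $\ring$, hence $\ring$-projective). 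Your route is slightly more elementary and sidesteps the injectivity/splitting observation entirely, while the paper's argument makes visible the structural reason — injectivity of $\funJ M$ in $\CM\algB$ — which is reused elsewhere (e.g.\ in \Cref{Thm:char-parfun} and \Cref{cor:syz-inj-proj}); both are complete, and your Snake Lemma step (all three vertical maps injective, so the kernel row vanishes and the cokernel row is the desired short exact sequence of weight modules) is exactly right.
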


\begin{proof}
For any short exact sequence in $\CM\algA$
\[
  \ShExSeq{X}{Y}{Z},
\]
applying $\funJ\efunct$ gives an exact sequence
\[
  \ShExSeq{\funJ \efunct X}{\funJ \efunct Y}{\funJ \efunct Z},
\]
which is split, because $\funJ \efunct X$ is injective in $\CM\algB$.
Hence the sequence remains exact on applying $\Hom(T, -)$.
Since the functoriality of $\wtmod$ 
comes from a natural transformation $\idfun\to \Hom(T, \funJ \efunct-)$,
we have a commutative diagram
\[
\begin{tikzpicture}[xscale=2.75, yscale=1.5]
\pgfmathsetmacro{\xoff}{0.45}
\pgfmathsetmacro{\xofff}{0.55}
\pgfmathsetmacro{\yoff}{0.75}
\draw (1,3) node (a3) {$X$}; 
\draw (1,2) node (a2) {$Y$}; 
\draw (1,1) node (a1) {$Z$}; 
\draw (2,3) node(b3) {$\Hom(T, \funJ \efunct X)$};
\draw (2,2) node(b2) {$\Hom(T, \funJ \efunct Y)$};
\draw (2,1) node(b1) {$\Hom(T, \funJ \efunct Z)$};
\draw (3,3) node(c3) {$\wtmod X$};
\draw (3,2) node(c2) {$\wtmod Y$};
\draw (3,1) node(c1) {$\wtmod Z$};
\draw (1-\xoff,3) node (z3) {$0$}; 
\draw (1-\xoff,2) node (z2) {$0$}; 
\draw (1-\xoff,1) node (z1) {$0$}; 
\draw (3+\xofff,3) node (zz3) {$0$}; 
\draw (3+\xofff,2) node (zz2) {$0$}; 
\draw (3+\xofff,1) node (zz1) {$0$}; 
\draw (1,3+\yoff) node (a4) {$0$}; 
\draw (2,3+\yoff) node (b4) {$0$}; 
\draw (1,1-\yoff) node (a0) {$0$}; 
\draw (2,1-\yoff) node (b0) {$0$}; 
\foreach \t/\h in {a3/a2, a2/a1, b3/b2, b2/b1, c3/c2, c2/c1, a1/b1, b1/c1, a2/b2, b2/c2, a3/b3, b3/c3} 
   \draw[cdarr] (\t) to (\h);
\foreach \t/\h in {z1/a1, z2/a2, z3/a3, a4/a3, b4/b3, a1/a0, b1/b0, c3/zz3, c2/zz2, c1/zz1} 
   \draw[cdarr] (\t) to (\h);
\end{tikzpicture}
\]
The Snake Lemma then implies that
\[
  \ShExSeq{\wtmod X}{\wtmod Y}{\wtmod Z}
\]
is exact, as required.
\end{proof} 

\begin{remark} \label{rem:WeightClass}
Note that the functors $\funG$ and $\funK(T, \efunct -)$ are not exact and
indeed a proof following the strategy of \Cref{Lem:exactWt}
would fail because the functor $\Hom(T, \efunct -)$ is not exact.
In practical terms, the exactness of $\wtmod$ means that the class $[\wtmod X]$ 
in the Grothendieck group $\Grot(\fd\algA)$ depends only on the class $[X]$
in the Grothendieck group $\Grot(\CM\algA)$ and there is an explicit formula:
see \eqref{eq:beta-wt}.
\end{remark}

\begin{remark}
More generally, the (bi)functor $\funK(-, -)$ is additive in both slots, 
but neither $\funK(M, -)$ nor $\funK(-, M)$ is exact, for most $M\in\CM\algB$.
Indeed, applying one of these functors to a short exact sequence leads to 
a complex, which typically has homology at the middle term. 
In \Cref{Sec:11}, we will prove precise statements on the exactness 
of these functors, which will help us to understand mutations of $\kapvec(T, M)$.
\end{remark}

\section{Grothendieck groups $\Grot(\fd \algA)$ and $\Grot(\CM\algA)$} \label{Sec:5}

In this section, we define (non-inverse) maps in both directions between the Grothendieck groups,
$\Grot(\fd\algA)$ and $\Grot(\CM\algA)$,
and discuss properties of the maps. 
We also denote by $\Grot(\proj\algA)$ the Grothendieck group of projective $\algA$-modules.

\begin{remark}\label{rem:lat-ranks}
Recall from \Cref{rem:gab-quiv}, that $\Grot(\fd\algA)\isom\ZZ^{Q_0}$ via the standard basis 
$\{ [S_i] \st i\in Q_0\}$ given by the classes of simple $\algA$-modules,
where $Q$ is the Gabriel quiver of $\algA$.
We will thus (often implicitly) make the identification $\Grot(\fd\algA)\isom\ZZ^{Q_0}$.

Note also that $\Grot(\proj\algA) \isom\ZZ^{Q_0}$ via the standard basis 
$\{ [Ae_i] \st i\in Q_0\}$ given by the classes of indecomposable projective $\algA$-modules,
but $\Grot(\proj\algA)$ is more naturally dual to $\Grot(\fd\algA)$,
with the basis of projectives being dual to the basis of simples.

Since projective $\algA$-modules are Cohen--Macaulay  
and by \Cref{prop:globdim},  $\algA$ has finite global dimension,
the inclusion $\proj\algA \subset \CM\algA$ induces
an isomorphism $\Grot(\proj\algA) \isom \Grot(\CM\algA)$.
Hence $\Grot(\CM\algA)$ also has a basis $\{ [Ae_i] \st i\in Q_0\}$
and so has the same rank as $\Grot(\fd\algA)$,
but again is more naturally dual to it. 
We will mostly work explicitly with $\Grot(\CM\algA)$ and only identify it
with $\Grot(\proj\algA)$ when necessary.
\end{remark}

Note that, we denote by $[X]$ the class of a module $X$ in a Grothendieck group,
such as $\Grot(\CM\algA)$ and $\Grot(\fd\algA)$, without distinguishing the group.
In the special case when $X=\Hom_\algB(T, M)\in \CM\algA$, 
for $M\in\CM\algB$, we will write $[X]$ as $[T, M]$ for short. 

\subsection{Maps between the Grothendieck groups} \label{Sec:seq}

Recall, from \Cref{Sec:4} (in particular, \eqref{eq:wtmod-diag}),
the functor $\wtmod\colon \CM\algA\to \fd \algA$ constructed through the short exact sequence
\begin{equation}\label{eq:defwtmod}
 \ShExSeq {X} {\Hom(T,\funJ\efunct X)}{\wtmod X}.
\end{equation}
Since $\wtmod$ is exact, by \Cref{Lem:exactWt}, it induces a map
\begin{equation}\label{eq:wt-class}
  \wt\colon \Grot(\CM\algA) \to \Grot(\fd\algA) \colon [X] \mapsto [\wtmod X].
\end{equation}
In particular, by \Cref{Lem:kerwt}, we have
\begin{equation}\label{eq:wtTM}
  \kapvec(T,M)= \wt[T, M].
\end{equation}
In the other direction, there is a natural map
\begin{equation}\label{eq:def-beta}
  \beta\colon \Grot(\fd\algA)\to \Grot(\CM\algA)
  \colon [X] \mapsto [\procov{X}]-[\Syz X]
\end{equation}
defined from a minimal partial presentation
\[ 
  \ShExSeq{\Syz X}{\procov{X}}{X}.
\]
We choose a minimal partial presentation just to have an initially well-defined map,
but in fact $\beta$ can defined equally well by any $\CM$-approximation.

\begin{lemma}\label{Lem:classvsresol}
Let $X$ be a finite dimensional $\algA$-module. 
For any short exact sequence 
$0\to Z\to Y\to X\to 0$, with $Z,Y\in \CM\algA$, 
we have $\beta[X]=[Y]-[Z]$. 
\end{lemma}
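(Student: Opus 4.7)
The plan is to show that $[Y]-[Z]\in\Grot(\CM\algA)$ is independent of the chosen short exact sequence resolving $X$, by comparing any such sequence with the chosen minimal partial presentation via a pullback construction.

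First I would fix the minimal partial presentation
\[
  \ShExSeq{\Syz X}{\procov{X}}{X}
\]
from the definition of $\beta$, and consider an arbitrary
\[
  \ShExSeq{Z}{Y}{X}
\]
with $Z,Y\in\CM\algA$. Since $\procov{X}$ is projective (hence in $\CM\algA$), I can form the pullback $E=\procov{X}\times_X Y$, which fits into two short exact sequences
\[
  \ShExSeq{Z}{E}{\procov{X}}
  \quadand
  \ShExSeq{\Syz X}{E}{Y}.
\]
The key step is then to observe that $E\in\CM\algA$. This is immediate from either sequence: in the first, $E$ is an extension of $\procov{X}\in\CM\algA$ by $Z\in\CM\algA$, and $\CM\algA$ is closed under extensions (being the category of modules which are free over $\ring$).

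Now applying $[-]$ to both sequences in $\Grot(\CM\algA)$ gives
\[
  [E]=[Z]+[\procov{X}]=[\Syz X]+[Y],
\]
and rearranging yields
\[
  [\procov{X}]-[\Syz X]=[Y]-[Z],
\]
i.e.\ $\beta[X]=[Y]-[Z]$, as required. The argument is essentially a Schanuel-style calculation, and the only point that requires the hypotheses of the lemma (rather than being formal) is the verification that each term in the pullback square lies in $\CM\algA$; this is where I'd expect the main (very mild) obstacle, but it follows at once from the extension-closure of $\CM\algA$ inside $\mmod\algA$.
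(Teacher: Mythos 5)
Your proof is correct and is essentially the paper's argument: the paper lifts $\procov{X}\to X$ through $Y\to X$ and observes that the resulting left-hand square is Cartesian, so its pullback object $Z\oplus\procov{X}$ sits in $0\to\Syz X\to Z\oplus\procov{X}\to Y\to 0$, which is exactly your pullback $E$ (your first sequence splits since $\procov{X}$ is projective). The only cosmetic difference is that you justify $E\in\CM\algA$ by extension-closure of $\CM\algA$ rather than via the splitting, which is equally valid.
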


\begin{proof}
As $\procov{X}$ is projective, the map $\procov{X}\to X$ in the partial presentation 
lifts to a map $\procov{X}\to Y$ which then induces a map of short exact sequences
\[
\begin{tikzpicture}[xscale=1.4, yscale=1.4] 
\draw (1,2) node(b1) {$\Syz X$};
\draw (2,2) node(b2) {$\procov{X}$};
\draw (3,2) node(b3) {$X$};
\draw (1,1) node (c1) {$Z$};
\draw (2,1) node (c2) {$Y$};
\draw (3,1) node (c3) {$X$};
\draw (0,1) node (c0) {$0$};
\draw (4,1) node (c4) {$0$.};
\draw (0,2) node (b0) {$0$};
\draw (4,2) node (b4) {$0$};
\foreach \t/\h in {b1/b2, b2/b3, c1/c2, c2/c3, b2/c2, b1/c1} 
   \draw[cdarr] (\t) to (\h);
\draw[equals] (b3) to (c3);
\foreach \t/\h in {b0/b1, b3/b4, c0/c1, c3/c4} 
   \draw[cdarr] (\t) to (\h);
\end{tikzpicture}
\]
Since the right-hand vertical map is an isomorphism, the left-hand square is Cartesian,
that is, it induces a short exact sequence in $\CM\algA$
\[
  \ShExSeq {\Syz X} {Z\oplus \procov{X}} {Y}
\]
so that $[\Syz X] + [Y] =  [Z] + [\procov{X}]$,
i.e. $[Y] - [Z] = [\procov{X}] - [\Syz X] = \beta[X]$,
as required.
\end{proof}

\newcommand{\Epair}[2]{\langle {#1},{#2}\rangle_{\mathsf E}}
\newcommand{\Tpair}[2]{\langle {#1},{#2}\rangle_{\mathsf T}}

\begin{remark}\label{rem:beta-stuff}
If we identify $\Grot(\CM\algA)$ with $\Grot(\proj\algA)$,
then $\beta\colon \Grot(\fd\algA)\to \Grot(\proj\algA)$ is induced by projective resolution.
If we write $\beta$ as a matrix $(\beta_{ij})$, 
using the (dual) bases of simples $[S_i]$ and projectives $[Ae_i]$,
then $\beta_{ij}=\Epair{S_i}{S_j}$ for the pairing on $\Grot(\fd\algA)$ given by
\[
 \Epair{X}{Y} = \sum_{\ell\geq 0} \dim\Ext^{\ell}(X,Y).
\]
Alternatively, $\beta_{ij}=\Tpair{S_i\op}{S_j}$, for the pairing on $\Grot(\fd\algA\op)\times\Grot(\fd\algA)$ given by
\[
 \Tpair{X}{Y} = \sum_{\ell\geq 0} \dim\Tor_{\ell}(X,Y).
\]
But this pairing can also be computed using the map $\beta\op\colon \Grot(\fd\algA\op)\to \Grot(\proj\algA\op)$
induced by projective resolution for the opposite algebra, showing that $\beta_{ij}=\beta_{ji}\op$.
Thus $\beta\op$ can be identified with $\beta\dual$, if we also identify 
$\Grot(\proj\algA\op)$ with $\Grot(\fd\algA)\dual$ and $\Grot(\fd\algA\op)$ with $\Grot(\proj\algA)\dual$,
using tensor product. 
\end{remark}

Define
\begin{equation} \label{eq:defrank}
  \rkk\colon \Grot(\CM\algA)\to  \ZZ 
  \colon[X]\mapsto \rnk{\algA} X
\end{equation}
which is well-defined because, in particular, 
$\rnk{\algA}$ is additive on short exact sequences in $\CM\algA$.
Let $\Mzero=\ker \rkk$.
Since $\rnk{\algA} \Hom(T, \modJ)=\rnk{\algB} \modJ =1$, we have
\begin{equation}\label{eq:decKCMA}
\Grot(\CM\algA)= \ZZ [T, \modJ] \oplus \Mzero.
\end{equation}

Also define
\begin{equation}\label{eq:def-delta}
  \delta = (\rnk{\algB} T_j)_{j\in Q_0} \in \ZZ^{Q_0}\isom \Grot(\fd\algA) 
\end{equation}
and
\begin{equation}\label{eq:def-Nstar}
  \Nstar = \bigl\{ x \st x_\vstar=0 \bigr\} \subspc \ZZ^{Q_0}\isom \Grot(\fd\algA),
\end{equation}
where $\vstar\in Q_0$ is the chosen boundary vertex, as in \Cref{Sec:4}.
Note that $\img\wt\subspc \Nstar$,
by \Cref{Lem:somefacts}\itmref{itm:facts3}.
Since $\delta_\vstar=\rnk{\algB} T_\vstar=1$, we have
\begin{equation}\label{eq:decKfdA}
\Grot(\fd\algA) =  \ZZ \delta \oplus \Nstar.
\end{equation}

\begin{remark}\label{rem:dual}
Using the identification $\Grot(\CM\algA)=\Grot(\proj\algA)$
and the natural duality between $\Grot(\proj\algA)$ and $\Grot(\fd \algA)$, 
as in \Cref{rem:lat-ranks},
we can see that the two maps $\delta\colon \ZZ\to \Grot(\fd\algA)\colon n\mapsto n\delta$ 
and $\rkk\colon \Grot(\CM\algA)\to \ZZ$ are dual, 
because $\rkk [T,T_j] =  \rnk{\algB} T_j$.

We also have an isomorphism $\Mzero\to\Nstar\dual$ by restriction of linear functions.
In practice this means you write an element of $\Mzero$ in the basis 
$\{[\algA e_j] \st j\in Q_0\}$ of $\Grot(\CM\algA)$
and drop the coefficient of $[\algA e_\vstar]$. 
\end{remark}

\begin{proposition} \label{Thm:2exactseq}
The following sequence is exact and, in particular, $\img\beta=\Mzero$.
\begin{equation}\label{eq:nat-ex-seq}
\begin{tikzpicture}[scale=2.4, baseline=(bb.base)]
\coordinate (bb) at (0,-0.05);
\pgfmathsetmacro{\zeps}{0.5}
\pgfmathsetmacro{\mideps}{0.2}
\draw (-1-\zeps, 0) node (b0){$0$};
\draw (-1, 0) node(b1) {$\ZZ$};
\draw (0, 0) node(b2) {$\Grot(\fd\algA) $};
\draw (1+\mideps, 0) node(b3) {$\Grot(\CM\algA)$};
\draw (2+\mideps, 0) node (b4) {$\ZZ$};
\draw (2+\mideps+\zeps, 0) node (b5) {$0$.};
\draw[cdarr] (b0) to (b1);
\draw[cdarr] (b1) to node [above] {\small $\delta$} (b2);
\draw[cdarr] (b2) to node [above] {\small $\beta$} (b3);
\draw[cdarr] (b3) to node [above] {\small $\rkk$} (b4);
\draw[cdarr] (b4) to (b5);
\end{tikzpicture}
\end{equation}
Moreover, the restriction $\beta\colon \Nstar\to\Mzero$ is an isomorphism,
with inverse given by the restriction $-\!\wt\colon \Mzero\to\Nstar$.
Hence $\img\wt=\Nstar$.
\end{proposition}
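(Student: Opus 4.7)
The plan is to derive the whole proposition from the single identity
\begin{equation}\label{eq:plan-key}
 \beta \wt[X] \,=\, \rkk[X]\cdot [T, \modJ] - [X], \quad\text{for all } X \in \CM\algA,
\end{equation}
obtained by applying \Cref{Lem:classvsresol} to the defining sequence \eqref{eq:defwtmod} of $\wtmod$: the middle term $\Hom(T, \funJ\efunct X)$ is isomorphic to $\Hom(T, \modJ)^{\rkk[X]}$ by \eqref{eq:isomPJ-rkB} combined with \Cref{Lem:ranks}, and hence lies in $\CM\algA$, so the lemma applies.

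First I would dispatch the easy structural parts: $\delta$ is injective because $\delta_\vstar = \rnk{\algB} T_\vstar = 1$; $\rkk$ is surjective because $\rkk[\algA e_\vstar] = 1$; and $\rkk\compo\beta = 0$ by additivity of rank on any partial presentation of a torsion module, so $\img\beta \subset \Mzero$.

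Next, restricting \eqref{eq:plan-key} to $\Mzero = \ker\rkk$ gives $\beta\wt = -\idfun_{\Mzero}$, and combined with $\img\wt \subset \Nstar$ from \Cref{Lem:somefacts}\itmref{itm:facts3} this exhibits $-\wt\colon \Mzero \to \Nstar$ as a split monomorphism with retraction $\beta|_{\Nstar}\colon\Nstar \to \Mzero$. Since $\Mzero$ and $\Nstar$ are both free abelian of rank $|Q_0|-1$ by \eqref{eq:decKCMA} and \eqref{eq:decKfdA}, any split monomorphism between them is automatically an isomorphism. Consequently $\beta|_{\Nstar}$ is the desired restriction isomorphism with inverse $-\wt|_{\Mzero}$, and in particular $\img\beta = \Mzero$ and $\img\wt = \Nstar$.

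It remains to establish $\ker\beta = \ZZ\delta$. Surjectivity of $\beta$ onto the free group $\Mzero$ together with injectivity of $\beta|_{\Nstar}$ implies that $\ker\beta$ is a rank-one direct summand of $\Grot(\fd\algA)$ meeting $\Nstar$ trivially, so once $\beta(\delta) = 0$ is known, primitivity of $\delta$ (its $\vstar$-coordinate is $1$) forces $\ker\beta = \ZZ\delta$. The hard part will be establishing the vanishing $\beta(\delta) = 0$ directly, which I would approach by applying $\Hom(T,-)$ to the short exact sequence $\ShExSeq{\modP}{\modJ}{\modJ/\modP}$ of $B$-modules (which remains exact after $\Hom(T,-)$ because $\Ext^1_B(T,\modP) = 0$) and, using \Cref{Lem:classvsresol} together with the explicit identification $\Hom(T_i, \modJ) \isom \Hom_\ring(e_\vstar T_i, \ring)$, computing the class in $\Grot(\fd\algA)$ of the resulting finite-dimensional $\algA$-module and comparing it with $\delta$. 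This is where the precise necklace structure of~$\algB$ enters decisively.
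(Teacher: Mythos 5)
Up to the point where you reduce everything to $\beta(\delta)=0$, your argument is essentially the paper's own proof: your key identity is exactly \eqref{eq:beta-wt}, obtained the same way from \eqref{eq:defwtmod} and \Cref{Lem:classvsresol}, and the restriction to $\Mzero$, the rank count, and the use of primitivity of $\delta$ all match. Your reorganisation of the middle step (a split monomorphism between free abelian groups of equal finite rank is an isomorphism, so $\beta|_{\Nstar}$ and $-\wt|_{\Mzero}$ are mutually inverse before $\ker\beta$ is identified) is correct and only a mild reordering of the paper's argument.

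The genuine gap is the step you yourself flag as hard, $\beta(\delta)=0$, and the route you sketch cannot deliver it. Applying $\Hom(T,-)$ to $\ShExSeq{\modP}{\modJ}{\modJ/\modP}$ does give a $\CM$-approximation of the finite-dimensional module $\Hom(T,\modJ/\modP)$, and \Cref{Lem:classvsresol} yields $\beta\bigl[\Hom(T,\modJ/\modP)\bigr]=[T,\modJ]-[T,\modP]$; but the class $\bigl[\Hom(T,\modJ/\modP)\bigr]$ is not $\delta$: by construction it is $[\funK(T,\modP)]=\kappadv(T,\modP)=\wt[T,\modP]$, which lies in $\Nstar$ (it vanishes at $\vstar$ by \eqref{eq:KP*M}), whereas $\delta_\vstar=1$. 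So the relation you would obtain is merely another instance of \eqref{eq:beta-wt} and says nothing about $\beta(\delta)$; moreover $[T,\modJ]-[T,\modP]$ is nonzero in general (e.g.\ for $\algB=\algC$ one has $\modJ\isom P_{n-k}$, a different indecomposable projective from $\modP$), so no comparison with $\delta$ can repair the computation. The missing idea — which, contrary to your closing remark, requires no necklace-specific input — is multiplication by $t$: for any $Z\in\CM\algA$ the map $t\colon Z\to Z$ is injective with $tZ\isom Z$, so the sequence $\ShExSeq{tZ}{Z}{Z/tZ}$ gives $\beta[Z/tZ]=[Z]-[tZ]=0$; taking $Z=\Hom(T,M)$ with $\rnk{\algB}M=1$ one has $\dim\Hom(T_i,M)/t\Hom(T_i,M)=\rnk{\ring}\Hom(T_i,M)=\rnk{\algB}T_i$, i.e.\ $[Z/tZ]=\delta$, whence $\beta(\delta)=0$ and the exactness at $\Grot(\fd\algA)$ follows as you argued.
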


\begin{proof}
First note that the map $\delta$ is injective, because the element $\delta$ is non-zero
and $\Grot(\fd\algA)$ is torsion-free.
Also $\rkk$ is surjective, because there are elements of $\Grot(\CM\algA)$ with $\rkk=1$,
such as $[T,\modJ]$.

To see that $\rkk\compo \beta=0$, consider any CM-approximation of $X\in \fd\algA$
\begin{equation}\label{eq:partialpres}
  \ShExSeq{Z}{Y}{X},
\end{equation}
such as a partial presentation. 
Then $\beta [X]=[Y]-[Z]$, by \Cref{Lem:classvsresol},
and $\rnk{\algA} X=0$, so $\rkk[Y]=\rkk[Z]$,
that is, $\rkk\beta [X]=0$, as required.

Furthermore, from \eqref{eq:defwtmod} and \Cref{Lem:classvsresol}, 
we see that, for any $Z\in\CM\algA$,
\begin{equation}\label{eq:beta-wt}
 \beta \wt [Z] = \rkk [Z] [T,\modJ] - [Z]
\end{equation}
because $\rnk{\algA} Z = \rnk{\algB} \efunct Z$,
by \Cref{Lem:ranks}\itmref{itm:rks3},
and so, by \eqref{eq:isomPJ-rkB},
\[
  \Hom(T,\funJ\efunct Z)\isom \Hom(T,\modJ)^{\rnk{\algA} Z}.
\]
When we restrict $\wt$ to $\Mzero=\ker \rkk$, we deduce from \eqref{eq:beta-wt} 
that $\beta\compo \wt = -\idmap_{\Mzero}$,
and thus that $\img\beta= \Mzero$, 
so the sequence \eqref{eq:nat-ex-seq} is exact at $\Grot(\CM\algA)$.

To see that $\beta\compo \delta=0$, that is, the element $\delta\in\ker\beta$,
observe that multiplication by $t \in \ring$ is injective on any $Z\in\CM\algA$,
so $Z\isom tZ$.
Hence $\beta[Z/tZ]=[Z]-[tZ]=0$ in $\Grot(\CM\algA)$.
On the other hand, $[Z/tZ]=(\rnk{\algA} Z)\,\delta$,
because
\[
 \dim e_iZ / t e_iZ
  = \rnk{\ring} e_iZ 
 = (\rnk{\algA} Z) (\rnk{\algB} T_i),
\]
by \Cref{Lem:ranks}\itmref{itm:rks4}.
Thus $\beta(\delta)=0$, as required.

Furthermore, since $\Grot(\CM\algA)$ and $\Grot(\fd\algA)$ have the same rank (\Cref{rem:lat-ranks}),
we see that $\ker\beta$ has rank~$1$.
But $\delta$ is primitive, since $\delta_\vstar=1$,
so $\delta$ must generate $\ker\beta$, 
that is, the sequence \eqref{eq:nat-ex-seq} is exact at $\Grot(\fd\algA)$.

In particular, this means that the restriction of $\beta$ to $\Nstar$ is injective, by \eqref{eq:decKfdA},
and so, since we already know that $\beta\compo \wt = -\idmap_{\Mzero}$,
the restrictions of $\beta$ to $\Nstar$ and $-\!\wt$ to $\Mzero$
must be mutual inverses, thereby completing the proof of the second part.
\end{proof}

\begin{remark}\label{rem:alt-proof}
In the proof of \Cref{Thm:2exactseq}, we deduced indirectly that,
after restriction to $\Nstar$, we have $\wt\compo \beta= -\idmap_{\Nstar}$.
It is also possible to prove this directly.

Consider any $X\in\fd\algA$ with $e_\vstar X=0$, so that $[X]\in\Nstar$.
Given a CM-approximation
\[
 0\lra Z \lraa{\lambda} Y\lra X\lra 0,
\]
the induced map $e_\vstar Z\to e_\vstar Y$ is an isomorphism,
inducing an isomorphism 
\[
\Hom(T, \funJ\efunct Z) \isom \Hom(T, \funJ\efunct Y).
\]
Thus, following \eqref{eq:defwtmod}, we have the following commutative diagram, 
\[
\begin{tikzpicture}[xscale=3.2,yscale=1.6]
\draw (-0.5, 2) node (b0){0};
\draw (0,2) node(b1) {$Z$};
\draw (1,2) node(b2) {$\Hom(T, \funJ\efunct Z)$};
\draw (2,2) node(b3) {$\wtmod Z$};
\draw (2.5,2) node (b4) {$0$};
\draw (-0.5,1) node (c0){0};
\draw (0,1) node(c1) {$Y$};
\draw (1,1) node(c2) {$\Hom(T, \funJ\efunct Y)$};
\draw (2,1) node(c3) {$\wtmod Y$};
\draw (2.5,1) node (c4) {$0$.};
\draw[cdarr] (b1) to node[auto]{$\lambda$}  (c1);
\draw[cdarr] (b2) to node[auto]{$\isom$} (c2);
\draw[cdarr] (b3) to node[auto]{$\wtmod \lambda$}  (c3);
\foreach \t/\h in {b0/b1, b1/b2, b2/b3, b3/b4, c0/c1, c1/c2, c2/c3, c3/c4, b1/c1}
  \draw[cdarr] (\t) to (\h);
\end{tikzpicture}
\]  
By the Snake Lemma, $\ker\wtmod\lambda \isom \cok \lambda =X$ and so
\[
 \wt \beta [X] 
 = \wt [Y] - \wt [Z] 
 = [\wtmod Y] - [\wtmod Z]
 = -[\ker\wtmod\lambda]
 =-[X].
\]
\end{remark}

\begin{remark}\label{rem:unnat-ex-seq}
As well as the natural (i.e.~choice independent) exact sequence \eqref{eq:nat-ex-seq},
we have an exact sequence in the reverse direction,
which does depend on the choice of boundary vertex $\vstar\in Q_0$,
\begin{equation}\label{eq:unnat-ex-seq}
\begin{tikzpicture}[scale=2.5, baseline=(bb.base)]
\coordinate (bb) at (0,-0.05);
\pgfmathsetmacro{\zeps}{0.5}
\pgfmathsetmacro{\mideps}{0.2}
\draw (-1-\zeps, 0) node (b0){$0$};
\draw (-1, 0) node(b1) {$\ZZ$};
\draw (0, 0) node(b2) {$\Grot(\CM\algA)$};
\draw (1+\mideps, 0) node(b3) {$\Grot(\fd\algA)$};
\draw (2+\mideps, 0) node (b4) {$\ZZ$};
\draw (2+\mideps+\zeps, 0) node (b5) {$0$.};
\draw[cdarr] (b0) to (b1);
\draw[cdarr] (b1) to node [above] {\small $[T,\modJ]$} (b2);
\draw[cdarr] (b2) to node [above] {\small $\wt$} (b3);
\draw[cdarr] (b3) to node [above] {\small $[T,\modP]$} (b4);
\draw[cdarr] (b4) to (b5);
\end{tikzpicture}
\end{equation}
Here, as before, we just write $[T,\modJ]$ for the map 
$\ZZ\to\Grot(\CM\algA)\colon n \mapsto n[T,\modJ]$.
We also identify the element $[T,\modP]=[\algA e_\vstar]\in \Grot(\proj\algA)$
with the corresponding (dual) map $\Grot(\fd\algA)\to\ZZ\colon x\mapsto x_\vstar$,
whose kernel is $\Nstar$.
Thus we have already seen that $\img\wt=\Nstar$, i.e.~ the sequence \eqref{eq:unnat-ex-seq}
is exact at $\Grot(\fd\algA)$.
On the other hand, the sequence is exact at $\Grot(\CM\algA)$ by \Cref{Lem:kerwt},
since we know that $\ker\wt$ has rank~$1$ and $[T,\modJ]$ is primitive,
because $\rkk[T,\modJ]=1$, or equally from the splitting~\eqref{eq:decKCMA}.
Note also that, as a map, $[T,\modP]$ is surjective, because 
$[T,\modP]\delta = \delta_\vstar =1$,
or equally from the splitting~\eqref{eq:decKfdA}.
\end{remark}


From \Cref{Thm:2exactseq} and \eqref{eq:beta-wt},
we have an isomorphism
\begin{equation}\label{eq:wt-hat}
 \wthat\colon \Grot(\CM\algA) \to \ZZ\oplus \Nstar\colon [Z]\mapsto (\rkk[Z],\wt[Z])
\end{equation}
with inverse
\begin{equation}\label{eq:wt-hat-inv}
 \betahat\colon \ZZ\oplus \Nstar \to \Grot(\CM\algA) \colon (r, [X]) \mapsto r[T,\modJ]-\beta[X].
\end{equation}
Thus we can enhance \eqref{eq:wtTM}, using \Cref{Lem:ranks}\itmref{itm:rks1}, to
\begin{equation}\label{eq:wthatTM}
  \wthat\,[T,M]=(\rnk{B} M,\kapvec(T,M)).
\end{equation} 

Note that $\Grot(\fd\algA)\isom \ZZ^{Q_0}$. For any $d\in \Grot(\fd\algA)$, 
writing $d=(d_i)_{i\in Q_0}$,
we can define a partial order on $\Grot(\fd\algA)$ by 
\[
  d \leq d' 
  \quad\text{if $d_i\leq d_i'$, for all $i\in Q_0$}, 
\] 
and can also restrict this partial order to $\Nstar\subset\Grot(\fd\algA)$.

We can then use the isomorphism  $\wthat\colon\Grot(\CM\algA) \to \ZZ\oplus \Nstar$ 
from \eqref{eq:wt-hat} to induce (lexicographically) a partial order on $\Grot(\CM\algA)$.
Explicitly, this gives the following.

\begin{definition}\label{def:part-ord}
For $\lambda, \lambda'\in \Grot(\CM\algA)$, we say $\lambda\leq \lambda'$ 
if $\rkk\lambda <\rkk\lambda'$ or $\rkk\lambda =\rkk\lambda' $ and $\wt\lambda \leq \wt\lambda'$.
\end{definition}

An alternative would be to use the isomorphism $\beta\colon\Nstar\to\Mzero$ 
to induce a partial order on $\Mzero$ and then use the splitting $\Grot(\CM\algA)=\ZZ\oplus \Mzero$
to give a lexicographic order on $\Grot(\CM\algA)$.
This is not quite the same, because the inverse of $\wt$ is $-\beta$,
so the partial order on elements of fixed rank is reversed.

\subsection{Projective resolutions of simples}\label{subsec:projresol}

Let  $T\moreq  \textstyle\bigoplus_{j\in Q_0} T_j$ be 
a (not necessarily basic) cluster tilting object in $\GP\algB$.
For any vertex $i$ of the Gabriel quiver $Q$ of $A=\End(T)\op$
(see \Cref{rem:gab-quiv}), let 
\begin{equation}\label{eq:EiFi}
  E_i=\bigoplus_{j\from i} T_j 
  \quadand
  F_i=\bigoplus_{j\to i} T_j .
\end{equation}
When $i$ is an interior vertex, we know that $E_i, F_i\in \add (T/ T_i)$ 
and that they have no common summands, 
because $Q$ has no loops or 2-cycles
at mutable vertices,
since $\GP\algB$ has a cluster stucture, 
by \Cref{rem:cluster-structure}.
In particular, we have the following two mutation sequences for $T_i$,
\begin{equation}\label{eq:mut-two}
\begin{aligned}
 &\ShExSeq {T_i^*} {E_i} {T_i^{\phantom*}}, \\
 &\ShExSeq {T_i^{\phantom*}} {F_i } {T_i^*}.
\end{aligned}
\end{equation}
Recall, from \Cref{rem:beta-stuff}, 
that the map $\beta\colon \Grot(\fd\algA) \to \Grot(\CM\algA)$ of \eqref{eq:def-beta}
can also be given by projective resolution.

\begin{proposition} \label{Prop:projres}
Write $\Radj=\Hom(T, -)$.
From $E_i,F_i$ as in \eqref{eq:EiFi}, 
we obtain projective resolutions of the simple $\algA$-module $S_i$ as follows.
\begin{enumerate}
\item\label{itm:pr1}
If $i$ is an interior vertex, then the resolution is
\begin{equation}\label{eq:projres1}
 0 \lra \Radj T_i \lra \Radj F_i \lra \Radj E_i \lra \Radj T_i \lra S_i \lra 0.
 \end{equation}
Consequently, $\projdim S_i=3$ and 
\[
  \beta [S_i] = [T, F_i] - [T, E_i]
 = \sum_{j\to i}\, [T, T_j] - \sum_{i\to j}\, [T, T_j].
\]
\item\label{itm:pr2}
If $i$ is a boundary vertex, then the resolution is 
 \begin{equation}\label{eq:projres2}
 0\lra \Radj K_i \lra \Radj E_i \lra \Radj T_i \lra S_i \lra 0,
\end{equation}
for some $K_i\in \add T$.
Consequently, $\projdim S_i\leq 2$ and 
\[\beta [S_i]=[T, T_i] -[T, E_i]+[T, K_i].\]
\end{enumerate}
\end{proposition}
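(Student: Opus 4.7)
The plan is to realise both projective resolutions by applying the equivalence $\funF\colon \add T \to \proj\algA$ from \eqref{eq:add-proj} to suitable short exact sequences in $\GP\algB$. In both cases, the starting point is the minimal projective presentation of $S_i$: since arrows out of $i$ in $Q$ correspond (by \Cref{rem:gab-quiv}) to irreducible maps $T_j\to T_i$ in $\add T$ and $Q$ has no loops at $i$, the image of the induced map $\funF E_i\to \funF T_i$ is precisely $\rrad(\algA e_i)$, so the cokernel is $S_i$.

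For part \itmref{itm:pr1}, I would apply $\funF$ to the first mutation sequence in \eqref{eq:mut-two}. Rigidity of $T$ gives $\Ext^1(T,E_i)=0$, hence $0\to \funF T_i^*\to \funF E_i\to \funF T_i$ is exact, which together with the cokernel calculation above yields the four-term exact sequence $0\to \funF T_i^*\to \funF E_i\to \funF T_i\to S_i\to 0$. Applying $\funF$ to the second mutation sequence in \eqref{eq:mut-two} and using $\Ext^1(T,T_i)=0$ produces $0\to \funF T_i\to \funF F_i\to \funF T_i^*\to 0$, a projective resolution of $\funF T_i^*$ of length one. Splicing these yields \eqref{eq:projres1}, and since $T_i^*\notin \add T$ the resolution does not collapse, so $\projdim S_i=3$. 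The formula for $\beta[S_i]$ then follows by applying \Cref{Lem:classvsresol} to $\ShExSeq{\rrad \funF T_i}{\funF T_i}{S_i}$ and substituting the auxiliary identities $[\rrad \funF T_i]=[\funF E_i]-[\funF T_i^*]$ and $[\funF T_i^*]=[\funF F_i]-[\funF T_i]$ in $\Grot(\CM\algA)$, which collapse to $\beta[S_i]=[\funF F_i]-[\funF E_i]=[T,F_i]-[T,E_i]$.

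For part \itmref{itm:pr2}, the presentation $\funF E_i\to \funF T_i\to S_i\to 0$ remains valid, so the task reduces to identifying the second syzygy $K'=\ker(\funF E_i\to \funF T_i)\in \CM\algA$. Since $\gldim\algA\leq 3$ by \Cref{prop:globdim}, $\projdim K'\leq 1$, and \Cref{prop:projdim} supplies $M\in \CM\algB$ with $K'\isom \funF M$ (concretely, $M=\efunct K'=\ker(E_i\to T_i)$). The crux is to upgrade this to $M\in \add T$, equivalently to show $\projdim S_i\leq 2$: the plan is to exploit that, for boundary $i$, $T_i=\algB e_i$ is projective in the larger category $\CM\algB$, so an $\add T$-presentation of $M$ (furnished by \Cref{Lem:appseq}) splits via the rigidity $\Ext^1(T,T)=0$, producing the desired $K_i\in \add T$. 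Once $K_i$ is identified, iterating \Cref{Lem:classvsresol} through the nested short exact sequences extracted from the length-two resolution \eqref{eq:projres2} yields $\beta[S_i]=[T,T_i]-[T,E_i]+[T,K_i]$. The main obstacle is precisely the rigidity-splitting step for boundary $i$; the interior case is essentially formal once both mutation sequences are in play.
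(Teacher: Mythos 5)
Your part \itmref{itm:pr1} is correct and is essentially the paper's argument: apply $\funF$ to both mutation sequences \eqref{eq:mut-two}, use $\Ext^1(T,E_i)=0$ and $\Ext^1(T,T_i)=0$, splice at $\funF T_i^*$, and read off $\beta[S_i]$; your identification of the cokernel as $S_i$ via ``image $=\rrad(\algA e_i)$'' is a legitimate variant of the paper's observation that the cokernel is $\Ext^1(T,T_i^*)\isom S_i$, and your ``$T_i^*\notin\add T$'' reason for $\projdim S_i=3$ can be completed by Schanuel plus full faithfulness of $\Radj$.

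In part \itmref{itm:pr2}, however, the step you yourself call the crux has a genuine gap. You propose to show $M=\ker(E_i\to T_i)\in\add T$ by taking an $\add T$-presentation $0\to T''\to T'\to M\to 0$ from \Cref{Lem:appseq} and claiming it ``splits via the rigidity $\Ext^1(T,T)=0$'' because $T_i$ is projective in $\CM\algB$. But splitting that sequence requires $\Ext^1_\algB(M,T'')=0$, and rigidity of $T$ says nothing about extensions involving $M$ until you already know $M\in\add T$; likewise ``$T_i$ is $\algB$-projective'' does not by itself produce the vanishing. Computing with the sequence $0\to M\to E_i\to \rrad T_i\to 0$, one sees that $\Ext^1(T'',M)$ (equivalently, via 2-CY, the obstruction to your splitting) is exactly the cokernel of $\Hom(T'',E_i)\to\Hom(T'',\rrad T_i)$ -- so what is really needed is that $E_i\to\rrad T_i$ is a (surjective) right $\add T$-approximation, i.e.\ the fact that every radical map $T'\to T_i$ with $T'\in\add T$ factors through the arrow maps. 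This is precisely the ingredient the paper uses (see \eqref{Eq:projres3}): since $f\colon E_i\to\rrad T_i$ is an $\add T$-approximation, $f_*=\Hom(T,f)$ is surjective, so the long exact sequence gives $\Ext^1(T,K_i)=0$, and then $K_i\in\add T$ (the paper concludes this from the cluster-tilting property; if you prefer your splitting route, you can instead pass through $\Ext^1_\algC(T,K_i)=0$, use 2-CY to get $\Ext^1(K_i,T)=0$, and then split your $\add T$-presentation -- but either way the approximation/radical-generation property is the engine). Ironically, you already invoked exactly this fact in your first paragraph to identify the image of $\funF E_i\to\funF T_i$ with $\rrad(\algA e_i)$; had you carried it into part \itmref{itm:pr2} and applied it to $\Hom(T,\rrad T_i)$ rather than reaching for rigidity, the argument would close. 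As written, the key vanishing is asserted by a mechanism that does not deliver it, so the proof of \eqref{eq:projres2} (and hence of $\projdim S_i\leq 2$ and the formula for $\beta[S_i]$) is incomplete.
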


\begin{proof}
\itmref{itm:pr1} 
Applying $\Hom(T, -)$ to the two mutation sequences,
and splicing the resulting sequences at $\Radj T_i^*$, 
gives the projective resolution \eqref{eq:projres1},
because $\Ext^1(T, T_i^*)=S_i$.
The resolution is minimal because adjacent terms have no common summands
and thus $\projdim S_i=3$.

\itmref{itm:pr2}  
When $i$ is a boundary vertex, $T_i$ is a projective $\algB$-module and 
the minimal $\add T$-approximation of $\rrad T_i$  has the following form
\begin{equation}\label{Eq:projres3}
0\lra K_i \lra E_i \lraa{f} \rrad T_i\lra 0.
\end{equation}
Note that $f$ is surjective, because $_{\algB}\algB\in \add T$.
Applying $\Hom(T, -)$ leads to 
\[
0\lra \Hom(T, K_i) \lra \Hom(T,  E_i) \lraa{f_*} \Hom(T, \rrad T_i)\lra \Ext^1(T, K_i)\lra 0.
\]
As $f$ is an $\add T$-approximation, $f_*$ is surjective. Hence $\Ext^1(T, K_i)=0$ and so $K_i\in \add T$. 
Applying $\Hom(T, -)$ to the sequence \eqref{Eq:projres3} 
and to the embedding $\rrad T_i  \lra T_i$,
and splicing the results at $\Radj(\rrad T_i)$, gives the projective resolution \eqref{eq:projres2},
because the cokernel of the (injective) map 
$\Hom(T,  \rrad T_i )\to \Hom(T, T_i)$ is isomorphic to $ S_i$. 
\end{proof}

\Cref{Prop:projres}\itmref{itm:pr1} shows that, 
in the bases of simples and projectives, 
the map $\beta$ is given on interior vertices
by the cluster exchange matrix (up to a sign).
In particular, this means that it is a candidate for Fock-Goncharov's $p^*$ map, as in \cite[(2.1)]{GHK}.

\section{Cluster characters, partition functions and flow polynomials}\label{Sec:6}
\newcommand{\ddel}[1]{{d_{#1}}}

In this section, we construct various cluster characters on $\GP B$,
with natural extensions to $\CM\algB$.  
In particular, we define a generalised partition function $\ptfn$ and 
a generalised flow polynomial $\fp$. 
In \Cref{sec:classical}, we will prove that $\ptfn_M$ and $\fp_M$ 
coincide with combinatorially defined partition functions and flow 
polynomials, when $M$ and all the indecomposable summands of $T$ have rank $1$.
This justifies the names of these cluster characters.  

\subsection{Fu--Keller's cluster character}\label{sec:FKclucha}

Following Fu--Keller~\cite[Def.~3.1]{FK} and Palu~\cite{Palu}, 
what we mean by a `cluster character' is as follows. 

\begin{definition}\label{def:clucha}
 A \emph{cluster character} on $\GP\algB$ with values in a commutative ring $H$
 is map $\Phi\colon \GP\algB\to H\colon M\mapsto \Phi_M$ such that 
 \begin{enumerate}
 \item\label{itm:cc1}
 if $M$ and $N$ are isomorphic, then $\Phi_M=\Phi_N$,
 \item\label{itm:cc2}
 $\Phi_{M\oplus N}=\Phi_M\Phi_N$, for any $M, N$,
 \item\label{itm:cc3}
 if $\dim\Ext^1(M, N)=1$ and the short exact sequences
\begin{equation}\label{eq:two-ses}
 \ShExSeq {M} {E} {N}
\quadand
 \ShExSeq {N} {F} {M}
\end{equation} 
are non-split, then $\Phi_M\Phi_N=\Phi_E +\Phi_F$.
\end{enumerate}
\end{definition}

\begin{remark}\label{rem:new-clucha}
If $\Phi\colon \GP\algB\to H$ is a cluster character 
and $\alpha\colon H\to H'$ is any ring homomorphism,
then $\alpha\compo \Phi$ is also a cluster character.
We will often consider the case when $H=\CC[L]$, 
the ring of formal Laurent polynomials with exponents in a lattice~$L$. 
In this case, any lattice map $\lambda\colon L\to L'$
induces a `monomial change of variables'  
$\CC[\lambda]\colon \CC[L]\to\CC[L']\colon \xvar^\ell\mapsto \xvar^{\lambda(\ell)}$.
We will write $\pbfun{\lambda}{\Phi}$ for $\CC[\lambda]\compo \Phi$
and for the map $(-1)\colon L\to L$, i.e. multiplication by $-1$,
we will write $\widetilde\Phi$ for $\pbfun{(-1)}{\Phi}$.

Note that $\CC[L]$ is the coordinate ring of an algebraic torus $\cantor{L}$,
whose character lattice is naturally $L$. 
We can thus regard the formal monomial $\xvar^\ell\in \CC[L]$ 
as the function $\cantor{L}\to \CC$ given by the character $\ell\in L$,
so that the formal multiplication rule $\xvar^\ell \xvar^m=\xvar^{\ell+m}$ becomes a true formula.
We can then see that $\widetilde\Phi$ is the pull-back of $\Phi$
along the involution $\cantor{L}\to \cantor{L}\colon \xvar\mapsto \xvar^{-1}$.
\end{remark}

We adapt Fu--Keller's cluster character \cite[Rem.~3.5]{FK} to $\GP\algB$,
noting that $\GP\algB$ is not Hom-finite, so is strictly outside their declared context,
but it is stably Hom-finite, which is all that is needed
to lift Palu's construction from the stable category. 
Thus, for any cluster tilting object $T$ in $\GP\algB$, we define 
\begin{equation}\label{eq:FK-CC}
  \Phi^T\colon \CM\algB\to \CC[\Grot(\CM\algA)] \colon M\mapsto \Phi^T_{M}
 =\xvar^{[T, M]}\sum_{d} \euler \bigl( \qvGr{d}\Ext^1(T, M) \bigr)\xvar^{-\beta(d)}, 
\end{equation}
where $\qvGr{d}\Ext^1(T, M)$ is the Grassmannian of $\algA$-submodules of $\Ext^1(T, M)$ 
of dimension vector $d\in\ZZ^{Q_0}$
and $\euler$ is the Euler characteristic. 
By \Cref{lem:ExtBinExtC}, extension groups in $\CM\algB$ are finite
dimensional, and so $\Phi$ is well defined and the sum is finite.
Although the map $\Phi$ is defined on $\CM\algB$,
it can only meaningfully be a cluster character when restricted to $\GP \algB$.  

\begin{remark}\label{rem:FKstuff}
In \cite{FK}, the exponents $[T, M]$ and $\beta(d)$ are written
in the basis of projectives $[Ae_i]=[T, T_i]$
(cf. \Cref{rem:beta-stuff}), 
that is, using $\Grot(\CM\algA)\isom\Grot(\proj\algA)$.
Thus \eqref{eq:FK-CC} is written explicitly in the variables $\xvar_i=\xvar^{[Ae_i]}$.
In addition, using $\Grot(\proj\algA) \isom\Grot(\add T)$,
the leading exponent $[T,M]$ is identified with the `index' of $M$ \cite[\S4]{FK}, 
that is, the class of an $\add T$-presentation of~$M$.
Note that \cite[\S4]{FK} uses the term `$g$-vector' to mean just the mutable
part of the index, following \cite{FZ4}.
\end{remark}

The proof of \cite[Thm.~3.3]{FK} shows the following.

\begin{proposition} \label{prop:Phi-clu-char}
$\Phi^T$ is a cluster character, when restricted to $\GP(B)$.
\end{proposition}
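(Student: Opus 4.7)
The plan is to verify the three conditions of \Cref{def:clucha} for $\Phi^T$ restricted to $\GP\algB$, following the strategy of \cite{FK}.

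Conditions \itmref{itm:cc1} and \itmref{itm:cc2} are formal. Condition \itmref{itm:cc1} is immediate because $[T,N] \in \Grot(\CM\algA)$ and the $\algA$-module $\Ext^1(T,N)$ depend only on the isomorphism class of $N$. For \itmref{itm:cc2}, additivity of $\Hom(T,-)$ and $\Ext^1(T,-)$ on direct sums gives $[T, M\oplus N] = [T,M] + [T,N]$ and $\Ext^1(T, M\oplus N) \isom \Ext^1(T,M) \oplus \Ext^1(T,N)$, so multiplicativity of $\Phi^T$ reduces to the standard Fubini-type identity
\[
\euler\bigl(\qvGr{d}(E \oplus F)\bigr) = \sum_{d'+d''=d} \euler(\qvGr{d'}E)\,\euler(\qvGr{d''}F),
\]
proved by stratifying $\qvGr{d}(E\oplus F)$ via the projection to $\qvGr{d''}F$ with affine-bundle fibres over submodule Grassmannians of $E$.

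The substantive content is condition \itmref{itm:cc3}. The first thing to check is that the proof in \cite{FK} (following \cite{Palu}) only really uses finiteness of $\sHom(X,Y)$ and $\Ext^1(X,Y)$ for $X,Y$ in the category, rather than genuine Hom-finiteness. Both hold for $\GP\algB$: extensions are finite dimensional by \Cref{lem:ExtB-ExtC}, and stable Homs by \Cref{Lem:ExtSHom}\itmref{itm:ESH3}. The 2-CY duality $D\Ext^1(M,N) \isom \Ext^1(N,M)$ is available because $\GP\algB$ is Frobenius 2-CY by \Cref{Thm:GPB3}\itmref{itm:GP3-3}. With these inputs, Palu's stratification argument runs unchanged: one partitions $\qvGr{d}\Ext^1(T, M\oplus N)$ into two loci according to whether a submodule $U$ contains $\mathrm{im}(\epsilon_*)$, where $\epsilon \in \Ext^1(M,N)$ generates the one-dimensional extension group of \eqref{eq:two-ses}, and identifies these loci, up to affine fibrations, with submodule Grassmannians of $\Ext^1(T,F)$ and $\Ext^1(T,E)$ respectively. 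Summing over $d$ with the Euler characteristic produces the sum $\euler(\qvGr{*}\Ext^1(T,E)) + \euler(\qvGr{*}\Ext^1(T,F))$ on the right-hand side of \itmref{itm:cc3}.

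The main obstacle is the bookkeeping of the leading monomial $\xvar^{[T,N]}$, which here lives in $\Grot(\CM\algA)$ rather than in $\Grot(\proj\algA)$ as in \cite{FK}. Under the identification $\Grot(\proj\algA) \isom \Grot(\CM\algA)$ of \Cref{rem:lat-ranks}, Fu--Keller's index for $N \in \GP\algB$ coincides with $[T,N]$. The requisite index identity $[T,E] = [T,M] + [T,N] - \beta[L]$, where $L$ is the submodule of $\Ext^1(T,M)$ singled out by the connecting map in the stratification, follows from \Cref{Lem:classvsresol} applied to the short exact sequence in $\CM\algA$ that is extracted from the long exact sequence obtained by applying $\Hom(T,-)$ to \eqref{eq:two-ses}. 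The analogous identity for $F$ then combines with the stratification to give $\Phi^T_M \Phi^T_N = \Phi^T_E + \Phi^T_F$, as required.
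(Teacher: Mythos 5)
Your proposal is correct and follows essentially the same route as the paper: the paper gives no detailed argument, simply observing that Fu--Keller's proof adapts once one knows $\GP\algB$ is stably Hom-finite with finite-dimensional $\Ext^1$ (\Cref{lem:ExtB-ExtC}, \Cref{Lem:ExtSHom}) and that $\Grot(\proj\algA)\isom\Grot(\CM\algA)$, which are exactly the ingredients you spell out. Your use of \Cref{Lem:classvsresol} to get the index identity $[T,E]=[T,M]+[T,N]-\beta[L]$ is a correct and welcome elaboration of that adaptation.
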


The following lemma is standard,
but we include a proof for completeness.

\begin{lemma}\label{lem:Phi-monom}
If $M\in\GP\algB$, then 
$\Phi^T_M$ is a monomial if and only if $M\in \add T$, 
in which case $\Phi^T_M= \xvar^{[T, M]}$.
In particular, this holds when $M$ is projective.
\end{lemma}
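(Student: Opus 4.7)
The plan is to argue the two directions of the equivalence separately, with most of the work going into the converse. The forward direction is essentially immediate from the definition: if $M\in\add T$, then by the rigidity of the cluster tilting object, $\Ext^1(T,M)=0$, so every quiver Grassmannian $\qvGr{d}\Ext^1(T,M)$ is empty for $d\neq 0$ while $\qvGr{0}(0)$ is a single point. Thus the sum in \eqref{eq:FK-CC} collapses to the $d=0$ term, which contributes $\euler=1$, and we get $\Phi^T_M=\xvar^{[T,M]}$.

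For the converse, I would assume $\Phi^T_M$ is a monomial and aim to deduce that $\Ext^1(T,M)=0$; since $T$ is cluster tilting in $\GP\algB$ and $M\in\GP\algB$, this vanishing will force $M\in\add T$. The first useful observation is that $T_\vstar=\algB e_\vstar$ is projective in $\CM\algB$, so $e_\vstar\Ext^1(T,M)=\Ext^1(T_\vstar,M)=0$. Consequently, every $\algA$-submodule $V\submod\Ext^1(T,M)$ has class $[V]\in\Nstar$, so the only classes $d$ that can contribute non-trivially to the sum lie in $\Nstar$.

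The crux of the argument is then to exploit the injectivity of $\beta$ on $\Nstar$, which is guaranteed by \Cref{Thm:2exactseq} (where in fact $\beta\colon\Nstar\to\Mzero$ is an isomorphism). This injectivity rules out cancellation among the $\xvar$-monomials $\xvar^{[T,M]-\beta(d)}$: distinct classes $d\in\Nstar$ produce distinct monomials in $\Phi^T_M$. Now the trivial submodule contributes $\xvar^{[T,M]}$ with coefficient $\euler(\qvGr{0}\Ext^1(T,M))=1$, and the full submodule contributes $\xvar^{[T,M]-\beta([\Ext^1(T,M)])}$ with coefficient $\euler(\qvGr{[\Ext^1(T,M)]}\Ext^1(T,M))=1$. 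If $\Ext^1(T,M)\neq 0$, these are two distinct nonzero $\xvar$-monomials appearing in $\Phi^T_M$, contradicting the monomial hypothesis. Hence $\Ext^1(T,M)=0$, and so $M\in\add T$ by the cluster tilting property.

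The main obstacle I anticipate is correctly ruling out unexpected cancellations in the defining sum — Euler characteristics of quiver Grassmannians need not be non-negative in general. This is why it is essential to confine the indexing classes to $\Nstar$ (via the projectivity of $T_\vstar$) and then invoke the injectivity of $\beta$ on $\Nstar$ from \Cref{Thm:2exactseq}; on all of $\Grot(\fd\algA)$, $\beta$ has a one-dimensional kernel spanned by $\delta$, which would allow coincidences. Finally, the stated consequence for projectives follows from our standing convention in \Cref{Sec:3} that $\algB$ is a summand of $T$, so every projective $\algB$-module lies in $\add T$.
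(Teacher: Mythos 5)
Your proof is correct and follows essentially the same route as the paper: read off from the Fu--Keller formula \eqref{eq:FK-CC} that being a monomial is equivalent to $\Ext^1(T,M)=0$, invoke the cluster tilting property to get $M\in\add T$, and use that $\algB$ is a summand of $T$ for the projective case. The paper treats the "monomial $\Rightarrow$ $\Ext^1(T,M)=0$" direction as immediate, whereas you carefully rule out cancellation by noting that all contributing classes lie in $\Nstar$ (since $e_\vstar\Ext^1(T,M)=\Ext^1(T_\vstar,M)=0$) and that $\beta$ is injective on $\Nstar$ by \Cref{Thm:2exactseq}; this is a legitimate and welcome filling-in of a step the paper leaves implicit.
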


\begin{proof}
In \eqref{eq:FK-CC}, the submodules $0$ and $\Ext^1(T, M)$
both give terms with coefficient~1.
Thus $\Phi^T_M$ is a monomial if and only $\Ext^1(T, M)=0$, 
which is equivalent to $M\in\add T$,
since $T$ is a cluster tilting object.
The formula is then immediate from \eqref{eq:FK-CC}.
Finally $B$ is a summand of $T$, so $\add\algB\subset\add T$, 
and so the condition holds for all projectives.
\end{proof}

\subsection{Generalised partition functions}\label{sec:gen-pt-fn}

For any $M\in \CM\algB$, define 
\begin{equation}\label{eq:sumrange}
\begin{aligned}
  \sumrange{M} 
  & = \{ [X] \in \Grot(\CM\algA) \st X \leq \Hom(T, M)\;\text{and}\; eX=M \} \\
  & = \{ [X] \in \Grot(\CM\algA) \st \algA\cdot M \leq X \leq \Hom(T, M) \},
\end{aligned}
\end{equation}
where the equality of the two sets follows from \Cref{Thm:EndA}\itmref{itm:EA6}.
For any $[X]\in \sumrange{M}$, 
by which we implicitly mean $X$ satisfies the conditions in \eqref{eq:sumrange},
the class, in $\Grot(\fd\algA)$,
\begin{equation}\label{eq:Dd-bij}
\ddel{[X]}= [X/(\algA\cdot M)] = \wt[\algA\cdot M] - \wt[X],
\end{equation}
depends only on the class $[X]$.
Furthermore, $\ddel{[X]}=\ddel{[Y]}$ if and only if $[X]=[Y]$. 
So there is a bijection $[X]\mapsto \ddel{[X]}$ between $\sumrange{M}$ 
and the set of classes of submodules of $\Hom(T,M)/\algA\cdot M$,
which equals $\sHom(T,M)$, by \Cref{Thm:EndA}\itmref{itm:EA5}.

For any cluster tilting object $T$ in $\GP\algB$, define
\begin{equation}\label{eq:part-fun}
\begin{aligned}
  \ptfn^T &\colon \CM\algB\to \CC[\Grot(\CM\algA)]\colon M\mapsto \ptfn^T_M, \\
  \ptfn^T_M &= \sum_{[X]\in \sumrange{M}} \euler\bigl( \qvGrsH{\ddel{[X]}}{T}{M} \bigr) \xvar^{[X]}, 
\end{aligned}
\end{equation}
where $\euler$ and $\qvGr{d}$ are as before in \eqref{eq:FK-CC}.  

\begin{remark}\label{rem:motivic-sum}
We can rewrite the sum in \eqref{eq:part-fun} as a `motivic sum' 
\begin{equation}\label{eq:motivic-sum}
  \ptfn^T_M = \motsum_{\substack{X \subspc \Hom(T, M)\\ eX=M} }\xvar^{[X]}
  = \motsum_{Y \subspc \sHom(T, M)} \xvar^{[\Yhat]}, 
\end{equation}
where $\Yhat$ is the inverse image of $Y$ under the quotient $\Hom(T,M)\to\sHom(T,M)$.
In the sums in \eqref{eq:motivic-sum}, infinite families are counted by the Euler characteristic of the family.
Indeed, it is precisely $\qvGrsH{\ddel{[X]}}{T}{M}$ that parametrises 
those~$Y$ with $[Y]=\ddel{[X]}$, or equivalently with $[\Yhat]=[X]$ .

To make \eqref{eq:motivic-sum} look even more like a classical partition function (cf. \eqref{eq:class-pf}),
we could write the sum range as ``$X\st eX=M$'', with the implicit understanding
that we have a prescribed isomorphism $eX\to M$ and use the unit map 
$\unitmap{X}\colon X\to \Radj\efunct X$ to make $X \subspc \Hom(T, M)$. 
Note that we can not just write ``$X\st eX \isom M$''.
\end{remark}

\begin{remark}\label{rem:ptfn-F-poly}
For any $[X]\in \sumrange{M}$, as in \eqref{eq:sumrange},
we know, from \eqref{eq:wtmod-diag} and \Cref{rem:eta=inc}, that
$ 
 [\funG X] = [ \Hom(T, M)/X]
$, 
which we can also write in $\Grot(\CM\algA)$ as
\begin{equation}\label{eq:X-GX}
  [X] = [T,M] - \beta[\funG X].
\end{equation}
Since $\ddel{[X]}= [X/(\algA\cdot M)]$, as in \eqref{eq:Dd-bij},
and $\Hom(T, M)/\algA\cdot M = \sHom(T, M)$, by \Cref{Thm:EndA}\itmref{itm:EA5},
we also have, in $\Grot(\fd\algA)$,
\begin{equation}\label{eq:GX-dX}
  [\funG X] = [\sHom(T, M)] - \ddel{[X]}.
\end{equation}
Hence the generalised partition function can be rewritten as follows, 
\begin{equation}\label{eq:ptfn-as-quotFpoly}
\begin{aligned}
  \ptfn^T_M &=\xvar^{[T, M]}\sum_{[X]\in \sumrange{M}} 
 \euler \bigl( \Gr_{\ddel{[X]}} \sHom(T, M) \bigr) \xvar^{-\beta[\funG X]}, \\
&=\xvar^{[T, M]} \sum_{d} 
 \euler \bigl( \Gr^{d} \sHom(T, M) \bigr) \xvar^{-\beta(d)}, 
\end{aligned}
\end{equation}
where $\Gr^{d} V$ is the Grassmannian of $d$-dimensional quotients of $V\in\fd\algA$,
so that the second sum in \eqref{eq:ptfn-as-quotFpoly} is 
the `quotient F-polynomial' of $\sHom(T, M)$, written in $\widehat y$ variables,
i.e.~the monomials $\xvar^{-\beta(d)}$, as in \eqref{eq:FK-CC}.

We could also write the partition function from the other end, that is,
\begin{equation}\label{eq:ptfn-as-subFpoly}
  \ptfn^T_M =\xvar^{[A\cdot M]} \sum_{d} 
 \euler \bigl( \Gr_{d} \sHom(T, M) \bigr) \xvar^{\beta(d)}, 
\end{equation}
so that the sum comes from the more usual (submodule) F-polynomial of $\sHom(T, M)$.
\end{remark}

\goodbreak
First, we show that $\ptfn^T$ is a cluster character, by relating it to $\Phi^T$.

\begin{proposition}\label{Thm:char-parfunA} 
Let $\ptbar^T=\CC[-1]\compo\ptfn^T$, in the notation of \Cref{rem:new-clucha}.
For any partial presentation $0 \to \Syz M \to \procov{M} \to M \to 0$, we have
\begin{equation}\label{eq:pt-Phi}
   \ptbar^T_M=\frac{\Phi^T_{\Syz M}}{\Phi^T_{\procov{M}}}.
\end{equation}
Consequently, when restricted to $\GP\algB$, both $\ptbar^T$ and $\ptfn^T$ are cluster characters.
\end{proposition}

\begin{proof}
By \Cref{Lem:ExtSHom}\itmref{itm:ESH3}, we have $\sHom(T, M)\isom \Ext^1(T, \Syz M)$ and,
by \Cref{lem:Phi-monom}, we have $\Phi^T_{\procov{M}}=\xvar^{[T,\procov{M}]}$.
Thus, modifying \eqref{eq:part-fun} and \eqref{eq:FK-CC}, we want to compare
\begin{align}
\ptbar^T_M  &= \sum_{[X]\in \sumrange{M}} \euler\bigl( \Gr_\ddel{[X]} \Ext^1(T, \Syz M) \bigr) \xvar^{-[X]},
\label{eq:def-ptbar}\\
\frac{\Phi^T_{\Syz M}}{\Phi^T_{\procov{M}}}
& = \xvar^{[T, \Syz M]-[T, \procov{M}]}\sum_{d} \euler\bigl( \Gr_d \Ext^1(T, \Syz M) \bigr) \xvar^{-\beta(d)}
\label{eq:def-Phi/Phi}.
\end{align}
The sums in \eqref{eq:def-ptbar} and \eqref{eq:def-Phi/Phi} are effectively over the same range,
by the discussion following \eqref{eq:Dd-bij}, and have the same coefficients, 
so it remains to prove that the exponents are equal, that is,
\begin{equation}\label{eq:pt-Phi-RTP}
  -[X] = [T,\Syz M] - [T,\procov{M}] -\beta(\ddel{[X]}),
\end{equation}
for any $[X]\in\sumrange{M}$.
By \eqref{eq:beta-wt} and \eqref{eq:Dd-bij},
we have $\beta(\ddel{[X]})=[X]-[\algA\cdot M]$.
But now, applying $\Hom(T, -)$ to the partial presentation of $M$ gives an exact sequence
\[
0 \lra \Hom(T, \Syz M) \lra \Hom(T, \procov{M}) \lra \Hom(T, M). 
\]
Since $\procov{M}$ is a projective cover, the image of $\Hom(T, \procov{M})$ here is 
the kernel of the natural map $\Hom(T, M)\to\sHom(T, M)$,
which is $\algA\cdot M$ by \Cref{Thm:EndA}\itmref{itm:EA5}. 
Thus
\[  [\algA\cdot  M]=[T, \procov{M}]- [T, \Syz M], \]
which completes the proof of \eqref{eq:pt-Phi},
which we can now use to show that $\ptbar^T$ is a cluster character
and \Cref{rem:new-clucha} immediately implies that $\ptfn^T$ is also.

First, \eqref{eq:pt-Phi} implies that $\ptbar^T$ satisfies \Cref{def:clucha}\itmref{itm:cc1}, 
because $\Phi^T$ does.
Second, the direct sum of partial presentations of $M$ and $N$
is a partial presentation of $M\oplus N$. 
Thus, since $\Phi^T$ satisfies \Cref{def:clucha}\itmref{itm:cc2}, we have
\[
\ptbar^T_{M\oplus N}
= \frac{ \Phi^T_{\Syz M\oplus \Syz N} }{ \Phi^T_{\procov{M}\oplus \procov{N}} }
= \frac{ \Phi^T_{\Syz M} }{ \Phi^T_{\procov{M}} }\;  \frac{ \Phi^T_{ \Syz N} }{ \Phi^T_{\procov{N}} }
=\ptbar^T_{M} \ptbar^T_{N}.
\]
Thus $\ptbar^T$ satisfies \Cref{def:clucha}\itmref{itm:cc2}.

Now suppose that we have $M, N \in \GP\algB$ with $\dim\Ext^1(M, N)=1$ 
and the two non-split short exact sequences with middle terms $E$ and $F$, 
as in \Cref{def:clucha}\itmref{itm:cc3}.
Choose partial presentations of $M$ and $N$
and use the Horseshoe Lemma to get partial presentations of $E$ and $F$ with 
\begin{equation}\label{eq:procovEF}
\procov{E} = \procov{M}\oplus \procov{N} = \procov{F},
\end{equation}
giving the following commutative diagram of short exact sequences for $E$
\[
\begin{tikzpicture}[xscale=1.8, yscale=1.2, baseline=(bb.base)] 
\coordinate (bb) at (2,2);
\pgfmathsetmacro{\xeps}{0.7}
\pgfmathsetmacro{\yeps}{0.7}
\draw (1,3+\yeps) node (a1) {$0$};
\draw (2,3+\yeps) node (a2) {$0$};
\draw (3,3+\yeps) node (a3) {$0$};
\draw (1-\xeps,3) node (b0) {$0$};
\draw (1,3) node (b1) {$\Syz{M}$};
\draw (2,3) node (b2) {$ \Syz{E}$};
\draw (3,3) node (b3) {$ \Syz{N}$};
\draw (3+\xeps,3) node (b4) {$0$};
\draw (1-\xeps, 2) node (c0) {$0$};
\draw (1,2) node (c1) {$\procov{M}$};
\draw (2,2) node (c2) {$\procov{E}$};
\draw (3,2) node (c3) {$\procov{N}$};
\draw (3+\xeps,2) node (c4) {$0$};
\draw (1-\xeps,1) node (d0) {$0$};
\draw (1,1) node (d1) {$M$};
\draw (2,1) node (d2) {$E$};
\draw (3,1) node (d3) {$N$};
\draw (3+\xeps,1) node (d4) {$0$};
\draw (1,1-\yeps) node (e1) {$0$};
\draw (2,1-\yeps) node (e2) {$0$};
\draw (3,1-\yeps) node (e3) {$0$};
\foreach \t/\h in {b0/b1, b1/b2, b2/b3, b3/b4, c0/c1, c1/c2, c2/c3, c3/c4, d0/d1, d1/d2, d2/d3, d3/d4,
  a1/b1, a2/b2, a3/b3, b1/c1, b2/c2, b3/c3, c1/d1, c2/d2, c3/d3, d1/e1, d2/e2, d3/e3}
\draw[cdarr] (\t) to (\h);
 \end{tikzpicture}
\]  
and similiarly for $F$, with $M$ and $N$ swapped.

As $\Syz$ is an equivalence on the stable category, 
we know that $\Ext^1(\Syz M,\Syz N)=1$ and the top rows
in the diagrams for both $E$ and $F$ are non-split.
Therefore, since $\Phi^T$ satisfies \Cref{def:clucha}\itmref{itm:cc3},
\[
\ptbar^T_{M} \ptbar^T_{N}
= \frac{ \Phi^T_{\Syz M} }{ \Phi^T_{\procov{M}} } \; \frac{ \Phi^T_{\Syz N} }{ \Phi^T_{\procov{N}} }
=\frac{\Phi^T_{\Syz {E}} +\Phi^T_{\Syz {F}}}{ \Phi^T_{\procov{M}\oplus \procov{N}} } 
= \ptbar^T_{E} +\ptbar^T_{F},
\]
where the last equality uses \eqref{eq:procovEF}.
Thus $\ptbar^T$ satisfies \Cref{def:clucha}\itmref{itm:cc3}
and hence it is a cluster character. 
\end{proof}

Some elementary properties of $\ptfn^T$ are as follows.

\begin{proposition}\label{Thm:char-parfunB} 
For any $M\in\CM\algB$, let $\ptfn^T_M$ be as defined in \eqref{eq:part-fun}. Then
\begin{enumerate}
\item\label{itm:pfun3}
$\ptfn^T_M$  is homogeneous of degree $\rnk{\algB} M$,
\item\label{itm:pfun4}
$\ptfn^T_M$ has minimum term $\xvar^{[T, M]}$ 
and maximum term $\xvar^{[\algA\cdot M]}$ 
and both terms have coefficient~$1$,
\item\label{itm:pfun5}
$\ptfn^T_M$ is a monomial if and only if $\Syz M\in \add T$, 
in which case $\ptfn^T_M= \xvar^{[T, M]}$.
In particular, this holds when $M$ is projective or injective in $\CM\algB$.
\item\label{itm:pfun6} 
If $M\not\in\GP\algB$, then there is an $N\in \GP\algB$ and a  $P\in \add B$ such that 
\[ \ptfn^T_M =\xvar^{-[T,P]} \ptfn^T_N. \]
\end{enumerate}
\end{proposition}

In \itmref{itm:pfun4}, the (partial) order on exponents is from \Cref{def:part-ord}
and so depends on the choice of boundary vertex $\vstar$.
However, the result clearly doesn't,
that is, these are the minimum and maximum terms for all choices of $\vstar$.

\begin{proof}
\medskip\itmref{itm:pfun3} 
All $X$ that contribute to the sum \eqref{eq:part-fun} have $\efunct X=M$ 
and so, by \Cref{Lem:ranks}\itmref{itm:rks3},
\[
  \rkk [X] = \rnk{\algA} X = \rnk{\algB} \efunct X = \rnk{\algB} M.
\]
Thus $\ptfn^T_M$ is a homogeneous polynomial of that degree.

\medskip\itmref{itm:pfun4} 
The (partial) order on terms comes from \Cref{def:part-ord}
applied to their exponents.
Since $\ptfn^T_M$ is homogeneous, we just have to compare 
the weights of the exponents in the sum \eqref{eq:part-fun}.
All $X$ that contribute have $\algA\cdot M \leq X \leq \Hom(T, M)$
and applying $\efunct$ to both these inclusions gives equality. 
Hence \Cref{Lem:somefacts}\itmref{itm:facts5} implies that 
\[
  \wt[T, M] \leq \wt [X] \leq \wt [\algA\cdot M]. 
\]
Thus the minimum exponent is $[T, M]$, the maximum exponent is $[\algA\cdot M]$
and all other exponents lie between these two in the partial order. 
Furthermore, both classes only contain a single module, namely,
$X = \Hom(T, M)$ and $X=\algA\cdot M$.
Hence the corresponding Euler characteristic is $1$. 

\medskip\itmref{itm:pfun5} 
As in the proof of \Cref{lem:Phi-monom}, 
the sum in \eqref{eq:part-fun} has only one term if and only if $\sHom(T, M)=0$,
which is equivalent to $\Ext^1(T, \Syz M)=0$, by \Cref{Lem:ExtSHom}\itmref{itm:ESH3}.
But $\Syz M\in \GP\algB$, by \Cref{lem:GPBBapp},
so this is equivalent to $\Syz M\in \add T$,
as $T$ is a cluster tilting object.
The one term in this case is from $X=\Hom(T,M)$ (cf. \itmref{itm:pfun4}).

Note that the property $\Syz M\in \add T$ does not depend on the choice of 
$\Syz M$, because two choices only differ by projectives and $\add\algB \subset \add T$.
If $M$ is projective, we can take $\Syz M=0$.
If $M$ is injective, then $\Syz M$ is projective, by \Cref{cor:syz-inj-proj},
and thus in $\add T$.

\medskip\itmref{itm:pfun6} 
Let $N=\coSyz \Syz M$.
By \Cref{Lem:ExtSHom}\itmref{itm:ESH2}, we know
$\sHom(T, N)\isom \sHom(T, M)$, as $\sEnd T$-modules.
Hence, from \eqref{eq:part-fun}, $\ptfn^T_M$ and $\ptfn^T_N$ 
are sums over the same dimension vectors with the same coefficients,
and so differ at most by their leading exponents. 
Applying $\Hom(T,-)$ to the diagram \eqref{eq:syzcosyz},
we see that the leading exponents are equal up to terms coming from $\add \Hom(T,B)$,
which completes the proof.
\end{proof}

\begin{remark}\label{rem:SigmaT} 
The condition $\Syz M\in \add T$ in \Cref{Thm:char-parfunB}\itmref{itm:pfun5},
that makes $\ptfn^T_M$ a monomial, holds in particular for all $M\in \add\CTOcosyz{T}$,
where $\CTOcosyz{T}$ is another cluster tilting object in $\GP B$, defined as
\begin{equation}\label{eq:SigmaT}
 \CTOcosyz{T} = \algB \oplus \bigoplus_{\text{interior $i$}} \Sigma T_i
\end{equation}
and $\Sigma T_i$ is a minimal coszygy of $T_i$ in $\GP\algB$, as in \S\ref{subsec:syz-cosyz}.
In particular, $\Sigma T_i$ is indecomposable and has a partial presentation
\[
 \ShExSeq {T_i} {\procov{\Sigma T_i}} {\Sigma T_i},
\]
so we can suppose that $\Syz\Sigma T_i = T_i$. 

Note that the summands of $\CTOcosyz{T}$ are indexed by the same vertex set $Q_0$ as the summands of $T$,
but the arrows of the Gabriel quiver of $\End (\CTOcosyz{T})\op$ may be different.
Note also that the summands of $\CTOcosyz{T}$ provide an alternative basis 
$\{ [T,\CTOcosyz{T}_i] \st i\in Q_0 \}$ of $\Grot(\CM\algA)$ 
to the standard basis $\{ [T,T_i]\st i\in Q_0 \}$.
More precisely, 
\begin{equation}\label{eq:TSigmaT}
  [T, \CTOcosyz{T}_i] 
  = \begin{cases} 
  [T, \procov{\Sigma T_i}]-[T, T_i],  & \text{for interior $i$}, \\
  [ T,T_i ],  & \text{for boundary $i$,}
 \end{cases} 
\end{equation}
giving an invertible change of basis matrix as, for interior~$i$, 
all summands of $\procov{\Sigma T_i}$ are projective, that is, of the form $T_j$ for some boundary~$j$.
\end{remark}

By \Cref{Lem:ExtSHom}\itmref{itm:ESH3}, 
we can see that $\sHom(T,M)\isom \Ext^1(\CTOcosyz{T},M)$,
so comparing \eqref{eq:part-fun} and \eqref{eq:FK-CC}
suggests another relationship between the partition function $\ptfn^T$
and the cluster character $\Phi^T$, as follows.

\begin{proposition}\label{thm:PF-zeta-Phi}
Let $T$ be a cluster tilting object in $\GP\algB$, with $\algA=\End (T)\op$,
and $\CTOcosyz{T}$ be as in \eqref{eq:SigmaT}, 
with $\algA'=\End (\CTOcosyz{T})\op$.
Then $\ptfn^T$ and  $\Phi^{\CTOcosyz{T}}$ are related by a monomial change of variables,
specifically,
\begin{equation}\label{eq:PF-zeta-Phi-0}
\ptfn^T = \CC[\zeta] \compo \Phi^{\CTOcosyz{T}},
\end{equation}
where, for $X\in\add\CTOcosyz{T}$,
\begin{equation}\label{eq:zeta-def-0}
\zeta\colon \Grot(\CM\algA') \to \Grot(\CM\algA)
\colon [\CTOcosyz{T}, X] \mapsto [T, X].
\end{equation}
\end{proposition}

Note that the ($\ZZ$-linear) map $\zeta$ is well-defined by \eqref{eq:zeta-def-0}, 
because, as $X$ runs over the indecomposable summands of $\CTOcosyz{T}$, 
the classes $[\CTOcosyz{T}, X]$ form a basis of $\Grot(\CM\algA')$.

\begin{proof}
We will actually prove that $\CC[-\zeta] \compo \Phi^{\CTOcosyz{T}}=\ptbar^T$,
as we can then use \eqref{eq:pt-Phi} from \Cref{Thm:char-parfunA}.
Indeed, by \eqref{eq:FK-CC} and \eqref{eq:pt-Phi},
\begin{align}
\Phi^{\CTOcosyz{T}}_M
& = \xvar^{[\CTOcosyz{T}, M]}\sum_{d} \euler\bigl( \Gr_d\Ext^1(\CTOcosyz{T}, M) \bigr) \xvar^{-\beta_{A'}(d)},
\label{eq:def-Phi}\\
\ptbar^{T}_M 
= \frac{\Phi^T_{\Syz M}}{\Phi^T_{\procov{M}}}
& = \xvar^{[T, \Syz M]-[T, \procov{M}]}\sum_{d} \euler\bigl( \Gr_d \Ext^1(T, \Syz M) \bigr) \xvar^{-\beta_A(d)},
\label{eq:def-pt'}
\end{align}
where $\beta_A\colon \Grot(\fd\algA)\to \Grot(\CM\algA)$ and $\beta_{A'}$ 
are defined as in \eqref{eq:def-beta} and \Cref{Lem:classvsresol}.
We need to show that the right-hand sides of \eqref{eq:def-Phi} and \eqref{eq:def-pt'}
are identified by $\CC[-\zeta]$.

By \Cref{Lem:ExtSHom}\itmref{itm:ESH3}, the stable equivalence $\coSyz$ induces an 
isomorphism 
\begin{equation}\label{eq:end-iso}
\sEnd (\CTOcosyz{T}) \isom \sEnd (T)
\end{equation}
so that
$\Ext^1(\CTOcosyz{T}, M)\isom \sHom(T,M) \isom \Ext^1(T, \Syz M)$ 
as $\sEnd(T)$-modules.
Thus the sums in \eqref{eq:def-Phi} and \eqref{eq:def-pt'} are over the same range 
and the corresponding terms have the same coefficients. 
So it remains to show that
\begin{equation}\label{eq:lead-term}
 -\zeta [\CTOcosyz{T}, M] = [T, \Syz M]-[T, \procov{M}]
\end{equation}
and, for all $d$,
\begin{equation}\label{eq:other-term}
 -\zeta \beta_{A'}(d) = \beta_A(d).
\end{equation}
Note that we need to prove these two separately, 
because \eqref{eq:lead-term} is the case $d=0$.

To prove \eqref{eq:lead-term}, we use the Horseshoe Lemma to construct
the following commutative diagram of short exact sequences,
\newcommand{\sS}{U} 
\begin{equation}\label{eq:HS-SSM}
\begin{tikzpicture}[xscale=2.0, yscale=1.4, baseline=(bb.base)]
\coordinate (bb) at (2,2);
\pgfmathsetmacro{\xeps}{0.7}
\pgfmathsetmacro{\yeps}{0.7}
\draw (1,3+\yeps) node (a1) {$0$};
\draw (2,3+\yeps) node (a2) {$0$};
\draw (3,3+\yeps) node (a3) {$0$};
\draw (1-\xeps,3) node (b0) {$0$};
\draw (1,3) node (b1) {$\Syz \sS''$};
\draw (2,3) node (b2) {$\Syz \sS'$};
\draw (3,3) node (b3) {$ \Syz M$};
\draw (3+\xeps,3) node (b4) {$0$};
\draw (1-\xeps, 2) node (c0) {$0$};
\draw (1,2) node (c1) {$\procov{\sS''}$};
\draw (2,2) node (c2) {$\procov{\sS'}$};
\draw (3,2) node (c3) {$\procov{M}$};
\draw (3+\xeps,2) node (c4) {$0$};
\draw (1-\xeps,1) node (d0) {$0$};
\draw (1,1) node (d1) {$\sS''$};
\draw (2,1) node (d2) {$\sS'$};
\draw (3,1) node (d3) {$M$};
\draw (3+\xeps,1) node (d4) {$0$};
\draw (1,1-\yeps) node (e1) {$0$};
\draw (2,1-\yeps) node (e2) {$0$};
\draw (3,1-\yeps) node (e3) {$0$};
\foreach \t/\h in {b0/b1, b1/b2, b2/b3, b3/b4, c0/c1, c1/c2, c2/c3, c3/c4, d0/d1, d1/d2, d2/d3, d3/d4,
  a1/b1, a2/b2, a3/b3, b1/c1, b2/c2, b3/c3, c1/d1, c2/d2, c3/d3, d1/e1, d2/e2, d3/e3}
\draw[cdarr] (\t) to (\h);
\end{tikzpicture}
\end{equation} 
where the bottom row is any $\add\CTOcosyz{T}$ approximation of $M$,
the outer columns are any syzygy sequences and
the middle column is the syzygy sequence with $\procov{\sS'}=\procov{\sS''}\oplus\procov{M}$.
Then the top row of \eqref{eq:HS-SSM} is an $\add T$ approximation of $\Syz M$ 
(see \Cref{rem:SigmaT}).
Since the bottom row of \eqref{eq:HS-SSM} remains exact under $\Hom(\CTOcosyz{T},-)$
and $\sS',\sS''\in \add\CTOcosyz{T}$, we can use \eqref{eq:zeta-def-0} to write 
\begin{align*}
 -\zeta [\CTOcosyz{T}, M] = -\zeta [\CTOcosyz{T},\sS'] + \zeta [\CTOcosyz{T}, \sS''] 
 & = -[T,\sS'] + [T,\sS''] \\
 & = [T,\Syz \sS'] - [T, \procov{\sS'}] - [T, \Syz \sS''] + [T,\procov{\sS''}]\\
 & = [T,\Syz M]  - [T, \procov{M}] ,
\end{align*}
where the last two equalities hold because the left two sequences and the top two sequences 
in \eqref{eq:HS-SSM} remain exact under $\Hom(T,-)$.
This proves \eqref{eq:lead-term}.

To prove \eqref{eq:other-term},  
since $\beta$ is additive and the modules
$\Ext^1(\CTOcosyz{T}, M)\isom\Ext^1(T, \Syz M)$ 
are supported on the interior vertices, 
we just need to do the case $d=[S_i]$ for any interior simple,
where because of the isomorphism \eqref{eq:end-iso},
we can use $S_i$ to denote both the simple top of $\Hom(T,T_i)$ 
and of $\Hom(\CTOcosyz T, \Sigma T_i)$. 

Consider the following two commutative diagrams of short exact sequences,
constructed using the Horseshoe Lemma
from the mutation sequences \eqref{eq:mut-two}.
\begin{equation}\label{eq:two-squares}
\begin{tikzpicture}[xscale=1.8, yscale=1.2, baseline=(bb.base)] 
\coordinate (bb) at (2,2);
\pgfmathsetmacro{\xeps}{0.7}
\pgfmathsetmacro{\yeps}{0.7}
\draw (1,3+\yeps) node (a1) {$0$};
\draw (2,3+\yeps) node (a2) {$0$};
\draw (3,3+\yeps) node (a3) {$0$};
\draw (1-\xeps,3) node (b0) {$0$};
\draw (1,3) node (b1) {$T_i$};
\draw (2,3) node (b2) {$F_i$};
\draw (3,3) node (b3) {${T_i}^*$};
\draw (3+\xeps,3) node (b4) {$0$};
\draw (1-\xeps, 2) node (c0) {$0$};
\draw (1,2) node (c1) {$\procov{\Sigma T_i}$};
\draw (2,2) node (c2) {$\procov{\Sigma F_i}$};
\draw (3,2) node (c3) {$\procov{\Sigma {T_i}^*}$};
\draw (3+\xeps,2) node (c4) {$0$};
\draw (1-\xeps,1) node (d0) {$0$};
\draw (1,1) node (d1) {$\Sigma T_i$};
\draw (2,1) node (d2) {$\Sigma F_i$};
\draw (3,1) node (d3) {$\Sigma {T_i}^*$};
\draw (3+\xeps,1) node (d4) {$0$};
\draw (1,1-\yeps) node (e1) {$0$};
\draw (2,1-\yeps) node (e2) {$0$};
\draw (3,1-\yeps) node (e3) {$0$};
\foreach \t/\h in {b0/b1, b1/b2, b2/b3, b3/b4, c0/c1, c1/c2, c2/c3, c3/c4, d0/d1, d1/d2, d2/d3, d3/d4,
  a1/b1, a2/b2, a3/b3, b1/c1, b2/c2, b3/c3, c1/d1, c2/d2, c3/d3, d1/e1, d2/e2, d3/e3}
\draw[cdarr] (\t) to (\h);
 \end{tikzpicture}
\quad
\begin{tikzpicture}[xscale=1.8, yscale=1.2, baseline=(bb.base)] 
\coordinate (bb) at (2,2);
\pgfmathsetmacro{\xeps}{0.7}
\pgfmathsetmacro{\yeps}{0.7}
\draw (1,3+\yeps) node (a1) {$0$};
\draw (2,3+\yeps) node (a2) {$0$};
\draw (3,3+\yeps) node (a3) {$0$};
\draw (1-\xeps,3) node (b0) {$0$};
\draw (1,3) node (b1) {${T_i}^*$};
\draw (2,3) node (b2) {$E_i$};
\draw (3,3) node (b3) {$T_i$};
\draw (3+\xeps,3) node (b4) {$0$};
\draw (1-\xeps, 2) node (c0) {$0$};
\draw (1,2) node (c1) {$\procov{\Sigma {T_i}^*}$};
\draw (2,2) node (c2) {$\procov{\Sigma E_i}$};
\draw (3,2) node (c3) {$\procov{\Sigma T_i}$};
\draw (3+\xeps,2) node (c4) {$0$};
\draw (1-\xeps,1) node (d0) {$0$};
\draw (1,1) node (d1) {$\Sigma {T_i}^*$};
\draw (2,1) node (d2) {$\Sigma E_i$};
\draw (3,1) node (d3) {$\Sigma T_i$};
\draw (3+\xeps,1) node (d4) {$0$};
\draw (1,1-\yeps) node (e1) {$0$};
\draw (2,1-\yeps) node (e2) {$0$};
\draw (3,1-\yeps) node (e3) {$0$};
\foreach \t/\h in {b0/b1, b1/b2, b2/b3, b3/b4, c0/c1, c1/c2, c2/c3, c3/c4, d0/d1, d1/d2, d2/d3, d3/d4,
  a1/b1, a2/b2, a3/b3, b1/c1, b2/c2, b3/c3, c1/d1, c2/d2, c3/d3, d1/e1, d2/e2, d3/e3}
\draw[cdarr] (\t) to (\h);
 \end{tikzpicture}
\end{equation}
In particular, the outer columns are minimal cosyzygy sequences, 
while the middle column is not necessarily minimal and constructed with
\begin{equation}\label{eq:procovSigEF}
\procov{\Sigma F_i}=\procov{\Sigma T_i}\oplus \procov{\Sigma T^*_i}=\procov{\Sigma E_i}.
\end{equation}
Given that $T$ is a cluster tilting object, the fact that the top rows are mutation sequences
is equivalent to $ F_i, E_i \in \add T/T_i$ and $\Ext^1(T/T_i,{T_i}^*)=0=\Ext^1({T_i}^*,T/T_i)$,
so that the maps to and from $T_i$ are $\add T/T_i$-approximations.

But then we know that $\Sigma F_i, \Sigma E_i \in \add\CTOcosyz{T}/\Sigma T_i$ 
(cf. \Cref{rem:SigmaT}) 
and, since $\Sigma$ is an equivalence on the stable category, 
it preserves the Ext vanishing condition, 
so that the bottom rows are mutation sequences for $\Sigma T_i$.
In particular, 
\begin{equation}\label{eq:Sigma-mut}
 (\Sigma{T_i})^*=\Sigma{T_i}^*.
\end{equation}
Hence, by \Cref{Prop:projres},  
\[
\beta_{\algA'}[S_i]=[\CTOcosyz{T}, \Sigma F_i]-[\CTOcosyz{T}, \Sigma E_i]
\quadand
\beta_{\algA}[S_i]=[T, F_i]-[T, E_i].
\]
Therefore, since $\Sigma F_i, \Sigma E_i \in \add\CTOcosyz{T}$ 
and the middle columns remain exact under $\Hom(T,-)$, 
we can use \eqref{eq:zeta-def-0} to write 
\begin{align*}
 -\zeta \beta_{\algA'}[S_i]
  = -\zeta[\CTOcosyz{T}, \Sigma F_i] +\zeta [\CTOcosyz{T}, \Sigma E_i] 
 & = - [T, \Sigma F_i] + [T, \Sigma E_i] \\
 & = [T, F_i] - [T, \procov{\Sigma F_i}] - [T, E_i] + [T, \procov{\Sigma E_i}] \\
 & = \beta_{\algA}[S_i],
\end{align*}
where the last equality uses \eqref{eq:procovSigEF}.
This completes the proof of \eqref{eq:other-term}
and thus of the whole result.
\end{proof} 

\begin{remark}\label{rem:T-vs-SigmaT}
The bottom and right-hand sequences in \eqref{eq:HS-SSM} 
usually do not remain exact under $\Hom(T,-)$ 
and the equal quantities in \eqref{eq:lead-term} are usually not $-[T,M]$.
Indeed the `leading'  term $\xvar^{[T,M]}$ in $\ptfn^{T}_M$ comes from the other end of the sum,
not from the  `leading'  term $\xvar^{[\CTOcosyz{T},M]}$ in $\Phi^{\CTOcosyz{T}}_M$.
\end{remark}

\subsection{Generalised flow polynomials}\label{subsec:gen-flo-pol}

Recall, from \eqref{eq:wt-class} and \eqref{eq:def-Nstar}, the lattice homomorphism 
\[
  \wt\colon \Grot(\CM\algA) \to \Nstar \subspc \Grot(\fd\algA). 
\]
For any cluster tilting object $T\in\GP\algB$, we can define $\fp^T = \pbfun{\wt}{\ptfn^T}$,
in the notation of \Cref{rem:new-clucha}, that is, 
\begin{equation}\label{eq:gen-flow-poly}
\begin{aligned}
  \fp^T &\colon \CM\algB\to \CC[\Nstar]\colon M\mapsto \fp^T_M, \\
  \fp^T_M &= \sum_{ [X]\in \sumrange{M} } \euler\bigl( \qvGrsH{\ddel{[X]}}{T}{M} \bigr) \yvar^{\wt[X]}. 
\end{aligned}
\end{equation}
Also recall, from \Cref{def:KTM} and \Cref{Lem:kerwt}, the invariant
\begin{equation}\label{eq:kap-def}
  \kapvec(T,M) = [\funK(T,M)] = \wt [T,M].
\end{equation}
More generally, since  $[\funG X]\in\Nstar$, by \Cref{Lem:somefacts}\itmref{itm:facts2}, 
we can apply $\wt$ to \eqref{eq:X-GX}, recalling that $\wt\compo\beta=-\idmap{}$ on  $\Nstar$,
by \Cref{Thm:2exactseq}, to get
\begin{equation}\label{eq:wtX-formG}
 \wt[X]=\kapvec(T, M) + [\funG X].
\end{equation}
In particular, as $\funG (A\cdot M) = \Hom(T, M)/\algA\cdot M = \sHom(T, M)$,
by \Cref{Thm:EndA}\itmref{itm:EA5},
\begin{equation}\label{eq:wtX-AM}
  \wt[A\cdot M] = \kapvec(T, M) +[\sHom(T, M)].
\end{equation} 
Properties of $\ptfn^T$ 
from \Cref{Thm:char-parfunA} and \Cref{Thm:char-parfunB}
give the following.

\begin{proposition} \label{Thm:char-flopol}
For $\fp^T$ as in \eqref{eq:gen-flow-poly}, we have
\begin{enumerate}
\item\label{itm:flo1}
  $\fp^T$ is a cluster character, when restricted to $\GP\algB$.
\item\label{itm:flo3}
  $\fp^T_M$ has minimum term $\yvar^{\kapvec(T, M)}$
  and maximum term $\yvar^{\kapvec(T, M)+[\sHom(T, M)]}$
  and both terms have coefficient~1. 
\item\label{itm:flo5} 
  $\fp^T_M$ is a monomial if and only if $\Syz M\in \add T$,
in which case $\fp^T_M=\yvar^{\kapvec(T, M)}$.
In particular, this holds when $M$ is projective or injective in $\CM\algB$.
\item\label{itm:flo6} 
$\fp^T_{\modJ}=1$. 
\end{enumerate}
\end{proposition}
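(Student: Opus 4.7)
The plan is to deduce everything from the corresponding statements for $\ptfn^T$ in \Cref{Thm:char-parfun}, by simply pushing through the ring homomorphism $\CC[\wt]$ induced by the lattice map $\wt\colon \Grot(\CM\algA)\to \Nstar$. Since $\fp^T=\pbfun{\wt}{\ptfn^T}=\CC[\wt]\compo \ptfn^T$, \Cref{rem:new-clucha} gives \itmref{itm:flo1} immediately from \Cref{Thm:char-parfun}\itmref{itm:pfun2}.

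For \itmref{itm:flo3}, the minimum and maximum terms of $\ptfn^T_M$ are $\xvar^{[T,M]}$ and $\xvar^{[\algA\cdot M]}$, each with coefficient $1$, so applying $\wt$ to the exponents gives candidate minimum $\kappadv(T,M)=\wt[T,M]$ (by \eqref{eq:kap-def}) and candidate maximum $\kappadv(T,M)+[\sHom(T,M)]=\wt[\algA\cdot M]$ (by \eqref{eq:wtX-AM}). The only thing to check is that no cancellation or collapse occurs when different classes in $\sumrange{M}$ map under $\wt$ to the same element of $\Nstar$. By \Cref{Thm:char-parfun}\itmref{itm:pfun3}, every $[X]\in\sumrange{M}$ satisfies $\rkk[X]=\rnk{\algB}M$, so the differences $[X]-[T,M]$ all lie in $\Mzero$. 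By \Cref{Thm:2exactseq}, the restriction $-\!\wt\colon \Mzero\to\Nstar$ is a bijection, so $[X]\mapsto \wt[X]$ is injective on $\sumrange{M}$. Thus the min and max terms are preserved with coefficient $1$. The main (minor) obstacle is this injectivity point, which is what ensures that $\CC[\wt]$ does not identify any of the terms contributing to $\ptfn^T_M$.

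For \itmref{itm:flo5}, the same injectivity argument shows that $\fp^T_M$ is a monomial if and only if $\ptfn^T_M$ is a monomial. By \Cref{Thm:char-parfun}\itmref{itm:pfun5}, this happens exactly when $\Syz M\in\add T$, and then $\ptfn^T_M=\xvar^{[T,M]}$, so $\fp^T_M=\yvar^{\wt[T,M]}=\yvar^{\kappadv(T,M)}$. The cases of projective and injective $M$ follow as recorded in \Cref{Thm:char-parfun}\itmref{itm:pfun5} (using \Cref{cor:syz-inj-proj} for the injective case).

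For \itmref{itm:flo6}, the module $\modJ$ is injective in $\CM\algB$, so $\Syz\modJ$ is projective by \Cref{cor:syz-inj-proj} and \itmref{itm:flo5} applies to give $\fp^T_{\modJ}=\yvar^{\kappadv(T,\modJ)}$. But $\kappadv(T,\modJ)=0$ by \Cref{Lem:kerwt}, so $\fp^T_{\modJ}=1$, as required.
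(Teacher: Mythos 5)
Your proof is correct and follows essentially the same route as the paper: all four parts are deduced from \Cref{Thm:char-parfun} by applying $\CC[\wt]$, with \itmref{itm:flo6} then coming from \Cref{cor:syz-inj-proj} and \Cref{Lem:kerwt}. The only difference is cosmetic: you make the no-collapse point explicit via the injectivity of $\wt$ on classes of fixed rank (from \Cref{Thm:2exactseq}), whereas the paper handles this implicitly through the rank-then-weight partial order of \Cref{def:part-ord} together with \eqref{eq:kap-def} and \eqref{eq:wtX-AM}, and argues \itmref{itm:flo5} by counting terms in \eqref{eq:gen-flow-poly} directly.
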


\begin{proof}
\itmref{itm:flo1} 
As $\fp^T_M = \pbfun{\wt}{\ptfn^T_M}$, \Cref{rem:new-clucha} implies that
$\fp^T$ is a cluster character because $\ptfn^T$ is, by \Cref{Thm:char-parfunA}.

\medskip\itmref{itm:flo3}  
follows from \Cref{Thm:char-parfunB}\itmref{itm:pfun4},\
using \eqref{eq:kap-def} and \eqref{eq:wtX-AM}.

\medskip\itmref{itm:flo5} 
Note that $\Syz M\in \add T$ if and only if $\Ext^1(T, \Syz M)\isom\sHom(T, M)=0$.
If this happens, then the sum in \eqref{eq:gen-flow-poly} has just one term,
as given in \itmref{itm:flo3}.
Otherwise, \itmref{itm:flo3} shows that the sum has at least two non-zero terms., 
The condition holds when $M$ is projective, because $\Syz M=0$, 
and when $M$ is injective, by \Cref{cor:syz-inj-proj}.

\medskip\itmref{itm:flo6} 
$\modJ$ is injective, so $\fp^T_{\modJ}=\yvar^{\kapvec(T, \modJ)}$ by \itmref{itm:flo5}.
But $\kapvec(T, \modJ)=\wt\Hom(T,\modJ)=0$, by \Cref{Lem:kerwt}.
\end{proof}

\begin{remark}\label{rem:fp-F-poly}
Similarly to \Cref{rem:ptfn-F-poly}, 
but here using \eqref{eq:wtX-formG} (or, equivalently, applying $\wt$ to \eqref{eq:ptfn-as-quotFpoly}),
the generalised flow polynomial can be rewritten as follows, 
\begin{equation}\label{eq:fp-as-Fpoly}
\begin{aligned}
  \fp^T_M &=\yvar^{\kapvec (T, M)}\sum_{[X]\in \sumrange{M}} 
 \euler \bigl( \Gr_{\ddel{[X]}} \sHom(T, M) \bigr) \yvar^{[\funG X]}, \\
&=\yvar^{\kapvec (T, M)}\sum_{d} 
 \euler \bigl( \Gr^{d} \sHom(T, M) \bigr) \yvar^{d}, 
\end{aligned}
\end{equation}
where again $\Gr^{d} V$ is the Grassmannian of $d$-dimensional quotients of $V\in\fd\algA$,
so that the second sum in \eqref{eq:fp-as-Fpoly} is now precisely the quotient F-polynomial of $\sHom(T, M)$.
\end{remark}

\section{Classical partition functions and flow polynomials}\label{sec:classical}

\newcommand{\strarrow}{\arrow{angle 60}} 
\tikzset{strand/.style={teal}}
\newcommand{\drawsharkstrands}{
\pgfmathsetmacro{\ax}{20}
\pgfmathsetmacro{\ay}{5}
\pgfmathsetmacro{\az}{20}
\draw [strand] (V1) to [out=(-135-\ax, in=40] (A12) to [out=-140,in=\ay] (A13) to [out= -180+\ay,in= -\ay] (A34) to [out= 180-\ay,in= -30]  (V4)
  [postaction=decorate, decoration={markings, mark= at position 0.25 with \strarrow,
   mark= at position 0.45 with \strarrow, mark= at position 0.65 with \strarrow, mark= at position 0.85 with \strarrow}];
\draw [strand] (V4) to [out= 30,in= -180+\ay] (A45) to [out= \ay,in= 180-\ay] (A15) to [out= -\ay,in= 140] (A12) to [out= -40,in= 135+\ax] (V2)
  [postaction=decorate, decoration={markings, mark= at position 0.2 with \strarrow,
   mark= at position 0.4 with \strarrow, mark= at position 0.6 with \strarrow, mark= at position 0.8 with \strarrow}];
\draw [strand] (V2) to [out= 115,in= -90] (V12) to [out= 90,in= -115] (V1)
  [postaction=decorate, decoration={markings, mark= at position 0.55 with \strarrow}];
\draw [strand] (V3) to [out= 90-\ax,in= -90-\ay] (A13) to [out= 90-\ay,in= -90+\ay] (A15) to [out= 90+\ay,in= -90+\ax] (V5)
  [postaction=decorate, decoration={markings, mark= at position 0.25 with \strarrow,
   mark= at position 0.525 with \strarrow, mark= at position 0.8 with \strarrow}];
\draw [strand] (V5) to [out= -90-\ax, in= 90-\ay] (A45) to [out= -90-\ay, in= 90+\ay] (A34) to [out= -90+\ay, in= 90+\ax] (V3)
  [postaction=decorate, decoration={markings, mark= at position 0.25 with \strarrow, 
  mark= at position 0.525 with \strarrow, mark= at position 0.8 with \strarrow}];
}

In this section, we explain how the generalised partition functions and flow polynomials,
introduced in the previous section, are related to their classical counterparts,
as in e.g.~\cite{MS,RW,Tal}.

\subsection{Dimer models and perfect matchings}\label{subsec:dim-mod}

The context in which we can do this is that of \Cref{ex:plabic},
that is, for a dimer model on a disc, in the sense of \cite{BKM,CKP}.
Such a dimer model is formulated in terms of a quiver 
with faces $Q=(Q_0,Q_1,Q_2)$ dual to a plabic graph $G$ 
that arises from a connected Postnikov alternating strand diagram.
The connectedness is required for the topological space associated to $Q$, 
as a 2-dimensional cell complex, to be a disc.
The alternating strand properties ensure that the dimer model is `consistent'.
See \Cref{fig:shark} for an example, associated to the codimension 1 positroid stratum in $\Gr(2,5)$
where $\minor{45}=0$.

\begin{figure}[h]
\begin{tikzpicture} [scale=1.8, 
 bdry/.style={thick, blue, densely dotted},
 quivvert/.style={red, fill},
 quivarr/.style={red, thick, -latex},
 plabedge/.style={blue, thick},
 blackplabvert/.style={blue, fill},
 whiteplabvert/.style={blue, fill=white}]
\newcommand{\dotRad}{1pt}
\pgfmathsetmacro{\Pstep}{0.35}
\begin{scope} [xscale=-1]
\foreach \n/\x/\y in {1/0/0, 5/-1/1, 3/-1/-1, 4/-2/0, 2/1.25/0}
  \coordinate (B\n) at (\x*\Pstep,\y*\Pstep);
\coordinate (V1) at ($(B2)+0.357*(1,1)$); 
\coordinate (V2) at ($(B2)+0.357*(1,-1)$); 
\coordinate (V12) at ($(B2)+0.22*(1,0)$); 
\coordinate (V3) at ($(B3)+0.422*(0,-1)$); 
\coordinate (V4) at ($(B4)+0.4*(-1,0)$); 
\coordinate (V5) at ($(B5)+0.422*(0,1)$); 
\foreach \a/\b in {1/2, 1/3, 4/5, 3/4, 1/5}
  \coordinate (A\a\b) at ($0.5*(B\a)+0.5*(B\b)$);
\drawsharkstrands 
\foreach \n/\N in {2/1, 2/2, 3/3, 4/4, 5/5}
 \draw [plabedge] (B\n)--(V\N);
\foreach \n/\m in {1/2, 1/5, 1/3, 4/5, 3/4}
 \draw [plabedge] (B\n)--(B\m);
\foreach \n in {2,3,5}
  \draw [whiteplabvert] (B\n) circle (\dotRad);
\foreach \n in {1,4}
 \draw [blackplabvert] (B\n) circle (\dotRad);
\draw [bdry] (-0.1,0) ellipse (1 and 0.8);
\foreach \N/\where/\M in {1/above left/5, 2/below left/4, 3/below/3, 4/right/2, 5/above/1}
 \draw (V\N) node [\where] {\small $\M$};
\end{scope}
\begin{scope} [shift={(3,0)},xscale=-1]
\pgfmathsetmacro{\Qstep}{0.5}
\foreach \n/\x/\y in {1/0/0, 2/2/0, 3/1/1, 4/1/-1, 5/-1/-1, 6/-1/1}
  \coordinate (Q\n) at (\x*\Qstep,\y*\Qstep);
\draw [quivvert] (Q3) node (Q3) {\small$*$};
\foreach \N in {1,2,4,5,6}
 \draw [quivvert] (Q\N) node (Q\N) {\tiny$\bullet$};
\foreach \t/\h in {3/1, 1/4, 4/3, 3/2, 2/4, 4/5, 5/1,1/6, 6/3, 6/5}
 \draw [quivarr] (Q\t) -- (Q\h);
\foreach \t/\h/\where/\N in {3/2/above left/5, 2/4/below left/4, 4/5/below/3, 5/6/right/2, 6/3/above/1}
 \draw ($0.5*(Q\t)+0.5*(Q\h)$) node [\where] {\small $\N$};
\end{scope}
\end{tikzpicture}
\caption{Plabic graph, with strand diagram, and (opposite) dual quiver}
\label{fig:shark}
\end{figure}

From such a dimer model $Q$, we construct the dimer algebra $\algA$,
as in \cite[Def~2.11]{CKP},
and then $T=e\algA$ is a CM-module over $\algB=e\algA e$,
where $e$ is the sum of the idempotents at boundary vertices.
By \cite[Prop~3.6]{CKP}, 
with the convention differences noted in \S\ref{subsec:nota-conv},
such a $\algB$ is an $\ring$-order containing $\algC=\algC(k,n)$,
where $n$ is the number of boundary labels 
and $k$ is the `helicity', that is, 
the average clockwise increment of the strand permutation
or, equivalently, the size of the label on each alternating region in
the left target labelling of the Postnikov diagram
(cf.~\Cref{fig:rs-labelling}, where $(k,n)=(2,5)$).
We then know \cite[Thm~4.5]{Pr} that $T$ is a cluster tilting object in the cluster category $\GP\algB$
and $\algA=\End_\algB(T)\op$.
Note that this result also requires the Postnikov diagram to be connected.

The left target labelling of the Postnikov diagram also computes 
$\algB$ and $T$ as $\algC$-modules, by \cite[Thm~8.2]{CKP},
noting again the convention differences from \S\ref{subsec:nota-conv}.
Thus, in the example of~\Cref{fig:rs-labelling}, we have
\begin{equation}\label{eq:algB}
 {}_\algC\algB = M_{12}\oplus M_{23}\oplus M_{34}\oplus M_{14}\oplus M_{15},
 \qquad
 {}_\algC T={}_\algC\algB \oplus M_{24}.
\end{equation}
In particular, $\algB$ differs from $\algC$ by having $M_{14}$ as a projective in place of $M_{45}$,
which is not a $\algB$-module.
The positroid, in this case, consists of all 
$I\in\labsubset{5}{2}$ 
except $45$.
\begin{figure}[h]
\begin{tikzpicture}[scale=1.7,
 bdry/.style={thick, blue, densely dotted},
 quivvert/.style={red, fill},
 quivarr/.style={red, -latex},
 flowarr/.style={very thick,-latex},
 plabedge/.style={blue, thick},
 blackplabvert/.style={blue, fill},
 whiteplabvert/.style={blue, fill=white}]
\newcommand{\dotRad}{1pt}
\pgfmathsetmacro{\Qstep}{0.5}
\pgfmathsetmacro{\Qoff}{-0.25}
\pgfmathsetmacro{\Pstep}{0.35}
\pgfmathsetmacro{\Hstep}{2.9}
\begin{scope} [xscale=-1]
\foreach \n/\x/\y in {1/0/0, 5/-1/1, 3/-1/-1, 4/-2/0, 2/1.25/0}
  \coordinate (B\n) at (\x*\Pstep,\y*\Pstep);
\coordinate (V1) at ($(B2)+0.357*(1,1)$); 
\coordinate (V2) at ($(B2)+0.357*(1,-1)$); 
\coordinate (V12) at ($(B2)+0.22*(1,0)$); 
\coordinate (V3) at ($(B3)+0.422*(0,-1)$); 
\coordinate (V4) at ($(B4)+0.4*(-1,0)$); 
\coordinate (V5) at ($(B5)+0.422*(0,1)$); 
\foreach \a/\b in {1/2, 1/3, 4/5, 3/4, 1/5}
  \coordinate (A\a\b) at ($0.5*(B\a)+0.5*(B\b)$);
\foreach \n/\x/\y in {1/-2/1, 2/0.5/1, 3/-1/0, 4/2.3/0, 5/-2/-1, 6/0.5/-1}
  \coordinate (Q\n) at (\x*\Pstep,\y*\Pstep);
\foreach \n/\lab in {1/23, 2/12, 3/24, 4/15, 5/34, 6/14}
 \draw [quivvert] (Q\n) node (Q\n) {\tiny \lab};
\drawsharkstrands 
\draw [bdry] (-0.1,0) ellipse (1 and 0.8);
\foreach \N/\where/\M in {1/above left/5, 2/below left/4, 3/below/3, 4/right/2, 5/above/1}
 \draw (V\N) node [\where] {\small $\M$};
\end{scope}
\begin{scope} [shift={(\Hstep,0)}, xscale=-1]
\foreach \n/\x/\y in {1/-1/1, 2/1/1, 3/0/0, 4/2/0, 5/-1/-1, 6/1/-1}
  \coordinate (Q\n) at (\x*\Qstep,\y*\Qstep);
\foreach \N/\lab in {1/23, 2/12, 3/24, 4/15, 5/34, 6/14}
 \draw (Q\N) node (Q\N) {\small \lab};
\foreach \t/\h in {1/2, 1/5, 2/4, 3/1, 4/6, 3/6, 2/3, 5/3, 6/2, 6/5}
 \draw [quivarr] (Q\t) -- (Q\h);
\end{scope}
\end{tikzpicture}
\caption{Left target labelling and the Gabriel quiver of $\End_\algB(T)\op$}
\label{fig:rs-labelling}
\end{figure}

\begin{definition}\label{def:match}
A \emph{(perfect) matching} $\match$ on a dimer model $Q=(Q_0,Q_1,Q_2)$ 
is a set of arrows $a\in Q_1$ containing exactly one arrow in the 
boundary of each face $f\in Q_2$.
\end{definition}

This definition is combinatorially equivalent to a perfect matching on the dual plabic graph $G$,
since, through the duality, the arrows of $Q$ are in bijection with the edges of $G$ 
and the faces of $Q$ are in bijection with the (internal) nodes of $G$.
Note that the points at which edges of $G$ meet the boundary are not considered to be nodes of $G$,
so are not required to be covered in a matching on $G$.

The edges of $G$ that meet the boundary carry the labels $1,\ldots,n$, 
which then also label the boundary arrows of $Q$, i.e. those that are in only one face.
Mirroring the definition in \cite[\S3.1]{MS}, the \emph{boundary value} $\bdry\match$ of a matching $\match$
consists of the labels of the anti-clockwise boundary arrows in $\match$ together with the 
labels of the clockwise arrows not in $\match$.

\begin{figure}[h]
\begin{tikzpicture}[scale=1.5,
 quivvert/.style={red, fill},
 quivarr/.style={red, -latex},
 dimerarr/.style={ultra thick, teal,-latex}]
\pgfmathsetmacro{\Qstep}{0.5}
\pgfmathsetmacro{\Qoff}{-1.75}
\pgfmathsetmacro{\Hstep}{2.4}
\begin{scope} [shift={(0,0)}, xscale=-1]
\foreach \n/\x/\y in {1/-1/1, 2/1/1, 3/0/0, 4/2/0, 5/-1/-1, 6/1/-1}
  \coordinate (Q\n) at (\x*\Qstep,\y*\Qstep);
\foreach \N in {1,2,3,4,5,6}
 \draw [quivvert] (Q\N) node (Q\N) {\tiny$\bullet$};
\foreach \t/\h in {1/2,4/6, 6/2, 3/1, 3/6, 5/3}
 \draw [quivarr] (Q\t) -- (Q\h);
\foreach \t/\h in {1/5, 6/5, 2/4, 2/3}
 \draw [dimerarr] (Q\t) -- (Q\h);
\draw (0,\Qoff*\Qstep) node {\small 35};
\end{scope}
\begin{scope} [shift={(\Hstep,0)}, xscale=-1]
\foreach \n/\x/\y in {1/-1/1, 2/1/1, 3/0/0, 4/2/0, 5/-1/-1, 6/1/-1}
  \coordinate (Q\n) at (\x*\Qstep,\y*\Qstep);
\foreach \N in {1,2,3,4,5,6}
 \draw [quivvert] (Q\N) node (Q\N) {\tiny$\bullet$};
\foreach \t/\h in {1/2,1/5, 4/6, 6/5, 6/2, 3/1, 3/6}
 \draw [quivarr] (Q\t) -- (Q\h);
\foreach \t/\h in {2/3, 5/3, 2/4}
 \draw [dimerarr] (Q\t) -- (Q\h);
\draw (0,\Qoff*\Qstep) node {\small 25};
\end{scope}
\begin{scope} [shift={(2*\Hstep,0)}, xscale=-1]
\foreach \n/\x/\y in {1/-1/1, 2/1/1, 3/0/0, 4/2/0, 5/-1/-1, 6/1/-1}
  \coordinate (Q\n) at (\x*\Qstep,\y*\Qstep);
\foreach \N in {1,2,3,4,5,6}
 \draw [quivvert] (Q\N) node (Q\N) {\tiny$\bullet$};
\foreach \t/\h in {1/2,1/5, 4/6, 6/5, 6/2, 2/3, 5/3}
 \draw [quivarr] (Q\t) -- (Q\h);
\foreach \t/\h in {3/1, 3/6, 2/4}
 \draw [dimerarr] (Q\t) -- (Q\h);
\draw (0,\Qoff*\Qstep) node {\small 25};
\end{scope}
\begin{scope} [shift={(3*\Hstep,0)}, xscale=-1]
\foreach \n/\x/\y in {1/-1/1, 2/1/1, 3/0/0, 4/2/0, 5/-1/-1, 6/1/-1}
  \coordinate (Q\n) at (\x*\Qstep,\y*\Qstep);
\foreach \N in {1,2,3,4,5,6}
 \draw [quivvert] (Q\N) node (Q\N) {\tiny$\bullet$};
\foreach \t/\h in {1/5, 2/4, 4/6, 6/5, 3/1, 3/6, 2/3}
 \draw [quivarr] (Q\t) -- (Q\h);
\foreach \t/\h in {1/2, 5/3, 6/2}
 \draw [dimerarr] (Q\t) -- (Q\h);
\draw (0,\Qoff*\Qstep) node {\small 12};
\end{scope}
\end{tikzpicture}
\caption{Some matchings and their boundary values}
\label{fig:matchings}
\end{figure}

Following \cite[\S4]{CKP}, a perfect matching $\match$ determines a rank 1 \emph{matching module} $\matmod{\match}$ as follows.
As a representation of $Q$, 
we put $\ring$ at each vertex, $t$ on every arrow in $\match$ and~$1$ on the other arrows.
From the construction, we see that, as a $\algC$-module
\begin{equation}\label{eq:bdry-mod}
 e \matmod{\match}\isom M_{\bdry\match}
\end{equation}
because $M_I$ can also be seen as a `matching module',
now on the circular double quiver \eqref{eq:circle-quiv},
where the indices in $I$ give the anticlockwise arrows which are $t$.

As observed in \cite[Prop.~8.6]{CKP}, $M_I$ is a $\algB$-module precisely 
when $I=\bdry\match$ for some matching $\match$ on $Q$.
One direction follows immediately from \eqref{eq:bdry-mod}.

\subsection{Grothendieck groups and quiver lattices}

In this subsection, when $\algA$ is a dimer algebra as in \S\ref{subsec:dim-mod},
we prove that the exact sequence in \Cref{Thm:2exactseq},
\begin{equation}\label{eq:grot-seq}
 \ZZ \lraa{\delta} \Grot(\fd \algA)\lraa{\beta} \Grot(\CM\algA) \lraa{\rkk} \ZZ,
\end{equation}
is isomorphic to a subsequence of the extended cochain complex of 
the dimer model $Q=(Q_0, Q_1, Q_2)$,
that is,
\begin{equation}\label{eq:cochain}
  \ZZ \lraa{c} \ZZ^{Q_0} \lraa{\cob{0}} \ZZ^{Q_1} \lraa{\cob{1}} \ZZ^{Q_2},
\end{equation}
where $c(n)$ is the constant map $Q_0\to \ZZ$ with value $n$ and the coboundary maps are
\begin{equation*}
\begin{aligned}
\cob{0}\phi(a) &= \phi(ha)-\phi(ta), & &
\quad\text{for $\phi\in\ZZ^{Q_0}$, i.e.~$\phi\colon Q_0\to \ZZ$, and $a\in Q_1$,}\\
\cob{1}\theta(f) &= \sum_{a\in\bdry f} \theta(a), & &
\quad\text{for $\theta\in\ZZ^{Q_1}$, i.e.~$\theta\colon Q_1\to \ZZ$, and $f\in Q_2$.}
\end{aligned}
\end{equation*}
Note that \eqref{eq:cochain} is exact because the topological space associated to the 
cell complex~$Q$ is a disc and thus contractible.
The faces in $Q_2$ are implicitly oriented so that their boundary cycles are positive,
i.e.~not consistently with either orientation of the disc.

In terms of \eqref{eq:cochain}, 
we may say that a \emph{perfect matching} on $Q$ is a cochain $\mu\in\ZZ^{Q_1}$
which takes values in $\{0,1\}$ and for which $\cob{1}\mu$ is constant with value $1$.
Note that, with the second condition, it is enough to require that $\mu$ takes non-negative values.
One can then further define a \emph{multi-matching} to be a non-negative cochain $\mu\in\ZZ^{Q_1}$ 
such that $\cob{1}\mu$ is constant, so that these form a cone in the \emph{matching lattice}
\begin{equation}\label{eq:matlat}
 \Mlat = \bigl\{ \mu\in\ZZ^{Q_1} \st \text{$\cob{1}\mu$ is constant} \bigr\}.
\end{equation}
Thus we have the following subcomplex of \eqref{eq:cochain}
\begin{equation}\label{eq:mat-seq}
  \ZZ \lraa{c} \ZZ^{Q_0} \lraa{\cob{0}} \Mlat \lraa{\dg} \ZZ,
\end{equation}
where $\dg$ is defined so that, if $\mu\in\Mlat$, then $\cob{1}\mu$ is constant with value $\dg(\mu)$.
Strictly speaking, to view \eqref{eq:mat-seq} as a subcomplex of \eqref{eq:cochain}, 
the final $\ZZ$ must be embedded as the constant functions on faces.
Our aim is to show that \eqref{eq:mat-seq} is isomorphic to \eqref{eq:grot-seq}.

We already know that dimension vectors of finite dimensional modules 
give an isomorphism
$\dimv\colon \Grot(\fd \algA) \to \ZZ^{Q_0}$.
For the next terms, define 
\begin{equation}\label{eq:def-nu}
  \nu\colon \Grot(\CM\algA)\to \ZZ^{Q_1}
  \quad\text{by}\quad   \nu[X](a) = \dim \cok X_a.
\end{equation}
Note that $\cok X_a$ is finite dimensional, for all $a\in Q_1$, 
because $\rnk{\ring} X_{ta}=\rnk{\ring} X_{ha}$ and $X_a$ is injective.
This injectivity also implies that $\nu$ is additive on short exact sequences in $\CM\algA$,
by applying the Snake Lemma 
to the following diagram
\[
\begin{tikzpicture}[xscale=1.8,yscale=1.4]
\pgfmathsetmacro{\eps}{0.3}
\draw (0+\eps,1) node (A0) {$0$};
\draw (1,1) node (A1) {$X_{ta}$};
\draw (2,1) node (A2) {$Y_{ta}$};
\draw (3,1) node (A3) {$Z_{ta}$};
\draw (4-\eps,1) node (A4) {$0$};
\draw (0+\eps,0) node (B0) {$0$};
\draw (1,0) node (B1) {$X_{ha}$};
\draw (2,0) node (B2) {$Y_{ha}$};
\draw (3,0) node (B3) {$Z_{ha}$};
\draw (4-\eps,0) node (B4) {$0$};
\foreach \T/\H in {A0/A1, A1/A2, A2/A3, A3/A4, B0/B1, B1/B2, B2/B3, B3/B4}
  \draw[cdarr]  (\T) to (\H);
\foreach \T/\H/\lab in {A1/B1/$X_a$, A2/B2/$Y_a$, A3/B3/$Z_a$}
  \draw[cdarr] (\T) to node[right] {\small \lab} (\H);
\end{tikzpicture}
\]
and thus $\nu$ is a well-defined function on $\Grot(\CM\algA)$.
Furthermore, $\nu[X]\in\Mlat$, because, for any face $f\in Q_2$ with boundary cycle $a_1\cdots a_m$
(to and from $i\in Q_0$, say),
we have $X_{a_m}\cdots X_{a_1}=t\idmap$, 
and the cokernel dimensions of a composite of injective maps add up, 
so that 
\[
  \cob{1}\nu[X](f)=\dim \cok (t\idmap) =  \rnk{\ring} X_i = \rkk[X]
\]
independent of $f$.
Thus $\nu[X]\in\Mlat$ and $\dg\nu[X]=\rkk[X]$.

\begin{proposition}\label{Prop:equivof2exactseq} 
If $\algA$ is a dimer algebra as in \S\ref{subsec:dim-mod}, then
we have the following commutative diagram of (exact) sequences, 
where the horizontal maps are from \eqref{eq:grot-seq} and \eqref{eq:mat-seq} 
and all the vertical maps are isomorphisms.
\[
\begin{tikzpicture}[xscale=2,yscale=1.5]
\pgfmathsetmacro{\extra}{0.5}
\draw (0,0) node (A-) {$\ZZ$};
\draw (1,0) node (A0) {$\Grot(\fd\algA)$};
\draw (2+\extra,0) node (A1) {$\Grot(\CM\algA)$};
\draw (3+\extra,0) node (A2) {$\ZZ$};
\draw (0,-1) node (B-) {$\ZZ$};
\draw (1,-1) node (B0) {$\ZZ^{Q_0}$};
\draw (2+\extra,-1) node (B1) {$\Mlat$};
\draw (3+\extra,-1) node (B2) {$\ZZ$};
\foreach \T/\H in {A-/B-, A2/B2}
  \draw[equals]  (\T) to (\H);
\foreach \T/\H/\lab in {A-/A0/\delta, A0/A1/\beta, A1/A2/\rkk, B-/B0/c, B0/B1/\cob{0}, B1/B2/\dg}
  \draw[cdarr] (\T) to node[above] {\small $\lab$} (\H);
\foreach \T/\H/\lab in {A0/B0/\dimv, A1/B1/\nu}
  \draw[cdarr] (\T) to node[right] {\small $\lab$} (\H);
\end{tikzpicture}
\]
\end{proposition}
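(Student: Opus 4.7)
The plan is to verify that the three squares commute, check that the bottom row is exact (the top row is exact by \Cref{Thm:2exactseq}), and then apply the Five Lemma to deduce that $\nu$ is an isomorphism, using that $\dimv$ is already an isomorphism through the basis of simples (\Cref{rem:lat-ranks}).

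The three squares can be checked as follows. The right-hand square amounts to the identity $\dg\nu[X]=\rkk[X]$, which is already established when $\nu$ is defined: tracing round the boundary of any face $f\in Q_2$, with $i$ its initial vertex, the relation $X_{a_m}\cdots X_{a_1}=t\idmap_{X_i}$ yields $\cob{1}\nu[X](f)=\rnk{\ring} X_i=\rkk[X]$. For the middle square, I would take a partial presentation $0\to\Syz X\to\procov{X}\to X\to 0$ of $X\in\fd\algA$ and apply the Snake Lemma at each arrow $a\in Q_1$ to the vertical maps $(\Syz X)_a,(\procov{X})_a,X_a$, noting that the first two maps are injective (since the modules are in $\CM\algA$ and the full composition round any face is multiplication by $t$). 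This produces an exact sequence
\[
 0\to\ker X_a\to\cok(\Syz X)_a\to\cok(\procov{X})_a\to\cok X_a\to 0,
\]
and the alternating sum of dimensions gives $\nu\beta[X](a)=\dim e_{ha} X-\dim e_{ta} X=\cob{0}\dimv[X](a)$. For the left-hand square, we need $\dimv(\delta)=c(1)$, that is $\rnk{\algB} T_j=1$ for every $j\in Q_0$. This is precisely the rank-$1$ property of summands of $T$ in the dimer-model setting: each interior summand is the rank-$1$ matching module of a face label of $G$, and $T_\vstar=Be_\vstar$ satisfies $e_\vstar B e_\vstar=\ring$ (see \cite[Prop.~8.2]{CKP}).

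For the bottom row, the extended cochain complex \eqref{eq:cochain} is exact because the cell complex underlying $Q$ is a contractible disc; exactness of the subcomplex \eqref{eq:mat-seq} at $\ZZ^{Q_0}$ and at $\Mlat$ follows by restriction, while surjectivity of $\dg$ is witnessed by the characteristic function of any perfect matching, which exists because $\algB\neq 0$ and the dimer model is consistent. With both rows thus five-term exact (padded with zeros at the ends) and the two outer vertical maps being the identity on $\ZZ$, the Five Lemma forces $\nu$ to be an isomorphism once $\dimv$ is. The main obstacle, such as it is, lies in the left-hand square: the rank-$1$ property of the $T_j$ is particular to the dimer/plabic context and has no direct analogue in the wider setting of \S\ref{subsec:GPB}, so this proposition genuinely specialises to that setting; everything else is a standard homological diagram chase or a topological observation about the disc.
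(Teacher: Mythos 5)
Your argument is essentially the paper's own proof: the right-hand square is the face computation already made in checking that $\nu$ lands in $\Mlat$, the middle square is the same Snake Lemma plus rank--nullity computation on a CM-presentation giving $\nu\beta[X](a)=\dim\cok X_a-\dim\ker X_a=\dim X_{ha}-\dim X_{ta}$, the left-hand square is the rank-one property $\rnk{\algB}T_i=1$ special to the dimer setting, and the conclusion is the Five Lemma using that $\dimv$ is an isomorphism. The extra care you take over exactness of the bottom row \eqref{eq:mat-seq} and over justifying the rank-one claim via \cite[Prop.~8.2]{CKP} is correct and harmless, but does not change the route.
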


\begin{proof}
The fact that the third square commutes was just proved in the process of showing that 
$\nu$ can have codomain $\Mlat$.
The first square commutes because in this case $\rnk{\algB} T_i=1$, for all $i\in Q_0$.

For the second square, suppose that $M\in \fd\algA$ has a CM-presentation $X\to Y\to M$.
Then $\beta[M] = [Y]-[X]$ and, by a simple application of the Snake Lemma,
\begin{equation}\label{eq:nu-beta}
 \nu([Y]-[X]) (a) = \dim \cok M_a - \dim \ker M_a.
\end{equation}
On the other hand, 
\begin{equation}\label{eq:cob-dim}
 (\cob{1}\dimv[M])(a) = \dim M_{ha} - \dim M_{ta}
\end{equation}
and the right-hand sides of \eqref{eq:cob-dim} and \eqref{eq:nu-beta} agree 
(e.g.~using the Rank-Nullity Theorem).
Thus $\nu\beta[M]=\cob{1}\dimv[M]$, as required.

Finally, we deduce that $\nu$ is an isomorphism by the Five Lemma.
\end{proof}

\begin{remark}\label{rem:rank1=match}
For any $X\in \CM\algA$, note that $\nu[X]$ is non-negative on all $a\in Q_1$, so it is a multi-matching.
In particular, if $\rkk[X]=1$ then $\nu[X]$ is a (perfect) matching.

In \cite[Cor.~4.6]{CKP}, it is shown that every rank 1 module $X\in \CM\algA$ is isomorphic to some $\matmod{\match}$,
and in fact the proof shows that $X\isom \matmod{\match}$ for $\match=\nu[X]$.
In particular,
\begin{equation}\label{eq:nu=match}
  \nu [\matmod{\match}] = \match.
\end{equation}
The isomorphism of sequences in \Cref{Prop:equivof2exactseq} is 
essentially the inverse of the one in \cite[Prop.~6.17]{CKP},
after making the identification $\Grot(\CM\algA)=\Grot(\proj\algA)$. 
In particular $\nu$ is the inverse of the map 
\[
  \eta\colon \Mlat\to \Grot(\proj\algA)
\]
from \cite{CKP}, which is defined so that $[\matmod{\match}]=\eta(\match)$, for any matching $\match$.
However $\nu$ and~$\eta$ are not transparently inverse to each other,
because both the identification and $\eta$ depend (at least, implicitly) on projective resolution, while $\nu$ does not.
\end{remark}

As a special case of \Cref{Prop:equivof2exactseq}, we have

\begin{lemma}\label{lem:dWt=mat-diff}
For a matching module $\matmod{\match}$,
\begin{equation}\label{eq:dWt=mat-diff}
  \cob{0} \dimv \wtmod \matmod{\match} = \match_\vstar - \match,
\end{equation}
where $\match_\vstar$ is the matching such that $\Hom(T,\modJ) \isom \matmod{\match_\vstar}$.
\end{lemma}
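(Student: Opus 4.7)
The plan is to recognise the defining exact sequence of $\wtmod N_{\match}$ as a CM-presentation of the finite-dimensional module $\wtmod N_{\match}$, and then apply the commutative diagram from \Cref{Prop:equivof2exactseq} together with the identification $\nu[N_{\match}] = \match$ from \Cref{rem:rank1=match}.

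First I would observe that $N_{\match}$ has rank $1$ and that, by \eqref{eq:bdry-mod}, $\efunct N_{\match} \isom M_{\bdry\match}$ is the rank $1$ $\algC$-module associated to the boundary of $\match$. Consequently $e_\vstar \efunct N_{\match}$ is free of rank $1$ over $\ring$, so \eqref{eq:isomPJ-rkB} gives a canonical isomorphism $\funJ \efunct N_{\match} \isom \modJ$. Applying $\Hom(T,-)$ then identifies
\[
\Hom(T, \funJ \efunct N_{\match}) \isom \Hom(T, \modJ) \isom N_{\match_\vstar},
\]
where the last step is exactly the definition of $\match_\vstar$. Thus the defining short exact sequence \eqref{eq:defwtmod} for $X = N_{\match}$ reads
\[
0 \lra N_{\match} \lra N_{\match_\vstar} \lra \wtmod N_{\match} \lra 0,
\]
which is a CM-presentation of the finite-dimensional $\algA$-module $\wtmod N_{\match}$.

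Next I would apply \Cref{Lem:classvsresol} to this presentation to get $\beta[\wtmod N_{\match}] = [N_{\match_\vstar}] - [N_{\match}]$ in $\Grot(\CM\algA)$. Then, using the commutativity of the second and third squares in \Cref{Prop:equivof2exactseq},
\[
\cob{0}\dimv[\wtmod N_{\match}] = \nu\,\beta[\wtmod N_{\match}] = \nu[N_{\match_\vstar}] - \nu[N_{\match}].
\]
Finally, by \Cref{rem:rank1=match} (in particular \eqref{eq:nu=match}) the values $\nu[N_{\match_\vstar}]$ and $\nu[N_{\match}]$ are just $\match_\vstar$ and $\match$ respectively, yielding the desired equality $\cob{0}\dimv\wtmod N_{\match} = \match_\vstar - \match$.

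No step here is particularly hard: the computation is essentially bookkeeping built on the isomorphism of sequences proved in \Cref{Prop:equivof2exactseq}. The one place that requires a small moment of care is the identification $\funJ\efunct N_{\match} \isom \modJ$, since \emph{a priori} $\funJ\efunct N_{\match}$ depends on the $\algB$-structure of $\efunct N_{\match}$ rather than just its $\algC$-structure; but \Cref{rem:overBorC} (or equivalently the fact that $\funJ$ is built from the functor $e_\vstar(-)$, which is the same over $\algB$ and $\algC$) removes this ambiguity, and the rank-$1$ hypothesis then pins the result down to $\modJ$ up to a unique isomorphism.
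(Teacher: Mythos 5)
Your argument is correct and follows essentially the same route as the paper: identify $\funJ\efunct\matmod{\match}\isom\modJ$ from the rank-one hypothesis, read off the short exact sequence $0\to \matmod{\match}\to \matmod{\match_\vstar}\to \wtmod\matmod{\match}\to 0$ from the definition of $\wtmod$, deduce $\beta[\wtmod\matmod{\match}]=[\matmod{\match_\vstar}]-[\matmod{\match}]$, and apply $\nu$ via the middle square of \Cref{Prop:equivof2exactseq} together with \eqref{eq:nu=match}. (Only the middle square is actually needed, not the third, and your extra remark on the $\algB$- versus $\algC$-structure of $\funJ$ is a harmless refinement of what the paper leaves implicit.)
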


\begin{proof}
Since $\rnk{\algA}\matmod{\match}=1$, we know $\funJ \efunct \matmod{\match}\isom \modJ$.
Hence, by definition of $\wtmod \matmod{\match}$, as in \eqref{eq:wtmod-diag}, and of $\match_\vstar$,
there is a short exact sequence 
\[
 \ShExSeq{ \matmod{\match} }{ \matmod{\match_\vstar} }{ \wtmod \matmod{\match} },
\]
so that $\beta [\wtmod \matmod{\match}] = [\matmod{\match_\vstar}] - [\matmod{\match}]$.
The result follows by applying $\nu$, and using \eqref{eq:nu=match} 
and the middle square of \Cref{Prop:equivof2exactseq}.
\end{proof}

By \Cref{Lem:somefacts}\itmref{itm:facts3}, 
we know that $\wtmod \matmod{\match}$ vanishes at the vertex $\vstar$,
so \eqref{eq:dWt=mat-diff} determines $\dimv \wtmod \matmod{\match}$,
and thus $\wt[\matmod{\match}]$, uniquely.

More generally, the map $\wt\colon \Grot(\CM\algA) \to \Grot(\fd\algA)$ in \eqref{eq:wt-class}
is identified with a map $\wtcom\colon \Mlat\to \ZZ^{Q_0}$,
taking values in $\Nstar$, as in \eqref{eq:def-Nstar},
and determined by
\[ 
\cob{0} \wtcom(\mu) = (\dg\mu)\match_\vstar - \mu.
\]

\subsection{The classical partition function}\label{subsec:class-PF}

\newcommand{\Gm}{\CC^*}
\newcommand{\Qtor}{\widehat{\mathbb{T}}}
\newcommand{\Btor}{\mathbb{T}}
\newcommand{\charpi}{p}

The (formal) partition function associated to a $k$-set $I\subset \labset{n}$ is 
\begin{equation}\label{eq:class-pf}
  \partfun_I=\sum_{\match\st\bdry \match=I} \xvar^\match,
\end{equation}
taking values in $\CC[\Mlat]$. 

We can interpret \eqref{eq:class-pf} 
as in \cite[\S1.5]{MS} by choosing edge weights $\{\xvar_{a} \st a\in Q_1\}$ 
on the quiver (or the dual plabic graph $G$) and writing $\xvar^\match=\prod_{a\in \match} \xvar_a$.
Then $\xvar^\match$ defines a function $(\Gm)^{Q_1}\to \Gm$, 
which is semi-invariant for the action of a `gauge torus' $(\Gm)^{Q_2}$
with respect to the character
\[
  \charpi\colon (\Gm)^{Q_2}\to\Gm\colon (t_f)\mapsto \prod_{f\in Q_2} t_f.
\]
Now $\Mlat$ is naturally the character lattice of the
quotient torus $\Qtor=(\Gm)^{Q_1}/\ker\charpi$
and so $\CC[\Mlat]$ is naturally the ($\ZZ$-graded) coordinate ring $\CC[\Qtor]$,
which we can think of as the homogeneous coordinate ring of $\Btor=(\Gm)^{Q_1}/(\Gm)^{Q_2}$.

Recall from \eqref{eq:part-fun} that the generalised partition function for any $M\in\CM\algB$ is
\[
\ptfn_{M}=
\sum_{[X]\in \sumrange{M}} \euler\bigl( \qvGrsH{\ddel{[X]}}{T}{M} \bigr)  \xvar^{[X]},
\] 
taking values in $\CC[\Grot(\CM\algA)]$.
In particular, we can consider this formula for a rank~1 module $M_I$.

\begin{proposition} \label{prop:partitionchar} 
Under the isomorphism $\nu\colon \Grot(\CM\algA)\to\Mlat$ 
from \Cref{Prop:equivof2exactseq}, 
$\partfun_I$ is identified with $\ptfn_{M_I}$,
that is, $\partfun_I= \pbfun{\nu}{\ptfn_{M_I}}$,
in the notation of \Cref{rem:new-clucha}.
\end{proposition}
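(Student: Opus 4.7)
The plan is to match $\partfun_I$ and $\pbfun{\nu}{\ptfn_{M_I}}$ term by term, through the bijection between matchings $\match$ with $\bdry\match = I$ and rank~$1$ submodules of $\Hom(T,M_I)$ with $e$-value $M_I$. First, I would rewrite the right-hand side using the motivic description in \Cref{rem:motivic-sum}:
\[
  \ptfn_{M_I} \,=\, \motsum_{X\leq\Hom(T,M_I),\, eX=M_I}\xvar^{[X]}.
\]
By \Cref{Lem:ranks}\itmref{itm:rks1}, $\rnk{\algA}\Hom(T,M_I)=\rnk{\algB}M_I=1$, so every summand $X$ also has $\rnk{\algA}X=1$. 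By \Cref{rem:rank1=match}, such $X$ is a matching module $\matmod{\match}$ with $\nu[X]=\match$, and the identification $e\matmod{\match}\isom M_{\bdry\match}$ from \eqref{eq:bdry-mod} forces $\bdry\match=I$. In particular, $\xvar^{[X]}$ pushes forward under $\nu$ to $\xvar^{\match}$.

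The key step is then to show that $X\mapsto\nu[X]$ is a bijection from these submodules to matchings with $\bdry=I$, and that distinct submodules yield distinct classes (so the Euler characteristic coefficient in the motivic sum is trivially~$1$). Since $\Hom(T,M_I)$ itself has rank~$1$, it equals $\matmod{\match_*}$ for $\match_*=\nu[\Hom(T,M_I)]$. Submodules $X\leq\matmod{\match_*}$ with $eX=M_I$ are parameterised by valuations $k\in\ZZ_{\geq 0}^{Q_0}$ with $k_i=0$ at boundary vertices of $Q$, via $X_i=t^{k_i}(\matmod{\match_*})_i$, and correspond to matchings $\match=\match_*-\cob{0}k$. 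Distinct $k$'s yield distinct submodules and distinct $\match$, because $\ker\cob{0}$ consists only of constant functions (as $Q$ is connected) and these are eliminated by the boundary normalisation. Surjectivity then uses the identification of the Grothendieck sequence with the cochain complex in \Cref{Prop:equivof2exactseq}: given $\match$ with $\bdry\match=I$, both $\match,\match_*\in\Mlat$ have $\dg=1$, so $\match_*-\match\in\ker\dg=\img\cob{0}$ produces the required $k$.

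Combining these, the motivic sum collapses to $\sum_{\bdry\match=I}\xvar^{[\matmod{\match}]}$ with every coefficient equal to $1$, and applying $\nu$ recovers $\partfun_I$ exactly. The main obstacle will be the final technical claim in the surjectivity step, namely that the $k$ produced from exactness can simultaneously be chosen non-negative and vanishing at every boundary vertex. This amounts to a local analysis of the dimer model on the disc at each boundary vertex, exploiting that both $\match$ and $\match_*$ are honest perfect matchings sharing the boundary data $I$, so that any would-be obstruction to non-negativity can be removed by adjusting $k$ by a constant on the appropriate region.
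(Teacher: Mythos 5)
Your setup and the injectivity half are essentially sound, and are in fact more self-contained than the paper's own proof, which simply quotes \cite[Prop.~5.5 \& Rem.~5.6]{CKP} for the bijection and calls injectivity ``the delicate part'': since $\Hom(T,M_I)$ is a rank one module in $\CM\algA$, it is a matching module $\matmod{\match_*}$ with $\match_*:=\nu[\Hom(T,M_I)]$ (note this is not the paper's $\match_\vstar$), every submodule $X$ with $eX=M_I$ is determined by the valuation vector $k$ with $e_iX=t^{k_i}e_i\Hom(T,M_I)$, one computes $\nu[X]=\match_*-\cob{0}k$, and connectedness of $Q$ together with the vanishing of $k$ at boundary vertices recovers $k$, hence $X$, from $\nu[X]$. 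This also shows that each class in $\sumrange{M_I}$ contains a single submodule, so every non-zero coefficient is $1$, as in the paper.

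The genuine gap is in surjectivity. Given $\match$ with $\bdry\match=I$, exactness of \eqref{eq:mat-seq} produces $k$ with $\cob{0}k=\match_*-\match$, unique up to a single global constant (again because $Q$ is connected); since $\match$ and $\match_*$ have the same boundary value they agree on every boundary arrow, so that one constant is already spent making $k$ vanish at all boundary vertices. After that there is no freedom left, and nothing in your argument forces $k\geq 0$, which is what you need to define the submodule $X$ inside $\Hom(T,M_I)$. Non-negativity of $k$ is exactly the assertion that $\match_*$ is the \emph{maximum} element of the lattice of matchings with boundary value $I$ (cf.\ \Cref{rem:RWflow-terms}); this is true, but it is a global property of the module $\Hom(T,M_I)$, and it cannot be obtained by ``adjusting $k$ by a constant on the appropriate region'', since any non-global adjustment changes $\cob{0}k$. (For an arbitrary matching with boundary $I$ in place of $\match_*$, e.g.\ the minimal one, the claim is false.) The missing input is precisely the paper's adjunction argument: an isomorphism $\efunct\matmod{\match}\isom M_I$ induces, via the unit, a map $\matmod{\match}\to\Hom(T,M_I)$ which is injective by \Cref{prop:rest-fful} and becomes the identity after applying $\efunct$ (\Cref{rem:eta-ker-coker}); its image is then a submodule $X$ with $eX=M_I$ and $\nu[X]=\match$, which proves $k\geq 0$ a posteriori. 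Adding this step (or citing \cite[Prop.~5.5]{CKP}, as the paper does) closes the gap, and the rest of your term-by-term identification goes through.
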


\begin{proof}
By \cite[Prop.~5.5 \& Rem.~5.6]{CKP}, there is a bijection
\[
 \theta\colon \{X \leq \Hom(T, M_I) \st eX= M_I\} \to \{ \match\st \bdry\match=I\}
 \colon X\mapsto \nu[X]
\]
Note that $\theta$ is well-defined, because every $X$ in the domain has 
$\rnk{\algA} X=\rnk{\algB} eX=1$, by  \Cref{Lem:ranks}\itmref{itm:rks3}, and so,
by \Cref{rem:rank1=match}, $X$ is a matching module, indeed $X\isom \matmod{\match}$ for $\match=\nu[X]$.
Furthermore, since $e \matmod{\match}\isom M_{\bdry\match}$, by \cite[Prop.~4.9]{CKP}, $\bdry\match=I$.

Then $\theta$ is surjective, because, if $\bdry\match=I$, then an isomorphism $e \matmod{\match}\to M_{I}$
induces an injective map $\matmod{\match}\to \Hom(T,M_{I})$ by adjunction.
Thus the ranges in the sums for $\partfun_I$ and $\ptfn_{M_I}$ are identified by $\nu$.

The fact that $\theta$ is injective is the delicate part of the proof in \cite{CKP}, 
but this then means that there is a single submodule of $\Hom(T, M_I)$
in each of the relevant classes $[X]\in \sumrange{M_I}$,
so that $\qvGrsH{\ddel{[X]}}{T}{M_I}$ is a single point and thus has $\euler=1$.
In other words, every non-zero coefficient in the sum for $\ptfn_{M_I}$ is 1 
and so the sums agree under the identification induced by $\nu$.
\end{proof}

Note that this result and its proof follow closely \cite[Theorem~9.3]{CKP},
where an analogous formula is given for a slightly different partition function due to Marsh--Scott.

\subsection{Poincar\'e duality and flows}
\newcommand{\Or}{\mathbb{O}}
\newcommand{\pdiso}{\phi}
\newcommand{\fundcls}{f}
\newcommand{\perfor}{\mathcal{O}}

Recall from \eqref{eq:gen-flow-poly} that we also have a generalised flow polynomial
$\fp^T_M=\pbfun{\wt}{\ptfn^T_{M}}$, taking values in $\CC[\Grot(\fd\algA)]$.
From \Cref{prop:partitionchar} and \Cref{lem:dWt=mat-diff}
it follows that
\begin{equation}\label{eq:fp-rank1}
 \fp^T_{M_I} 
 = \sum_{\match\st\bdry \match=I} \yvar^{\wtcom(\match)}.
\end{equation}
Note that \Cref{lem:dWt=mat-diff} says that $\wt[\matmod{\match}]=\wtcom(\match)$. 
The quantity $\match_\vstar - \match$ in \eqref{eq:dWt=mat-diff} 
can be regarded as a `cohomological flow',
that is, a cocycle in $\ZZ^{Q_1}$ which takes values just  in $\{0,\pm1\}$.
Then $\match_\vstar - \match=\cob{0} \wtcom(\match)$.
See \Cref{fig:coh-flow} for some examples, where the cocycle value $0$ is omitted and $\pm$ denote $\pm1$.
Note that $\match_\vstar$ is the first matching in \Cref{fig:matchings} 
and we chose $\match$ to be each of the other three.

\begin{figure}[h]
\begin{tikzpicture}[scale=1.9,
 wtvert/.style={black, fill},
 quivarr/.style={red, thick, -latex},
 blankit/.style={white, fill=white},
 arrlab/.style={teal}]
\pgfmathsetmacro{\Qstep}{0.5}
\pgfmathsetmacro{\Hstep}{2.6}
\pgfmathsetmacro{\blankrad}{0.08}
\newcommand{\qdot}{1.4pt}
\begin{scope} [shift={(0,0)}, xscale=-1]
\foreach \n/\x/\y in {1/-1/1, 2/1/1, 3/0/0, 4/2/0, 5/-1/-1, 6/1/-1}
  \coordinate (Q\n) at (\x*\Qstep,\y*\Qstep);
\foreach \N/\W in {1/0, 2/*, 3/0, 4/0, 5/1, 6/0}
 \draw [wtvert] (Q\N) node (Q\N) {\small $\W$};
\foreach \t/\h in {1/2,1/5, 4/6, 6/5, 6/2, 3/1, 3/6, 2/3, 5/3, 2/4}
 \draw [quivarr] (Q\t) -- (Q\h);
\foreach \t/\h in {5/3}
{\coordinate (M) at ($0.5*(Q\t) + 0.5*(Q\h)$);
 \draw [blankit] (M) circle (\blankrad);
 \draw [arrlab] (M) node {\small $-$};}
\foreach \t/\h in {1/5, 6/5}
{\coordinate (M) at ($0.5*(Q\t) + 0.5*(Q\h)$);
 \draw [blankit] (M) circle (\blankrad);
 \draw [arrlab] (M) node {\small $+$};}
\end{scope}
\begin{scope} [shift={(\Hstep,0)}, xscale=-1]
\foreach \n/\x/\y in {1/-1/1, 2/1/1, 3/0/0, 4/2/0, 5/-1/-1, 6/1/-1}
  \coordinate (Q\n) at (\x*\Qstep,\y*\Qstep);
\foreach \N/\W in {1/0, 2/*, 3/1, 4/0, 5/1, 6/0}
 \draw [wtvert] (Q\N) node (Q\N) {\small $\W$};
\foreach \t/\h in {1/2,1/5, 4/6, 6/5, 6/2, 2/3, 5/3, 3/1, 3/6, 2/4}
 \draw [quivarr] (Q\t) -- (Q\h);
\foreach \t/\h in {3/1, 3/6}
{\coordinate (M) at ($0.5*(Q\t) + 0.5*(Q\h)$);
 \draw [blankit] (M) circle (\blankrad);
 \draw [arrlab] (M) node {\small $-$};}
\foreach \t/\h in {1/5, 6/5, 2/3}
{\coordinate (M) at ($0.5*(Q\t) + 0.5*(Q\h)$);
 \draw [blankit] (M) circle (\blankrad);
 \draw [arrlab] (M) node {\small $+$};}
\end{scope}
\begin{scope} [shift={(2*\Hstep,0)}, xscale=-1]
\foreach \n/\x/\y in {1/-1/1, 2/1/1, 3/0/0, 4/2/0, 5/-1/-1, 6/1/-1}
  \coordinate (Q\n) at (\x*\Qstep,\y*\Qstep);
\foreach \N/\W in {1/1, 2/*, 3/1, 4/1, 5/2, 6/1}
 \draw [wtvert] (Q\N) node (Q\N) {\small $\W$};
\foreach \t/\h in {1/2,1/5, 4/6, 6/5, 6/2, 2/3, 5/3, 3/1, 3/6, 2/4}
 \draw [quivarr] (Q\t) -- (Q\h);
\foreach \t/\h in {1/2, 5/3, 6/2}
{\coordinate (M) at ($0.5*(Q\t) + 0.5*(Q\h)$);
 \draw [blankit] (M) circle (\blankrad);
 \draw [arrlab] (M) node {\small $-$};}
\foreach \t/\h in {1/5, 6/5, 2/4, 2/3}
{\coordinate (M) at ($0.5*(Q\t) + 0.5*(Q\h)$);
 \draw [blankit] (M) circle (\blankrad);
 \draw [arrlab] (M) node {\small $+$};}
\end{scope}
\end{tikzpicture}
\caption{Cohomological flows $\match_*-\match$ and their weights $\wtcom(\match)$}
\label{fig:coh-flow}
\end{figure}

To view \eqref{eq:fp-rank1} as a classical flow polynomial, 
we need to interpret matchings and weights on the plabic graph $G$.
We do this using Poincar\'e duality, which is an isomorphism~$\pdiso_{\bullet}$ 
between the (extended) cochain complex for $Q$ and the chain complex for $G$,
extended by a choice of fundamental class $\fundcls\in\Or G_2$,
i.e.~a map $\ZZ\to\Or G_2\colon 1\mapsto \fundcls$.
\begin{equation}\label{eq:pdiso}
\begin{tikzpicture}[xscale=2,yscale=1.5,baseline=(bb.base)]
\pgfmathsetmacro{\extra}{0}
\coordinate (bb) at (0,-0.5);
\draw (0,0) node (A-) {$\ZZ$};
\draw (1,0) node (A0) {$\ZZ^{Q_0}$};
\draw (2+\extra,0) node (A1) {$\ZZ^{Q_1}$};
\draw (3+\extra,0) node (A2) {$\ZZ^{Q_2}$};
\draw (0,-1) node (B-) {$\ZZ$};
\draw (1,-1) node (B0) {$\Or G_2$};
\draw (2+\extra,-1) node (B1) {$\Or G_1$};
\draw (3+\extra,-1) node (B2) {$\Or G_0$};
\foreach \T/\H in {A-/B-}
  \draw[equals]  (\T) to (\H);
\foreach \T/\H/\lab in {A-/A0/c, A0/A1/\cob{0}, A1/A2/\cob{1}, B-/B0/\fundcls, B0/B1/\bdry, B1/B2/\bdry}
  \draw[cdarr] (\T) to node[above] {\small $\lab$}(\H);
\foreach \n in {0,1,2} 
  \draw[cdarr] (A\n) to node[right] {\small $\pdiso_\n$} (B\n);
\end{tikzpicture}
\end{equation}
Note that in \eqref{eq:pdiso} we write the canonical chain complex of the \emph{open} disc 
divided up by the unoriented graph $G$, where 
\[
  \Or G_i = \bigoplus_{g\in G_i} \Or_g
\]
and $\Or_g$ is the rank~one $\ZZ$-module 
whose two primitive elements
are the two orientations of the unoriented $i$-cell $g$.
We don't need to do this for the cochain complex of $Q$ because all those cells are already oriented.
The isomorphism $\pdiso_{\bullet}$ is uniquely determined 
by the choice of fundamental class $\fundcls$, or equivalently an orientation of the disc.
Notice that the chain complex here must be for the open disc.
In particular, some edges in $G_1$ have only one (internal) node in $G_0$ as their boundary, 
to match the fact that some arrows in $Q_1$ are in the boundary of only one face.

Under the isomorphism $\pdiso_1$ a cohomological flow becomes a `homological flow',
that is, a cycle in $\Or G_1$ made up just of oriented edges.
Such a flow $\flow$ must be an edge-disjoint union of oriented paths beginning and ending on the boundary
and we write $\flow\colon I \rightsquigarrow  J$ if the incoming boundary edges are labelled by $I\setminus J$
and the outgoing boundary edges are labelled by $J\setminus I$;
see \Cref{fig:hom-flow} for examples corresponding to the cohomological flows in \Cref{fig:coh-flow}.

\begin{figure}[h]
\begin{tikzpicture} [scale=1.7, 
 bdry/.style={thick, blue, densely dotted},
 quivvert/.style={red, fill},
 flowarr/.style={very thick,-latex},
 plabedge/.style={blue, thick},
 blackplabvert/.style={blue, fill},
 whiteplabvert/.style={blue, fill=white}]
\newcommand{\dotRad}{1pt}
\pgfmathsetmacro{\Rad}{1}
\pgfmathsetmacro{\Qstep}{0.5}
\pgfmathsetmacro{\Qoff}{-0.25}
\pgfmathsetmacro{\LXoff}{0.4}
\pgfmathsetmacro{\LYoff}{-1.1}
\pgfmathsetmacro{\Pstep}{0.35}
\pgfmathsetmacro{\Hstep}{2.9}
\begin{scope} [xscale=-1]
\foreach \n/\x/\y in {1/0/0, 5/-1/1, 3/-1/-1, 4/-2/0, 2/1.25/0}
  \coordinate (B\n) at (\x*\Pstep,\y*\Pstep);
\coordinate (V1) at ($(B2)+0.357*(1,1)$); 
\coordinate (V2) at ($(B2)+0.357*(1,-1)$); 
\coordinate (V12) at ($(B2)+0.22*(1,0)$); 
\coordinate (V3) at ($(B3)+0.422*(0,-1)$); 
\coordinate (V4) at ($(B4)+0.4*(-1,0)$); 
\coordinate (V5) at ($(B5)+0.422*(0,1)$); 
\foreach \a/\b in {1/2, 1/3, 4/5, 3/4, 1/5}
  \coordinate (A\a\b) at ($0.5*(B\a)+0.5*(B\b)$);
\foreach \n/\x/\y in {1/-2/1, 2/0.5/1, 3/-1/0, 4/2/0, 5/-2/-1, 6/0.5/-1}
  \coordinate (Q\n) at (\x*\Pstep,\y*\Pstep);
\foreach \N/\W in {1/0, 2/*, 3/0, 4/0, 5/1, 6/0}
 \draw [quivvert] (Q\N) node (Q\N) {\small $\W$};
\foreach \n/\N in {2/1, 2/2, 3/3, 4/4, 5/5}
 \draw [plabedge] (B\n)--(V\N);
\foreach \n/\m in {1/2, 1/5, 1/3, 4/5, 3/4}
 \draw [plabedge] (B\n)--(B\m);
\foreach \n in {2,3,5}
  \draw [blackplabvert] (B\n) circle (\dotRad);
\foreach \n in {1,4}
 \draw [whiteplabvert] (B\n) circle (\dotRad);
\draw [bdry] (-0.1,0) ellipse (1 and 0.8);
\foreach \N/\where/\M in {1/above left/5, 2/below left/4, 3/below/3, 4/right/2, 5/above/1}
 \draw (V\N) node [\where] {\small $\M$};
\foreach \T/\H in {V4/B4,B4/B3, B3/V3} 
{ \coordinate (M1) at ($0.57*(\T) + 0.43*(\H)$);
  \coordinate (M2) at ($0.43*(\T) + 0.57*(\H)$);
  \draw [flowarr] (M1)--(M2);}
\draw (\LXoff,\LYoff) node {$25\rightsquigarrow 35$};
\end{scope}
\begin{scope} [shift={(\Hstep,0)},xscale=-1]
\foreach \n/\x/\y in {1/0/0, 5/-1/1, 3/-1/-1, 4/-2/0, 2/1.25/0}
  \coordinate (B\n) at (\x*\Pstep,\y*\Pstep);
\coordinate (V1) at ($(B2)+0.357*(1,1)$); 
\coordinate (V2) at ($(B2)+0.357*(1,-1)$); 
\coordinate (V12) at ($(B2)+0.22*(1,0)$); 
\coordinate (V3) at ($(B3)+0.422*(0,-1)$); 
\coordinate (V4) at ($(B4)+0.4*(-1,0)$); 
\coordinate (V5) at ($(B5)+0.422*(0,1)$); 
\foreach \a/\b in {1/2, 1/3, 4/5, 3/4, 1/5}
  \coordinate (A\a\b) at ($0.5*(B\a)+0.5*(B\b)$);
\foreach \n/\x/\y in {1/-2/1, 2/0.5/1, 3/-1/0, 4/2/0, 5/-2/-1, 6/0.5/-1}
  \coordinate (Q\n) at (\x*\Pstep,\y*\Pstep);
\foreach \N/\W in {1/0, 2/*, 3/1, 4/0, 5/1, 6/0}
 \draw [quivvert] (Q\N) node (Q\N) {\small $\W$};
\foreach \n/\N in {2/1, 2/2, 3/3, 4/4, 5/5}
 \draw [plabedge] (B\n)--(V\N);
\foreach \n/\m in {1/2, 1/5, 1/3, 4/5, 3/4}
 \draw [plabedge] (B\n)--(B\m);
\foreach \n in {2,3,5}
  \draw [blackplabvert] (B\n) circle (\dotRad);
\foreach \n in {1,4}
 \draw [whiteplabvert] (B\n) circle (\dotRad);
\draw [bdry] (-0.1,0) ellipse (1 and 0.8);
\foreach \N/\where/\M in {1/above left/5, 2/below left/4, 3/below/3, 4/right/2, 5/above/1}
 \draw (V\N) node [\where] {\small $\M$};
\foreach \T/\H in {V4/B4,B4/B5, B5/B1, B1/B3, B3/V3} 
{ \coordinate (M1) at ($0.57*(\T) + 0.43*(\H)$);
  \coordinate (M2) at ($0.43*(\T) + 0.57*(\H)$);
  \draw [flowarr] (M1)--(M2);}
\draw (\LXoff,\LYoff) node {$25\rightsquigarrow 35$};
\end{scope}
\begin{scope} [shift={(2*\Hstep,0)},xscale=-1]
\foreach \n/\x/\y in {1/0/0, 5/-1/1, 3/-1/-1, 4/-2/0, 2/1.25/0}
  \coordinate (B\n) at (\x*\Pstep,\y*\Pstep);
\coordinate (V1) at ($(B2)+0.357*(1,1)$); 
\coordinate (V2) at ($(B2)+0.357*(1,-1)$); 
\coordinate (V12) at ($(B2)+0.22*(1,0)$); 
\coordinate (V3) at ($(B3)+0.422*(0,-1)$); 
\coordinate (V4) at ($(B4)+0.4*(-1,0)$); 
\coordinate (V5) at ($(B5)+0.422*(0,1)$); 
\foreach \a/\b in {1/2, 1/3, 4/5, 3/4, 1/5}
  \coordinate (A\a\b) at ($0.5*(B\a)+0.5*(B\b)$);
\foreach \n/\x/\y in {1/-2/1, 2/0.5/1, 3/-1/0, 4/2/0, 5/-2/-1, 6/0.5/-1}
  \coordinate (Q\n) at (\x*\Pstep,\y*\Pstep);
\foreach \N/\W in {1/1, 2/*, 3/1, 4/1, 5/2, 6/1}
 \draw [quivvert] (Q\N) node (Q\N) {\small $\W$};
\foreach \n/\N in {2/1, 2/2, 3/3, 4/4, 5/5}
 \draw [plabedge] (B\n)--(V\N);
\foreach \n/\m in {1/2, 1/5, 1/3, 4/5, 3/4}
 \draw [plabedge] (B\n)--(B\m);
\foreach \n in {2,3,5}
  \draw [blackplabvert] (B\n) circle (\dotRad);
\foreach \n in {1,4}
 \draw [whiteplabvert] (B\n) circle (\dotRad);
\draw [bdry] (-0.1,0) ellipse (1 and 0.8);
\foreach \N/\where/\M in {1/above left/5, 2/below left/4, 3/below/3, 4/right/2, 5/above/1}
 \draw (V\N) node [\where] {\small $\M$};
\foreach \T/\H in {V4/B4,B4/B3, B3/V3, V5/B5, B5/B1, B1/B2, B2/V1} 
{ \coordinate (M1) at ($0.57*(\T) + 0.43*(\H)$);
  \coordinate (M2) at ($0.43*(\T) + 0.57*(\H)$);
  \draw [flowarr] (M1)--(M2);}
\draw (\LXoff,\LYoff) node {$12\rightsquigarrow 35$};
\end{scope}
\end{tikzpicture}
\caption{Dual homological flows $\flow$ and their weights $\wtcom(\flow)$}
\label{fig:hom-flow}
\end{figure}

The matching $\match_\vstar$ determines a `perfect orientation' $\perfor_\vstar$,
as illustrated in \Cref{fig:perf-orient}.
More precisely, $\perfor_\vstar$ is an orientation for each edge in $G_1$, 
from black and/or to white if the edge is dual to an arrow in $\match_\vstar$
and the other way otherwise.
The outgoing boundary edges of $\perfor_\vstar$ are precisely those in 
$I_\vstar=\bdry\match_\vstar$,
that is, $M_{I_\vstar}=\modJ$.

\begin{figure}[h]
\begin{tikzpicture} [scale=1.7, 
 bdry/.style={thick, blue, densely dotted},
 quivvert/.style={red, fill},
 quivarr/.style={red, -latex},
 dimerarr/.style={ultra thick, teal,-latex},
 flowarr/.style={very thick,-latex},
 plabedge/.style={blue, thick},
 blackplabvert/.style={blue, fill},
 whiteplabvert/.style={blue, fill=white}]
\newcommand{\dotRad}{1pt}
\pgfmathsetmacro{\Rad}{1}
\pgfmathsetmacro{\Qstep}{0.5}
\pgfmathsetmacro{\LXoff}{0.4}
\pgfmathsetmacro{\LYoff}{-1.1}
\pgfmathsetmacro{\Pstep}{0.35}
\pgfmathsetmacro{\Hstep}{2.5}
\begin{scope} [shift={(-\Hstep,0)}, xscale=-1]
\foreach \n/\x/\y in {1/-1/1, 2/1/1, 3/0/0, 4/2/0, 5/-1/-1, 6/1/-1}
  \coordinate (Q\n) at (\x*\Qstep,\y*\Qstep);
\foreach \N in {1,3,4,5,6}
 \draw [quivvert] (Q\N) node (Q\N) {\tiny$\bullet$};
\draw [quivvert] (Q2) node (Q2) {\small$*$};
\foreach \t/\h in {1/2,4/6, 6/2, 3/1, 3/6, 5/3}
 \draw [quivarr] (Q\t) -- (Q\h);
\foreach \t/\h in {1/5, 6/5, 2/4, 2/3}
 \draw [dimerarr] (Q\t) -- (Q\h);
\end{scope}
\begin{scope} [xscale=-1]
\foreach \n/\x/\y in {1/0/0, 5/-1/1, 3/-1/-1, 4/-2/0, 2/1.25/0}
  \coordinate (B\n) at (\x*\Pstep,\y*\Pstep);
\coordinate (V1) at ($(B2)+0.357*(1,1)$); 
\coordinate (V2) at ($(B2)+0.357*(1,-1)$); 
\coordinate (V12) at ($(B2)+0.22*(1,0)$); 
\coordinate (V3) at ($(B3)+0.422*(0,-1)$); 
\coordinate (V4) at ($(B4)+0.4*(-1,0)$); 
\coordinate (V5) at ($(B5)+0.422*(0,1)$); 
\foreach \a/\b in {1/2, 1/3, 4/5, 3/4, 1/5}
  \coordinate (A\a\b) at ($0.5*(B\a)+0.5*(B\b)$);
 \coordinate (Q2) at (0.5*\Pstep,1*\Pstep);
 \draw [quivvert] (Q2) node (Q2) {\small $*$};
\foreach \n/\N in {2/1, 2/2, 3/3, 4/4, 5/5}
 \draw [plabedge] (B\n)--(V\N);
\foreach \n/\m in {1/2, 1/5, 1/3, 4/5, 3/4}
 \draw [plabedge] (B\n)--(B\m);
\foreach \n in {2,3,5}
  \draw [blackplabvert] (B\n) circle (\dotRad);
\foreach \n in {1,4}
 \draw [whiteplabvert] (B\n) circle (\dotRad);
\draw [bdry] (-0.1,0) ellipse (1 and 0.8);
\foreach \N/\where/\M in {1/above left/5, 2/below left/4, 3/below/3, 4/right/2, 5/above/1}
 \draw (V\N) node [\where] {\small $\M$};
\foreach \T/\H in {V4/B4,B3/V3, B2/V1, B5/B1} 
{ \coordinate (M1) at ($0.57*(\T) + 0.43*(\H)$);
  \coordinate (M2) at ($0.43*(\T) + 0.57*(\H)$);
  \draw [teal, flowarr] (M1)--(M2);}
\foreach \T/\H in {B4/B3, V2/B2, B1/B2, B1/B3, V5/B5, B4/B5} 
{ \coordinate (M1) at ($0.57*(\T) + 0.43*(\H)$);
  \coordinate (M2) at ($0.43*(\T) + 0.57*(\H)$);
  \draw [flowarr] (M1)--(M2);}
\end{scope}
\end{tikzpicture}
\caption{The matching $\match_\vstar$ and perfect orientation $\perfor_\vstar$ ($\bdry\match_\vstar=35$).}
\label{fig:perf-orient}
\end{figure}

The following result is essentially \cite[Lemma 12.3]{RW}, 
but with a slight change of language and conventions
(see \Cref{rem:compare-RW} for more explanation).

\begin{lemma}\label{lem:match-flow}
The map $\match\mapsto\pdiso_1(\match_\vstar-\match)$ is a bijection between matchings 
$\match$ with $\bdry\match=I$ and flows $\flow\colon I \rightsquigarrow I_{\vstar}$ 
that are contained in the perfect orientation $\perfor_\vstar$.
\end{lemma}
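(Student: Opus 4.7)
The plan is to verify that the map is well-defined (lands in $\perfor_\vstar$-flows $I \rightsquigarrow I_\vstar$) and then to exhibit an explicit inverse.

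For well-definedness, since both $\match$ and $\match_\vstar$ are $\{0,1\}$-valued cochains, the difference $\match_\vstar - \match \in \ZZ^{Q_1}$ takes values in $\{-1,0,+1\}$, and $\cob{1}(\match_\vstar - \match) = 1-1 = 0$ because both matchings are cochains of constant degree $1$. Thus $\pdiso_1(\match_\vstar - \match)$ is a $1$-cycle in $\Or G_1$ with entries in $\{-1,0,+1\}$, and such a signed $1$-cycle decomposes canonically as an edge-disjoint union of oriented paths between boundary edges of $G$ (there are no internal cycles because the disc is simply connected, so every cycle with boundary in $\bdry G$ is a sum of paths).

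The key orientation check is that each arrow $a \in Q_1$ on which $\match_\vstar - \match$ equals $+1$ satisfies $a \in \match_\vstar$, while each arrow on which it equals $-1$ satisfies $a \notin \match_\vstar$. Reading the definition of $\perfor_\vstar$, this means the dual edge is oriented from black to white in the first case and from white to black in the second. Having fixed the fundamental class $\fundcls$ so that $\pdiso_1$ converts a $+1$ value at a cochain on $a$ into the $\perfor_\vstar$-orientation of the dual edge (this is essentially the choice of $\fundcls$), we conclude that every edge of $\flow := \pdiso_1(\match_\vstar - \match)$ is oriented as in $\perfor_\vstar$. A short calculation with the definition of $\bdry\match$ in terms of clockwise/anticlockwise boundary arrows then identifies the incoming boundary edges of $\flow$ with labels in $\bdry\match \setminus \bdry\match_\vstar = I \setminus I_\vstar$, and the outgoing edges with $I_\vstar \setminus I$, giving $\flow : I \rightsquigarrow I_\vstar$.

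For the inverse, given any flow $\flow : I \rightsquigarrow I_\vstar$ contained in $\perfor_\vstar$, I would set $\theta = \pdiso_1^{-1}(\flow) \in \ZZ^{Q_1}$ and define $\match = \match_\vstar - \theta$. The $\perfor_\vstar$-condition on $\flow$ exactly says that $\theta$ takes value $+1$ only on arrows in $\match_\vstar$ and value $-1$ only on arrows outside $\match_\vstar$, so $\match$ is $\{0,1\}$-valued. Since $\flow$ is a $1$-cycle, $\theta$ is a cocycle, giving $\cob{1}\match = \cob{1}\match_\vstar = 1$, so $\match$ is a perfect matching; the same boundary identification as above shows $\bdry\match = I$. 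These two constructions are visibly mutually inverse, completing the bijection.

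The main obstacle is bookkeeping the orientation conventions: the compatibility of $\pdiso_1$ (which depends on $\fundcls$) with the black/white rule defining $\perfor_\vstar$ and with the convention for $\bdry\match$ must all be aligned so that the signs work out. Once these conventions are fixed once and for all (and the standard choices in the paper are already consistent), the bijection is a direct Poincar\'e-dual repackaging of the equality $\match = \match_\vstar - (\match_\vstar - \match)$.
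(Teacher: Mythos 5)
Your construction is essentially the paper's proof: the paper likewise identifies $\flow=\pdiso_1(\match_\vstar-\match)$ with the symmetric difference $(\match\setminus\match_\vstar)\cup(\match_\vstar\setminus\match)$, implicitly oriented by $\perfor_\vstar$, inverts it by $\match=(\flow\setminus\match_\vstar)\cup(\match_\vstar\setminus\flow)$ (your $\match=\match_\vstar-\pdiso_1^{-1}(\flow)$), and fixes the fundamental class $\fundcls$ so that the boundary labels come out right; your signed-cochain bookkeeping, the $\{0,1\}$-valuedness check and the use of $\cob{1}$ to see that $\match$ is again a perfect matching are the same argument in chain-level language.

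One justification, however, is wrong, although the claim it supports is true. Simple connectivity of the disc does not show that the $1$-cycle $\pdiso_1(\match_\vstar-\match)$ is an edge-disjoint union of boundary-to-boundary paths: a closed oriented loop around interior faces is null-homologous, yet it would still be a closed component of the cycle, so $H_1=0$ is not the relevant point. What excludes closed components is the acyclicity of $\perfor_\vstar$: since the nodes of $G$ alternate in colour around any cycle, a closed oriented cycle in $\perfor_\vstar$ would alternate edges dual to arrows in $\match_\vstar$ with edges dual to arrows not in $\match_\vstar$, and swapping $\match_\vstar$ along it would produce a second matching with boundary value $I_\vstar$, contradicting the uniqueness of $\match_\vstar$ (noted in \Cref{rem:flow-wt}, via \Cref{Thm:char-parfun}\itmref{itm:pfun5} and \Cref{prop:partitionchar}, independently of the present lemma). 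Since your orientation check already places $\pdiso_1(\match_\vstar-\match)$ inside $\perfor_\vstar$, this acyclicity then gives the decomposition into boundary-to-boundary paths; with that substitution for the topological argument, your proof is complete and agrees with the paper's.
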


\begin{proof}
To see this, treat a matching $\match$ as a set of arrows in $Q_1$ and a flow $\flow$ as a set of edges in $G_1$, 
implicitly oriented in the direction given by $\perfor_\vstar$.
Identifying $Q_1$ with $G_1$ by duality, 
the equation $\flow=\pdiso_1(\match_\vstar-\match)$ can be written
\[
  \flow = (\match \setminus\match_\vstar) \cup (\match_\vstar \setminus \match),
\]
which can be inverted as
\[
  \match= (\flow\setminus \match_\vstar) \cup (\match_\vstar \setminus \flow).
\]
The fact that $\match$ is a subset with one arrow in each face in $Q_2$ corresponds to the fact that
$\flow$ is a subset with one incoming and one outgoing edge at each node in $G_0$ it passes through.
Since $\flow$ is in $\perfor_\vstar$ precisely one of those two edges must be in $\match_\vstar$.

Finally, to have $\flow\colon \bdry\match \rightsquigarrow \bdry\match_{\vstar}$,
we choose the fundamental class so that $\pdiso_1$ maps anti-clockwise boundary arrows in $Q_1$ 
to outgoing boundary edges in $G_1$.
\end{proof}

Flows in~$\perfor_\vstar$ are vertex-disjoint unions of paths in~$\perfor_\vstar$.
We can compute $\wtcom(\match)$ 
from the flow $\flow=\pdiso_1(\match_\vstar-\match)$ as in \cite[\S6]{RW}.
That is, the weight $\wtcom(\flow)\in \NN^{G_2}\isom \NN^{Q_0}$ is given by
summing functions~$w_\flowpath$, over the paths~$\flowpath$ in~$\flow$,
where $w_\flowpath$ is 1 on all faces to the left of~$\flowpath$ and 0 on all faces to the right of~$\flowpath$.
See \Cref{fig:hom-flow} for examples.
(Strictly, $\wtcom(\flow)$ here is the exponent of the `weight' in \cite{RW}).

\begin{remark}\label{rem:flow-wt}
Note that, because any path~$\flowpath$ in $\perfor_\vstar$ is itself a flow 
and so corresponds, as in \Cref{lem:match-flow}, to a matching $\match$, 
we know that $w_\flowpath(\vstar)=0$, 
that is (the face corresponding to) $\vstar$ is on the right of~$\flowpath$.
Thus $\wtcom(\flow)\in\Nstar\cap \NN^{Q_0}$, as required.
This further means that $I_\vstar=\bdry\match_\vstar$ is lexicographically maximal in 
the positroid (i.e.~the set of all boundary values of matchings),
since otherwise there would be a flow $I \rightsquigarrow I_{\vstar}$ with $\vstar$ on the left of some path.
It also follows from \Cref{Thm:char-parfunB}\itmref{itm:pfun5} and \Cref{prop:partitionchar} 
that $\match_\vstar$ is the unique matching with boundary value $I_\vstar$,
recovering the more combinatorial way to characterise $\match_\vstar$ (cf. \cite[Remark~6.4]{RW}).
\end{remark}

The observations above lead to the following conclusion.

\begin{proposition}\label{prop:class-flow-poly}
We can rewrite \eqref{eq:fp-rank1} as a classical flow polynomial (cf. \cite[(6.3)]{RW}).
\begin{equation}\label{eq:class-flow-poly}
  \fp_{M_I} = \flowp_I
  = \sum_{\substack{\flow\colon I \rightsquigarrow  I _{\vstar}\\\text{in $\perfor_\vstar$}}} \yvar^{ \wtcom(\flow)},
\end{equation}
which is a (regular) polynomial in $\CC[\Nstar]$, that is, the exponents are all in $\NN^{Q_0}$.
\end{proposition}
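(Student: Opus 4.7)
The plan is to compare the two sums term-by-term via the bijection established in \Cref{lem:match-flow} and match exponents using the uniqueness characterisation of weight vectors. Starting from \eqref{eq:fp-rank1}, we have
\[
  \fp_{M_I} = \sum_{\match\st\bdry\match=I} \yvar^{\wt[\matmod{\match}]},
\]
so it suffices to establish a matching-by-matching identification of the exponents with $\wt\flow$ for $\flow = \pdiso_1(\match_\vstar - \match)$. By \Cref{lem:match-flow}, this bijection sends $\{\match\st \bdry\match = I\}$ onto $\{\flow\colon I\rightsquigarrow I_\vstar\text{ in }\perfor_\vstar\}$, so once the exponents are matched, the summation index can be relabelled as in \eqref{eq:class-flow-poly}.

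To identify the exponents, recall from \Cref{lem:dWt=mat-diff} that $\wt[\matmod{\match}]$, viewed in $\Nstar\subspc \ZZ^{Q_0}$ via $\dimv$, is the \emph{unique} element $w\in\Nstar$ satisfying
\[
 \cob{0} w = \match_\vstar - \match,
\]
the uniqueness following because $\ker\cob{0}$ consists of constant functions and we fix $w_\vstar = 0$. I would then verify that the combinatorial weight $\wt\flow$ satisfies the same two conditions for $\flow = \pdiso_1(\match_\vstar - \match)$. The vanishing at $\vstar$ is immediate from \Cref{rem:flow-wt}, since every path $\flowpath$ in~$\perfor_\vstar$ keeps the face $\vstar$ on its right. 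For the coboundary equation, it suffices, by additivity, to check that a single path $\flowpath$ contributes $w_\flowpath$ whose coboundary, evaluated on an arrow $a\in Q_1$ (dual to an edge of $G$), equals $+1$ if $a\in \match_\vstar\setminus\match$, $-1$ if $a\in\match\setminus\match_\vstar$, and $0$ otherwise, across the arrows crossed by $\flowpath$ under Poincar\'e duality.

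The key combinatorial step, which will be the main obstacle, is this last local check relating $\cob{0}w_\flowpath$ to the edges of~$\flowpath$. Unpacking: $\cob{0} w_\flowpath(a) = w_\flowpath(ha) - w_\flowpath(ta)$ is nonzero only for arrows $a$ whose two adjacent faces (i.e.~the two endpoints of the dual edge) lie on opposite sides of~$\flowpath$, which is exactly the set of arrows dual to edges of $\flowpath$ itself. The sign is determined by the orientation conventions fixed by the choice of fundamental class $\fundcls$ in \eqref{eq:pdiso}, and by \Cref{lem:match-flow} these signed arrows are precisely the symmetric difference $(\match_\vstar\setminus\match)\cup(\match\setminus\match_\vstar)$ with the correct signs. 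Thus $\cob{0}\wt\flow = \match_\vstar - \match$, and uniqueness gives $\wt[\matmod{\match}] = \wt\flow$ as elements of $\Nstar$.

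Finally, since $\wt\flow\in\NN^{Q_0}$ by construction (each $w_\flowpath$ takes values in $\{0,1\}$), every exponent appearing in the relabelled sum lies in $\NN^{Q_0}\cap\Nstar$, so the right-hand side of \eqref{eq:class-flow-poly} is a genuine polynomial in $\CC[\Nstar]$, completing the proof.
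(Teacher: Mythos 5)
Your proposal is correct and follows essentially the same route as the paper: the bijection of \Cref{lem:match-flow} re-indexes the sum, and the exponent identification comes from the uniqueness in \Cref{lem:dWt=mat-diff} of the $w\in\Nstar$ with $\cob{0}w=\match_\vstar-\match$, together with \Cref{rem:flow-wt} for the vanishing at $\vstar$. The only difference is that you spell out the local check that $\cob{0}\wt\flow=\match_\vstar-\match$ (support on arrows dual to edges of $\flowpath$, signs fixed by the fundamental class), which the paper simply delegates to the weight computation in \cite[\S6]{RW}.
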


In our running example $I_\vstar=35$ and the first two flows in \Cref{fig:hom-flow}
are the only two flows $25\rightsquigarrow 35$, so we conclude that
\[
  \flowp_{25} = y_{34}(1+y_{24})
\]
where $y_i=\yvar^{[S_i]}$ and the vertices are labelled by the Pl\"ucker labels from \Cref{fig:rs-labelling}.

\begin{remark}\label{rem:RWflow-terms}
The fact that the classical flow polynomials $\flowp_I$ in \eqref{eq:class-flow-poly}
have a combinatorial form as implied by \Cref{rem:fp-F-poly} is familiar from \cite[Cor.~12.4]{RW}.
More precisely, the existence of a minimal (and maximal) term amounts to the fact that
 the matchings in the partition function \eqref{eq:class-pf} form a distributive lattice
(see \cite[Thm.~12.1]{RW} and references therein).
We can see this from the categorical viewpoint of this paper,
by interpreting this lattice as (the opposite of) the submodule lattice of $\sHom(T,M_I)$.
\end{remark}

\begin{remark}\label{rem:compare-RW}
There are various orientation choices that determine the conventions used above,
and some conventions may differ from \cite{RW} or elsewhere.

First, the choice of the fundamental class $\fundcls\in\Or G_2$ 
determines the direction of the homological flow 
$\flow=\pdiso_1(\match_\vstar-\match)$ associated to a matching $\match$ and
thus whether $w_\flowpath$ should be~1 on the left or right of the path $\flowpath$. 
Note that $\wtcom(\match)$ is defined cohomologically by 
$\cob{0} \wtcom(\match)= \match_\vstar-\match$
and will not depend on this choice.

Second, the boundary value 
of a matching $\match$ is most naturally 
a `matching' on the circular double quiver \eqref{eq:circle-quiv}, that is, 
a choice of one of the two arrows between each pair of adjacent vertices.
The module $e N_\match$ is canonically determined by this boundary matching,
as it specifies which arrows should be $t$.

However (cf.~\S\ref{subsec:nota-conv}),
to describe such a module as $M_I$, for a Pl\"ucker label~$I$, requires a choice of 
whether $I$ gives the clockwise or anti-clockwise arrows in the boundary matching.
The choice made here and in \cite{JKS1} is anti-clockwise,
while in \cite{CKP} it is clockwise.
Changing this choice changes $I$ to the complementary label $I^c$
and swaps $k$ and $n-k$.

Rietsch--Williams \cite[\S2.3]{RW} use Young diagrams as indices, 
so that both Pl\"ucker labels~$I$ and $I^c$ can be read off as the south or west steps in the 
boundary (NE to SW) of the diagram.
Since they use south steps to label Pl\"ucker coordinates on the original Grassmannian,
they are effectively using the clockwise convention from our point of view.
On the other hand, a flow $I \rightsquigarrow I_{\vstar}$ is also a flow $I_{\vstar}^c \rightsquigarrow I^c$
and $I_{\vstar}^c$ is lexicographically minimal precisely when $I_{\vstar}$ is lexicographically maximal,
which explains the differences between e.g.~\cite[(6.3)]{RW} and \eqref{eq:class-flow-poly}.
\end{remark}

\section{Cluster algebras, monoids and cones}\label{sec:clusalg}

In this section, we use the cluster character $\ptfn^T$ to introduce a cluster algebra $\clualgA$  
and two cones (Newton--Okounkov and $g$-vector), associated to a cluster tilting object~$T$.
We say that $M$ is \emph{reachable} from $T$
if $M\in \add U$ for a cluster tilting object $U$ obtained from $T$ by a finite sequence of mutations.  
A \emph{cluster isomorphism} between cluster algebras is an algebra isomorphism 
which preserves clusters and mutation relations. 

\begin{proposition} \label{lem:clus-alg}
Let $T\moreq \bigoplus_{i\in Q_0} T_i$
be a cluster tilting object in $\GP \algB$ (cf.~\eqref{eq:T-moreq}) and 
\[ \gamma\colon \GP B \to H \] 
be a cluster character for which the $\gamma(T_i)\in H$ are algebraically independent.
\begin{enumerate}
\item\label{itm:clus1}
  There is a cluster algebra $\clusalg{T}{\gamma}\subset H$ with cluster variables 
$\gamma(M)$ for all rigid indecomposable $M$ reachable from $T$.
\item\label{itm:clus2} 
  If $\gamma':\GP\algB \to H'$ is another cluster character with 
$\gamma'(T_i)$ algebraically independent, then there is a cluster isomorphism
\[ f\colon \clusalg{T}{\gamma}\to \clusalg{T}{\gamma'}, \]
where $f(\gamma(M))=\gamma'(M)$ for all rigid $M$ reachable from $T$.
\end{enumerate}
\end{proposition}

\begin{proof}
\itmref{itm:clus1}  
Let $\mathcal{F}_\gamma$ be the field of rational functions in the algebraically independent
generators $\gamma(T_i)$, which is a subfield of the field of fractions of $H$. 
Let
\[
  \clusalg{T}{\gamma}\subset \mathcal{F}_\gamma 
\]
be the cluster algebra constructed using $\{\gamma(T_i)\colon i\}$ and the quiver of 
$\End_C(T)\op$ for the initial seed. 

Since $\GP\algB$ has a cluster structure, by \Cref{rem:cluster-structure},
for any cluster tilting object~$U$, the Gabriel quiver of $\End_C(\mu_kU)\op$ is the Fomin--Zelevinsky 
mutation of the quiver of $\End_C (U)\op$. Also, using the mutation sequences 
\[
  0 \to U_k^* \to  E \to U_k \to 0 
  \quadand
  0\to U_k \to F \to U_k^* \to 0, 
\]
we have
\[
  \gamma(U_k)\gamma(U_k^*) = \gamma(E) + \gamma(F),
\]
which is precisely the Fomin--Zelevinsky mutation relation between cluster variables. 
So the cluster variables coincide with the set of $\gamma(M)\in H$ for all $M$
that are rigid indecomposable and reachable from $T$, by induction.
 
\itmref{itm:clus2} 
There is an isomorphism of fields
\[
 f\colon\mathcal{F}_\gamma \to \mathcal{F}_{\gamma'}
  \colon \gamma(T_i)\mapsto \gamma'(T_i).
\]
We need to show that $f(\gamma(M))=\gamma'(M)$ for all $M$ indecomposable rigid and reachable from $T$. We have
\[
  \gamma(U_k)\gamma(U_k^*) 
  = \gamma(E) + \gamma(F) \quadand \gamma'(U_k)\gamma'(U_k^*) 
  = \gamma'(E) + \gamma'(F),
\]
and $f(\gamma(U_k^*))=\gamma'(U_k^*)$ provided $f(\gamma(U))=\gamma'(U)$. 
Therefore $f(\gamma(M))=\gamma'(M)$ for all $M$ which are reachable from $T$, by induction.
\end{proof}

\begin{remark}
\Cref{lem:clus-alg} only requires that $\gamma$ satisfies
\[
  \gamma(U_i)\gamma(U_i^*)=\gamma(E) + \gamma(F)
\]
on mutation sequences, which is a special case of \Cref{def:clucha}\itmref{itm:cc3}.
Indeed, the result holds more generally for Frobenius 2-CY categories, and even other
types of categories, provided there is a suitable cluster character and cluster structure. 
\end{remark}

\begin{proposition}\label{Prop:algind}
If $T\moreq\bigoplus_{i=1}^{m} T_i$ is a cluster tilting object in $\GP B$, 
then the partition functions $\ptfn^T_{T_i}$, as defined in \eqref{eq:part-fun}, 
are algebraically independent.
\end{proposition}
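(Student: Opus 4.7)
The plan is to observe that, for each summand $T_i$ of $T$, the generalised partition function $\ptfn^T_{T_i}$ is a single monomial, and then to check that the exponents of these monomials form a basis of the free abelian group $\Grot(\CM\algA)$. Algebraic independence of Laurent monomials whose exponents form a lattice basis is a general and elementary fact about the Laurent polynomial ring $\CC[\Grot(\CM\algA)]$, so this will suffice.

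First, I would apply \Cref{Thm:char-parfun}\itmref{itm:pfun5}. For $M=T_i\in\add T$ we may take the trivial partial presentation $0\to 0\to T_i\to T_i\to 0$, so that $\Syz T_i=0\in\add T$. The criterion then gives
\[
  \ptfn^T_{T_i} \;=\; \xvar^{[T,T_i]},
\]
which is a single Laurent monomial in $\CC[\Grot(\CM\algA)]$.

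Second, I would identify the exponents $\{[T,T_i]\}_{i\in Q_0}$ with the standard basis of $\Grot(\CM\algA)$. By the equivalence \eqref{eq:add-proj}, $[T,T_i]=[\Hom_\algB(T,T_i)]=[\algA e_i]$ is the class of the indecomposable projective $\algA$-module at vertex~$i$. By \Cref{prop:globdim}, $\algA$ has finite global dimension, so the natural map $\Grot(\proj\algA)\to\Grot(\CM\algA)$ is an isomorphism (cf. \Cref{rem:lat-ranks}), and the classes $\{[\algA e_i]\}_{i\in Q_0}$ form a $\ZZ$-basis of $\Grot(\CM\algA)$.

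Finally, since the exponents $\{[T,T_i]\}_{i=1}^m$ form a $\ZZ$-basis of the lattice $\Grot(\CM\algA)$, the corresponding Laurent monomials $\xvar^{[T,T_i]}$ are algebraically independent in $\CC[\Grot(\CM\algA)]$: any polynomial relation among them could be rewritten as a relation on the character lattice of the algebraic torus with cocharacter lattice $\Grot(\CM\algA)$, which must be trivial. There is no genuine obstacle here; the content of the argument is entirely contained in \Cref{Thm:char-parfun}\itmref{itm:pfun5} together with the standard structure of $\Grot(\CM\algA)$ established in \S\ref{Sec:5}.
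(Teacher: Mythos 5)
There is a genuine gap in your first step. The sequence $0\to 0\to T_i\to T_i\to 0$ is not a partial presentation unless $T_i$ is projective: by the paper's convention, the middle term $\procov{M}$ of a partial presentation must be a projective module, so the syzygy appearing in \Cref{Thm:char-parfun}\itmref{itm:pfun5} is the kernel of a surjection from a projective (well defined up to projective summands), not an arbitrary kernel. The criterion ``$\Syz T_i\in\add T$'' is equivalent to $T_i\in\add\CTOcosyz{T}$ (\Cref{rem:SigmaT}), and this fails for the mutable summands of $T$ in general. Indeed, for a non-projective summand $T_i$ one has $\sHom(T,T_i)\neq 0$ (it contains the class of $\idmap_{T_i}$ in $\sEnd T_i$), so by \Cref{Thm:char-parfun}\itmref{itm:pfun4} the minimum term $\xvar^{[T,T_i]}$ and the maximum term $\xvar^{[\algA\cdot T_i]}$ of $\ptfn^T_{T_i}$ are distinct, since their weights differ by $[\sHom(T,T_i)]$; hence $\ptfn^T_{T_i}$ is not a monomial. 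The monomial cluster in this setting is $\{\ptfn^T_{\Sigma T_i}\}\cup\{\ptfn^T_P\}$, not $\{\ptfn^T_{T_i}\}$ (cf.\ \Cref{rem:monom-cluster} and \Cref{rem:T-vs-SigmaT}).

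The rest of your argument is in the right spirit, and the repair is exactly the paper's proof: one does not need $\ptfn^T_{T_i}$ to be a monomial, only that its minimal exponent is $[T,T_i]$ with coefficient $1$, which is \Cref{Thm:char-parfun}\itmref{itm:pfun4}. Since the partial order of \Cref{def:part-ord} is additive, the product $\prod_i\bigl(\ptfn^T_{T_i}\bigr)^{d_i}$ has minimal exponent $\sum_i d_i[T,T_i]$, again with coefficient $1$, and these exponents are pairwise distinct for distinct $d$ because the $[T,T_i]$ form a basis of $\Grot(\CM\algA)\isom\Grot(\proj\algA)$ --- the part of your argument that is correct. Given a vanishing polynomial relation $\sum_d c_d\prod_i\bigl(\ptfn^T_{T_i}\bigr)^{d_i}=0$, choosing a $d$ with $c_d\neq 0$ whose leading exponent is minimal among those occurring shows $c_d=0$, a contradiction; this is the inductive step in the paper's proof.
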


\begin{proof}
Suppose that there is a polynomial $f(x)=\sum_d c_d x^d$ in $\CC[x_1,\ldots,x_m]$ such that 
\[
f\bigl( \ptfn^T_{T_1}, \dots, \ptfn^T_{T_m}\bigr) =  \sum_{d} c_d \prod_{i=1}^{m} \bigl(\ptfn^T_{T_i}\bigr)^{d_i}=0.
\]
Since the partial order $\leq$ is additive (i.e.~if $a\leq b$ and $c\leq d$, then $a+c\leq b+d$),
the leading exponent of the $d$-summand is $\sum_i d_i[T, T_i]$, by \Cref{Thm:char-parfunB}\itmref{itm:pfun4}.
Since the $[T, T_i]$ form a basis of $\Grot(\CM\algA)\isom\Grot(\proj\algA)$, by \Cref{rem:lat-ranks},
these leading exponents are all distinct 
and so we can deduce inductively that $c_d=0$ for all $d$. 
Thus $f=0$, as required.
\end{proof}

We can use \Cref{lem:clus-alg} and \Cref{Prop:algind} to define a cluster algebra as follows. 

\begin{definition}\label{def:clualgA}
Let $\clualgA_T=\clusalg{T}{\ptfn^T}\subset \CC[\Grot(\CM\algA)]$ be the cluster algebra 
generated by cluster variables obtained by iterated mutation from the initial variables,
that is, generated by all $\ptfn^T_M$ for reachable rigid $M$.
\end{definition}

\begin{remark}\label{rem:total-order}
Recall the partial order $\leq$ on $\Grot(\CM\algA)\isom\ZZ\oplus \Nstar$ from \Cref{def:part-ord}. 
We can refine this order to a lexicographic total order
by choosing an order on the vertices $i\in Q_0^\vstar := Q_0\setminus\{\vstar\}$
(cf.~\cite[Def.~8.1]{RW}),
as $\{[S_i]\st i\in Q_0^\vstar\}$ is a basis of $\Nstar$.
\end{remark}

We can then, using the total order in \Cref{rem:total-order}, define
\begin{equation}\label{eq:valT}
 \Val{T}\colon \CC[\Grot(\CM\algA)]\setminus 0 \,\lra\, \Grot(\CM\algA),
\end{equation}
where $\Val{T}(f)$ is the minimal exponent of the non-zero terms in $f$.
The restriction of $\Val{T}$ to $\clualgA_T\setminus 0$ 
can be regarded as 
an example of a `$g$-vector valuation', in the general sense of \cite[\S5]{BCMN}.

\begin{definition}\label{Def:noc-etc} 
We define two monoids in $\Grot(\CM\algA)$, 
and their respective cones in $\Grot(\CM\algA)\otimes_{\ZZ}\RR$, 
associated to any cluster tilting object $T$ in $\GP\algB$ as follows.

The \emph{Newton--Okounkov monoid/cone} is
\begin{align*}
  \nom{T} &= \{ \Val{T}(f) \st f \in \clualgA_T \setminus 0 \}, \\
  \noc{T} &= \overline{\Rspan}\, \nom{T}.
\end{align*}
Traditionally, $\nom{T}$ is also known as the \emph{value semigroup} for $\Val{T}$ on $\clualgA_T$.

The \emph{$g$-vector monoid/cone} is
\begin{align*}
   \gvm{T} &=\{ [T,M] \st M\in \CM \algB \}, \\
   \gvc{T} &= \overline{\Rspan}\, \gvm{T}. 
\end{align*}
\end{definition}  

Note that $\nom{T}$ and $\gvm{T}$ are semigroups, because 
\[
  \Val{T}(f_1f_2)=\Val{T}(f_1)+\Val{T}(f_2)
  \quadand
 [T,M_1\oplus M_2]=[T,M_1]+[T,M_2]
\]
and indeed they are monoids, because $\Val{T}(1)=0=[T,0]$.

\newcommand{\genmon}{\mathsf{M}}
\newcommand{\gencon}{\mathsf{C}}
\newcommand{\genlat}{\mathsf{L}}

\begin{remark} \label{rem:MonCon}
A general monoid $\genmon$ in a lattice $\genlat$ such as $\Grot(\CM\algA)$ can be quite complicated.
It may not be finitely-generated and it may not be \emph{saturated},
that is, equal to the set of all integral points in $\Rspan(\genmon)$.

On the other hand, there are `nice' monoids $\genmon$ which are defined by a finite number
of integral linear inequalities, that is, they are the integral points of a rational polyhedral cone
$\gencon\subset \genlat\tensR$.
In that case, $\genmon$ is both finitely-generated, by Gordan's Lemma, and saturated.
Furthermore $\gencon=\Rspan(\genmon)=\overline{\Rspan}(\genmon)$.
(See e.g.~\cite[\S1.2--3]{Ful} for more details.)

One goal of later sections (\S\ref{Sec:10}--\ref{Sec:11}), culminating in \Cref{rem:rat-pol-cone2},
is to show that $\gvm{T}$ is nice in this way.
\end{remark}

\begin{remark} \label{Rem:HomogCone}
Since the first term in the lexicographic order on $\Grot(\CM\algA)$ is $\rkk$,
the leading exponents of all functions $f\in\clualgA$ will coincide with the leading exponents of 
homogeneous functions, that is, whose exponents have fixed $\rkk$.
Thus, if we denote the subset of (non-zero) homogeneous functions by $\clualgA_\bullet$, 
then we can also write
\[ \nom{T} = \{ \Val{T}(f) \st f \in \clualgA_\bullet \}. \]
\end{remark}

\begin{remark} \label{Rem:SameCone}
Recall, from \Cref{Thm:char-parfunB}\itmref{itm:pfun4}, that $\ptfn_M$ has  
leading exponent $[T,M]$ for any refinement to a total order as in \Cref{rem:total-order}
and for any choice of vertex~$\vstar$.
Hence the two monoids intersect in at least
the $[T,M]$ for reachable rigid~$M$.

However, on one hand, there may be some $M$ for which $\ptfn_M\not\in\clualgA$, 
so it could be that $[T,M]\not\in\nom{T}$.
On the other hand, there may be some $f\in\clualgA$ whose leading exponent is not of the form $[T,M]$
for any $M$.

Note also that $\nom{T}$ may depend on the choice of total order, while $\gvm{T}$ plainly doesn't.
Despite all this, in the case where $B=C$, we will see, in \Cref{Thm:NOcone}, 
that $\nom{T}=\gvm{T}$, and so their cones also coincide.
\end{remark}

\section{Cluster characters under isomorphisms of cluster algebras}\label{sec:twistchar}

Since mutation in cluster categories is compatible with mutation in cluster algebras,
cluster characters of reachable rigid indecomposables are necessarily cluster variables
and hence will be identified under any (cluster) isomorphism of cluster algebras.
On the other hand, there is no reason \emph{a priori} why such isomorphisms
should identify cluster characters of arbitrary modules.
However, we will now proceed to show that this does happen for the cluster characters considered in this paper.

To do this, we currently have to restrict attention to the algebra $\algC$, 
as a special case of the algebras $\algB$ studied since \Cref{Sec:3}. 
For general $\algB$, we have no proof of \Cref{Thm:TandSigmaT}
and thus of \Cref{rem:monom-cluster}.
We will maintain this restriction for the rest of the paper,
unless otherwise stated.
Since $\GP\algC=\CM\algC$ in this case, the results proved so far for $\GP\algB$ all apply to $\CM\algC$.

\begin{remark}\label{rem:reachable}
Recall from \cite[Remark 5.7]{JKS1} that cluster tilting objects in $\CM \algC$ 
with all summands of rank 1 are in bijection with 
maximal non-crossing collections $\maxNC\subset\labsubset{n}{k}$.
More precisely, as in \eqref{eq:CTO-maxNC}, the cluster tilting object corresponding to $\maxNC$ is
\begin{equation}\label{eq:TmaxNC}
  T_{\maxNC}=\textstyle\bigoplus_{I\in \maxNC} M_I.
\end{equation}
By \cite[Theorem 1.4]{OPS}, any two such $T_{\maxNC}$ can be mutated into each other
(see e.g.~\cite[\S8]{JKS2} for more explanation).
Hence we can define a \emph{reachable} cluster tilting object in $\CM \algC$
to be one that is reachable from any (and hence all) $T_{\maxNC}$.
\end{remark}

\subsection{The rectangles cluster tilting object} \label{subsec:RTO}
Recall from \eqref{eq:PtoJ} that, for $M\in\CM\algC$, we have canonical embeddings 
\[
\funP M\subspc M \subspc \funJ M 
\quad\text{with}\quad 
e_\vstar\funP M=e_\vstar M =e_\vstar \funJ M.
\]
We can further define
\begin{equation}\label{eq:pifun-omfun}
\pifun M = M/\funP M
\quadand
\omfun  M = \funJ M/ M.
\end{equation}
When $M$ is a rank one module, 
$\funJ M/\funP M=\modJ/\modP$ can be depicted as a $k\times (n-k)$ rectangle
with edges in north-west and north-east directions respectively,
so that $\pifun M$ is depicted as a Young diagram contained in the rectangle 
and based at the bottom corner, 
while $\omfun M$ can be depicted as the complementary Young diagram, based at the top corner. 
See Figure~\ref{fig:pi-omega49} for an example:
the lower part depicts $\pifun M_{1457}$ and the upper part depicts $\omfun  M_{1457}$.
The dividing line between the two Young diagrams is 
the `profile' of the module $M_I$ in the sense of \cite[\S6]{JKS1}.
The elements of $I$ give the south-east edges or `down-steps' of the profile
(read from left to right).
The profile here matches the NE-SW boundary of the indexing Young diagrams in \cite{RW},
as we are just rotating the diagram by $3\pi/4$ 
(but see \Cref{rem:compare-RW} for differences of convention).
 
\begin{figure}[h]
\begin{tikzpicture} [scale=0.4,
upbdry/.style={thick, gray},
lowbdry/.style={thick, gray},
bboxes/.style={thick, blue, densely dotted},
rboxes/.style={thick, red, densely dotted},
ridge/.style={very thick, purple},
>={Stealth[inset=2.5pt,length=4.5pt,angle'=40,round]},
outarr/.style={->, gray},
midarr/.style={->,blue},
rdgarr/.style={->,red},
keyarr/.style={->,black},
outdot/.style={gray, fill= gray},
middot/.style={blue, fill=blue}]
\newcommand{\abit}{0.133}
\begin{scope} [scale=1.2, shift={(-10,-5)},rotate=135]
\draw [bboxes] (0,-1)--(3,-1) (0,-2)--(1,-2)--(1,0) (2,0)--(2,-2);
\draw [rboxes] (4,-1)--(3,-1) (4,-2)--(3,-2)--(3,-5) (4,-3)--(1,-3)--(1,-5) (4,-4)--(0,-4) (2,-2)--(2,-5);
\draw [upbdry] (0,-5)--(4,-5)--(4,0);
\draw [lowbdry] (0,-5)--(0,0)--(4,0);
\draw [ridge] (4,0)--(3,0)--++(0,-2)--++(-2,0)--++(0,-1)--++(-1,0)--(0,-5);
\end{scope}
\end{tikzpicture}
\caption{A depiction of $\pifun M_{I}$ and $\omfun  M_{I}$, for $I=1457$, $(k,n)=(4,9)$.}
\label{fig:pi-omega49}
\end{figure}

Note that  $M\in \CM\algC$ is indecomposable projective if and only if $M$ has a simple top.
Let $P_i=\algC e_i$ and $S_i=\top P_i$, for $i=1, \dots, n$
(note $\vstar=n$).
We will call a module~$M$ \emph{rectangular} if it has rank~1 and the Young diagram for $\pifun M$ is a rectangle.
We will call a module~$M$ \emph{co-rectangular} if it has rank~1 and the Young diagram for $\omfun M$ is a rectangle.
Note that these depend on the choice of $\vstar$.

\begin{lemma} \label{Lem:rect-too} 
An $M\in\CM\algC$ is rectangular if and only if either (1) $M$ is projective 
or (2) $\top M=S_\vstar\oplus S_i$, for some $i\neq \vstar$. 
In case (2), the minimal syzygy $\Syz M$ has rank~$1$. 
Furthermore, if $M$ and $N$ are both rectangular,
then $\Ext^1(M,N)=0$.
\end{lemma}

\begin{proof}
The first part follows immediately from the definition of $\pifun M$ 
and how the diagram depicts the module.
In case (2), the minimal projective cover is $P_\vstar\oplus P_i\to M$
and so the kernel $\Syz M$ has rank~$1$, because rank is additive. 
Indeed, any rank 1 module~$M$ with two tops has minimal syzygy $\Syz M$ of rank~$1$.

If $M$ and $N$ are both rectangular, then, 
after removing initial common down-steps in their profiles,
at least one of the residual profiles is `projective', in the sense that
the up and down-steps divide the circle into two cyclic intervals.
Such a profile is always non-crossing with any other profile, 
so $\Ext^1(M,N)=0$, by \cite[Prop.~5.6]{JKS1}.
\end{proof}

\begin{definition}\label{def:rect-clus}
Let $\grid=\grid(k,n)$ be the $k\times (n-k)$ grid
\begin{equation}\label{eq:grid}
  \grid(k,n)=\{ (i,j)\in \ZZ^2\st 1\leq i \leq k,\, 1\leq j \leq n-k \}.
\end{equation}
The \emph{rectangles cluster tilting object} is
\begin{equation}\label{eq:rectCTO}
  \rectCTO=T_\vempty\oplus \bigoplus_{ij\in\grid} T_{ij},
\end{equation}
where $T_\vempty=\modP$ and $T_{ij}$ is the rank~1 module for which $\pifun T_{ij}$
is depicted, as in \Cref{fig:pi-omega49}, by an $i\times j$ rectangle.
In other words, $T_\vempty = M_{[1,k]}$ and $T_{ij}=M_{K_{ij}}$ for 
\[ 
  K_{ij} = [1, k-i]\cup [k-i+j+1, k+j]. 
\]
Note that $\rectCTO$ is rigid, by \Cref{Lem:rect-too}, and has the maximal number of summands,
namely $k(n-k)+1$, so it is indeed a cluster tilting object, by \cite[Rem~5.7]{JKS1}.
For an explicit plabic graph whose face labels are $[1,k]$ and the $K_{ij}$, see \cite[\S4]{RW}.
\end{definition}

By \Cref{rem:reachable}, any reachable cluster tilting object in $\CM\algC$ 
can be obtained by mutation from $\rectCTO$.

Given any cluster tilting object $T=\bigoplus_i T_i$, 
define $\CTOsyz{T}$ to be the direct sum of the indecomposable projectives 
and the minimal syzygies $\Syz T_i$, when $T_i$ is not projective.
Note that $\CTOcosyz{T}$ is already defined in a similar way in \Cref{rem:SigmaT}.

\begin{remark}\label{rem:rk1syz}
If $\rectCTO_i$ is not projective,
then the minimal syzygy $\Syz {\rectCTO_i}$
has rank~$1$, by \Cref{Lem:rect-too},
and also has two tops.
In particular, all summands of $\CTOsyz{\rectCTO}$ have rank~$1$.
For similar reasons as above, there is also a `co-rectangles cluster tilting object' $T''$,
whose summands are all the co-rectangular (rank~$1$) modules, 
and all summands of~$\CTOsyz{T''}$ also have rank~$1$.
In fact, $\CTOsyz{\rectCTO}$ is a cyclic shift of $T''$,
that is, a co-rectangles cluster tilting object
defined relative to a different choice of $\vstar$.
\end{remark}

\begin{proposition}\label{Thm:TandSigmaT}
Let $T$ be a reachable cluster tilting object. 
Then both $\CTOsyz{T}$ and  $\CTOcosyz{T}$ are reachable 
cluster tilting objects. 
\end{proposition}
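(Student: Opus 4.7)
The plan is to proceed in two stages: first to show that mutation commutes with the operations $T\mapsto\CTOsyz{T}$ and $T\mapsto\CTOcosyz{T}$, reducing the problem to the single case $T=\rectCTO$; then to verify reachability of $\CTOsyz{\rectCTO}$ and $\CTOcosyz{\rectCTO}$ directly.

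For the first stage, I would work in the stable category $\underline{\CM\algC}$, which is triangulated 2-CY with shift $[1]$ given by the cosyzygy functor $\coSyz$. Mutation of cluster tilting subcategories in a 2-CY triangulated category is determined entirely by approximation triangles; since $[1]$ is an auto-equivalence it preserves such triangles, and hence $\mu_k\compo[1]\moreq[1]\compo\mu_k$ on cluster tilting subcategories. Lifting back to $\CM\algC$ via the natural identification of the Gabriel quiver of $\End(\CTOcosyz{T})\op$ with that of $\End(T)\op$ (interior vertices matched by $T_i\mapsto\coSyz T_i$, boundary vertices by the common projective summands), this translates to $\CTOcosyz{\mu_k T}\moreq\mu_k\CTOcosyz{T}$. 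The analogous statement for $\CTOsyz{-}$ follows in the same way, or by using that $\CTOsyz{-}$ and $\CTOcosyz{-}$ are mutually inverse on the stable category. Consequently, if $T=\mu_{i_l}\cdots\mu_{i_1}\rectCTO$ is reachable, then so are $\CTOcosyz{T}\moreq\mu_{i_l}\cdots\mu_{i_1}\CTOcosyz{\rectCTO}$ and $\CTOsyz{T}\moreq\mu_{i_l}\cdots\mu_{i_1}\CTOsyz{\rectCTO}$, provided only that $\CTOcosyz{\rectCTO}$ and $\CTOsyz{\rectCTO}$ themselves are reachable.

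For the second stage in the syzygy case, each non-projective summand $T_{ij}$ of $\rectCTO$ has $\pifun T_{ij}$ equal to an $i\times j$ rectangle by construction, so \Cref{Lem:rect}\itmref{itm:rect2} immediately implies that $\Syz T_{ij}$ has rank 1, hence $\Syz T_{ij}\isom M_{J_{ij}}$ for some $J_{ij}\in\labsubset{n}{k}$. Together with the rank 1 indecomposable projectives, this shows that $\CTOsyz{\rectCTO}$ is a cluster tilting object with all indecomposable summands of rank 1. By \cite[Remark~5.7]{JKS1} such a cluster tilting object corresponds to a maximal non-crossing collection $\maxNC'$, so $\CTOsyz{\rectCTO}=T_{\maxNC'}$ is reachable from $\rectCTO=T_\maxNC$ by \cite[Theorem~1.4]{OPS}.

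The main obstacle will be the analogous argument for $\CTOcosyz{\rectCTO}$, since \Cref{Lem:rect}\itmref{itm:rect3} controls rank 1 \emph{syzygies} under rectangularity of $\omfun M$, whereas $\omfun T_{ij}$ is generically an L-shape, so the lemma as stated does not directly force $\coSyz T_{ij}$ to have rank 1. I expect the resolution to come either from a dual socle-side version of \Cref{Lem:rect} identifying when $\coSyz M$ has rank 1 (applied to the rectangular modules $T_{ij}$), or from the cyclic symmetry of the quiver \eqref{eq:circle-quiv}, which induces an auto-equivalence of $\CM\algC$ sending rank 1 modules to rank 1 modules and interchanging syzygy- and cosyzygy-type conditions. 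With this in hand one identifies $\coSyz T_{ij}\isom M_{J'_{ij}}$ explicitly, concludes that $\CTOcosyz{\rectCTO}=T_{\maxNC''}$ for a second maximal non-crossing collection $\maxNC''$, and finishes by the same OPS reachability argument.
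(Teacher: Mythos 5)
Your reduction step (mutation commutes with $\Syz$ and $\coSyz$ via the stable equivalence) and your treatment of the syzygy base case are essentially the paper's argument: $\pifun \rectCTO_{ij}$ is a rectangle, so \Cref{Lem:rect} gives $\rnk{\algC}\Syz \rectCTO_{ij}=1$, hence $\CTOsyz{\rectCTO}=T_\maxNC$ for a maximal non-crossing collection, which is reachable by \cite[Thm.~1.4]{OPS}. The genuine gap is exactly the one you flag: the cosyzygy base case is not proved. \Cref{Lem:rect} says nothing about $\coSyz \rectCTO_{ij}$, and your two suggested repairs are not carried out; moreover the second one is not right as stated, since the cyclic rotation auto-equivalence of $\CM\algC$ \emph{commutes} with both $\Syz$ and $\coSyz$ rather than interchanging them -- to swap syzygy-type and cosyzygy-type conditions you would need a contravariant duality (or an explicit socle-side computation of the minimal embedding of $\rectCTO_{ij}$ into a projective), none of which is supplied.

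The paper avoids this computation altogether, and you could too, using only ingredients you already have. Since $\CTOsyz{\rectCTO}=T_\maxNC$ is reachable, write $T_\maxNC=\mu_w\,\rectCTO$ for some mutation sequence $w$; then, by the commutation you established and the fact that minimal cosyzygy inverts minimal syzygy on the non-projective summands (so $\CTOcosyz{\CTOsyz{\rectCTO}}=\rectCTO$), you get $\CTOcosyz{\rectCTO}=\mu_{w^{-1}}\CTOcosyz{T_\maxNC}... $ -- more directly, in the paper's phrasing: mutate the given $T$ to $T_\maxNC$, so $\CTOcosyz{T}$ mutates to $\CTOcosyz{T_\maxNC}=\rectCTO$, which is trivially reachable. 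Either way, no dual version of \Cref{Lem:rect} and no identification of $\coSyz \rectCTO_{ij}$ as an explicit $M_{J'_{ij}}$ is needed; as written, your proposal leaves the $\CTOcosyz{T}$ half unproven. (A small additional point: the claim that $\CTOsyz{T}$ and $\CTOcosyz{T}$ are cluster tilting objects should be stated explicitly -- it follows from the stable equivalence, e.g.\ $\Ext^1(\CTOsyz{T},\CTOsyz{T})=\Ext^1(T,T)=0$ with the same number of indecomposable summands -- rather than being absorbed silently into the mutation-commutation argument.)
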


\begin{proof} Note that $\Syz$ is an equivalence on the stable category of $\CM\algC$. 
So there are exactly $m$ indecomposable pairwise nonisomorphic summands in $\CTOsyz{T}$ and 
\[
  \Ext^1(\CTOsyz{T} , \CTOsyz{T})=\Ext^1(T, T)=0.
\]
Therefore $\CTOsyz{T}$ is a maximal rigid module and so it is a cluster tilting object in $\CM\algC$.

Again, as $\Syz$ is an equivalence on the stable category of $\CM\algC$, the sequence 
\[ 
  \ShExSeq{T_i}{F_i}{T^*_i} 
\]
is a mutation sequence if and only if the following is a mutation sequence
\[
  \ShExSeq{\Syz T_i}{\Syz F_i}{\Syz T^*_i}, 
\]
where $\Syz T_i$ and $\Syz T^*_i$ are both minimal, although $\Syz F_i$ is, in general, not minimal. 
Thus $\CTOsyz{T}$ can be mutated to $\CTOsyz{T'}$ if and only if $T$ can be mutated to $T'$.   
A similar fact is proved for $\CTOcosyz{T}$ as part of \Cref{thm:PF-zeta-Phi}
(see in particular \eqref{eq:two-squares}).

By \Cref{rem:reachable}, since $T$ is reachable, 
it can be mutated into the rectangles cluster tilting object $\rectCTO$
and so $\CTOsyz{T}$ can be mutated to $\CTOsyz{\rectCTO}$.
But, by \Cref{rem:rk1syz}, there is a maximal non-crossing collection $\maxNC\subset\labsubset{n}{k}$ 
such that $\CTOsyz{\rectCTO}=T_\maxNC$.
Hence $\CTOsyz{\rectCTO}$ is reachable, by \cite[Thm~1.4]{OPS},
and so $\CTOsyz{T}$ is reachable, as required.

For the case of $\CTOcosyz{T}$, we can mutate $T$ to $T_\maxNC$
and thus $\CTOcosyz{T}$ to $\CTOcosyz{T_\maxNC}$.
But $\CTOcosyz{T_\maxNC}=\rectCTO$, which is reachable,
and hence $\CTOcosyz{T}$ is reachable,
completing the proof.
\end{proof}

\begin{remark}\label{rem:monom-cluster}
\Cref{Thm:TandSigmaT} implies that the cluster algebra $\clualgA$ from \Cref{def:clualgA}
contains a (reachable) cluster of monomials, namely
\[
  \{ \ptfn_{\Sigma {T_i}}\st \text{$T_i$ is a mutable summand of $T$} \}
  \cup \{ \ptfn_{P}\st \text{$P$ is indecomposable projective} \},
\]
since \Cref{Thm:char-parfunB}\itmref{itm:pfun5} implies that these are all monomials.
\end{remark}

\subsection{Cluster characters $\cluschar{}$ and $\Phi$}\label{subsec:PsiPhi}
\newcommand{\olbeta}{\overline{\beta}}
\newcommand{\GNmap}{q}
\newcommand{\AAmap}{\pi_*}

Recall, from \cite[\S9]{JKS1}, that the cluster character 
$\cluschar{}\colon \CM\algC\to \CC[\Gr(k, n)]$
is obtained by homogenisation of Geiss--Leclerc--Schr\"{o}er's character 
$\psi\colon \sub Q_{k}\to \CC[\unirad]$, from \cite{GLS08}, 
where $\unirad$ is an open Schubert cell in $\Gr(k, n)$. 
More precisely, $\unirad$ is the open set where the Pl\"ucker coordinate $\Psi_{\modP}\neq 0$,
so that the affine coordinate ring 
$\CC[\unirad] = \CC[\Gr(k, n)] / (\Psi_{\modP}-1)$.

Furthermore, by \cite[Prop~4.3]{JKS1}, 
there is an exact functor $\pifun\colon \CM\algC\to \sub Q_{k}$,
defined as in \eqref{eq:pifun-omfun},
which kills just the subcategory $\add\modP$ and
induces an equivalence 
\[ \CM\algC/\add\modP \to \sub Q_{k}. \]
The cluster characters are then related by the following commutative square,
in which $\GNmap$ is the quotient map.
\begin{equation}\label{eq:cc-square}
\begin{tikzpicture}[xscale=3.0, yscale=1.7, baseline=(bb.base)]
\coordinate (bb) at (0,1.5);
 \draw (1, 1) node (b1) {$\sub Q_{k}$};
 \draw (2, 1) node (b3) {$\CC[\unirad]$.};
 \draw (1, 2) node (a1) {{$\CM\algC$}}; 
 \draw (2, 2) node (a3) {$\CC[\Gr(k, n)]$};
 \draw[cdarr] (a1) to node [auto] {$\Psi$} (a3);
 \draw[cdarr] (b1) to node [auto] {$\psi$} (b3);
 \draw[cdarr] (a1) to node [auto] {$\pifun$} (b1);
 \draw[cdarr] (a3) to node [auto] {$\GNmap$} (b3);
\end{tikzpicture}
\end{equation}
Note that $\GNmap$ is not injective, but $\Psi_M$ is the unique lift of 
$\psi_{\pifun M}$ of degree $\rnk{\algC} M$.

If $T=\bigoplus_i T_i$ is a cluster tilting object in $\CM\algC$,
then $\olT=\pifun\, T$ is a cluster tilting object in $\sub Q_{k}$,
with one less summand.
Furthermore, $\olA=\End(\olT)\op$ is a quotient of $\algA=\End(T)\op$
by the ideal $(e_\vstar)$.

In this case, using $\olT$,
Fu--Keller's cluster character (cf. \S\ref{sec:FKclucha}) is 
\begin{equation}\label{eq:phi-Tbar}
\begin{aligned}
\phi\colon &\sub Q_k \to \CC[\Grot(\proj\olA)]\colon M\mapsto \phi_M, \\
\phi_M &=\xvar^{[\olT, M]}\sum_{d} \euler (\qvGr{d}\Ext^1(\olT, M)) \xvar^{-\olbeta(d)},
\end{aligned}
\end{equation}
where $\olbeta\colon \Grot(\fd\olA) \to \Grot(\proj\olA)$
is the isomorphism given by projective resolution.

Note that the formula \eqref{eq:phi-Tbar} can be interpreted as an explicit monomial in the 
variables $\xvar_i=\xvar^{[\olT, \olT_i]}$, as $[\olT, \olT_i]$ form a basis of $\Grot(\proj\olA)$.
Geiss--Leclerc--Schr\"{o}er showed that $\phi_M$ is the expression for $\psi_M$ 
in the initial variables corresponding to $\olT$.
In other words, $\phi_M$ is the expression for $\psi_M$ in the cluster chart associated to $\olT$.

\begin{theorem}\label{Thm:glsgeneric}  
\cite[Thm~4]{GLS12}  
For any $M\in \sub Q_{k}$, under the substitution $x_i=\psi_{\olT_i}$,
one has
\[
  \phi_M=\psi_M.
\]
So, in particular, $\phi_M\in \CC[\unirad]$.
\end{theorem}

In this subsection, we will lift this result to $\CM \algC$.
First we prove a useful lemma.

\begin{lemma} \label{Lem:upsilon}
Let $\Gamma\colon \CM\algC \to H$ be a cluster character
such that, for some reachable cluster tilting object $T=\bigoplus_i T_i$, 
the characters $\Gamma_{T_i}$ are algebraically independent.
Then $\Gamma$ induces an injective algebra homomorphism such that
\[
\Upsilon_{\Gamma}\colon \CC[\Gr(k, n)] \to H
 \colon \Psi_M\mapsto \Gamma_{M},
\]
for any reachable rigid $M$. 
In particular,  $\Upsilon_{\Gamma}$ is determined by 
$\Upsilon_{\Gamma}(\minor{I})= \Gamma_{M_I}$.
\end{lemma}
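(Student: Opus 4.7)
My strategy is to realise both $\Psi$ and $\Gamma$ as cluster characters on $\CM\algC$ satisfying the hypotheses of \Cref{lem:clus-alg}\itmref{itm:clus2}, extract a cluster isomorphism between their cluster algebras, and then identify the source cluster algebra with all of $\CC[\Gr(k,n)]$.

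The first step will be to verify that $\Psi_{T_1},\ldots,\Psi_{T_m}$ are algebraically independent whenever $T$ is reachable. When $T=T_{\maxNC}$ for a maximal non-crossing collection $\maxNC$, this is immediate from \eqref{eq:Psi-minor}, since $\Psi_{M_I}=\minor{I}$ and $\{\minor{I}:I\in\maxNC\}$ is a cluster of Pl\"ucker coordinates in Scott's cluster structure on $\CC[\Gr(k,n)]$, hence algebraically independent. For a general reachable $T$, I will argue inductively along a sequence of mutations from $T_{\maxNC}$ to $T$: at each step the exchange relation
\[
\Psi_{U_k}\,\Psi_{U_k^*}=\Psi_E+\Psi_F
\]
expresses the newly introduced character $\Psi_{U_k^*}$ birationally in terms of the previous cluster, so algebraic independence is preserved at each mutation.

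With both $\Psi_{T_i}$ and $\Gamma_{T_i}$ algebraically independent, \Cref{lem:clus-alg}\itmref{itm:clus2} applied to $\gamma=\Psi$ and $\gamma'=\Gamma$ then yields an injective cluster isomorphism
\[
f\colon \clusalg{T}{\Psi}\to \clusalg{T}{\Gamma}\subset H
\]
satisfying $f(\Psi_M)=\Gamma_M$ for every rigid indecomposable $M$ reachable from $T$, and hence by multiplicativity for every reachable rigid~$M$. To promote $f$ to a map out of the full coordinate ring, I will show that $\clusalg{T}{\Psi}=\CC[\Gr(k,n)]$. The inclusion $\subset$ is immediate from the construction in \Cref{lem:clus-alg}\itmref{itm:clus1}. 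For the reverse, $\CC[\Gr(k,n)]$ is generated as a $\CC$-algebra by the Pl\"ucker coordinates $\minor{I}$; for any $I\in\labsubset{n}{k}$ there is a maximal non-crossing collection containing $I$, so $M_I$ is a summand of some $T_{\maxNC}$, which is reachable from $T$ by \Cref{rem:reachable}. Hence $M_I$ is a reachable rigid indecomposable and $\minor{I}=\Psi_{M_I}$ is a cluster variable of $\clusalg{T}{\Psi}$. Setting $\Upsilon_\Gamma:=f$ then gives the required injective algebra homomorphism $\CC[\Gr(k,n)]\to H$, and, since the $\minor{I}$ generate $\CC[\Gr(k,n)]$, it is uniquely determined by $\Upsilon_\Gamma(\minor{I})=\Gamma_{M_I}$.

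I expect the algebraic-independence step to be the main obstacle: although intuitively obvious, it requires running the induction along a mutation sequence and using carefully that the exchange relation is a birational change of generators. All other ingredients---\Cref{lem:clus-alg}, Scott's cluster structure on $\CC[\Gr(k,n)]$, and the fact that Pl\"ucker coordinates generate the homogeneous coordinate ring---are standard.
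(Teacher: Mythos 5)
Your proof is correct and takes essentially the same route as the paper: both reduce the lemma to \Cref{lem:clus-alg}\itmref{itm:clus2} applied to the pair $(\Psi,\Gamma)$ together with the identification $\clusalg{T}{\Psi}=\CC[\Gr(k,n)]$. The only difference is that the paper simply cites \cite[\S9]{JKS1} for that identification (which also gives the algebraic independence of the $\Psi_{T_i}$), whereas you reprove it directly from $\Psi_{M_I}=\minor{I}$, the reachability of the objects $T_{\maxNC}$, and an induction along mutations; this is sound but not strictly necessary.
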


\begin{proof}
By \cite[\S9]{JKS1}, $\CC[\Gr(k, n)]=\clusalg{T}{\Psi}$, in the notation of \Cref{lem:clus-alg},
because $T$ is reachable.
The result then follows from \Cref{lem:clus-alg}, 
and the fact that $\Psi_{M_I}=\minor{I}$, by \cite[(9.4)]{JKS1}.
More precisely, $\Upsilon_{\Gamma}$ is the induced isomorphism 
$\CC[\Gr(k, n)]\to \clusalg{T}{\Gamma}$.
\end{proof}

\begin{remark}\label{rem:plucker-rels}
It is an immediate consequence of \Cref{Lem:upsilon} that, under the assumptions of the lemma,
the characters $\Gamma_{M_I}$ satisfy all Pl\"ucker relations.
\end{remark}

\begin{remark}\label{rem:FK-applic}
If we apply \Cref{Lem:upsilon} to the Fu-Keller cluster character $\Phi^T$ in \eqref{eq:FK-CC},
then we obtain the well-known fact (e.g.~\cite[Thm~5.4]{FK}, \cite[(1.3)]{GLS12}) 
that one can obtain an expression for any
cluster variable $\Psi_M$, for $M$ reachable rigid, in terms of an initial cluster $\Psi_{T_i}$,
on substituting $\Psi_{T_i}$ for $\xvar^{[T,T_i]}$ in the formula for $\Phi^T$.
\end{remark}

\begin{theorem} \label{Thm:PsiandTheta} 
Let $T$ be a reachable cluster tilting object in $\CM C$. 
Let $\Upsilon^T=\Upsilon_{\Phi^T}$ in the notation of \Cref{Lem:upsilon}.
Then $\Phi^T=\Upsilon^T\compo \Psi$,
that is, for all $M\in \CM\algC$,
\[
\Phi^T_M=\Upsilon^T \bigl( \cluschar{M} \bigr).
\]
\end{theorem}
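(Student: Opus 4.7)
The plan is to apply \Cref{Lem:upsilon} to $\Gamma = \Phi^T$, then lift the Geiss--Leclerc--Schr\"oer result \Cref{Thm:glsgeneric} from $\sub Q_k$ back to $\CM\algC$ via a homogeneity argument. For the hypothesis of the lemma, note that each $\Phi^T_{T_i} = x^{[T, T_i]}$ is a single Laurent monomial (since $T_i$ is rigid), and these are algebraically independent as $\{[T, T_i]\}$ is a $\ZZ$-basis of $\Grot(\CM\algA)$. The lemma then produces the algebra embedding $\Upsilon^T$ with $\Upsilon^T(\minor{I}) = \Phi^T_{M_I}$, so the theorem already holds for reachable rigid $M$; the task is to upgrade this to arbitrary $M\in\CM\algC$. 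An analogous application to $\phi^{\olT}$ on $\sub Q_k$ produces an algebra embedding $\widetilde{\Upsilon}\colon \CC[\unirad]\to \CC[\Grot(\proj\olA)]$, and by \Cref{Thm:glsgeneric} we have $\widetilde{\Upsilon}(\psi_N) = \phi^{\olT}_N$ for all $N\in \sub Q_k$.

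To descend from $\CM\algC$ to $\sub Q_k$, I would construct a ring homomorphism
\[
 \sigma\colon \CC[\Grot(\CM\algA)]\lra \CC[\Grot(\proj\olA)]
\]
induced by the lattice map $[T, T_i]\mapsto [\olT, \olT_i]$ for $i\neq\vstar$ and $[T, T_\vstar]\mapsto 0$, and prove
\begin{equation}\label{eq:PfiPfi}
 \sigma\,\Phi^T_M = \phi^{\olT}_{\pifun M} \qquad\text{for all }M\in \CM\algC.
\end{equation}
This rests on identifying $\Ext^1_\algC(T, M)$ with $\Ext^1(\olT, \pifun M)$ as $\stalgA$-modules and verifying $\sigma_*\beta = \overline{\beta}$ on the classes indexed by interior simples. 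Applying $\sigma$ to the rank-one case $\Upsilon^T(\minor{I}) = \Phi^T_{M_I}$, combined with \eqref{eq:PfiPfi}, \Cref{Thm:glsgeneric} and the square \eqref{eq:cc-square}, gives $\sigma\,\Upsilon^T(\minor{I}) = \widetilde{\Upsilon}(\GNmap\minor{I})$ for every $I$, so $\sigma\,\Upsilon^T = \widetilde{\Upsilon}\GNmap$ as algebra maps on all of $\CC[\Gr(k, n)]$. Evaluating at $\Psi_M$ yields
\[
 \sigma\,\Upsilon^T(\Psi_M) = \widetilde{\Upsilon}(\GNmap\Psi_M) = \widetilde{\Upsilon}(\psi_{\pifun M}) = \phi^{\olT}_{\pifun M} = \sigma\,\Phi^T_M.
\]

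To deduce $\Upsilon^T(\Psi_M) = \Phi^T_M$ from this equality after $\sigma$, I will use homogeneity. Both sides are homogeneous of $\rkk$-degree $\rnk{\algC}M$ under the grading on $\CC[\Grot(\CM\algA)]$ with $\deg x^{[T, T_i]} = \rnk{\algC} T_i$: this follows from \eqref{eq:FK-CC} together with $\rkk\beta = 0$ for $\Phi^T_M$, and from $\Psi_M$ having Pl\"ucker degree $\rnk{\algC}M$ with each $\Phi^T_{M_I}$ of $\rkk$-degree one for $\Upsilon^T(\Psi_M)$. Since $\deg x^{[T, T_\vstar]} = \rnk{\algC}T_\vstar = 1 > 0$ and $\sigma$ is evaluation at $x^{[T, T_\vstar]} = 1$, expanding any homogeneous $f$ of degree $d$ as a finite sum $\sum_m g_m (x^{[T, T_\vstar]})^m$, with $g_m$ in the Laurent polynomial ring in the remaining variables, makes each $g_m$ homogeneous of degree $d - m$. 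Since the $g_m$ for different $m$ occupy distinct graded pieces and are recovered from $\sigma(f) = \sum_m g_m$, the map $\sigma$ is injective on each homogeneous component, and the equality $\Upsilon^T(\Psi_M) = \Phi^T_M$ follows.

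The principal obstacle is \eqref{eq:PfiPfi}. The matching of leading exponents $\sigma_*[T, M] = [\olT, \pifun M]$ and the agreement of $\Ext^1$ groups as $\stalgA$-modules should follow from the formal properties of $\pifun$ (exactness and killing $\add\modP$); the delicate point is to check $\sigma_*\beta = \overline{\beta}$, as $\beta$ is defined via $\CM$-approximations in $\CM\algA$ while $\overline{\beta}$ uses projective resolutions in $\mmod\olA$, placing them in related but distinct categorical settings.
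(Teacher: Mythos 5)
Your proposal follows essentially the same route as the paper's proof: your $\sigma$ is the paper's $\CC[\AAmap]$, your identity $\sigma\,\Phi^T_M=\phi_{\pifun M}$ is the commuting back face of the paper's prism diagram \eqref{diag:prism}, and your homogeneity/fixed-rank injectivity argument is exactly the paper's concluding step. The ``principal obstacle'' you flag is resolved in the paper just as you anticipate: since $\pifun$ is exact and kills $\add\modP$, it sends the $\add T$-presentations computing $[T,M]$ and the mutation sequences computing $\beta[S_i]=[T,F_i]-[T,E_i]$ (\Cref{Prop:projres}) to the corresponding data computing $[\olT,\pifun M]$ and $\olbeta[S_i]$, and $\Ext^1(T,M)\isom\Ext^1(\olT,\pifun M)$ as $\stalgA$-modules, so that step goes through as you expect.
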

 
\begin{proof}
Define a map $\AAmap\colon \Grot(\CM\algA) \to \Grot(\proj\olA)$
induced by $\pifun\colon \CM \algC \to \sub Q_{k}$,
in the sense that $\AAmap([T,T_i])=[\pifun T,\pifun T_i]$,
which includes $\AAmap([T,\modP])=0$.

Using $\olT=\pifun\, T$ to define $\phi$ as in \eqref{eq:phi-Tbar},
we have the following diagram,  
which we want to show is fully commutative.
\begin{equation}\label{diag:prism}
\begin{tikzpicture}[xscale=2.7, yscale=2.5, baseline=(bb.base)]
\coordinate (bb) at (0,1.5);
 \draw (1, 1) node (b1) {$\sub Q_{k}$};  
 \draw (2, 0.6) node (b2) {$\CC[\unirad]$};
 \draw (3, 1) node(b3) {$\CC[\Grot(\proj\olA)]$};
 \draw (1, 2) node (a1) {$\CM\algC$};
 \draw (2, 1.6) node (a2) {$\CC[\Gr(k, n)]$};
 \draw (3, 2) node(a3) {$\CC[\Grot(\CM\algA)]$};
 \draw[cdarr] (b1) to node [below left=-1pt] {$\psi$} (b2);
 \draw[cdarr] (b1) to node [above left] {$\phi$} (b3);
 \draw[cdarr] (b2) to node [below right=-1pt] {$\upsilon$} (b3);
 \draw[cdarr] (a1) to node [below left=-1pt] {$\Psi$} (a2);
 \draw[cdarr] (a1) to node [above] {$\Phi^T$} (a3);
 \draw[cdarr] (a2) to node [below right=-1pt] {$\Upsilon^T$} (a3);
 \draw[cdarr] (a1) to  node [left] {$\pifun$} (b1);
 \draw[cdarr] (a2) to  node [above right] {$\GNmap$} (b2);
 \draw[cdarr] (a3) to  node [auto] {$\CC[\AAmap]$} (b3);
\end{tikzpicture}
\end{equation}
Here $\upsilon$ is the algebra homomorphism $\psi_{\pifun T_i}\mapsto \phi_{\pifun T_i}$,
so \Cref{Thm:glsgeneric} says 
\[
  \phi=\upsilon\compo \psi,
\]
i.e.~the bottom triangle in \eqref{diag:prism} commutes.
It will then suffice to show that the three vertical squares commute, because we then get 
$\CC[\AAmap] \bigl(\Upsilon^T(\cluschar{M})\bigr)=\CC[\AAmap]\bigl(\Phi^T_M\bigr)$.
However, note that both $\Upsilon^T(\cluschar{M})$ and $\Phi^T_M$ are in the linear subspace
of $\CC[\Grot(\CM \algA)]$ spanned by monomials of degree $\rnk{\algC} M$.
Furthermore, $\AAmap$ is an isomorphism when restricted to the affine subspace of $\Grot(\CM \algA)$
of classes of any fixed rank.
Therefore $\Upsilon^T(\cluschar{M})=\Phi^T_M$, as required.

To prove the claim that the three vertical rectangles commute,
note first that the front left commutation, i.e.~$\psi_{\pifun N}=\GNmap\Psi_N$, is just \eqref{eq:cc-square}.

For the back commutation, i.e.~$\phi_{\pifun N}= \CC[\AAmap] \Phi^T_N$,
we must compare the two Fu-Keller formulae \eqref{eq:FK-CC} and \eqref{eq:phi-Tbar}.
First note that $\Ext^1(T, N)\isom \Ext^1(\pifun T, \pifun N)$ as 
modules for $\sEnd(T)\op=\sEnd(\olT)\op$ 
and so the sums match up term-by-term.
Thus it remains to show that the exponents match, i.e.~that 
\begin{equation}\label{eq:piTN}
\AAmap [T,N] = [\pifun T,\pifun N]
\quadand
\AAmap \beta(d)= \olbeta(d).
\end{equation}
This follows because $\pifun$ is an exact functor and so transforms
one calculation, using $\add T$-approximation and \Cref{Prop:projres}\itmref{itm:pr1} respectively,
into the other.

For the front right commutation, it suffices to know that the square commutes when applied to  
$\minor{I}=\Psi_{M_I}$ for all rank one modules $M_I$.
But $\Upsilon^T$ is defined precisely so that $\Upsilon^T \Psi_{M_I} = \Phi^T_{M_I}$,
and so the commutation of the other squares and the bottom triangle yields the result.
 \end{proof}
 
\begin{theorem}\label{cor:partfun-map}
Let $T$ be a reachable cluster tilting object in $\CM C$,
with associated partition function $\ptfn^T$,
as in \eqref{eq:part-fun}.
Then there is an injective homomorphism, 
which induces an isomorphism on function fields,
\[ 
  \Xi^T\colon \CC[\Gr(k, n)]\to \CC[\Grot(\CM\algA)]\colon \cluschar{M}\mapsto \ptfn^T_M,
\]
for all $M\in\CM C$.
Furthermore, $\Xi^T$ induces a cluster isomorphism $\CC[\Gr(k, n)] \isom \clualgA_T$,
the cluster algebra of \Cref{def:clualgA}.
In particular, $\ptfn^T_M\in\clualgA_T$, for all $M\in\CM C$.
\end{theorem}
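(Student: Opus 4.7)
The plan is to build $\Xi^T$ as the composition $\CC[\zeta]\compo \Upsilon^{\CTOcosyz{T}}$, where $\zeta$ is the lattice map from \Cref{thm:PF-zeta-Phi} and $\Upsilon^{\CTOcosyz{T}}$ is the map provided by \Cref{Thm:PsiandTheta} for the cluster tilting object $\CTOcosyz{T}$. To start, since $T$ is reachable, so is $\CTOcosyz{T}$ by \Cref{Thm:TandSigmaT}, so $\Upsilon^{\CTOcosyz{T}}\colon \CC[\Gr(k,n)]\to \CC[\Grot(\CM\algA')]$ is defined and satisfies $\Upsilon^{\CTOcosyz{T}}(\cluschar{M})=\Phi^{\CTOcosyz{T}}_M$ for every $M\in\CM\algC$, by \Cref{Thm:PsiandTheta}. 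Applying $\CC[\zeta]$ and using \Cref{thm:PF-zeta-Phi} gives, for all $M\in\CM\algC$,
\[
 \CC[\zeta]\bigl(\Upsilon^{\CTOcosyz{T}}(\cluschar{M})\bigr) = \CC[\zeta]\bigl(\Phi^{\CTOcosyz{T}}_M\bigr) = \ptfn^T_M,
\]
so setting $\Xi^T = \CC[\zeta]\compo \Upsilon^{\CTOcosyz{T}}$ yields an algebra homomorphism $\CC[\Gr(k,n)]\to \CC[\Grot(\CM\algA)]$ with the required property on all of $\CM\algC$.

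Next I would verify that this $\Xi^T$ is the map produced by \Cref{Lem:upsilon} applied to the cluster character $\Gamma=\ptfn^T$. By \Cref{Prop:algind}, the values $\ptfn^T_{T_i}$ are algebraically independent, so \Cref{Lem:upsilon} provides $\Upsilon_{\ptfn^T}\colon \CC[\Gr(k,n)]\to \CC[\Grot(\CM\algA)]$ characterised by $\minor{I}\mapsto \ptfn^T_{M_I}$. My $\Xi^T$ sends $\minor{I}=\cluschar{M_I}$ to $\ptfn^T_{M_I}$, and since $\CC[\Gr(k,n)]$ is generated as an algebra by its Pl\"ucker coordinates, $\Xi^T$ coincides with $\Upsilon_{\ptfn^T}$. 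In particular, \Cref{Lem:upsilon} ensures $\Xi^T$ is injective and restricts to a cluster isomorphism $\CC[\Gr(k,n)]\isom \clusalg{T}{\ptfn^T}=\clualgA$, so $\ptfn^T_M\in\clualgA$ for every $M\in\CM\algC$.

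To see that $\Xi^T$ induces an isomorphism on function fields, note that the initial cluster $\{\cluschar{T_i}\}$ is a transcendence basis of the fraction field of $\CC[\Gr(k,n)]$, and $\Xi^T$ sends it to $\{\ptfn^T_{T_i}\}$, which has the same cardinality and is algebraically independent by \Cref{Prop:algind}; thus $\Xi^T$ extends to an isomorphism of the two purely transcendental extensions.

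In terms of difficulty, essentially all the real work has already been done in \Cref{Thm:PsiandTheta} (the lift of the Geiss--Leclerc--Schr\"oer result), in \Cref{thm:PF-zeta-Phi} (identifying the monomial change of variables $\zeta$), and in \Cref{Thm:TandSigmaT} (reachability of $\CTOcosyz{T}$). The only delicate point in the present proof is confirming that the composite $\CC[\zeta]\compo \Upsilon^{\CTOcosyz{T}}$ genuinely agrees with the map built by \Cref{Lem:upsilon} on all of $\CC[\Gr(k,n)]$, and not merely on cluster variables reachable from~$T$; this is handled by the fact that both maps are algebra homomorphisms that agree on Pl\"ucker generators.
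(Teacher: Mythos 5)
Your construction is essentially the paper's own proof, run in the opposite order: the paper first sets $\Xi^T=\Upsilon_{\ptfn^T}$ via \Cref{Prop:algind} and \Cref{Lem:upsilon} (which delivers injectivity and the cluster isomorphism with $\clualgA$ in one stroke), and then identifies this map with $\CC[\zeta]\compo\Upsilon^{\CTOcosyz{T}}$ by checking agreement on the Pl\"ucker coordinates, exactly the comparison you perform; the identity $\Xi^T(\cluschar{M})=\ptfn^T_M$ for all $M\in\CM\algC$ then comes from \Cref{thm:PF-zeta-Phi} together with \Cref{Thm:PsiandTheta} (with \Cref{Thm:TandSigmaT} guaranteeing that $\CTOcosyz{T}$ is reachable, a point you rightly make explicit). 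So for the existence, injectivity, the formula on all of $\CM\algC$, and the cluster isomorphism, your argument is correct and matches the paper.

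The one genuine gap is your justification of the isomorphism on function fields. Sending the transcendence basis $\{\cluschar{T_i}\}$ to the algebraically independent set $\{\ptfn^T_{T_i}\}$ only shows that the induced map of fraction fields is injective, with image the subfield $\CC(\ptfn^T_{T_i})$; equal transcendence degree does not force this subfield to be all of the fraction field of $\CC[\Grot(\CM\algA)]$ (compare $\CC(x^2)\subsetneq\CC(x)$, an inclusion of purely transcendental fields of the same degree). Since the $\ptfn^T_{T_i}$ are honest polynomials rather than monomials, surjectivity requires a separate argument. The paper supplies it via \Cref{rem:monom-cluster}: because $\Xi^T(\cluschar{M})=\ptfn^T_M$ holds for \emph{all} $M$, the image of $\Xi^T$ contains the monomials $\ptfn^T_{\Sigma T_i}=\xvar^{[T,\Sigma T_i]}$ and $\ptfn^T_{P}=\xvar^{[T,P]}$ for $P$ indecomposable projective (\Cref{Thm:char-parfun}\itmref{itm:pfun5}), i.e.\ the monomial cluster attached to $\CTOcosyz{T}$, and these monomials generate the function field of $\CC[\Grot(\CM\algA)]$. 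Replacing your transcendence-degree count by this observation closes the gap and completes the proof.
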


\begin{proof} 
By \Cref{Prop:algind}, we can define $\Xi^T=\Upsilon_{\ptfn^T}$,
in the notation of \Cref{Lem:upsilon},
so that $\Xi^T(\Psi_M)=\ptfn^T_M$ for any reachable rigid object $M$. 

By \Cref{thm:PF-zeta-Phi} and \Cref{Thm:PsiandTheta}, we have
\begin{equation}\label{eq:PF-Phi-Psi}
\ptfn^T = \CC[\zeta] \compo \Phi^{\CTOcosyz{T}}
\quadand
\Phi^{\CTOcosyz{T}} = \Upsilon^{\CTOcosyz{T}} \compo \Psi,
\end{equation}
where $\Upsilon^{\CTOcosyz{T}} =\Upsilon_{\Phi^{\CTOcosyz{T}}}$
in the notation of \Cref{Lem:upsilon}.
By \Cref{rem:monom-cluster},
the image of $\Xi^T$ contains a set of generators for the function field of $\CC[\Grot(\CM\algA)]$,
so the induced map on function fields is an isomorphism.

Furthermore, $\Xi^T=\CC[\zeta] \compo \Upsilon^{\CTOcosyz{T}}$,
because both maps take $\cluschar{M_I}$ to $\ptfn^T_{M_I}$, for all rank one modules $M_I$.
Hence $\ptfn^T = \Xi^T\compo \Psi$, that is, 
\[
  \Xi^T(\Psi_M)=\ptfn^T_M,\; \text{for all $M\in \CM\algC$,}
\]
as required.
The cluster isomorphism is a special case of \Cref{lem:clus-alg}.
\end{proof}

\begin{remark}\label{rem:net-chart}
Suppose $T=T_\maxNC$, as in \eqref{eq:TmaxNC},
for a maximal non-crossing collection~$\maxNC$
of face labels of a plabic graph $G$ of type $\perm_{k,n}$. 
By \Cref{cor:partfun-map} and \Cref{prop:partitionchar}, 
\begin{equation}\label{eq:def-nethat}
 \pbfun{\nu}{\Xi^T}\colon \CC[\Gr(k, n)]\to \CC[\Mlat]\colon \minor{I}\mapsto \partfun_I,
\end{equation}
so it coincides with the `homogeneous network chart' $\nethat_G$ (or boundary measurement map) 
associated to $G$, as in \cite[\S3.2]{MS}.
Then the `inhomogeneous network chart' $\net_G$ 
(cf. \cite[Thm.~6.8]{RW}, \cite[Thm.~1.1]{Tal})
coincides with
\begin{equation}\label{eq:def-net}
 \pbfun{\wt}{\Xi^T}\colon \CC[\Gr(k, n)]/(\minor{I_\vstar}-1)\to \CC[\Nstar]\colon \minor{I}\mapsto \flowp_I.
\end{equation}
Note that this map vanishes on the ideal $(\minor{I_\vstar}-1)$, 
because $\flowp_{I_\vstar}=\fp_{\modJ}=1$, 
by \eqref{eq:class-flow-poly} and/or \Cref{Thm:char-flopol}\itmref{itm:flo6}.

These two charts are familiar coordinate systems, but \Cref{cor:partfun-map}
gives a new proof that \eqref{eq:def-nethat} and \eqref{eq:def-net} give well-defined maps,
i.e.~that the $\partfun_I$ and $\flowp_I$ satisfy the appropriate Pl\"ucker relations.
It also provides the new results that, for any $M\in\CM\algC$,
\begin{equation}\label{eq:net-nethat}
  \nethat_G(\Psi_M)=\pbfun{\nu}{\ptfn^T_M}\quadand 
 \net_G(\Psi_M) =\pbfun{\wt}{\ptfn^T_M} = \fp^T_M.
\end{equation}
\end{remark}

\subsection{Twist of characters of arbitrary modules}

Let $\Gro(k, n)$ be the complement of the `boundary divisors' in Grassmannian $\Gr(k, n)$,
and let $\CC [\Gro(k, n)]$ be its homogeneous coordinate ring,
which is obtained from $\CC[\Gr(k, n)]$ by inverting the frozen cluster variables,
i.e. the boundary minors.

For open positroid varieties, Muller--Speyer \cite{MS} defined a twist map 
which we here denote by `$\twi$'.
In the case of $\Gro(k, n)$, 
\c{C}anak\c{c}i--King--Pressland \cite[Theorem 12.2]{CKP} showed that 

\begin{equation}\label{eq:CKP-twi}
\twi\colon \CC[\Gro(k, n)]\to \CC [\Gro(k, n)]
 \colon \Psi_{M_I} \mapsto \frac{\Psi_{\Syz M_I}}{\Psi_{\procov{M_I}}}.
\end{equation}
Note that $\Psi_M\in \CC[\Gr(k, n)]$, for any $M\in \CM\algC$, and 
$\Psi_{\procov{M}}$ is a product of frozen variables. 
A natural question is whether 
\[
\twi(\Psi_M)=\frac{\Psi_{\Syz M}}{\Psi_{\procov{M}}} 
\]
for any $M\in \CM\algC$. 
In this subsection we will prove that this is indeed true.  

To this end, we define the map
\begin{equation}\label{eq:psi-dag}
  \Psitil \colon  \CM\algC\to \CC[\Gro(k, n)]
  \colon M\mapsto  \frac{ \Psi_{\Syz M} }{ \Psi_{\procov{M}}}.
\end{equation}
Let $T$ be a cluster tilting object in $\CM\algC$ and $\algA=\End (T)\op$.
Then, from \Cref{Thm:PsiandTheta} and \Cref{cor:partfun-map}, we have maps 
\begin{align} 
\label{eq:UpsT}
\Upsilon^{T} &\colon \CC[\Gr(k, n)]\to \CC[\Grot(\CM \algA)]
\colon \Psi_{M} \mapsto \Phi^{T}_{M}, \\
\label{eq:XiT}
\widetilde{\Xi}^T &\colon \CC[\Gr(k, n)]\to \CC[\Grot(\CM\algA)]
\colon \cluschar{M}\mapsto \ptbar^T_M.
\end{align}
By \Cref{lem:Phi-monom} and \Cref{Thm:char-parfunB}\itmref{itm:pfun5}, the frozen variables, 
i.e.~$\Psi_{P}$ for $P$ indecomposable projective,
are mapped by $\Upsilon^{T}$ and $\widetilde{\Xi}^T$ 
to monomials in $\CC[\Grot(\CM \algA)]$, which are invertible.
Hence $\Upsilon^{T}$ and $\widetilde{\Xi}^T$ extend  to $\CC[\Gro(k, n)]$, 
without change of codomain,
and we will also denote these extensions by $\Upsilon^{T}$ and $\widetilde{\Xi}^T$.

\begin{proposition} \label{Thm:twistchar}
For any cluster tilting object $T$ in $\CM\algC$, with $\algA=\End (T)\op$,
we have the following commutative diagram.
\begin{equation}\label{diag:twistchar}
 \begin{tikzpicture}[xscale=2.7, yscale=1.5, baseline=(bb.base)]
\coordinate (bb) at (0,2);
]
 \draw (2,2) node (a) {$\CM\algC$};
 \draw (2,0.7) node (b3) {$\CC[\Grot(\CM\algA)]$};
 \draw (1,3) node (c2) {$\CC[\Gro(k, n)]$};
 \draw (3,3) node (c3) {$\CC[\Gro(k, n)]$};
 \draw[cdarr] (c2) to node[above] {$\twi$} (c3);
 \draw[cdarr] (a) to node[right] {$\ptbar^T$} (b3); 
 \draw[cdarr] (a) to node[above right=-2pt] {$\Psi$} (c2); 
 \draw[cdarr] (a) to node[above left=-2pt] {$\Psitil$} (c3); 
 \draw[cdarr] (c3) to [bend left=10] node[below right] {$\Upsilon^{T}$} (b3);
 \draw[cdarr] (c2) to [bend right=10] node[below left] {$\widetilde{\Xi}^T$}  (b3);
\end{tikzpicture}
\end{equation}
In particular, for any $M\in \CM \algC$, 
\[
  \twi(\Psi_M) =\Psitil_M =\frac{\Psi_{\Syz M}}{\Psi_{\procov{M}}}. 
\]
\end{proposition}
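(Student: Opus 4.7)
My plan is to verify that the three triangles sharing the apex $\CM\algC$ and the central node $\CC[\Grot(\CM\algA)]$ commute, and then deduce $\twi\compo\cluschar{} = \Psitil$ from the injectivity of $\Upsilon^{T}$. Throughout I will tacitly use that all the maps in \eqref{diag:twistchar} have been extended from $\CC[\Gr(k,n)]$ to $\CC[\Gro(k, n)]$ by inverting the frozen Pl\"ucker coordinates $\Psi_{P}$ (for $P$ indecomposable projective), which is legitimate because both $\Upsilon^T$ and $\widetilde{\Xi}^T$ send frozens to invertible monomials, by \Cref{lem:Phi-monom} and \Cref{Thm:char-parfun}\itmref{itm:pfun5}.

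First I would check the left-hand triangle ($\Psi$--$\widetilde{\Xi}^T$--$\ptbar^T$): this is essentially the definition of $\widetilde{\Xi}^T$ as the composition $\CC[-1]\compo\Xi^T$ from \Cref{cor:partfun-map}. Second, for the right-hand triangle ($\Psitil$--$\Upsilon^T$--$\ptbar^T$), I compute
\[
  \Upsilon^{T}(\Psitil_M)
  = \frac{\Upsilon^{T}(\Psi_{\Syz M})}{\Upsilon^{T}(\Psi_{\procov{M}})}
  = \frac{\Phi^{T}_{\Syz M}}{\Phi^{T}_{\procov{M}}}
  = \ptbar^T_M,
\]
where the first equality uses that $\Upsilon^T$ is an algebra homomorphism, the second uses \Cref{Thm:PsiandTheta} applied to $\Syz M$ and to $\procov{M}$, and the third is \Cref{Thm:char-parfun}\itmref{itm:pfun1}. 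Combining the two triangles already commuted gives $\Upsilon^{T}\compo\Psitil = \widetilde{\Xi}^T\compo\Psi$ on all of $\CM\algC$.

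The third step, which I expect to be the crux, is the commutativity of the top triangle $\widetilde{\Xi}^T = \Upsilon^{T}\compo\twi$ as algebra homomorphisms on $\CC[\Gro(k,n)]$. Both sides are algebra homomorphisms, so it suffices to check agreement on the Pl\"ucker generators $\minor{I}=\Psi_{M_I}$ (together with the inverted frozens, which is automatic). For rank one $M_I$, the Muller--Speyer--\c{C}anak\c{c}i--King--Pressland twist formula \eqref{eq:CKP-twi} tells us that $\twi(\Psi_{M_I}) = \Psitil_{M_I}$, and then the right-hand triangle already established yields
\[
  \Upsilon^{T}\bigl(\twi(\Psi_{M_I})\bigr)
  = \Upsilon^{T}(\Psitil_{M_I})
  = \ptbar^T_{M_I}
  = \widetilde{\Xi}^T(\Psi_{M_I}).
\]
So the whole prism \eqref{diag:twistchar} commutes.

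Finally, to extract the pointwise identity $\twi(\Psi_M)=\Psitil_M$ for arbitrary $M\in\CM\algC$, I combine the top and right triangles to get $\Upsilon^{T}\compo\twi\compo\Psi = \widetilde{\Xi}^T\compo\Psi = \Upsilon^{T}\compo\Psitil$, and then invoke the injectivity of $\Upsilon^{T}$ from \Cref{Lem:upsilon} (its extension to $\CC[\Gro(k,n)]$ remains injective because we are only localising at non-zero-divisors). The principal delicacy, worth flagging, is not a deep obstruction but rather bookkeeping: one must ensure that the rank--$1$ cluster character identity \eqref{eq:CKP-twi} of \cite{CKP} is indeed the correct input to pin down the top triangle on generators, and that $\Upsilon^{T}$ makes sense on the localised ring so that the final injectivity argument goes through.
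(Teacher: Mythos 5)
Your proposal is correct and follows essentially the same route as the paper's own proof: commute the left and right triangles via \Cref{cor:partfun-map}, \Cref{Thm:PsiandTheta} and \Cref{Thm:char-parfun}\itmref{itm:pfun1}, pin down the top triangle on the Pl\"ucker generators using \eqref{eq:CKP-twi} together with generation of $\CC[\Gro(k,n)]$ by the $\Psi_{M_I}$ and inverted frozens, and conclude with the injectivity of $\Upsilon^T$ on the localisation. No gaps to report.
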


\begin{proof}
We know that the left-hand triangle commutes by \eqref{eq:XiT}, 
i.e.~\Cref{cor:partfun-map},
and that the right-hand triangle commutes by \eqref{eq:UpsT}, 
i.e.~\Cref{Thm:PsiandTheta,},
since \eqref{eq:pt-Phi} from \Cref{Thm:char-parfunA} then gives
\[
  \Upsilon^{T} \Psitil_M 
 = \Upsilon^{T} \frac{\Psi_{\Syz M}}{\Psi_{\procov{M}}}
 = \frac{\Phi^T_{\Syz M}}{\Phi^T_{\procov{M}}} 
 = \ptbar^{T}_{M}.
\]
On the other hand, we know that the top triangle commutes when applied 
to any rank~1 module $M_I$, by \eqref{eq:CKP-twi}. 

Thus all inner triangles commute when applied to any $M_I$,
which is sufficient to show that the outer triangle commutes,
as the $\Psi_{M_I}$ (and ${\Psi_{P}}^{-1}$, 
for $P$ indecomposable projective) 
generate $\CC[\Gro(k, n)]$.
That is, $\Upsilon^T \compo \twi=\widetilde{\Xi}^T$, so
\[
  \Upsilon^T \twi (\Psi_M)= \widetilde{\Xi}^T(\Psi_M) = \ptbar^{T}_{M} = \Upsilon^T\Psitil_M,
\]
since the left and right triangles commute.
But $\Upsilon^T$ is injective, by \Cref{Lem:upsilon}, 
and remains so on extension to $\CC[\Gro(k, n)]$,
so we can conclude that $\twi (\Psi_M)=\Psitil_M$, 
for all $M\in \CM\algC$, completing the proof.
\end{proof} 

We can combine the proofs of \Cref{cor:partfun-map} and \Cref{Thm:twistchar} as follows.

\begin{corollary}\label{rem:flag-thm}
The following diagram commutes,
\begin{equation}\label{diag:flag-thm}
\begin{tikzpicture}[xscale=2.7, yscale=1.6,baseline=(bb.base)]
\coordinate (bb) at (0,2);
 \draw (2,2) node (a) {$\CM\algC$};
 \draw (1,1) node (b2) {$\CC[\Grot(\CM\algA')]$};
 \draw (3,1) node (b3) {$\CC[\Grot(\CM\algA)]$};
 \draw (1,3) node (c2) {$\CC[\Gro(k, n)]$};
 \draw (3,3) node (c3) {$\CC[\Gro(k, n)]$};
 \draw[cdarr] (b2) to node[above] {$\CC[-\zeta]$}  (b3);
 \draw[cdarr] (c2) to node[above] {$\twi$} (c3);
 \draw[cdarr] (c3) to node[right] {$\Upsilon^{T}$} (b3);
 \draw[cdarr] (c2) to node[left] {$\Upsilon^{\CTOcosyz{T}}$}  (b2);
 \draw[cdarr] (a) to node[above] {$\Phi^{\CTOcosyz{T}}$} (b2); 
 \draw[cdarr] (a) to node[above right=-1pt] {$\ptbar^T$} (b3); 
 \draw[cdarr] (a) to node[above right=-2pt] {$\Psi$} (c2); 
 \draw[cdarr] (a) to node[above left=-2pt] {$\Psitil$} (c3); 
\end{tikzpicture}
\end{equation}
where $\algA'=\End (\CTOcosyz{T})\op$ and $\zeta$ is as in \eqref{eq:zeta-def-0}.
\end{corollary}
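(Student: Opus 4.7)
The plan is to deduce the commutativity of the whole diagram from the commutativity of each of the four inner triangles that have $\CM\algC$ at the apex and one side of the outer square as the opposite edge. Once all four triangles are established, the outer square commutes too, because its four edges are algebra homomorphisms and $\CC[\Gro(k,n)]$ is generated by the images of rank~$1$ modules under~$\Psi$ (together with inverses of frozens).

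First I would handle the left-hand triangle $\Upsilon^{\CTOcosyz{T}} \compo \Psi = \Phi^{\CTOcosyz{T}}$ by directly applying \Cref{Thm:PsiandTheta} to the cluster tilting object~$\CTOcosyz{T}$ (which is reachable, by \Cref{Thm:TandSigmaT}, so the hypothesis is satisfied). The bottom triangle $\CC[-\zeta] \compo \Phi^{\CTOcosyz{T}} = \ptbar^T$ is a rewriting of \Cref{thm:PF-zeta-Phi}: since $\ptbar^T = \CC[-1]\compo \ptfn^T$ and $\ptfn^T = \CC[\zeta]\compo \Phi^{\CTOcosyz{T}}$, composing gives the claim. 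The right-hand triangle $\Upsilon^{T} \compo \Psitil = \ptbar^T$ is exactly the commutation of the right triangle inside diagram \eqref{diag:twistchar} in the proof of \Cref{Thm:twistchar}. Finally, the top triangle $\twi\compo \Psi = \Psitil$ is the content of \Cref{Thm:twistchar} itself.

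To close the argument, I would verify the commutativity of the outer square, namely $\CC[-\zeta]\compo \Upsilon^{\CTOcosyz{T}} = \Upsilon^{T}\compo \twi$. Chasing an arbitrary rank~$1$ generator $\Psi_{M_I}$ around both ways through the apex, the top and left triangles give $\Upsilon^T\twi(\Psi_{M_I}) = \Upsilon^T \Psitil_{M_I} = \ptbar^T_{M_I}$ (by the right triangle), and the left and bottom triangles give $\CC[-\zeta]\Upsilon^{\CTOcosyz{T}}(\Psi_{M_I}) = \CC[-\zeta]\Phi^{\CTOcosyz{T}}_{M_I} = \ptbar^T_{M_I}$. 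Since all four square maps are ring homomorphisms (after extending to $\CC[\Gro(k,n)]$ by inverting the frozen monomials, which are sent to invertible monomials by \Cref{lem:Phi-monom} and \Cref{Thm:char-parfun}\itmref{itm:pfun5}) and the $\Psi_{M_I}$ together with inverses of frozens generate $\CC[\Gro(k,n)]$, the two composites agree everywhere.

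The only subtlety I anticipate is bookkeeping: making sure that $\Upsilon^T$ and $\Upsilon^{\CTOcosyz{T}}$ extend unambiguously from $\CC[\Gr(k,n)]$ to $\CC[\Gro(k,n)]$ and that $\twi$ and $\CC[-\zeta]$ really land in the correct codomains for the composition to make sense. This is already handled in the discussion preceding \Cref{Thm:twistchar} (the extensions exist because frozens go to invertible monomials), so the assembly is essentially formal once each triangle has been named and cited.
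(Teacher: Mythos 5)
Your proposal is correct and follows essentially the same route as the paper, which likewise assembles the diagram by citing \eqref{eq:PF-Phi-Psi} (i.e.\ \Cref{Thm:PsiandTheta} applied to the reachable object $\CTOcosyz{T}$ together with \Cref{thm:PF-zeta-Phi}) for the left and bottom triangles and \eqref{diag:twistchar} from \Cref{Thm:twistchar} for the top and right ones. Your explicit check of the outer square on the generators $\Psi_{M_I}$ and inverted frozens is a harmless extra step that is already implicit in the proof of \Cref{Thm:twistchar}, where $\Upsilon^{T}\compo\twi=\widetilde{\Xi}^T=\CC[-\zeta]\compo\Upsilon^{\CTOcosyz{T}}$.
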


\begin{proof}
The left and bottom triangles commute by \eqref{eq:PF-Phi-Psi},
while the top and left triangles are from \eqref{diag:twistchar}.
\end{proof}

\begin{remark}
The twist automorphism (and the Chamber Ansatz) originates in work of 
Berenstein--Fomin--Zelevinsky \cite{BFZ} 
(and more generally in \cite{BZ}) where, among other things, it is used to 
invert Lusztig's parametrisation of the totally non-negative part of a unipotent group. 
The Muller--Speyer twist \cite{MS} plays a similar role, being used to compute the inverse of the boundary measurement map for positroids. 
Both these constructions are combinatorial in nature and it is known that the MS twist induces the BFZ twist in the Grassmannian case \cite[App.~4]{MS}. 
Geiss--Leclerc--Schroer \cite[Thm 6]{GLS12} gave a categorical description of 
the BFZ twist using cluster characters and partial projective presentations, 
similar to the definition of $\Psitil$ in \eqref{eq:psi-dag}.
\end{remark}

\section{Bases and Newton--Okounkov cones for Grassmannians}\label{Sec:7} 
\newcommand{\basis}{\mathcal{B}}
\newcommand{\GrotAbar}{\Grot(\fd\olA)}
\newcommand{\gen}{\mathrm{gen}}
\newcommand{\gminvec}[2]{\overline{g}_{\gen}(#1)}
\newcommand{\gminmap}[1]{\overline{g}_{\gen}}
\newcommand{\ghminvec}[2]{g_{\gen}(#1)}
\newcommand{\ghminmap}[1]{g_{\gen}}
\newcommand{\hCmin}{\hC_{\gen}}
\newcommand{\uc}{\mathbf{u}_{\hC}}
\newcommand{\gvecloc}[1]{\mathbf{v}_{#1}}
\newcommand{\dvec}[1]{d_{#1}}

\subsection{Bases and their leading exponents}\label{subsec:bases-leading}
As before, let $T$ be a cluster tilting object in $\CM\algC$ and $A=\End(T)\op$.
Fix a refinement of the order $\leq$ from \Cref{def:part-ord}
to a total order on $\Grot(\CM A)$, as in \Cref{rem:total-order}.

If $f\in \CC[\Gr(k,n)]$ is expressed in a chart associated to $T$,
that is, either the cluster chart $\Upsilon^{T}$ or the network chart $\Xi^T$, 
then its leading exponent is an element in $\Grot(\CM A)$. 
Thus, for any basis $\basis$ of $\CC[\Gr(k,n)]$, there is a map 
\[ \basis \to \Grot(\CM A) \]
sending a basis element to its leading exponent in the chart. 
We are interested in bases where this map is injective, 
that is, the leading exponents are distinct. 
The main reason for this is the following.

\begin{lemma} \label{Lem:uniqueleading}
Let $V\subspc \CC[\Grot(\CM\algA)]$ be a linear subspace with a basis whose leading exponents are distinct. 
Then the leading exponents of the basis elements are all the possible leading exponents for elements of $V$.
\end{lemma}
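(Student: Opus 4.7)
The plan is to argue that, given any non-zero $f\in V$, the leading exponent $\Val{T}(f)$ must coincide with $e_i:=\Val{T}(b_i)$ for one of the basis elements $b_i$. The converse inclusion is automatic, since each $b_i$ lies in $V$, so this will identify the two sets.

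First I would fix a basis $\{b_i\}_{i\in I}$ of $V$ whose leading exponents $\{e_i\}_{i\in I}$ are pairwise distinct, and expand a given non-zero $f\in V$ as a finite sum $f=\sum_{i\in S}c_i b_i$, where $S\subspc I$ is the (finite, non-empty) set of indices with $c_i\neq 0$. Using the refinement to a total order from \Cref{rem:total-order}, I can then pick the unique $i_0\in S$ minimising $e_i$ over $S$.

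The key step is to show that no cancellation occurs at the exponent $e_{i_0}$. By definition of leading exponent, $\text{supp}(b_i)\subspc\{x\in\Grot(\CM\algA)\st x\geq e_i\}$ for every $i$. For $i\in S$ with $i\neq i_0$ we have $e_i>e_{i_0}$ in the total order, so $e_{i_0}\notin\text{supp}(b_i)$; hence the coefficient of $\xvar^{e_{i_0}}$ in $f$ equals $c_{i_0}$ times the (non-zero) leading coefficient of $b_{i_0}$, which is non-zero. Thus $e_{i_0}\in\text{supp}(f)$. On the other hand, every $y\in\text{supp}(f)$ lies in $\text{supp}(b_i)$ for some $i\in S$, so $y\geq e_i\geq e_{i_0}$. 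Combining these two facts gives $\Val{T}(f)=e_{i_0}$, which is a leading exponent of a basis element, as required.

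There is no real obstacle: the argument is the standard one showing that a basis with distinct leading terms with respect to a total order behaves like a Gröbner basis, and the distinctness of the $e_i$ is exactly what prevents cancellation at the minimum. The only point to be careful about is using a genuine total order (rather than the partial order of \Cref{def:part-ord}) so that the minimum $e_{i_0}$ is unambiguously defined and strictly less than every other $e_i$ occurring in the support of $f$.
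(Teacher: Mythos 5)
Your proof is correct and follows the same argument as the paper: expand $f$ in the basis, pick the unique basis element with minimal leading exponent among those appearing, and note that distinctness prevents cancellation at that exponent, so it is the leading exponent of $f$. The extra care you take with the total order from \Cref{rem:total-order} is exactly the point the paper relies on implicitly, so there is nothing to add.
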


\begin{proof}
Write any $f\in V$ as a linear combination in the basis.
Among the basis elements with non-zero coefficient,
there is (precisely) one whose leading term has minimal exponent. 
This leading term cannot cancel due to the minimality, so it gives the leading term of $f$.  
\end{proof}

In addition, having distinct leading exponents is useful in identifying bases.

\begin{lemma} \label{Lem:uleading-indep}
If $f_1,\dots,f_k\in \CC[\Grot(\CM\algA)]$ have distinct leading exponents,
then they are linearly independent.
\end{lemma}

\begin{proof}
If there were a linear dependence between the $f_i$, then
among those with a non-zero coefficient,
there is (precisely) one whose leading term has minimal exponent. 
But this term cannot cancel due to the minimality, so there could not have been such a dependence.  
\end{proof}

In this section, we will see how to construct bases with this leading term property
and will see that such bases are far from unique. 
The dual semicanonical basis of $\CC[\Gr(k,n)]$ is an example of such a basis. 
The theta basis \cite[Thm~16.15]{RW} is another example. 
Even the standard monomial basis (\cite{Ho43} or \cite[Ch.XIV \S9]{HP52}) 
has this property for some clusters.

In our case, the basis elements will be cluster characters, like $\Psi_M$, 
so the leading exponents are $g$-vectors, like $[T, M]$.
This will enable us to relate the $g$-vector cone to the Newton-Okounkov cone (see \Cref{Def:noc-etc}).
This also helps us to show that the leading exponents are independent of the 
choices made when defining the order. 

\subsection{Generic $g$-vectors}\label{subsec:min-class}

Let $\preproj$ be the preprojective algebra of type $\dyn{A}_{n-1}$,
with vertices explicitly labelled $1,\dots,n-1$,
and let $\preprojQ_k$ be the indecomposable injective $\preproj$-module at the vertex $k$
(which is also the projective at vertex $n-k$).

Let $\sub\preprojQ_k$ be the subcategory of $\mmod \preproj$ consisting of  
submodules of modules in $\add \preprojQ_k$. 
Note that the socle of $\preprojQ_k$ is the simple $S_k$ at vertex $k$,
and $\sub\preprojQ_k$ also consists of those $\preproj$-modules with socle only at $k$.

Let $\rep(\preproj, d)$ be the representation variety of 
$\preproj$ with dimension vector $d\in \NN^{n-1}$,
that is, the set of module structures on $\CC^d=\bigoplus_{i=1}^{n-1} \CC^{d_i}$,
equipped with the natural structure of an algebraic variety.
This variety carry a natural conjugation action of $\GL{d}=\prod_{i=1}^{n-1} \GL{d_i}$.

Note that, for any $d\in \NN^{n-1}$ 
and any $\preproj$-module $X$, the map
\begin{equation}\label{eq:usc-dim-hom}
  \rep(\preproj, d) \to \NN \colon M \mapsto \dim \Hom(X, M)
\end{equation}
is upper semicontinuous, 
that is, $\{ M\st \dim \Hom(X, M) \leq m\}$ is open, for any $m\in\NN$.
Let
\[
  \sub(\preprojQ_k, d)\subset \rep(\preproj, d),
\] 
be the $\GL{d}$-invariant subset of $\preproj$-modules in $\sub\preprojQ_k$.
Furthermore, let 
\[
  \sub(\preprojQ_k, d,r)\subset \sub(\preprojQ_k, d),
\] 
be the $\GL{d}$-invariant subset of modules $M$ with $\dim\soc(M)\leq r$.
Note that this is a special case of varieties considered in \cite[\S1.5]{Lus03}.

\begin{lemma}\label{Lem:quotrep}
The subsets $\sub(\preprojQ_k, d)$ and $\sub(\preprojQ_k, d,r)$ are open in $\rep(\preproj, d)$. 
\end{lemma}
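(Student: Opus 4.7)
The plan is to characterise $\sub(\preprojQ_k, d)$ as the locus in $\rep(\preproj, d)$ where certain socle conditions are imposed, and then to observe that these socle conditions are visibly open.

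First I would identify the relevant characterisation: since $\preprojQ_k$ is injective in $\mmod\preproj$ with simple socle $S_k$, a module $M$ lies in $\sub \preprojQ_k$ if and only if every embedding of a simple $S_j$ into $M$ factors through $S_k$, i.e.\ $\soc M \subset \add S_k$, i.e.\ $\Hom_\preproj(S_j, M) = 0$ for every vertex $j \neq k$ of $\Lambda$. The forward direction is immediate (a submodule of $\preprojQ_k^m$ has socle in $\add S_k$); the reverse follows because any module with socle in $\add S_k$ embeds into the injective envelope of its socle, which lies in $\add \preprojQ_k$.

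Next I would make these Hom-vanishing conditions explicit. For a representation $M$ of $\preproj$ with $\dimv M = d$, the space $\Hom_\preproj(S_j, M)$ is naturally identified with the kernel of the linear map
\[
 \phi_j(M) \colon M_j \lra \bigoplus_{a \in \overline{\Lambda}_1,\, ta = j} M_{ha}
\]
whose components are the action of the arrows $a$ of the double quiver of $\Lambda$ starting at $j$. As $M$ varies over $\rep(\preproj, d)$, the linear map $\phi_j(M)$ depends algebraically (in fact linearly) on the matrix entries defining $M$. The condition $\ker\phi_j(M) = 0$ is equivalent to $\phi_j(M)$ being injective, which is cut out by the non-vanishing of the $d_j \times d_j$ minors of the matrix of $\phi_j$; hence it defines an open subvariety of $\rep(\preproj, d)$.

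Finally I would conclude by taking the intersection: $\sub(\preprojQ_k, d)$ is the locus where $\phi_j(M)$ is injective for each of the finitely many vertices $j \neq k$, so it is a finite intersection of open subsets of $\rep(\preproj, d)$, hence open. No real obstacle is anticipated — the only delicate point is being careful that the characterisation via socle works for \emph{all} modules (not just those of a fixed dimension vector), which is standard once one invokes the injectivity of $\preprojQ_k$ together with the fact that $S_k$ is its socle.
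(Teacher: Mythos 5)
Your proposal is correct and follows essentially the same route as the paper: membership in $\sub\preprojQ_k$ is characterised by $\Hom(S_j,M)=0$ for every simple $S_j$ with $j\neq k$ (the non-socle simples of $\preprojQ_k$), and openness follows because these are open conditions on $\rep(\preproj,d)$. Your explicit identification of $\Hom(S_j,M)$ with $\ker\phi_j(M)$ and the rank argument is just a hands-on version of the upper-semicontinuity of $\dim\Hom(S_j,-)$ that the paper invokes.
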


\begin{proof}
Modules $M$ in $\sub(\preprojQ_k, d)$ are characterised by  
$\Hom(S, M)=0$, for any simple module $S\not\isom S_k$. 
By the semicontinuity of $\dim \Hom(S, M)$, 
this is an intersection of open subsets, hence open in $\rep(\preproj, d)$.
Modules $M$ in $\sub(\preprojQ_k, d,r)$ are additionally characterised by 
$\dim\Hom(S_k, M)\leq r$, which is again open by semicontinuity.
\end{proof}

Let $\olT=\bigoplus_i \olT_i$ be a cluster tilting object in $\sub\preprojQ_k$, 
let $\olA =\End (\olT)\op$ and let $\olQ$ be the Gabriel quiver of $\olA$.
For $M\in \sub\preprojQ_k$, denote by $[\olT, M]$ the class of the $\olA$-module $\Hom(\olT, M)$ 
in the Grothendieck group $\Grot(\proj\olA) = \Grot(\fd\olA) = \ZZ^{\olQ_0}$.
Thus $[\olT, M]=\dimv\Hom(\olT, M)$.

Denote by $\IrrSub(d)$ the set of irreducible components in $\sub(\preprojQ_k, d)$ and let 
\[
  \IrrSub=\bigcup_d \IrrSub(d).
\] 
\begin{definition}\label{def:gmin-cmin}
By semicontinuity of \eqref{eq:usc-dim-hom},
for each $\hC\in \IrrSub$, 
there is a class 
\[ \gminvec{\hC}{T}\in\GrotAbar, \] 
which is minimal (as a dimension vector) in $\{[\olT, N]\st N\in \hC\}$. 
Furthermore, the set  
\[
  \hCmin=\{N\in \hC\st [\olT, N]=\gminvec{\hC}{T} \}
\] 
is an open dense subset of $\hC$. 
Consequently, we call $\gminvec{\hC}{T}$
the \emph{generic $g$-vector} of~$\hC$.
\end{definition}

\begin{remark}\label{rem:sem-can-basis}
Following Lusztig~\cite{Lus00} and Geiss--Leclerc--Schr\"{o}er~\cite{GLS05,GLS08, Lec},
the dual semicanonical basis of $\CC[\unirad]$ is $\{ \psi_{M_\hC} \st \hC\in \IrrSub\}$,
where $M_\hC$ is any module in a certain dense open subset $\uc\subset \hC$ 
on which $\psi_{M}$ (as in \S\ref{subsec:PsiPhi}) is constant.
The leading exponent of~$\psi_{M}$ in a cluster chart,
that is, the leading exponent of~$\phi_{M}$ from \eqref{eq:phi-Tbar},
is $[\olT,M]$.
Hence $\uc\subset \hCmin$, that is, $[\olT,M_\hC]=\gminvec{\hC}{T}$.

An alternative (more general) approach to finding a basis of generic cluster characters 
indexed by $g$-vectors is given by Plamondon \cite[Thm~1.1]{Plam};
see also \cite[\S5]{Fei1}.
However, in general, this approach only finds a linearly independent set in
the upper cluster algebra.
The indexing sets of $g$-vectors in \cite{Plam} and \cite{Fei1} are also described differently.
\end{remark}

Let $T', T''\in \add \olT$ with $T''$ a submodule of $T'$.
In other words, suppose that the set
\[ \Inj(T'', T')=\{f\in \Hom(T'', T')\st \text{$f$ is injective}\} \]
is non-empty and hence a dense open subset of $\Hom(T'', T')$.
Note that $\Hom(T'', T')$ is a finite-dimensional vector space over $\CC$ 
and we consider the Zariski topology on it,
so any non-empty open subset is dense.

Let $d=\dimv (T'/T'')\in\NN^{n-1}$ and define 
\begin{equation} \label{eq:defV}
 V = \{(f, g, X) \st 
 \text{$T''\stackrel{f}{\to} T'\stackrel{g}{\to} X$ is a short exact sequence, 
 $X\in \rep(\preproj, d)$} \}. 
\end{equation}

The following lemma is standard, but we include a brief proof for completeness.

\begin{lemma}\label{Lem:prinbundle}
The projection $\alpha\colon V\to \Inj(T'', T')\colon (f, g, X)\mapsto f$ 
is a principal $\GL{d}$-bundle.
Hence $V$ is irreducible.
\end{lemma}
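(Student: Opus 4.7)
The plan is to exhibit a natural free and transitive $\GL(d)$-action on the fibres of $\alpha$, then use the fact that $\GL(d)$ is a special group (so every principal $\GL(d)$-bundle is Zariski-locally trivial) to conclude both the bundle structure and the irreducibility of $V$.

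First I would define the action: for $h=(h_i)_{i} \in \GL(d) = \prod_i \GL(d_i)$ and $(f,g,X) \in V$, transport the preprojective representation structure on $X$ via $h$ to obtain a new representation $h\cdot X$ (with structure map $h_{ha}X_a h_{ta}^{-1}$ on each arrow $a$), and set $h\cdot(f,g,X) = (f, h\compo g, h\cdot X)$. One checks that $h\compo g \colon T' \to h\cdot X$ remains $\preproj$-linear and surjective with kernel $f(T'')$, so $h\cdot(f,g,X)\in V$, and $\alpha$ is clearly $\GL(d)$-invariant.

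Next I would verify that $\GL(d)$ acts freely and transitively on each fibre $\alpha^{-1}(f)$. Given $f \in \Inj(T'',T')$, the cokernel $T'/f(T'')$ is determined up to isomorphism, and any two choices $(f,g_1,X_1), (f,g_2,X_2) \in \alpha^{-1}(f)$ have the same underlying linear data at each vertex; the induced isomorphism $X_1\to X_2$ making $g_2 = h\compo g_1$ is unique because $g_1$ is surjective. Freeness is immediate: if $h\compo g = g$ then $h$ is the identity on $\img g = X$. This exhibits $\alpha$ as a $\GL(d)$-torsor.

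Then I would invoke the standard fact (due to Serre/Grothendieck) that $\GL(d) = \prod_i \GL(d_i)$ is a special algebraic group, so every $\GL(d)$-torsor over a variety is Zariski-locally trivial; hence $\alpha$ is a principal $\GL(d)$-bundle. Alternatively, one can construct a local section explicitly: over the open locus where the vector space $T'/f(T'')$ at each vertex has a chosen basis varying algebraically with $f$, one obtains a section by transporting the quotient structure.

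The main (mild) obstacle is simply being careful about the specialness of $\GL(d)$ versus direct Zariski-local trivialisation; neither is deep, and once the bundle structure is in place, irreducibility of $V$ follows at once because $\Inj(T'',T')$ is open in the affine space $\Hom(T'',T')$ and hence irreducible, $\GL(d)$ is connected hence irreducible, and a Zariski-locally trivial fibration with irreducible base and irreducible fibre has irreducible total space.
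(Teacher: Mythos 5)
Your proposal is essentially correct, and its decisive ingredient is the same as the paper's. The fibrewise analysis (a point of $\alpha^{-1}(f)$ amounts to a vertex-wise isomorphism $T'/\img f\to\CC^d$, because the module structure on $X$ is forced by the surjection $g$) is exactly how the paper identifies $\alpha^{-1}(f)\isom\GL(d)$, and your ``alternative'' explicit section is the paper's local trivialisation: fix a vertex-wise vector-space complement $W$ of $\img f$ in $T'$; over the open set $U_f=\{f'\in\Inj(T'',T')\st \img f'\cap W=0\}$ the composite $W\hookrightarrow T'\to T'/\img f'$ is an isomorphism, which gives an algebraic section and hence $\alpha^{-1}(U_f)\isom U_f\times\GL(d)$. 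From there your irreducibility argument (irreducible base, irreducible fibre, Zariski-local triviality, fibres nonempty so the preimages of overlapping opens meet) is the same as the paper's conclusion.

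The one thin spot is your primary route through specialness of $\GL(d)$. Serre's theorem upgrades étale- or fppf-local triviality to Zariski-local triviality; but a free and transitive action of $\GL(d)$ on the closed points of each fibre does not by itself exhibit $\alpha$ as a torsor in that sense --- one would still need, for instance, that $\alpha$ is faithfully flat and that the action map $\GL(d)\times V\to V\times_{\Inj(T'',T')}V$ is an isomorphism of varieties rather than merely a bijection on points. As written, the specialness step therefore presupposes essentially what it is meant to deliver. This is harmless here only because your alternative (the explicit linear-algebra section over $U_f$, i.e.\ the paper's proof) establishes Zariski-local triviality directly and renders the special-group machinery unnecessary; I would make that the main argument rather than the fallback.
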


\begin{proof}
If we fix $f\in \Inj(T'', T')$, then the fibre $\alpha^{-1}(f)$ 
can be identified with the set $\Iso(T'/\img f,\CC^d)$ of isomorphisms of graded vector spaces 
$\ol{g}\colon T'/\img f \to \CC^d$, 
since any such isomorphism induces a unique module structure on $\CC^d$.
This set is a single free orbit for the action of $\GL{d}$ on the pairs $(g,X)$.

Now write $T'=\img f\oplus W$, as graded vector spaces, and let
\[
  U_f = \{ f'\in \Inj(T'', T')\st \img f' \cap W=0 \},
\] 
which is an open neighbourhood of $f$. Then 
\[
  \alpha^{-1}(U_f) = \{ (f', g, X) \st 
   \text{$g|_{W}$ is an isomorphism} \} = U_f\times \Iso(W,\CC^d),
\]
Hence $\alpha$ is a principal $\GL{d}$-bundle.
As $\Inj(T'', T')$ is irreducible, $V$ is also.
\end{proof}

\begin{lemma}\label{Lem:largeapprox}
Given a dimension vector $d\in\NN^{n-1}$, there is some $T'\in \add \olT$ that gives
the first term of an $\add \olT$-presentation of any $X\in \sub(\preprojQ_k, d)$. 
\end{lemma}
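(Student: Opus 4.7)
The plan is to take $T'$ of the form $\bigoplus_i \olT_i^{N_i}$, where the $N_i$ are chosen uniformly large enough that, for every $X\in\sub(\preprojQ_k,d)$, one can fit enough maps $\olT_i\to X$ to account for all of $\Hom(\olT_i,X)$ at once.

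The first step is to secure a uniform bound. For each indecomposable summand $\olT_i$ of $\olT$, the function $X\mapsto \dim_\CC\Hom(\olT_i, X)$ is upper-semicontinuous on $\rep(\preproj, d)$. Since $\rep(\preproj, d)$ is a scheme of finite type, the descending chain of closed level sets of this integer-valued function stabilises, so the function takes only finitely many values and is bounded above by some integer $N_i$. \emph{A fortiori}, this bound holds on the subset $\sub(\preprojQ_k, d)$.

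With $T'=\bigoplus_i \olT_i^{N_i}$ in hand, the approximation is constructed as follows. Fix $X\in\sub(\preprojQ_k, d)$ and, for each $i$, choose morphisms $f_{i,1},\ldots,f_{i,N_i}\colon \olT_i\to X$ whose images span $\Hom(\olT_i, X)$ as a $\CC$-vector space (possible since $\dim_\CC\Hom(\olT_i, X)\leq N_i$). These assemble into a single morphism $f\colon T'\to X$. To verify that $f$ is a right $\add\olT$-approximation, it is enough to show that every $g\colon \olT_j\to X$ factors through $f$. By choice of the $f_{j,\ell}$, one has $g=\sum_\ell c_\ell f_{j,\ell}$ for some scalars $c_\ell\in\CC$, and $g$ then factors through $f$ via the morphism $\olT_j\to \olT_j^{N_j}\hookrightarrow T'$ whose $\ell$-th component is $c_\ell\cdot\idmap_{\olT_j}$.

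The only point requiring real care is the uniformity step: without a single bound $N_i$ that works for all $X$ in the family, the module $T'$ would have to depend on $X$, which is forbidden. Once finite-typeness of $\rep(\preproj, d)$ is invoked together with upper-semicontinuity to produce such an $N_i$, the rest of the argument is the standard construction of an approximation from a spanning set of morphisms.
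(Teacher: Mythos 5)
Your proof is correct and follows essentially the same strategy as the paper: bound $\dim\Hom(\olT_i,X)$ uniformly over all $X\in\sub(\preprojQ_k,d)$ and take that many copies of each $\olT_i$, mapping to $X$ by a spanning set of morphisms. The only difference is how the uniform bound is produced — the paper observes directly that a morphism out of $\olT_i$ is determined by the images of a generating set, giving the explicit bound $\dim\Hom(\olT_i,X)\leq\sum_j d_j t_j$ with $t=\dimv\top\olT_i$, whereas you obtain a non-explicit bound from upper-semicontinuity together with Noetherianity of $\rep(\preproj,d)$ (where, to be fully precise, the stabilised level set must be empty because each $\Hom(\olT_i,X)$ is finite dimensional), which is also fine.
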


\begin{proof}
For each $i\in\olQ_0$, we can necessarily find $c_i$ such that 
$\dim \Hom(\olT_i, X)\leq c_i$, for all $X\in \sub(\preprojQ_k, d)$.
As a very crude bound, we could take $c_i= \sum_j d_j t_{ij}$,
where $(t_{ij})=\dimv\top \olT_i$.
Then $T'=\bigoplus_i (\olT_i)^{c_i}$ gives a (right) $\add \olT$-approximation to any 
$X\in \sub(\preprojQ_k, d)$, 
by choosing $c_i$ maps that span $\Hom(\olT_i, X)$ to make a map $g\colon T'\to X$.

Since $\add \olT$ includes the projective $\olA$-modules, $g$
will be surjective.
Furthermore, $T'':=\ker g\in\sub \preprojQ_k$ and $\Ext^1(\olT,T'')=0$, 
so $T''\in\add \olT$, as required.
\end{proof}

As $\olT$ contains the projectives in $\sub \preprojQ_k$,
there is a map $\ZZ^{\olQ_0}\to\ZZ^{n-1}\colon x\mapsto \dvec{x}$
such that, for any $M\in \sub\preprojQ_k$, if $[\olT,M]=x$, then $\dimv M=\dvec{x}$.
For $x\in \NN^{\olQ_0}$, define
\begin{equation}\label{eq:gvecloc}
  \gvecloc{x}=\{M\in \sub(\preprojQ_k, \dvec{x})\colon [\olT,M]=x\},
\end{equation}
which is a locally closed subvariety of $\sub(\preprojQ_k, \dvec{x})$.

\begin{proposition}\label{lem:cx} 
If $\gvecloc{x}$ is non-empty, then it is irreducible.
\end{proposition}

\begin{proof} 
By \Cref{Lem:largeapprox}, we can find a fixed $T'\in \add \olT$ that gives an 
$\add\olT$-presentation
\[ \ShExSeq{T''} {T'} {M}\]
of any $M\in \sub(\preprojQ_k, \dvec{x})$.
If $[\olT, M]=x$, then  $[\olT, T''] = [\olT, T']-x$ and so, 
since the classes $[\olT, \olT_i]$ form a basis of $\GrotAbar$, 
we can also fix $T''$ and suppose that $T''\leq T'$.

Then $T'$, $T''$ and $d=\dvec{x}$ determine $V$, as in \eqref{eq:defV}, 
and we can consider
\begin{equation}\label{eq:Wset}
  W = \{(f, g, X) \in V  \st X\in \sub(\preprojQ_k, \dvec{x}) \},
\end{equation}
which is a nonempty open subset of $V$.
By \Cref{Lem:prinbundle}, $W$ is irreducible 
and so its projection onto the last component, which is $\gvecloc{x}$,
is irreducible. 
\end{proof}

\begin{corollary}\label{cor:ggen-inj}
The map 
$\gminmap{T}\colon\IrrSub \to \GrotAbar\colon \hC\mapsto \gminvec{\hC}{T}$,
from \Cref{def:gmin-cmin},
is injective.
Moreover, $\hCmin = \gvecloc{x}$, for $x=\gminvec{\hC}{T}$.
\end{corollary}

\begin{proof}
If $\gminvec{\hC}{T}=x$, then $\hCmin \subset \gvecloc{x}$.
But $\gvecloc{x}$ is irreducible, by \Cref{lem:cx}, 
so $\hCmin = \gvecloc{x}$, proving the second part.
But then $\hC=\overline{\gvecloc{x}}$, so $\gminvec{\hC}{T}$ determines $\hC$,
that is, the map $\gminmap{T}$ is injective.
\end{proof}

\begin{remark}\label{rem:rigid-gvec}
By Voigt's Lemma \cite[II.3.5]{Vo}, rigid modules are generic,
in the sense that the corresponding orbit is a dense open subset of $\sub(\preprojQ_k,d)$.
In particular, a rigid module $M$ is contained in only one component $\hC$ 
and $[\olT,M]=\gminvec{\hC}{T}$.
Hence \Cref{cor:ggen-inj} includes the already known fact that
different rigid modules have different $g$-vectors 
(cf.~\cite[Thm~2.3]{DK}, \cite[Thm~5.5]{FK}).
\end{remark}

Note that $\CC[\unirad]=\bigoplus_d\CC[\unirad]_d$,
where $d\in \NN^{n-1}$ is a weight
for a maximal torus of $\SL{n}$, but is also a dimension vector for $\preproj$.
For example, if $\dimv M=d$, then $\psi_M\in \CC[\unirad]_d$.
It is known (ultimately by \cite{Lus91}; see also \cite[\S9]{GLS08}, \cite[\S5]{Lec}) 
that $\dim \CC[\unirad]_d$ is equal to the number of components of $\sub(\preprojQ_k,d)$.
We include a proof for completeness, using the following elementary fact, 
which seems special to the case in hand. 

\begin{lemma}\label{lem:SubQk-cond}
Let $\Lambda$ be the path algebra of a quiver of type $\dyn{A}_{n-1}$ 
with all arrows directed towards vertex $k$ and let $M\in\rep\preproj$.
Then $M\in\sub\preprojQ_k$ if and only if 
all indecomposable summands of $M_{|\Lambda}$ are supported at $k$.
\end{lemma}
\begin{proof}
If all summands of $M_{|\Lambda}$ are supported at $k$, then, due to the choice of arrow orientations,
$\soc_\Lambda M$ is only at $k$ and hence $\soc_\preproj M$ is only at $k$, that is, $M\in\sub\preprojQ_k$.
Conversely, if $M\in\sub\preprojQ_k$, that is, $M$ is a submodule of a sum of copies of $\preprojQ_k$,
then all the arrows pointing towards $k$ act injectively, because $\preprojQ_k$ has this property.
Thus every indecomposable summand of $M_{|\Lambda}$ is supported at $k$.
\end{proof}

\begin{corollary}\label{cor:dim=numcomp}
$\dim \CC[\unirad]_d=|\IrrSub (d)|$.
\end{corollary}

\begin{proof}
First note that $\CC[\unirad]$ is a polynomial ring in variables $x_\alpha$ of degree $\alpha$, 
for those positive roots $\alpha$ supported at vertex $k$.
Thus $\CC[\unirad]_d$ has a basis of monomimials $\prod_\alpha x_\alpha^{d_\alpha}$, 
for  $\sum_{\alpha} d_\alpha \alpha = d$.

On the other hand, by considering restrictions such as $M\mapsto M_{|\Lambda}$,
Lusztig \cite[Prop 14.2]{Lus91} showed that,
for any $\preproj$ of Dynkin type $\dyn{A}$, $\dyn{D}$ or $\dyn{E}$, 
the irreducible components of $\rep(\Pi, d)$ 
are closures of conormal bundles of the $\GL{d}$-orbits in $\rep(\Lambda, d)$.
By Gabriel's Theorem, such orbits correspond to expressions $d=\sum_{\alpha} d_\alpha \alpha$,
for any positive roots $\alpha$.
Hence, the result follows by \Cref{lem:SubQk-cond}.
\end{proof}

\begin{proposition}\label{thm:gen-basis}  
If $N_\hC\in\hCmin$, for each $\hC\in\IrrSub (d)$,
then $\{\psi_{N_\hC}\st \hC\in\IrrSub (d)\}$ is a basis of $\CC[\unirad]_d$.
\end{proposition}

\begin{proof}
In the cluster chart associated to $\olT$, the leading exponent of $\psi_{N_\hC}$, 
that is, the leading exponent of~$\phi_{N_\hC}$ from \eqref{eq:phi-Tbar},
is $[\olT,N_\hC]$.
These exponents are distinct, by \Cref{cor:ggen-inj}.
Hence, by \Cref{Lem:uleading-indep}, the $\phi_{N_\hC}$, and thus the $\psi_{N_\hC}$,
are linearly independent.
Since $\dim \CC[\unirad]_d=|\IrrSub (d)|$,
by \Cref{cor:dim=numcomp},
we have a basis as claimed.
\end{proof}

By \Cref{rem:sem-can-basis}, 
the dual semicanonical basis is an example of such a basis. 

\begin{proposition}\label{thm:subQComp}  
Every $g$-vector is generic.
More precisely, for any $M\in\sub(\preprojQ_k,d)$,
there is some (unique) $\hC\in \IrrSub (d)$ such that $[\olT,M]=\gminvec{\hC}{T}$
and, furthermore, $M\in\hCmin$.
In other words,
\begin{equation}\label{eq:subQComp} 
\sub(\preprojQ_k,d) = \bigcup_{\hC\in\IrrSub(d)} \hCmin.
\end{equation}
\end{proposition}

\begin{proof} 
For any basis $\{ \psi_{N_{\hC}} \st \hC\in  \IrrSub (d)\}$,
as in \Cref{thm:gen-basis}, the leading exponents are $[\olT,N_{\hC}]=\gminvec{\hC}{T}$,
which are distinct, by \Cref{cor:ggen-inj}.
Hence, by \Cref{Lem:uniqueleading}, the leading exponent $[\olT,M]$ of any $\psi_{M}$,
must be equal to $\gminvec{\hC}{T}$ for some $\hC$,
that is, every $g$-vector is generic.

The last part was already seen in the proof of \Cref{cor:ggen-inj},
that is, if $x=\gminvec{\hC}{T}$, then $\gvecloc{x}=\hCmin$,
because $\gvecloc{x}$ is irreducible, by \Cref{lem:cx}.
\end{proof}

\begin{corollary}\label{cor:comp} 
For $M\in \sub(\preprojQ_k, d)$, let $\hC_{1}, \dots, \hC_{t}\in\IrrSub(d)$
be the components containing~$M$. 
Then 
\begin{equation}\label{eq:class}
  [\olT, M]=\gminvec{\hC_j}{T},
\end{equation}
for some (unique) $j$ with $\gminvec{\hC_j}{T}>\gminvec{\hC_\ell}{T}$, for $\ell\neq j$.

In particular, if $M\in \hC$ for just one component $\hC$,
then $[\olT, M]=\gminvec{\hC}{T}$.
In other words, $\hC_\gen$ always contains the 
complement (in $\hC$) of all the other components.
\end{corollary}

\begin{proof}
\Cref{thm:subQComp} implies that $[\olT, M]=\gminvec{\hC_j}{T}$, for some $j$. 
The uniqueness follows from \Cref{cor:ggen-inj},
while the inequality follows from the upper semicontinuity of $\dimv \Hom(\olT, -)$.
The rest of the statement follows immediately.
\end{proof}

\newcommand{\grotiso}{\theta}
\newcommand{\grotisotoo}{\widehat{\theta}}
\subsection{Bases of $\CC[\Gr(k, n)]$ with distinct leading exponents.} 
\label{subsec:bases}
We now explain how to lift results of \S\ref{subsec:min-class} from $\sub\preprojQ_k$ to $\CM\algC$.
Recall, from \eqref{eq:pifun-omfun} and \S\ref{subsec:PsiPhi}, 
that the choice of boundary vertex $\vstar$ determines an exact functor
\[ \pifun\colon \CM\algC\to \sub\preprojQ_k \colon M\mapsto M/\funP{M} \]
such that $\pifun \modP=0$.
If $T$ is a cluster tilting object in $\CM\algC$, 
then $\olT=\pi T$ is a cluster tilting object in $\sub\preprojQ_k$,
with one fewer summand.

Any $M\in\CM\algC$ is determined (up to isomorphism) 
by the pair $(\rnk{\algC} M, \pifun M)$. 
Conversely, a pair $(r, X)$ comes from some $M$ provided $\dim\soc X\leq r$
(see \cite[Thm~4.5]{JKS1}).
At the level of Grothendieck groups, there is an isomorphism
\begin{equation}\label{eq:Grot-isom}
 \grotiso\colon \Grot(\CM\algC)\to \ZZ\times \Grot(\sub\preprojQ_k)\colon [M]\mapsto (\rkk [M],[\pifun M]),
\end{equation}
where $\rkk [M]=\rnk{\algC} M$.
For $\lambda\in\Grot(\CM\algC)$, 
let $\CM(\lambda)$ be the set of isomorphism classes in $\CM\algC$ of modules of class $\lambda$
and let
\[
  \CM(r)=\bigcup_{\rkk\lambda=r} \CM(\lambda),
\]
that is, the set of isomorphism classes modules of rank $r$.

If $\grotiso(\lambda)=(r,d)$, 
then $\pifun$ gives a bijection between $\CM(\lambda)$ and the set of $\GL{d}$-orbits in 
$\sub(\preprojQ_k,d,r)$,
which is an open subset of $\sub(\preprojQ_k,d)$, by \Cref{Lem:quotrep}.
Slightly abusing notation, 
if $\hC\in\IrrSub(d)$ intersects $\sub(\preprojQ_k,d,r)$, then we write 
\begin{equation}\label{eq:Crc}
  \compC(r, \hC)=\{ M\in \CM(\lambda)\st \pifun M \in \hC \} 
\end{equation}
and call this set an irreducible component of $\CM(\lambda)$.
Let $\IrrCM(\lambda)$ be the collection of all such components and further write
\[ 
 \IrrCM(r) = \bigcup_{\rkk\lambda=r} \IrrCM(\lambda),\quad 
 \IrrCM = \bigcup_r \IrrCM(r).
\]
Note, in particular, that $\IrrSub(0)$ has precisely one element $\mathbf{0}$ 
which is the component consisting of just the zero module in $\sub(\preprojQ_k, 0)$.
Then $\compC(r,\mathbf{0})= \bigl\{ (\modP)^r \bigr\}$.

\begin{proposition}\label{lem:num-components}
For any $\lambda\in \Grot(\CM\algC)$, 
\begin{equation}\label{eq:dimIrr}
|\IrrCM(\lambda)| = \dim \CC[\Gr(k, n)]_\lambda.
\end{equation}
On the right-hand side, $\lambda$ is interpreted as a $\GL{n}$ weight 
(as in \cite[\S8]{JKS1}).
\end{proposition}

\begin{proof}
By definition, the components of $\CM(\lambda)$ are in bijection with the components of $\sub(\preprojQ_k, d,r)$,
while $\CC[\Gr(k, n)]_\lambda$ is the $\lambda$ weight space of $V_{r\omega_k}$, 
as in \eqref{eq:irrep-decomp}.
Hence the result is a dual version of \cite[Prop 1.6 (a)]{Lus03} (see also \cite[\S 5.1]{GLS06}).
\end{proof}

\begin{lemma} \label{lem:g-vec-projection}
At the level of Grothendieck groups, there is an isomorphism
\begin{equation}\label{eq:Grot-isomA}
 \grotisotoo\colon \Grot(\CM\algA)\to \ZZ\times \GrotAbar
 \colon [T,M]\mapsto (\rnk{\algC} M,[\pifun T,\pifun M]).
\end{equation}
Furthermore, if $\rnk{\algC} M=\rnk{\algC} N$, then 
\[
[\pifun T,\pifun M] \leq [\pifun T,\pifun N] \iff \kapvec(T,M) \geq \kapvec(T,N),
\] 
noting that the right-hand side is also the partial order in \Cref{def:part-ord}.
\end{lemma}

\begin{proof} 
The first component of the map $\grotisotoo$ is $\rkk$, since $\rkk [T,M]=\rnk{\algC} M$.
The map is uniquely determined by its values on the basis $[T,T_i]$ of $\Grot(\CM\algA)$.
Thus $[T,\modP]$ maps to~$(1,0)$, but otherwise the second components of $\grotisotoo[T,T_i]$
are a basis of $\GrotAbar$.
Hence $\grotisotoo$ is an isomorphism.
The fact that $\grotisotoo$ has the claimed effect on $[T,M]$ follows from~\eqref{eq:piTN},
because the second component of $\grotisotoo$ is $\AAmap$.

For the second part, recall from \eqref{eq:wtmod-diag} that there is a short exact sequence
\[
\ShExSeq {\Hom(T, M)} {\Hom(T, \funJ M)} {\funK(T, M)}.
\]
Applying $\Hom(T, -)$ to the defining sequences (cf. \eqref{eq:pifun-omfun})
of $\pifun M$ and $\pifun\funJ M$ gives
\[
\ShExSeq {\Hom(\pifun T, \pifun M)} {\Hom(\pifun T, \pifun\funJ M)} {\funK(T, M)}.
\]
The dimension vectors of the outer terms are $[\pifun T, \pifun M]$ and 
$\kapvec(T,M)=\wt[T,M]$ (see \eqref{eq:wtTM}).
The middle term is fixed by $\rnk{\algC} M$, so these dimension vectors are oppositely ordered,
as required.
\end{proof}

As in \Cref{def:gmin-cmin}, every component $\compC\in \IrrCM$
contains a dense open set~$\compC_{\gen}$ on which the class $[T, M]$ is constant,
with value $\ghminvec{\compC}{T}\in \Grot(\CM \algA)$.
More precisely, if $\compC=\compC(r, \hC)$, as in \eqref{eq:Crc}, then 
\begin{equation}\label{eq:Crc-too}
  \compC_{\gen}=\{ M\in \CM(\lambda)\st \pifun M \in \hC_{\gen} \} 
\end{equation}
and furthermore $\grotisotoo\bigl( \ghminvec{\compC}{T} \bigr)=(r,\gminvec{\hC}{T})$.

\begin{remark}\label{rem:Cgen}
The bijection of $\CM(\lambda)$ with the set of $\GL{d}$-orbits in 
$\sub(\preprojQ_k,d,r)$ induces a (quotient) topology on $\CM(\lambda)$,
for which the components $\compC$ are the irreducible components.
Although the definition of this topology, and hence the subsets~$\compC$, 
depends on the choice of $\vstar$ which determines $\pi$, 
it seems reasonable to conjecture that, in fact, it does not depend on $\vstar$.
Then $\compC_{\gen}$ and $\ghminvec{\compC}{T}$ would just depend on~$T$.
\end{remark}

\begin{proposition} \label{prop:ghgen-inj}
The map 
$\ghminmap{T}\colon \IrrCM\to \Grot(\CM \algA)\colon \compC\mapsto \ghminvec{\compC}{T}$
is injective.
\end{proposition}

\begin{proof}  
By \Cref{lem:g-vec-projection}, the injectivity of $\ghminmap{T}$
follows from the injectivity of $\gminmap{T}$, 
from \Cref{cor:ggen-inj}.
\end{proof}

We can now find the analogue of \Cref{thm:gen-basis}.

\begin{proposition}\label{Thm:GrassGeneric}
If $N_\compC\in\compC_{\gen}$, for each $\compC\in\IrrCM(\lambda)$,
then $\{\Psi_{N_\compC}\colon \compC\in \IrrCM(\lambda)\}$ is a basis of $\CC[\Gr(k, n)]_{\lambda}$. 
\end{proposition}

\begin{proof}
Following \cite[Prop 9.4]{JKS1}, if $M\in \CM C$ has $[M]= \lambda$, 
then $\Psi_M\in\CC[\Gr(k, n)]_\lambda$.
In the cluster or network chart associated to $T$, the leading exponent of $\Psi_{N_\compC}$, 
is $[T,N_\hC]$.
These exponents are distinct, by \Cref{prop:ghgen-inj}.
Hence, by \Cref{Lem:uleading-indep}, the $\Psi_{N_\hC}$ are linearly independent.
The number of components $|\IrrCM(\lambda)|$ equals $\dim \CC[\Gr(k, n)]_\lambda$,
by \Cref{lem:num-components},
so we have a basis as claimed.
\end{proof}

Since
\[\CC[\Gr(k, n)]_r=\bigoplus_{\rkk\lambda=r} \CC[\Gr(k, n)]_{\lambda}
\quadand \CC[\Gr(k, n)]=\bigoplus_{r\geq 0} \CC[\Gr(k, n)]_{r},
\]
we obtain bases 
$\{\Psi_{N_\compC}\colon \compC\in \IrrCM(r)\}$ for $\CC[\Gr(k, n)]_{r}$
and $\{\Psi_{N_\compC}\colon \compC\in \IrrCM\}$ for $\CC[\Gr(k, n)]$,
simply by maintaining the condition $N_\compC\in\compC_{\gen}$.

\begin{remark}\label{rem:semican-too}
If we choose $N_\compC$ to be the lift of $M_\hC$ from \Cref{rem:sem-can-basis}
to $\CM(r)$, for any $\hC$ with $\dim \soc M_\hC\leq r$, 
then the basis $\{\Psi_{N_\compC}\colon  \compC\in \IrrCM(r)\}$
is the dual semicanonical basis of $\CC[\Gr(k, n)]_r$ 
and is the lift of the appropriate part of the dual semicanonical  basis
of $\CC[\unirad]$
(cf. \cite[\S9]{GLS08}).
\end{remark}

We also have the analogue of \Cref{thm:subQComp}.

\begin{theorem}\label{thm:subQComp-too}  
Every $g$-vector is generic, 
that is, 
\begin{equation}\label{eq:gen-g-vec} 
 \gvm{T} = \{ \ghminvec{\compC}{T} \st \compC\in \IrrCM \}.
\end{equation}
Consequently, 

\begin{equation}\label{eq:subQComp-too} 
\CM(\lambda) = \bigcup_{\compC\in \IrrCM(\lambda)} \compC_{\gen} .
\end{equation}
If $\compC_{1}, \dots, \compC_{t}\in\IrrCM(\lambda)$
are the components containing~$M\in \CM(\lambda)$, then 
\begin{equation}\label{eq:class-too}
  [T, M]=\ghminvec{\compC_j}{T},
\end{equation}
for some (unique) $j$ with $\ghminvec{\compC_j}{T}<\ghminvec{\compC_\ell}{T}$, for $\ell\neq j$,
where the partial order here is as in \Cref{def:part-ord}.
\end{theorem}

\begin{proof} 
Using \Cref{lem:g-vec-projection}, the result follows directly from \Cref{thm:subQComp}  
and \Cref{cor:comp}.
Alternatively, it follows in the same way as these do, 
from \Cref{Thm:GrassGeneric}, \Cref{prop:ghgen-inj} and  \Cref{Lem:uniqueleading}.
\end{proof}

\begin{corollary}\label{cor:bottom-line}
If $M\in \compC$, for just one component $\compC$,
then $[T, M]=\ghminvec{\compC}{T}$.
In other words, $\compC_\gen$ always contains the 
complement (in $\compC$) of all the other components.
Furthermore, simply choosing, for each $\gvec\in \gvm{T}$, a module~$M$ with $[T,M]=\gvec$, 
ensures that the corresponding $\cluschar{M}$ form a basis of $\CC[\Gr(k, n)]$.
\end{corollary}

\begin{proof}
This follows immediately from \Cref{Thm:GrassGeneric} and \Cref{thm:subQComp-too}. 
\end{proof}

\begin{remark}
If $N\in\CM(\lambda)$ is rigid, then
$\pifun N$ is rigid for any choice of $\vstar$.
Hence by \Cref{rem:rigid-gvec},
$\pifun N$ is generic in the component $\hC$ with $\gminvec{\hC}{T}=[\pifun T, \pifun N]$,
for any~$T$.
In other words, $N$ is generic in the component $\compC$ 
with $\ghminvec{\compC}{T}=[T, N]$, for any~$T$, independent of the choice of~$\vstar$.
In particular, $N$ is in $\compC$ and no other component of $\CM(\lambda)$,
independent of~$\vstar$.

Even if the conjecture from \Cref{rem:Cgen} (that the components $\compC$ 
are independent of~$\vstar$) is not true,
then one would still conjecture that every component contains a set of modules
that are in no other component for any ~$\vstar$.
Furthermore, this set should include some $M$ 
for which $\Psi_M$ is the dual semicanonical basis element for that component.

This would prove that the dual semicanonical basis is invariant 
under cycling the indices of Pl\"ucker coordinates.
Note that, by \cite[Thm 1(iv)]{Lam}, the dual canonical basis has this property.
\end{remark}

\subsection{Newton--Okounkov and $g$-vector monoids}\label{subsec:NO-gvec-mons}

Recall, from \Cref{cor:partfun-map}, that there is an isomorphism of cluster algebras
\[
  \Xi^T\colon \CC[\Gr(k,n)]\to \clualgA_T \colon \Psi_M\mapsto \ptfn^T_{M}
\] 
where $\clualgA_T\subset \CC[\Grot(\CM\algA)]$ is as in \Cref{def:clualgA}.
Consequently, \Cref{Thm:GrassGeneric} immediately gives the following. 

\begin{proposition}\label{cor:GrassGeneric}
If $N_\compC\in\compC_{\gen}$, for each $\compC\in\IrrCM$,
then $\{\ptfn^T_{N_\compC} \st  \compC\in \IrrCM\}$ 
is a basis for $\clualgA_T$.
\end{proposition}

Recall also, from \Cref{Def:noc-etc}, that we have two monoids in $\Grot(\CM\algA)$, namely
\begin{align*}
 \nom{T} &= \{ \Val{T}(f) \st f \in \clualgA_T \setminus 0 \}, \\
 \gvm{T} &= \{ [T,M] \st M\in \CM\algC \},
\end{align*}
together with their cones in $\Grot(\CM\algA)\tensR$.
Note that $\nom{T}$ is essentially the NO-monoid of $\CC[\Gr(k,n)]$,
studied in \cite[\S17.3]{RW}
(see \Cref{rem:noc=gvc} for a more precise statement).

\begin{theorem} \label{Thm:NOcone} 
The Newton--Okounkov and $g$-vector monoids coincide. More precisely,
\begin{equation}\label{eq:gvecs-r-generic}
  \nom{T} = \{\ghminvec{\compC}{T} \st \compC\in \IrrCM \} = \gvm{T}.
\end{equation}
Consequently, we also have $\noc{T}= \gvc{T}$.
\end{theorem}

\begin{proof}
The second equality is \eqref{eq:gen-g-vec} in \Cref{thm:subQComp-too}.
For the first, we reuse the main argument of that proof (cf.~\Cref{thm:subQComp}).
That is, by \Cref{cor:GrassGeneric}, $\clualgA_T$ has a basis consisting of 
$\ptfn^T_{N_\compC}$, for $\compC\in \IrrCM$,
with leading exponents $[T, N_\compC] = \ghminvec{\compC}{T}$, 
which are distinct by \Cref{prop:ghgen-inj}. 
By \Cref{Lem:uniqueleading}, the leading exponent $\Val{T}(f)$ 
of any $f\in\clualgA_T$ is the leading exponent of a basis element, 
which gives the second equality.
The final equality follows by applying $\overline{\Rspan}$
to the monoids to get the cones.
\end{proof}

\begin{remark}
As the proof shows, \Cref{Lem:uniqueleading} enables the NO-cone to be understood using any 
basis with distinct leading exponents.
For example, in the next section we can use the standard monomial basis 
in the case of the rectangles cluster.
\end{remark}

\section{$\kapvec(T, M)$ and Gelfand--Tsetlin polytopes} \label{Sec:10}
\newcommand{\profile}[1]{\operatorname{pr}_{#1}}
\newcommand{\level}{level}

Let $\grid=\grid(k,n)$ be the $k\times (n-k)$ grid, as in \eqref{eq:grid}.
A \emph{Gelfand--Tsetlin pattern} (cf.~\cite[Def.~16.1]{RW})
for the $\GL{n}$ representation $\CC[\Gr(k, n)]_r$ can be defined as 
(or simplified to) an integer vector $u=(u_{ij})\in \ZZ^\grid$ satisfying
\begin{equation}\label{eq:GT-pattern}
\begin{aligned}
 u_{1 1} & \geq 0, \quad
 u_{k (n-k)} \leq r , \\
 u_{i j} &\geq u_{(i-1) j} \quad\text{for $2\leq i\leq k$ and $1\leq j\leq n-k$,} \\
 u_{i j} &\geq u_{i (j-1)} \quad\text{for $1\leq i \leq k$ and $2 \leq j\leq n-k$.}
\end{aligned}
\end{equation}
The \emph{GT-polytope} at \level~$r$ is the collection of 
all solutions to \eqref{eq:GT-pattern} in $\RR^\grid$.
Thus GT-patterns are the integer points of the GT-polytope.
Moreover, this polytope is integral, that is, the convex hull of its integer points
(see e.g.~\cite{Ax} for a modern account).
Clearly the GT-polytope is the level $r$ slice of a cone in $\RR\oplus\RR^\grid$,
where $r$ is the first coordinate.

Following \cite[Rem.~10.12, Lem.~16.2]{RW}, we can add $u_{ij}$ along diagonals
to get new coordinates $v_{ij}$ related by a unimodular change of variables
\begin{equation}\label{eq:u-v-cov}
u_{ij} = v_{ij}-v_{(i-1)(j-1)},
\end{equation} 
with the convention that $v_{ij}=0$ if $i=0$ or $j=0$.

\begin{definition}\label{Def:GTpattern}
A \emph{cumulative GT-pattern at \level~$r$} is an integer vector $v\in \ZZ^\grid$
that satisfies the following.
\begin{equation}\label{eq:CGT-pattern}
\begin{aligned}
 v_{11} &\geq 0, \qquad 
 v_{k(n-k)}-v_{(k-1)(n-k-1)}\leq r, \\
 v_{i j}-v_{(i-1)(j-1)} &\geq v_{(i-1) j}-v_{(i-2)(j-1)} \quad\text{for $2\leq i\leq k$ and $1\leq j\leq n-k$,}\\
 v_{i j}-v_{(i-1)(j-1)} &\geq v_{i (j-1)}-v_{(i-1)(j-2)} \quad\text{for $1\leq i \leq k$ and $2 \leq j\leq n-k$.}
\end{aligned}
\end{equation}
Note that here $k$ and $n-k$ are swapped relative to \cite[Rem.~10.12]{RW}
(cf. \Cref{rem:compare-RW}).

The (cumulative) \emph{GT-monoid} and (cumulative) \emph{GT-cone} are 
\begin{align*}
 \gtm &= \{ (r,v)\in \ZZ\oplus\ZZ^\grid \st \text{$v$ satisfies \eqref{eq:CGT-pattern}} \}, \\
 \gtc &= \{ (r,v)\in \RR\oplus\RR^\grid \st \text{$v$ satisfies \eqref{eq:CGT-pattern}} \}.
\end{align*} 
Thus cumulative GT-patterns are the integral points of the rational polyhedral cone $\gtc$.
\end{definition}

Recall also, from \Cref{def:rect-clus}, the rectangles cluster tilting object 
\[
T=\rectCTO=T_\vempty\oplus \bigoplus_{ij\in\grid} T_{ij}.
\]
Note that $T_\vempty=\modP$, so $\kappa(T_\vempty, M)=0$, for any $M\in\CM\algC$, by \eqref{eq:KP*M}.
Hence we can consider that $\kapvec(T, M)\in \ZZ^\grid$, with
\[
\kapvec(T, M)_{ij}=\kappa(T_{ij}, M).
\]

\begin{lemma}\label{Lem:kappacump}
If $M$ is a rank 1 module, then $\kapvec(T, M)$ is a cumulative GT-pattern
at \level~1.
Furthermore, every such pattern occurs uniquely in this way.
\end{lemma}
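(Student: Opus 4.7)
The plan is to reduce to the combinatorial identity $\kappa(T_{ij}, M_I) = \maxdiag(\lambda_{K_{ij}} \setminus \lambda_I)$ recalled from \cite[Lem.~7.1]{JKS2}, and then to check by direct combinatorics that $v_{ij} := \kappa(T_{ij}, M_I)$ satisfies \eqref{eq:CGT-pattern}. By \Cref{def:rect-clus}, the Pl\"ucker label $K_{ij} = [1, k-i] \cup [k-i+j+1, k+j]$ corresponds, under the standard partition recipe, to the rectangular Young diagram $\lambda_{K_{ij}}$ with $i$ rows of length $j$, sitting at the top-left of the ambient $k \times (n-k)$ rectangle; in particular, $\lambda_{K_{(i-1)(j-1)}}$ is the concentric sub-rectangle with one fewer row and one fewer column.

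First I would verify the edge conditions $v_{11}\geq 0$ and $v_{k(n-k)} - v_{(k-1)(n-k-1)} \leq 1$: the former holds because $\kappa$ is a non-negative integer, and the latter because enlarging the bounding rectangle from $(k-1)\times(n-k-1)$ to $k\times(n-k)$ adds at most one cell to any given diagonal, so $\maxdiag$ increases by at most $1$. The same geometric principle, applied to any pair $\lambda_{K_{ij}} \supset \lambda_{K_{(i-1)(j-1)}}$, shows that $u_{ij} := v_{ij} - v_{(i-1)(j-1)} \in \{0,1\}$. The monotonicity inequalities in \eqref{eq:CGT-pattern}, which translate to $u_{ij} \geq u_{(i-1)j}$ and $u_{ij} \geq u_{i(j-1)}$, then follow by observing that a maximal diagonal witnessing $u_{(i-1)j}=1$ or $u_{i(j-1)}=1$ lies inside $\lambda_{K_{ij}} \setminus \lambda_I$ and contributes the same increment there.

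For the ``unique occurrence'' assertion, I would argue by cardinality. The unimodular substitution $u_{ij} = v_{ij} - v_{(i-1)(j-1)}$ identifies cumulative GT-$1$-patterns with ordinary GT-$1$-patterns, namely monotone $\{0,1\}$-valued functions on $\grid$, and these are in bijection with Young diagrams fitting inside $\grid$ (the zero-set of $u$), hence with $\labsubset{n}{k}$; so both sides of the alleged bijection $I \mapsto \kappadv(\rectCTO, M_I)$ have size $\binom{n}{k}$. It therefore suffices to prove the map is injective, which I would do by recovering $\lambda_I$ from $\kappadv(\rectCTO, M_I)$ via the identification $u_{ab} = 0 \iff (a,b) \in \lambda_I$: this amounts to checking that the discrete jump $v_{ij}-v_{(i-1)(j-1)}$ registers precisely whether the newly added corner cell $(i,j) \in \lambda_{K_{ij}}$ lies outside $\lambda_I$.

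The main obstacle is this last combinatorial identification, for which one must verify that an optimal diagonal in $\lambda_{K_{ij}} \setminus \lambda_I$ can always be chosen to pass through the corner cell $(i,j)$ when $(i,j) \notin \lambda_I$, and otherwise coincides with an optimal diagonal in $\lambda_{K_{(i-1)(j-1)}} \setminus \lambda_I$; the boundary cases $i=1$ or $j=1$ require separate attention. An alternative route, once $\kappadv(\rectCTO, M_I)$ is known to be a cumulative GT-$1$-pattern, is to appeal to the analogous computation in the rectangles seed in \cite[Thm.~15.1]{RW}, which establishes the same bijection for the valuations $\val{G}(\minor{I})$ in the Newton--Okounkov setting and so fixes the identification combinatorially.
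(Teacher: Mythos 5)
Your proposal is correct and rests on the same two ingredients as the paper's own argument: the formula $\kappa(M_J,M_I)=\maxdiag(\lambda_J\setminus\lambda_I)$ from \cite[Lem.~7.1]{JKS2} (see \eqref{eq:kappa-maxdiag}) and the elementary combinatorics of the rectangles $\lambda_{K_{ij}}$ against the profile of $I$. The difference is organisational. The paper proves the single identity $\kappadv(\rectCTO,M_I)=v(I)$, where $u(I)$ is the $\{0,1\}$-pattern recording which boxes of the $k\times(n-k)$ rectangle lie above the profile of $I$ and $v(I)$ is its cumulative version; since $I\mapsto u(I)$ is visibly a bijection between $\labsubset{n}{k}$ and GT-$1$-patterns, both the membership claim and the bijection follow at once. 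You instead verify the inequalities \eqref{eq:CGT-pattern} directly from monotonicity of $\maxdiag$ and then get the bijection from injectivity plus a count of both sides; but notice that your injectivity step -- recovering $\lambda_I$ from the jumps $u_{ab}=v_{ab}-v_{(a-1)(b-1)}$ via $u_{ab}=0\iff(a,b)\in\lambda_I$ -- is exactly the identity $\kappadv(\rectCTO,M_I)=v(I)$, so once it is established the separate inequality checks and the cardinality argument become redundant. The ``main obstacle'' you flag is precisely the observation the paper asserts as elementary, and it does hold: because $\lambda_I$ is a Young diagram, it meets each diagonal of $\lambda_{K_{ij}}$ in an initial segment, and one checks that the count of boxes of $\lambda_{K_{ij}}\setminus\lambda_I$ on a diagonal is weakly maximised on the diagonal through the corner box $(i,j)$ (if $(i,j)\in\lambda_I$ the whole rectangle lies in $\lambda_I$ and the count is $0$), giving $\maxdiag(\lambda_{K_{ij}}\setminus\lambda_I)=v(I)_{ij}$ as in \eqref{eq:maxdiag-cump} of \Cref{rem:max-diag}. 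Your fallback via \cite[Thm.~15.1]{RW} would also work logically, but it imports exactly the Rietsch--Williams computation that this lemma is meant to keep independent, so the self-contained diagram argument is preferable.
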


\begin{proof}
Every rank~1 module has the (unique) form $M=M_I$ for some $k$-set $I\subset\labset{n}$.
On the other hand, there is a clear bijection $I\mapsto u(I)$ between such $k$-sets and GT-patterns
at \level~1, determined by the boundary `profile' $\profile{I}$ between $0$'s and $1$'s in the pattern, 
as illustrated in \Cref{fig:russian}.

Note that here we orient $(i,j)$-coordinates with the $i$-axis north-west and the $j$-axis north-east.
We then write $u_{ij}$ or $v_{ij}$ in the unit box whose top corner is at $(i,j)$, 
so that the profile runs from $(k,0)$ to $(0,n-k)$, with $I$ labelling the south-east steps.

If $v(I)$ is the cumulative GT-pattern corresponding to $u(I)$ under \eqref{eq:u-v-cov}, 
then, to complete the proof, 
we observe that $\kappa(T_{ij}, M_I)=v(I)_{ij}$.
More precisely, either
\begin{itemize}
\item[(a)] 
the $ij$-box is below the profile $\profile{I}$ and $\kappa(T_{ij}, M_I)=0=v(I)_{ij}$, or
\item[(b)] 
the $ij$-box is above $\profile{I}$ and $\kappa(T_{ij}, M_I)$ 
is the vertical distance from $\profile{I}$ to $(i, j)$, 
which is precisely $v(I)_{ij}$. \qedhere
\end{itemize}
\end{proof}

\begin{figure}[h]
\begin{tikzpicture} [scale=0.4,
upbdry/.style={thick, gray},
lowbdry/.style={thick, gray},
boxes/.style={thick, blue},
ridge/.style={very thick, purple}]

\begin{scope} [shift={(-9,0.5)},rotate=135]
\draw [boxes] (0,-1)--(3,-1) (0,-2)--(1,-2) (1,0)--(1,-2) (2,0)--(2,-2);
\draw [upbdry] (0,-5)--(4,-5)--(4,0);
\draw [lowbdry] (0,-5)--(0,0)--(4,0);
\draw [ridge] (4,0)--(3,0)--++(0,-2)--++(-2,0)--++(0,-1)--++(-1,0)--(0,-5);
\draw (3.5,0) node [above right=-3pt] {\tiny 1};
\draw (2.5,-2) node [above right=-3pt] {\tiny 4};
\draw (1.5,-2) node [above right=-3pt] {\tiny 5};
\draw (0.5,-3) node [above right=-3pt] {\tiny 7};
\end{scope}

\begin{scope} [shift={(0,0)}, rotate=135]
\foreach \a/\b in {1/1, 1/2, 1/3, 2/1, 2/2, 2/3, 3/1}
\draw (\b,-\a) node {\tiny $0$}; 
\foreach \a/\b in {1/4, 2/4, 3/2, 3/3, 3/4, 4/1, 4/2,  4/3, 4/4, 5/1, 5/2, 5/3, 5/4}
\draw (\b,-\a) node {\tiny $1$}; 
\draw [upbdry] (0.5,-5.5)--++(4,0)--++(0,5);
\draw [lowbdry] (0.5,-5.5)--++(0,5)--++(4,0);
\draw [ridge] (0.5,-5.5)--++(0,2)--++(1,0)--++(0,1)--++(2,0)--++(0,2)--++(1,0);
\draw [-latex] (2,0) to node [below left] {\small $i$} (3.5,0);
\draw [-latex] (0,-2) to node [below right] {\small $j$} (0,-3.5);
\end{scope}

\begin{scope} [shift={(9,0)}, rotate=135]
\foreach \a/\b in {1/1, 1/2, 1/3, 2/1, 2/2, 2/3, 3/1}
\draw (\b,-\a) node {\tiny $0$}; 
\foreach \a/\b/\g in {1/4/1, 2/4/1, 3/2/1, 3/3/1, 3/4/1, 
   4/1/1, 4/2/1,  4/3/2, 4/4/2, 5/1/1, 5/2/2, 5/3/2, 5/4/3}
\draw (\b,-\a) node {\tiny $\g$}; 
\draw [upbdry] (0.5,-5.5)--++(4,0)--++(0,5);
\draw [lowbdry] (0.5,-5.5)--++(0,5)--++(4,0);
\draw [ridge] (0.5,-5.5)--++(0,2)--++(1,0)--++(0,1)--++(2,0)--++(0,2)--++(1,0);
\end{scope}

\end{tikzpicture}
\caption{The profile $\profile{I}$, Young diagram $\lambda_{I}$,
GT-pattern $u(I)$ and cumulative GT-pattern $v(I)$;
for $I=1457$, $(k,n)=(4,9)$.}
\label{fig:russian}
\end{figure}

\begin{remark}\label{rem:max-diag}
The profile $\profile{I}$ is also the `profile' of $M_I$,
in the sense of \cite[\S6]{JKS1},
while the Young diagram $\lambda_{I}$ below it is a picture of $\pifun M_I$, 
as in \S\ref{subsec:RTO}.
In particular, $\pifun T_{ij}$ corresponds to the rectangular Young diagram $\lambda_{ij}$, 
whose top corner is at~$(i,j)$.
The calculation at the end of the proof of \Cref{Lem:kappacump} 
amounts to the elementary observation that 
\begin{equation}\label{eq:maxdiag-cump}
  \maxdiag(\lambda_{ij}\setminus \lambda_I)=v(I)_{ij}
\end{equation}
given that we know more generally, from \cite[Lem.~7.1]{JKS2}, that
\begin{equation}\label{eq:kappa-maxdiag}
   \kappa(M_J, M_I)=\maxdiag(\lambda_J\setminus \lambda_I).
\end{equation}
Here $\maxdiag(\lambda_J\setminus \lambda_I)$, as in \cite[Def.~14.3]{RW}, is 
the maximum height of the Young diagram $\lambda_J$ above $\lambda_I$,
in Russian orientation as in \Cref{fig:russian}.
In fact, \eqref{eq:maxdiag-cump} is already known 
(indirectly, at least) from \cite[Lem.~14.2, Prop.~14.4]{RW}.
\end{remark}

Recall, from \Cref{Def:noc-etc}, the $g$-vector monoid
\[
 \gvm{T}= \{[T,M] \st M\in\CM\algC\} \subset \Grot(\CM\algA).
\]
Recall also, from \eqref{eq:wt-hat}, the isomorphism
\[
 \wthat\colon \Grot(\CM\algA) \to \ZZ\oplus \Nstar\colon [Z]\mapsto (\rkk[Z],\wt[Z])
\]
and, from \eqref{eq:wthatTM}, that $\wthat[T,M]=(\rnk{C} M,\kapvec(T,M))$.

\goodbreak\medskip
When $T=\rectCTO$, we can identify $\Nstar=\ZZ^\grid$ and we have the following.

\begin{theorem} \label{Prop:kappaGT}
$\wthat\gvm{\rectCTO} = \gtm$. 
Consequently, $\gvm{\rectCTO}$ consists of the integral points of $\gvc{\rectCTO}$,
which is a rational polyhedral cone.
\end{theorem}

\begin{proof}
The result follows from \Cref{Lem:kappacump}, provided we can show that both monoids
are generated by their slices at rank/\level~1.

For $\gtm$, this is the (known) integer decomposition property of GT-patterns \cite{Ax},
which is preserved by the unimodular change of variables \eqref{eq:u-v-cov},
since that doesn't change the \level~$r$.
The proof is straightforward: if $u$ is a GT-pattern and $u'$ is the GT-pattern at \level~1
with the same support (i.e.~non-zero entries), then $u-u'$ is a GT-pattern
of smaller \level~and so, inductively, $u$ is a sum of \level~1 GT-patterns.

The canonical sums $u=u(I_1)+\cdots+u(I_r)$ that arise in this way correspond 
precisely to semistandard Young tableau or to standard monomials $\minor{I_1}\cdots\minor{I_r}$.
This is one way to understand the well-known fact (cf.~\cite[\S7.10]{Stan})
that GT-patterns index a basis of $\CC[\Gr(k, n)]$.

For $\gvm{T} $, note that $[T,M\oplus  N]=[T,M]+[T,N]$, so it suffices to prove that
for any module $M$, we have $[T, M]=[T, N]$,
for a \emph{standard module} $N= M_{I_1}\oplus\cdots\oplus M_{I_r}$, 
whose cluster character $\Psi_N$ is a standard monomial $\minor{I_1}\cdots\minor{I_r}$.

Thus the cluster characters $\Psi_N$ of standard modules form a basis of $\CC[\Gr(k, n)]$,
whose leading exponents 
(in the cluster chart $\Upsilon^{T}$ or the network chart $\Xi^T$) 
are $[T, N]$.
When $T=\rectCTO$, these leading exponents correspond to cumulative GT patterns,
by \Cref{Lem:kappacump}, that is, 
\[
  \wthat[T,N]=(\rnk{C} N,\kapvec(T,N)) = 
  (r, v(I_1)+\cdots+v(I_r)).
\]
These exponents are distinct, because the sum $v=v(I_1)+\cdots+v(I_r)$ is canonical, 
in the sense, explained above, that $I_1,\ldots,I_r$ (and thus $N$) can be recovered from $v$.

Hence, by \Cref{Lem:uniqueleading}, the leading exponent $[T, M]$ of $\Psi_M$,
for any $M$, must coincide with the leading exponent $[T, N]$ of $\Psi_N$, 
for some standard module $N$, as required.

For the second part, observe that $\gtm$ is the set of integral points of $\gtc$,
which is defined by finitely many integral linear inequalities \eqref{eq:CGT-pattern}.
Hence this is preserved by the linear isomorphism $\wthat$.
\end{proof}

In the next section (\Cref{rem:rat-pol-cone2}), we will use \Cref{Prop:kappaGT} 
as the base of an inductive proof that $\gvm{T}$ and $\gvc{T}$
always have these properties.

\section{Mutation of $\kapvec(T, M)$}\label{Sec:11}
\newcommand{\presmap}{\rho}

For just this section, we return to considering modules in $\CM\algB$, 
for a general algebra~$\algB$ as in \Cref{Sec:3},
and the algebra $\algA=\End(T)\op$, for a 
cluster tilting object $T\moreq\bigoplus_{i\in Q_0} T_i$ in $\GP\algB$.
Recall, from \Cref{def:KTM} \emph{et seq}, the defining short exact sequence
for $\funK(N,M)$, namely
\[
  0\lra \Hom(N,M) \lraa{\epsemb{M}_*} \Hom(N,\funJ M) \lra \funK(N,M) \lra 0,
\]
for any $M,N\in\CM\algB$. 
Furthermore $\kappa(N,M)=\dim \funK(N,M)$, so that 
the $\algA$-module $\funK(T,M)$ has class/dimension vector
in $\Grot(\fd\algA)\isom \ZZ^{Q_0}$, given by 
\[
  \kapvec(T,M) = \bigl( \kappa(T_i,M) \bigr)_{i\in Q_0}.
\]

\begin{lemma} \label{Lem:factsonkappa} 
Let $0\to X\to Y\to Z\to 0$ be a short exact sequence in $\CM\algB$ and $M\in \CM\algB$.
\begin{enumerate}
\item\label{itm:fackap1}
  If $\Ext^1_\algB(M, Y)=0$ or $\Ext^1_\algB(M, X)=0$, then
\[
  \kappa(M, Y)=\kappa(M, X)+\kappa(M, Z)+\dim \Ext^1_\algB(M, X).
\]
\item\label{itm:fackap2}
  If $\Ext^1_\algB(Y, M)=0$ or $\Ext^1_\algB(Z, M)=0$, then
\[
  \kappa(Y, M)=\kappa(X, M)+\kappa(Z, M)+\dim \Ext^1_\algB(Z, M).
\]
\end{enumerate}
\end{lemma}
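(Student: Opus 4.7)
The plan is to exploit two structural properties of the functor $\funJ$: its exactness on $\CM\algB$, and the injectivity of $\funJ W$ in $\CM\algB$ for any $W$. The first holds because $\funJ W=\Hom_\ring(e_\vstar\algB, e_\vstar W)$, $e_\vstar$ is exact as a functor to $\ring$-modules, and $e_\vstar\algB$ is $\ring$-free; so the given short exact sequence induces
\[
 \ShExSeq{\funJ X}{\funJ Y}{\funJ Z}
\]
in $\CM\algB$. The second holds by \eqref{eq:isomPJ-rkB}, which gives $\funJ W\isom\modJ^{\rnk{\algB} W}$ with $\modJ$ injective in $\CM\algB$. In particular, $\Ext^1(M,\funJ W)=0$ and $\Hom(-,\funJ M)$ is exact on $\CM\algB$.

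For part~\itmref{itm:fackap1}, I would apply $\Hom(M,-)$ to both short exact sequences. The $\funJ$-sequence yields a short exact sequence by the previous paragraph, while the original sequence yields
\[
0\to \Hom(M,X)\to \Hom(M,Y)\to \Hom(M,Z)\lraa{\alpha} \Ext^1(M,X)\to \Ext^1(M,Y)\to \cdots.
\]
Combining these with the defining identity $\kappa(M,W)=\dim\Hom(M,\funJ W)-\dim\Hom(M,W)$ from \Cref{def:KTM}, a direct dimension count gives the universal formula
\[
\kappa(M,Y)=\kappa(M,X)+\kappa(M,Z)+\dim\img\alpha.
\]
Under the hypothesis $\Ext^1(M,X)=0$ the image of $\alpha$ is zero, so $\dim\img\alpha=0=\dim\Ext^1(M,X)$; under the hypothesis $\Ext^1(M,Y)=0$ the map $\alpha$ is surjective and $\dim\img\alpha=\dim\Ext^1(M,X)$. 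Either way the stated identity falls out.

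Part~\itmref{itm:fackap2} is strictly dual: I would apply $\Hom(-,M)$ to the original sequence, and $\Hom(-,\funJ M)$ to obtain a short exact sequence by the injectivity of $\funJ M$ in $\CM\algB$. The role of $\alpha$ is played by the contravariant connecting map $\Hom(X,M)\to\Ext^1(Z,M)$, whose image vanishes when $\Ext^1(Z,M)=0$ and is all of $\Ext^1(Z,M)$ when $\Ext^1(Y,M)=0$, so the same bookkeeping delivers the formula. The only point requiring any real care is the exactness of $\funJ$ together with the injectivity of $\funJ M$ in $\CM\algB$; once these are in hand, the result is a routine dimension count with no substantive obstacle.
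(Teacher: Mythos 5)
Your route is essentially the paper's: apply $\Hom(M,-)$ (resp.\ $\Hom(-,M)$) to the given sequence and to its image under $\funJ$, using that $\funJ$ is exact on $\CM\algB$ and that $\funJ W\isom\modJ^{\rnk{\algB}W}$ is injective in $\CM\algB$, and then compare with the defining sequences for $\funK$. However, the central bookkeeping step as you have written it does not work: the identity $\kappa(M,W)=\dim\Hom(M,\funJ W)-\dim\Hom(M,W)$ is not meaningful, because for $M,W\in\CM\algB$ both $\Hom(M,W)$ and $\Hom(M,\funJ W)$ are free $\ring$-modules of positive rank, hence infinite-dimensional over $\CC$; by \Cref{def:KTM}, $\kappa(M,W)$ is the dimension of the cokernel $\funK(M,W)$, not a difference of dimensions. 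So the ``direct dimension count'' is precisely the point that still requires an argument, and it is exactly the subtlety the paper flags before giving its explicit workaround.

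The repair is the one the paper uses. Let $f\colon\Hom(M,Y)\to\Hom(M,Z)$ be the induced map and compare the short exact sequence
\[
\ShExSeq{\Hom(M,X)}{\Hom(M,Y)}{\img f}
\]
with the (always exact, by injectivity of $\funJ X$ in $\CM\algB$) sequence
\[
\ShExSeq{\Hom(M,\funJ X)}{\Hom(M,\funJ Y)}{\Hom(M,\funJ Z)}.
\]
All vertical maps are injective, so the Snake Lemma yields a short exact sequence of \emph{finite-dimensional} spaces
$0\to\funK(M,X)\to\funK(M,Y)\to\Hom(M,\funJ Z)/\img f\to 0$, and combining with
$0\to\Hom(M,Z)/\img f\to\Hom(M,\funJ Z)/\img f\to\funK(M,Z)\to 0$
together with $\Hom(M,Z)/\img f\isom\img\alpha$ (where $\alpha$ is your connecting map) gives your formula $\kappa(M,Y)=\kappa(M,X)+\kappa(M,Z)+\dim\img\alpha$; the two hypotheses then identify $\dim\img\alpha$ with $\dim\Ext^1_\algB(M,X)$ exactly as you say (surjectivity of $\alpha$ when $\Ext^1_\algB(M,Y)=0$, vanishing when $\Ext^1_\algB(M,X)=0$). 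Part~\itmref{itm:fackap2} needs the same repair, run contravariantly, using that $\Hom(-,\funJ M)$ is exact on $\CM\algB$. With this substitution for the dimension count, your argument matches the paper's proof.
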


\begin{proof}
\itmref{itm:fackap1}
Applying $\Hom(M, -)$ to the following commutative diagram
\[ \begin{tikzpicture}[xscale=1.6, yscale=1.4]
\draw (0.2,2) node (b0) {$0$};
\draw (1,2) node (b1) {$X$};
\draw (2,2) node (b2) {$Y$};
\draw (3,2) node (b3) {$ Z$};
\draw (3.8,2) node (b4) {$0$};
\draw (0.2,1) node (a0) {$0$};
\draw (1,1) node (a1) {$\funJ X$};
\draw (2,1) node (a2) {$\funJ Y$};
\draw (3,1) node (a3) {$\funJ Z$};
\draw (3.8,1) node (a4) {$0$};
\foreach \t/\h in {b0/b1, b1/b2, b2/b3, b3/b4, a0/a1, a1/a2, a2/a3, a3/a4} 
  \draw[cdarr] (\t) to (\h);
\foreach \t/\h/\lab in {b1/a1/$\epsemb{X}$, b2/a2/$\epsemb{Y}$, b3/a3/$\epsemb{Z}$} 
  \draw[cdarr] (\t) to node [right] {\small \lab}(\h);
\end{tikzpicture} \]
and noting that $\funJ X$ is injective in $\CM\algB$, 
we obtain a commutative diagram in which the first two rows and first three columns
are exact sequences,
\[ 
\begin{tikzpicture}[xscale=3.2, yscale=1.25]
\draw (0.3,2) node (b0) {$0$};
\draw (1,2) node (b1) {$\Hom(M, X)$};
\draw (2,2) node (b2) {$\Hom(M, Y)$};
\draw (3,2) node (b3) {$ \Hom(M, Z)$};
\draw (4,2) node (b4) {$\Ext^1_\algB(M, X)$};
\draw (4.7,2) node (b5) {$0$};
\draw (0.3,1) node (a0) {$0$};
\draw (1,1) node (a1) {$\Hom(M, \funJ X)$};
\draw (2,1) node (a2) {$\Hom(M, \funJ Y)$};
\draw (3,1) node (a3) {$\Hom(M, \funJ Z)$};
\draw (4,1) node (a4) {$0$};
\draw (1,0) node (c1) {$\funK(M, X)$};
\draw (2,0) node (c2) {$\funK(M, Y)$};
\draw (3,0) node (c3) {$\funK(M, Z)$};
\draw (1,2.8) node (x1) {$0$};
\draw (2,2.8) node (x2) {$0$};
\draw (3,2.8) node (x3) {$0$};
\draw (1,-0.8) node (y1) {$0$};
\draw (2,-0.8) node (y2) {$0$};
\draw (3,-0.8) node (y3) {$0$};
\foreach \t/\h in {b0/b1, b1/b2, b3/b4, b4/b5, a0/a1, a1/a2, a2/a3, a3/a4, c1/c2, c2/c3, 
  b1/a1, a1/c1, b2/a2, a2/c2, b3/a3, b4/a4, a3/c3, x1/b1, x2/b2, x3/b3, c1/y1, c2/y2, c3/y3} 
  \draw[cdarr] (\t) to (\h);
\draw[cdarr] (b2) to node [above] {\small $f$}  (b3);
\end{tikzpicture}
\]
provided either $\Ext^1_\algB(M, Y)=0$ or $\Ext^1_\algB(M, X)=0$.
Note that the $\Hom$ spaces here are infinite dimensional,
but we can still use a spectral sequence argument to
deduce that the third row and fourth column have the same cohomology,
so, in particular,
\[
  \kappa(M, Y) - \kappa(M, X) - \kappa(M, Z) = \dim \Ext^1_\algB(M, X).
\]
More explicitly, applying the Snake Lemma to the diagram with $\Hom(M, Z)$ replaced by $\img f$,
we obtain a short exact sequence of finite dimensional spaces
\[
  0\lra \funK(M, X) \lra \funK(M, Y) \lra  \frac{\Hom(M, \funJ Z)}{\img f} \lra 0
\]
and can make another one as follows
\[
  0\lra \frac{\Hom(M, Z)}{\img f} \lra \frac{\Hom(M, \funJ Z)}{\img f} \lra  \funK(M, Z) \lra 0.
\]
The result then follows because $\Ext^1_\algB(M, X)\isom \Hom(M, Z) /\img f$.

\itmref{itm:fackap2}
We can apply $\Hom(-, M)$ and $\Hom(-, \funJ M)$ to the original short exact sequence to obtain
a commutative diagram very similar to the one in \itmref{itm:fackap1}, 
\[
\begin{tikzpicture}[xscale=3.2, yscale=1.4]
\draw (0.3,2) node (b0) {$0$};
\draw (1,2) node (b1) {$\Hom(Z, M)$};
\draw (2,2) node (b2) {$\Hom(Y, M)$};
\draw (3,2) node (b3) {$ \Hom(X, M)$};
\draw (4,2) node (b4) {$\Ext^1_\algB(Z, M)$};
\draw (4.7,2) node (b5) {$0$};
\draw (0.3,1) node (a0) {$0$};
\draw (1,1) node (a1) {$\Hom(Z, \funJ M)$};
\draw (2,1) node (a2) {$\Hom(Y, \funJ M)$};
\draw (3,1) node (a3) {$\Hom(X, \funJ M)$};
\draw (4,1) node (a4) {$0$};
\draw (1,0) node (c1) {$\funK(Z, M)$};
\draw (2,0) node (c2) {$\funK(Y, M)$};
\draw (3,0) node (c3) {$\funK(X, M)$};
\draw (1,2.8) node (x1) {$0$};
\draw (2,2.8) node (x2) {$0$};
\draw (3,2.8) node (x3) {$0$};
\draw (1,-0.8) node (y1) {$0$};
\draw (2,-0.8) node (y2) {$0$};
\draw (3,-0.8) node (y3) {$0$};
\foreach \t/\h in {b0/b1, b1/b2, b3/b4, b4/b5, a0/a1, a1/a2, a2/a3, a3/a4, c1/c2, c2/c3, 
  b1/a1, a1/c1, b2/a2, a2/c2, b3/a3, b4/a4, a3/c3, x1/b1, x2/b2, x3/b3, c1/y1, c2/y2, c3/y3} 
  \draw[cdarr] (\t) to (\h);
\draw[cdarr] (b2) to (b3); 
\end{tikzpicture}
\]
if either $\Ext^1_\algB(Y, M)=0$ or $\Ext^1_\algB(Z, M)=0$. 
The result then follows as in \itmref{itm:fackap1}.
\end{proof}

Given a cluster tilting object $T\moreq\bigoplus_j T_j$ in $\GP\algB$,
recall the two sequences \eqref{eq:mut-two}
for a mutable summand $T_i$
\begin{equation}\label{eq:mut-too}
\begin{aligned}
 &\ShExSeq {T_i^*} {E_i} {T_i^{\phantom*}}, \\
 &\ShExSeq {T_i^{\phantom*}} {F_i } {T_i^*}.
\end{aligned}
\end{equation}
In particular, the mutation of $T$ at $i$ is given by
\begin{equation*}\label{eq:mutT}
  \mu_i(T)=(T/T_i)\oplus T_i^*.
\end{equation*}

\begin{lemma}\label{Prop:kappafacts}
We have
\begin{enumerate}
\item\label{itm:kapfac2}
$\kappa(E_i, T_j) = \kappa(T^*_i, T_j) + \kappa(T_i, T_j) = \kappa(F_i, T_j)$, for $j\neq i$.
\item\label{itm:kapfac3}
$\kappa (E_i, T_i) = \kappa(T^*_i, T_i) + \kappa(T_i, T_i) = \kappa(F_i, T_i) - 1$.
\end{enumerate}
\end{lemma}

\begin{proof}
\itmref{itm:kapfac2} follows by applying \Cref{Lem:factsonkappa}\itmref{itm:fackap2}
to the two mutation sequences with $M=T_j$, for $j\neq i$, and using that
$\Ext^1(T_i^*, T_j) = \Ext^1(T_i, T_j) = 0$.

\itmref{itm:kapfac3} follow similarly with $M=T_i$, using that
$\Ext^1(T_i, T_i) =  \Ext^1(F_i, T_i) = 0$ and $\dim\Ext^1(T_i^*, T_i)=1$.
\end{proof}

\begin{lemma} \label{Prop:kappadv}
Writing $T'=\mu_i(T)$, we have 
\begin{enumerate}
\item\label{itm:kapdv1} 
$\kapvec (T, E_i) - [S_i] = \kapvec(T, T_i)+\kapvec(T, T_i^*) =\kapvec(T, F_i) $.
\item\label{itm:kapdv2}
$\kapvec (T', E_i) = \kapvec(T', T_i)+\kapvec(T', T_i^*) =\kapvec(T', F_i) - [S'_i]$.
\end{enumerate}
Here $S_i$ and $S'_i$ are the simple modules at $i$ 
for $\End(T)\op$ and $\End(T')\op$, respectively.
\end{lemma}

\begin{proof}
\itmref{itm:kapdv1} follows by applying \Cref{Lem:factsonkappa}\itmref{itm:fackap1} to the two mutation sequences with $M=T_k$, for all $k$, and using that, for $j\neq i$,
\[
  \Ext^1(T_j ,T_i^*) = \Ext^1(T_j, T_i) = 0,
\]
while $\Ext^1(T_i, T_i) =  \Ext^1(T_i, E_i) = 0$ 
and $\dim\Ext^1(T_i, T_i^*)=1$. 

\itmref{itm:kapdv2} follows similarly, but now using that 
$\Ext^1(T_i^*, T_i^*) =  \Ext^1(T_i^*, F_i) = 0$ 
and $\dim\Ext^1(T_i^*, T_i)=1$,
for the case $M= T_i^*$.
\end{proof}

Recall, from \Cref{Lem:appseq}, that every $M\in \CM\algB$ 
has an exact $\add T$-presentation 
\begin{equation}\label{eq:addTappM}
  \ShExSeq{X}{Y}{M},
\end{equation}
that is, with $X,Y\in\add T$.

\begin{definition}\label{def:gen-pres}
An $\add T$-presentation \eqref{eq:addTappM} is called \emph{generic} if
no summand of $T$ is a summand of both $X$ and $Y$.
\end{definition}

The terminology is justified by the next result.
Recall, from \Cref{thm:subQComp-too}, 
that every $g$-vector $\gvec\in\gvm{T}$ is generic,
that is, $\gamma=\ghminvec{\compC}{T}$ for some (unique) component $\compC$ of $\CM\algC$.

\begin{lemma}\label{lem:gen=gen}
For any $\gvec=\ghminvec{\compC}{T}\in\gvm{T}$, 
the set of $M$ in $\compC$ with $[T,M]=\gvec$ and with a generic $\add T$-presentation
contains a non-empty open subset of $\compC$.
\end{lemma}

\begin{proof}
Since an $\add T$-presentation can be chosen without projective-injectives in the left-hand term,
this amounts to the fact that, if $\compC=\compC(r, \hC)$ as in \eqref{eq:Crc},
then there is an open set of $\pifun M$ in $\hC$ with a 
generic $\add\olT$-presentation.
This is true because, as in the proof of \Cref{lem:cx}, all $\pifun M\in\hCmin$
are cokernels of injective maps $\presmap\colon T''\to T'$ for fixed $T'',T'\in\add\olT$.
But then those $\presmap$ which restrict to an isomorphism on the common summands, 
which thus split off, form an open subset,
giving an open subset of $W$, as in \eqref{eq:Wset},
whose projection onto $\hCmin$ must then contain an open subset of $\hCmin$,
as required.
\end{proof}

\begin{proposition}\label{Thm:kappamain} 
Let $T$ be a cluster tilting object in $\GP\algB$.
If $M\in \CM\algB$ has a generic $\add T$-presentation, then, 
in the notation of \eqref{eq:mut-too},
\begin{equation}\label{eq:kap-mut}
  \kappa (T_i^*, M) = \min \{\kappa (E_i, M), \;\kappa (F_i, M)\} - \kappa(T_i, M).
\end{equation}
\end{proposition}

\begin{proof}
Consider a generic $\add T$-presentation of $M$ as in \eqref{eq:addTappM}.
For each summand $T_i$, we can suppose that either 
\begin{itemize}
\item[(a)] $T_i$ is not a summand of $X$, so $\Ext^1(T^*_i, X)=0$, or 
\item[(b)] $T_i$ is not a summand of $Y$, so $\Ext^1(T^*_i, Y)=0$. 
\end{itemize}
In both cases, since $T_i,E_i,F_i,X \in\add T$, we have
\[
  \Ext^1(T_i, X) =\Ext^1(E_i, X)=\Ext^1(F_i, X)=0. 
\]
In case (a), \Cref{Lem:factsonkappa}\itmref{itm:fackap1} then implies that
\begin{align}
\kappa(T_i, M)+\kappa(T_i^*, M)
&= \kappa(T_i, Y)+\kappa(T^*_i, Y)-\kappa(T_i, X)-\kappa(T^*_i, X),\label{eqn:0} \\
\kappa(E_i, M)
&= \kappa(E_i, Y)-\kappa(E_i, X),  \label{eqn:1} \\
\kappa(F_i, M)
&= \kappa(F_i, Y)-\kappa(F_i, X). \label{eqn:2}
\end{align}
By \Cref{Prop:kappafacts}, we have
\begin{align}
\kappa(T_i, X) +\kappa(T_i^*, X)
 &= \kappa(E_i, X)
 = \kappa(F_i, X) \label{Eqn:1} \\
\kappa(T_i, Y) +\kappa(T_i^*, Y)
 &= \kappa(E_i, Y)
 = \kappa(F_i, Y) -s, \label{Eqn:2}
\end{align}
where $s$ is the multiplicity of $T_i$ as a summand of $Y$.
Substituting \eqref{Eqn:1} and \eqref{Eqn:2} into \eqref{eqn:0}, 
then using \eqref{eqn:1} and \eqref{eqn:2}, we obtain the required result, that is,
\begin{align*}
  \kappa(T_i, M)+\kappa(T_i^*, M) &=  \kappa(E_i, M) = \kappa(F_i, M)-s \\
  &= \min\{\kappa(E_i, M) ,\, \kappa(F_i, M)\}.
\end{align*}
In case (b), \Cref{Lem:factsonkappa}\itmref{itm:fackap1} now implies that
\[
\kappa(T_i, M)+\kappa(T_i^*, M)
=  \kappa(T_i, Y)+ \kappa(T^*_i, Y) -\kappa(T_i, X) -\kappa(T_i^*, X)-t
\]
where $t=\dim\Ext^1(T^*_i, X)$, which is also the multiplicity of $T_i$ as a summand of $X$.
The other equations are then the same, except that
\begin{align*}
\kappa(T_i, X) +\kappa(T_i^*, X)
 &= \kappa(F_i, X)-t \\
\kappa(T_i, Y) +\kappa(T_i^*, Y)
 &= \kappa(F_i, Y).
\end{align*}
Thus a similar calculation to (a) gives
\begin{align*}
  \kappa(T_i, M)+\kappa(T_i^*, M)
  &= \kappa(E_i, M)-t = \kappa(F_i, M) \\
  &= \min\{\kappa(E_i, M),\, \kappa(F_i, M)\}
\end{align*}
as required.
\end{proof}

\begin{remark}\label{rem:non-generic}
If $T_i$ is a summand of both $X$ and $Y$ in \eqref{eq:addTappM},
then neither $\Ext^1(T_i^*,X)$ nor $\Ext^1(T_i^*,Y)$ vanish and 
\Cref{Lem:factsonkappa}\itmref{itm:fackap1} can't be used to get a formula for $\kappa(T_i^*, M)$.
A more careful analysis of the proof shows that, 
for any presentation $\eqref{eq:addTappM}$, we have
\begin{align*}
  \kappa(T_i, M)+\kappa(T_i^*, M)
  &= \kappa(E_i, M)-\dim\ker\presmap_* \\
  &= \kappa(F_i, M)-\dim\cok\presmap_*,
\end{align*}
where $\presmap_*\colon \Ext^1(T_i^*,X)\to\Ext^1(T_i^*,Y)$ is induced by the presentation map
$\presmap\colon X\to Y$.
Thus \eqref{eq:kap-mut} holds for a particular $M$,
precisely when $\presmap_*$ has maximal rank, i.e.~$\ker\presmap_*$ or $\cok\presmap_*$ is trivial.
In fact, $\ker\presmap_*$ can be identified with the cokernel of the composition map
$\sHom(T_i^*,T)\otimes \sHom(T,M)\to \sHom(T_i^*,M)$,
showing directly that it depends only on $M$ and not on the choice of $\presmap$.
Similarly, $\cok\presmap_*$ can be identified with the image of composition 
$\sHom(T_i^*,\CTOcosyz T)\otimes \sHom(\CTOcosyz T,\coSyz M)
  \to \sHom(T_i^*,\coSyz M) \isom \Ext^1(T_i^*,M)$.
\end{remark}

\goodbreak
We can interpret \Cref{Thm:kappamain} in the language of tropical mutation, as follows.


\begin{definition}\label{def:tropAmut}
(Tropical $\cluA$-mutation, cf.~\cite[Def.~11.8]{RW})
Let $T$ be a cluster tiling object in $\GP\algB$, 
with $\algA= \End(T)\op$ and $Q$ the Gabriel quiver of $\algA$
(see \Cref{rem:gab-quiv}).
When $T'=\mu_i(T)$, with $\algA'= \End(T')\op$,
define the (bijective) map 
\begin{equation}\label{eq:tropAmut}
\tropAmut{Q}{i}\colon \Grot(\fd\algA) \to \Grot(\fd\algA') \colon (v_j) \mapsto (v'_j)
\end{equation}
by setting $v'_j=v_j$, for each $j\in Q_0\setminus\{i\}$, while 
\[ v'_i = \min \Bigl\{\sum_{j\from i} v_j ,\, \sum_{j\to i} v_j \Bigr\} - v_i, \]
where the arrows in the sums are taken from $Q_1$.
\end{definition}
Note that, as $T$ and $T'$ are related by a single mutation,
both $\Grot(\fd\algA)$ and $\Grot(\fd\algA')$ are identified
with $\ZZ^{Q_0}$, even though $Q$ is not the Gabriel quiver of $\algA'$.


\begin{theorem}\label{Cor:mutkappa}
Let $T$ be a cluster tilting object in $\GP\algB$. 
If $M\in \CM\algB$ has a generic $\add T$-presentation, 
then 
\begin{equation}\label{eq:mutkapvec}
\kapvec (\mu_i(T), M)=\tropAmut{Q}{i}\, \kapvec (T, M).
\end{equation}
\end{theorem}

\begin{proof}
Use \Cref{Thm:kappamain} and observe from \eqref{eq:EiFi} that
$\kappa (E_i, M) = \sum_{j\from i} \kappa (T_j, M)$
and $\kappa (F_i, M) = \sum_{j\to i} \kappa (T_j, M)$.
\end{proof}

Note that $\tropAmut{Q}{i}$ always preserves $\Nstar\subset \ZZ^{Q_0}$,
as $\vstar$ is not mutable,
and hence induces a piecewise linear map 
\begin{equation}\label{eq:tropAmuthat}
  \tropAmuthat{Q}{i}\colon \Grot(\CM\algA)\to \Grot(\CM\algA')
\end{equation}
via the isomorphism $\wthat\colon \Grot(\CM\algA') \to \ZZ\oplus \Nstar$
(and the same for $A$), 
defined in \eqref{eq:wt-hat}. 
More precisely,
\[
\wthat \tropAmuthat{Q}{i} [Z] = (\rkk[Z], \tropAmut{Q}{i} \wt[Z]).
\]
Then \Cref{Cor:mutkappa} immediately gives 
\begin{equation}\label{eq:trop-hat-on-gvec}
  \tropAmuthat{Q}{i} ([T, M]) =[\mu_i(T), M], 
\end{equation}
when $M$ has a generic $\add T$-presentation.
This leads to the following.

\begin{corollary}\label{rem:rat-pol-cone2}
For all cluster tilting objects $T$ in $\CM\algC$ and all $i\in Q_0$,
\begin{equation}\label{eq:mut-mon}
  \gvm{\mu_i(T)}=\tropAmuthat{Q}{i}\, \gvm{T}.
\end{equation}
Consequently, if $T$ is reachable, then $\gvm{T}$ consists of the integral points of $\gvc{T}$,
which is a rational polyhedral cone.
In particular, $\gvm{T}$ is saturated and finitely generated.
\end{corollary}

\begin{proof}
For any $\gvec\in \gvm{T}$,
choose $M$ with a generic $\add T$-presentation and $[T,M]=\gvec$, by \Cref{lem:gen=gen}.
Then \eqref{eq:trop-hat-on-gvec} implies that $\tropAmuthat{Q}{i}\, \gvm{T} \subset \gvm{\mu_i(T)}$.
For the opposite containment, since mutation is an involution, we can replace $T$ by $\mu_i(T)$
and use the correspondiing $\tropAmuthat{Q}{i}$ map.

Now, when $T=\rectCTO$, we know from \Cref{Prop:kappaGT} that $\gvm{T}$ and $\gvc{T}$
have the claimed properties and these properties are preserved by $\tropAmuthat{Q}{i}$,
because it is a piecewise linear map of lattices.
Hence they hold for all reachable $T$ by induction.
The final statement follows by \Cref{rem:MonCon}.
\end{proof}

In \S\ref{Sec:13} (Theorems~\ref{thm:trop-GT}, \ref{thm:coneGV-SP}), 
we will find explicit inequalities for $\gvc{T}$,
using the mirror symmetry viewpoint of  \cite{RW}.

\begin{remark}\label{rem:RWanalogue}
\Cref{Cor:mutkappa} and \Cref{rem:rat-pol-cone2} play a role here which is similar to
\cite[Lemma~16.17]{RW} for studying $\nom{G}$
(or equivalently its slices $\val{G}(L_r)$) as in \S\ref{subsec:12.3}.
Their lemma shows that the leading exponents of theta functions transform 
under tropical $\cluA$-mutation, although this is not true for all functions.
\end{remark}

\begin{remark}\label{rem:moretropAmut}
By \Cref{thm:subQComp-too} (last part),
if $M\in \compC$ for just one component~$\compC$,
then $[T, M]=\ghminvec{\compC}{T}$, for any~$T$ 
(recall that $\ghminvec{\compC}{T}$ depends implicitly on~$T$ also).
But then \Cref{Cor:mutkappa} implies that 
\[
 \kapvec (\mu_i(T), M)=\tropAmut{Q}{i}\, \kapvec (T, M),
 \quad\text{or}\quad
 [\mu_i(T),M] = \tropAmuthat{Q}{i} [T, M].
\]
\end{remark}

\begin{remark}\label{rem:tropAmuthat}
One can check that, under the identification $\Grot(\CM\algA)=\Grot(\add T)$, 
the map $\tropAmuthat{Q}{i}$ of \eqref{eq:tropAmuthat} is identified with 
the map $\Grot(\add T)\to\Grot(\add T')$ implicitly defined in \cite[Thm~3.1]{DK}.
As noted in \cite[Thm~5.5(c)]{FK}, this is also the same as the transformation rule for $g$-vectors 
conjectured in \cite[Conj.~7.12]{FZ4},
consistent with \eqref{eq:trop-hat-on-gvec}.
\end{remark}

\section{Mutation of simple modules and $\cluX$-mutation}\label{Sec:new10} 
\newcommand{\Fint}[1]{F_{#1, \mathrm{int}}}
\newcommand{\Ebar}{\overline{E}}
\newcommand{\Kbar}{\overline{K}}
\newcommand{\fhat}{\widehat{f}}

We return to the context where $\algB=\algC$.
Let $T=\bigoplus_{i\in Q_0} T_i$ be a cluster tilting object in $\CM\algC$
and $T'=\mu_j(T)=(T/T_j)\oplus T_j^*$.
Let $Q$ and $Q'$ be the Gabriel quivers of $A=\End(T)\op$ and $A'=\End(T')\op$, respectively. 
Note that $Q$ and $Q'$ have no loops or two cycles and $Q'=\mu_j(Q)$.

\subsection{More on projective resolutions}\label{subsec:projres2}

Recall, from \eqref{Eq:projres3}, the minimal $\add T$-approximation of $\rrad T_i$
when $T_i$ is projective, that is, $i\in Q_0$ is a boundary vertex,
\begin{equation*}
  0\lra K_i\lra E_i \lraa{f} \rrad T_i\lra 0.
\end{equation*}
Define
\begin{equation}\label{eq:Fint}
  \Fint{i}=\bigoplus_{\substack{\ell\to i\\ \mathrm{int}}} T_\ell,
\end{equation}
summing over all the \emph{interior} arrows in $Q$ with head at $i$ (cf. \eqref{eq:EiFi}).
Since $j$ is an interior vertex, the multiplicity of $T_j$ in $\Fint{i}$ is 
the number of arrows $b_{ji}$ from $j$ to $i$.

\begin{proposition} \label{prop:minappradP}
Suppose that $\rrad T_i$ has the following minimal $\add T$-approximation, 
\begin{equation}\label{eq:minaddTapp}
  0\lra \Fint{i} \lraa{g} E_i \lraa{f} \rrad T_i\lra 0.
\end{equation}
Then the same  holds for its minimal $\add T'$-approximation, that is, 
\begin{equation}\label{eq:minaddT'app}
  0 \lra \Fint{i}' \lraa{g'} E'_i \lraa{f'} \rrad T_i\lra 0,
\end{equation}
where $\Fint{i}'$ is defined with respect to  $Q'$.
\end{proposition}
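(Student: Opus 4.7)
The plan is to obtain \eqref{eq:minaddT'app} from \eqref{eq:minaddTapp} by replacing the $T_j$-summands using the mutation exchange sequences \eqref{eq:mut-two} at $j$, and then to identify the resulting terms with $E'_i$ and $\Fint{i}'$ as defined by the mutated quiver $Q' = \mu_j(Q)$. Since $Q$ has no $2$-cycles, either $T_j$ is a summand of $\Fint{i}$ with multiplicity $a = b_{ji} > 0$ (and does not appear in $E_i$), or $T_j$ is a summand of $E_i$ with some multiplicity $c > 0$ (and does not appear in $\Fint{i}$), or $T_j$ appears in neither. In the last case, $\Fint{i},E_i\in\add(T/T_j)\subset\add T'$ and \eqref{eq:minaddTapp} itself already serves as \eqref{eq:minaddT'app}.

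In the first case, write $\Fint{i} = T_j^a \oplus \Kbar$ with $\Kbar\in\add(T/T_j)$. Take $a$ copies of the exchange sequence $0\to T_j\to F_j\to T_j^*\to 0$ and form its pushout along the inclusion $T_j^a\hookrightarrow\Fint{i}$ to obtain a short exact sequence $0\to\Fint{i}\to \Kbar\oplus F_j^a\to (T_j^*)^{a}\to 0$. Combining this via a further pushout with the map $\Fint{i}\to E_i$ from \eqref{eq:minaddTapp} yields a $3\times 3$ commutative diagram whose new middle row reads $0\to \Kbar\oplus F_j^a\to Q\to\rrad T_i\to 0$ and whose new right-hand column $0\to E_i\to Q\to (T_j^*)^{a}\to 0$ splits, since $E_i\in\add(T/T_j)$ and the rigidity of $T'$ force $\Ext^1_B(T_j^*,E_i)=0$. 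Hence $Q\isom E_i\oplus (T_j^*)^{a}$, producing the candidate
\[
0\to \Kbar\oplus F_j^a\to E_i\oplus (T_j^*)^{a}\to\rrad T_i\to 0.
\]
The second case is symmetric: a pullback with $c$ copies of $0\to T_j^*\to E_j\to T_j\to 0$ gives $0\to \Fint{i}\oplus (T_j^*)^{c}\to \Ebar\oplus E_j^c\to\rrad T_i\to 0$, where $E_i = T_j^c\oplus\Ebar$.

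To finish, I would match these candidate sequences term-by-term with the prescription of $E'_i$ and $\Fint{i}'$ via $Q' = \mu_j(Q)$. The arrows incident to $i$ in $Q'$ are obtained from those in $Q$ by reversing arrows at $j$, adjoining new arrows for each length-two path through $j$, and cancelling $2$-cycles; in the first case, the reversed arrows contribute the $(T_j^*)^a$ summand of $E'_i$, while the new arrows $\ell\to i$ from paths $\ell\to j\to i$ contribute the $F_j^a$-part of $\Fint{i}'$. Minimality of the resulting $\add T'$-approximation should follow from the minimality of \eqref{eq:minaddTapp} together with rigidity of $T'$.

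The hard part will be the combinatorial bookkeeping of $2$-cycles in this last step. The naive pushout produces $F_j^a$ as a summand of the kernel, containing $T_\ell$-summands for \emph{every} arrow $\ell\to j$ in $Q$ (including boundary ones), while $\Fint{i}'$ should contain only interior summands. Each excess summand on the kernel side must cancel against a common summand of the middle term, corresponding precisely to a $2$-cycle in $Q'$ between $i$ and $\ell$ formed from an original arrow $i\to\ell$ in $Q$ together with a new arrow $\ell\to i$ produced by the path $\ell\to j\to i$. Verifying this matching termwise, and effecting the split using Krull--Schmidt and the rigidity of $T'$, is where the real content of the proof lies; the dual verification is needed in the second case.
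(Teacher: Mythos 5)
Your proposal follows essentially the same route as the paper's proof: the same case split on whether $T_j$ occurs in $\Fint{i}$ or in $E_i$, the same pushout (resp.\ pullback) along the exchange sequences at $j$, the splitting of the new column via $\Ext^1(T_j^*,E_i)=0$ to get the candidate sequence $0\to \Kbar\oplus F_j^{a}\to E_i\oplus (T_j^*)^{a}\to\rrad T_i\to 0$, and the cancellation of common summands corresponding to the $2$-cycles removed when mutating $Q$ at $j$. The bookkeeping you defer is exactly what the paper does in one step: the common summand of the kernel and the middle term is $\bigoplus_p T_p$, summed over paths $i\to p\to j$ (resp.\ $j\to p\to i$), and splitting it off by Krull--Schmidt gives the minimal $\add T'$-approximation, whose kernel is then $\Fint{i}'$ by inspection of the mutated quiver.
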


\begin{proof} 
If $b_{ij}=b_{ji}=0$, then \eqref{eq:minaddTapp} is also a minimal $\add T'$-approximation
and is equal to \eqref{eq:minaddT'app}.
The proof is now divided into two cases.

First, suppose that $b_{ji}>0$. 
By assumption, $K_i=\Kbar_i\oplus T^{b_{ji}}_j$,
where $\Kbar_i$ has no summand  isomorphic to $T_j$. 
Let 
\[
  h = \idmap_{\Kbar_i} \oplus \; l_j^{\oplus b_{ji}}
  \colon \Kbar_i\oplus T^{b_{ji}}_j\to \Kbar_i\oplus F^{b_{ji}}_j,
\]
where $l_j\colon T_j\to F_j$ is the left map in the mutation sequence (see \eqref{eq:mut-two}) 
starting from~$T_j$.
Constructing the pushout of $g$ and $h$ gives the following commutative diagram 
with exact rows and columns. 
\[
\begin{tikzpicture}[xscale=2.2, yscale=1.5, baseline=(bb.base)] 
\coordinate (bb) at (2,2);
\pgfmathsetmacro{\xeps}{0.2}
\pgfmathsetmacro{\yeps}{0.2}
\draw (1,3-\yeps) node (e1) {$0$};
\draw (2,3-\yeps) node (e2) {$0$};
\draw (0+\xeps, 2) node (a0) {$0$};
\draw (1,2) node (a1) {$\Kbar_i\oplus T^{b_{ji}}_j$};
\draw (2,2) node (a2) {$E_i$};
\draw (3,2) node (a3) {$\rrad T_i$};
\draw (4-\xeps, 2) node (a4) {$0$};
\draw (0+\xeps,1) node (b0) {$0$};
\draw (1,1) node (b1) {$\Kbar_i\oplus F^{b_{ji}}_j$};
\draw (2,1) node (b2) {$X$};
\draw (3,1) node (b3) {$ \rrad T_i$};
\draw (4-\xeps,1) node (b4) {$0$};
\draw (1,0) node (c1) {$(T_j^*)^{b_{ji}}$};
\draw (2,0) node (c2) {$(T_j^*)^{b_{ji}}$};
\draw (1,-1+\yeps) node (d1) {$0$};
\draw (2,-1+\yeps) node (d2) {$0$};
\foreach \t/\h in {a0/a1, a3/a4, b0/b1, b1/b2, b3/b4, 
  a2/b2, b1/c1, b2/c2, c1/d1, c2/d2, e1/a1, e2/a2}
  \draw[cdarr] (\t) to (\h);
\foreach \t/\h in {c1/c2, a3/b3}
  \draw[equals] (\t) to (\h);
\draw[cdarr] (b2) to node [auto] {\small $\fhat$} (b3);
\draw[cdarr] (a2) to node [auto] {\small $f$} (a3);
\draw[cdarr] (a1) to node [auto] {\small $g$} (a2);
\draw[cdarr] (a1) to node [auto] {\small $h$} (b1);
\end{tikzpicture}
\]
By assumption, $T_j$ is not a summand of $E_i$, 
so $\Ext^1(T_j^*, E_i)=0$. 
Hence
\[
  X=E_i\oplus (T_j^*)^{b_{ji}}\in \add T'.
\]
Observe also that 
\[
   \Ext^1(T', \Kbar_i\oplus F_j)=0.
\]
Therefore the map $\fhat\colon X\to \rrad T_i$ is  an $\add T'$-approximation, 
which is usually not minimal. 
The common summand of $\Kbar_i\oplus F^{b_{ji}}_j$ and $X$ is the 
common summand of $F^{b_{ji}}_j$ and $E_i$, 
which is $M=\bigoplus_p T_p$, summing over all the paths $i\to p\to j$. 
Each such  $T_p$ corresponds to a 2-cycle deleted in the process of mutating $Q$ at $j$.
Splitting off $M$ gives the following minimal $\add T'$-approximation of $\rrad T_i$, 
\begin{equation*}\label{eq:minapp}
  0\lra K_i' \lra E_i' \lra \rrad T_ i\lra  0.
\end{equation*}
In particular, $K_i'=\Fint{i}'$.

As the second case, 
suppose that $b_{ij}>0$. 
Then  $T^{b_{ij}}_j$ is a summand of $E_i$. 
Write $E_i=\Ebar_i\oplus T_j^{b_{ij}}$ and let
\[
  h = \idmap_{\Ebar_i}\oplus r_j^{\oplus b_{ij}} 
  \colon \Ebar_i\oplus  E^{b_{ij}}_j\to \Ebar_i\oplus  T^{b_{ij}}_j ,
\] 
where $r_j$ is the right map in the mutation sequence (see \eqref{eq:mut-two}) ending at $T_j$.
Constructing the pullback of $g$ and $h$ gives the following commutative diagram 
with exact rows and columns. 
\[
\begin{tikzpicture}[xscale=2.2, yscale=1.5, baseline=(bb.base)] 
\coordinate (bb) at (2,2);
\pgfmathsetmacro{\xeps}{0.3}
\pgfmathsetmacro{\yeps}{0.2}
\draw (1,3-\yeps) node (e1) {$0$};
\draw (2,3-\yeps) node (e2) {$0$};
\draw (1,2) node (a1) {$(T^*_j)^{b_{ij}}$};
\draw (2,2) node (a2) {$(T^*_j)^{b_{ij}}$};
\draw (0+\xeps,1) node (b0) {$0$};
\draw (1,1) node (b1) {$X$};
\draw (2,1) node (b2) {$\Ebar_i\oplus E_j^{b_{ij}}$};
\draw (3,1) node (b3) {$\rrad T_i$};
\draw (4-\xeps,1) node (b4) {$0$};
\draw (0+\xeps,0) node (c0) {$0$};
\draw (1,0) node (c1) {$K_i$};
\draw (2,0) node (c2) {$\Ebar_i\oplus T^{b_{ij}}_j$};
\draw (3,0) node (c3) {$\rrad T_i$};
\draw (4-\xeps,0) node (c4) {$0$};
\draw (1,-1+\yeps) node (d1) {$0$};
\draw (2,-1+\yeps) node (d2) {$0$};
\foreach \t/\h in {b0/b1, b1/b2, b3/b4, c0/c1, c3/c4,
  a1/b1, a2/b2, b1/c1, c1/d1, c2/d2, e1/a1, e2/a2}
  \draw[cdarr] (\t) to (\h);
\foreach \t/\h in {a1/a2, b3/c3}
  \draw[equals] (\t) to (\h);
\draw[cdarr] (b2) to node [auto]{\small $\widehat{f}$} (b3);
\draw[cdarr] (c2) to node [auto]{\small $f$} (c3);
\draw[cdarr] (c1) to node [auto]{\small $g$} (c2);
\draw[cdarr] (b2) to node [auto]{\small $h$} (c2);
\end{tikzpicture}
\]
Similar to the first case,  $X=(T_j^*)^{b_{ij}}\oplus K_i$ and $\fhat$ is an $\add T'$-approximation. 
The common summand of $X$ and $\Ebar_i\oplus E_j^{b_{ij}}$
is $M=\bigoplus_p T_p$, summing over all the paths $j\to p\to i$, and each such $T_p$ corresponds to 
a 2-cycle deleted in the process of mutating $Q$ at $j$. 
Splitting off $M$ gives the minimal $\add T'$-approximation of $\rrad T_i$ as claimed,
\begin{equation*}\label{eq:minapp2}
0\lra K_i' \lra E_i' \lra \rrad T_i \lra  0.
\end{equation*}
This completes the proof.
\end{proof}

\begin{proposition}\label{thm:projresolb}
Let $T$ be a reachable cluster tilting object in $\CM\algC$. 
Then any boundary simple $\algA$-module $S_i$ has the following minimal projective resolution,
\begin{equation}\label{eq:projres5}
  0 \lra \Hom(T,\Fint{i}) \lra\Hom (T, E_i) \lra \Hom(T,T_i) \lra S_i \lra 0.
\end{equation}
\end{proposition}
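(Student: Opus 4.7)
The plan is to argue by induction on the mutation distance of $T$ from a conveniently chosen base cluster tilting object, using \Cref{prop:minappradP} as the inductive step. Since $T$ is reachable, by \Cref{rem:reachable} we may fix some rank-one cluster tilting object $T_\maxNC$ and a sequence of mutations at interior vertices connecting $T_\maxNC$ to $T$. \Cref{Prop:projres}\itmref{itm:pr2} already supplies, for every boundary vertex $i$, a minimal projective resolution
\[
  0 \lra \Hom(T,K_i) \lra \Hom(T,E_i) \lra \Hom(T,T_i) \lra S_i \lra 0
\]
with $K_i\in\add T$ fitting into $0\to K_i\to E_i\to \rrad T_i\to 0$. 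So the content of the proposition is the identification $K_i=\Fint{i}$, and it is this equality that needs to be propagated along the mutation sequence.

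The inductive step is essentially free: \Cref{prop:minappradP} is exactly the statement that, once $K_i=\Fint{i}$ has been established for $T$, the minimal $\add T'$-approximation of $\rrad T_i$ for $T'=\mu_j(T)$ at any interior vertex $j$ again has the form $K'_i=\Fint{i}'$ (with $\Fint{i}'$ now read off from the mutated quiver $Q'=\mu_j(Q)$, accounting for arrow reversals and the cancellation of 2-cycles). Since the boundary vertices are unchanged under interior mutation, iterating this through the mutation sequence gives the claim for $T$ as soon as it is known for $T_\maxNC$.

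The main work therefore is the base case: I need to verify that for some reachable rank-one cluster tilting object $T_\maxNC$, every boundary radical $\rrad T_i$ admits a minimal $\add T_\maxNC$-approximation whose kernel is exactly $\Fint{i}$. For this, the natural choice is to take $T_\maxNC$ to be a cluster tilting object coming from a plabic graph in which the arrows incident to each boundary vertex are transparent from the combinatorics. The boundary summands of $T_\maxNC$ are the projectives $P_i$, which are rank-one modules $M_I$ with $I$ a cyclic interval, and $\rrad P_i$ can be described explicitly using the $\CC$-module structure of \S\ref{subsec:RTO}. The $\add T_\maxNC$-approximation of $\rrad P_i$ is then computed by reading off the arrows in $Q$ of $T_\maxNC$ ending at~$i$; by construction, no boundary summand can contribute to the kernel, because any map from another $P_m$ into $\rrad P_i$ factors through the arrows into $i$, i.e.\ through $\Fint{i}$, giving a redundancy in any approximation that is not minimal.

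The main obstacle is making the base-case computation clean. One delicate point is to rule out boundary summands from appearing in $K_i$ for the initial $T_\maxNC$: this requires verifying that there are no arrows between boundary vertices in the Gabriel quiver of $\End(T_\maxNC)\op$ that are forced by the module-theoretic data (beyond those already accounted for), which is a standard feature of the plabic/dimer combinatorics but must be recorded. Once this is done, combining the base case with the inductive step via \Cref{prop:minappradP} yields the stated resolution, and minimality is automatic because $E_i$ and $\Fint{i}$ share no common summand (no $2$-cycles at $i$ in $Q$) and $T_i$ is not a summand of $E_i$.
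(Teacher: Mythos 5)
Your reduction is exactly the paper's: by \Cref{Prop:projres}\itmref{itm:pr2} the resolution already exists with some kernel $K_i\in\add T$, so the whole content is the identification $K_i=F_{i,\mathrm{int}}$, and \Cref{prop:minappradP} transports this identification across interior mutations, so it suffices to verify it for a single reachable cluster tilting object. Up to that point your argument and the paper's coincide (note that the paper's choice, the rectangles cluster $\rectCTO$, is itself of the form $T_\maxNC$, so your set-up is not really different).

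The gap is that the base case is asserted rather than proved, and it is precisely where the work lies. The paper verifies it by explicit computation for $\rectCTO$: $T_i=M_{[i+1,i+k]}$, $\rrad T_i=M_{\{i\}\cup[i+2,i+k]}$, $E_i$ is written as a sum of two explicit rank-one modules, hence $K_i$ is identified as the explicit rank-one module $M_{\{1\}\cup[i+1,i+k-1]}$ (resp.\ $M_{[1,i-n+k+1]\cup[i+1,n-1]}$), and this is matched against the arrows of the grid quiver (\Cref{fig:gab-quiv}) to get $K_i=F_{i,\mathrm{int}}$. Your sketch does not do this: the claim that ``no boundary summand can contribute to the kernel'' would at best constrain the summands of $K_i$ to be interior, which is far from the required equality --- one must determine exactly which interior summands occur, and with which multiplicities, and match them with the interior arrows of $Q$ ending at $i$. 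Moreover the factorisation argument is not coherent as stated: by \Cref{rem:gab-quiv} an arrow $\ell\to i$ corresponds to an irreducible map $T_i\to T_\ell$, so $F_{i,\mathrm{int}}$ does not receive maps ``through the arrows into $i$'' from other projectives in any obvious way; the approximation $E_i\to\rrad T_i$ is built from the arrows out of $i$, and its kernel has to be computed. For an arbitrary plabic seed this would require dimer-model input about the quiver at boundary vertices which you do not supply; the rectangles cluster is chosen exactly because both the modules and the quiver are completely explicit there. So the skeleton is correct, but the proposition is not established until that base-case computation (or an equivalent argument) is actually carried out.
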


\begin{proof}
We show that $\rrad T_i$ has a minimal $\add T$-approximation as in \eqref{eq:minaddTapp}
and so, by applying $\Hom(T, -)$, we obtain the given projective resolution, 
as in \Cref{Prop:projres}\itmref{itm:pr2}.

By \Cref{prop:minappradP}, it suffices to show that this is true for the rectangles cluster tilting object. 
Indeed, we have 
\[ 
  T_{i}=M_{[i+1, i+k]}, \quad
  \rrad T_i=M_{\{i\}\cup [i+2, i+k]},
\]
and  
\[
  E_i
  = \left\{ \begin{array}{ll}
  M_{[i, i+k-1]}\oplus M_{\{1\}\cup [i+2, i+k]},  & \text{when $i\leq n-k$}, \medskip \\
  M_{[1, i-n+k] \cup [i, n-1]} \oplus M_{[i+2, i+k]}, & \text{otherwise}.
\end{array} \right.
\]
Therefore, 
\[
 K_i
  = \left\{ \begin{array}{ll}
  M_{\{1\}\cup [i+1, i+k-1]},  & \text{when $i\leq n-k$}, \medskip \\
  M_{[1, i-n+k+1] \cup [i+1, n-1]}, & \text{otherwise}.
  \end{array} \right.
\]
So, in either case, we have $K_i = \Fint{i}$, as required. 
\end{proof}

\subsection{$\cluX$-mutation and flow polynomials}

Recall, from \Cref{cor:partfun-map}, that, for any cluster tilting object~$T$ in $\CM\algC$, we have a map 
\[ 
  \Xi^T\colon \CC[\Gr(k, n)]\to \CC[\Grot(\CM\algA)]\colon \cluschar{M}\mapsto \ptfn^T_M.
\]
We wish to regard  $\Xi^T$ as a generalised `homogeneous network chart' (cf. \Cref{rem:net-chart}),
so that the generalised `inhomogeneous network chart' is then $\pbfun{\wt}{\Xi^T}=\CC[\wt]\compo \Xi^T$,
in the notation of \Cref{rem:new-clucha}.
To justify this, we will show that $\pbfun{\wt}{\Xi^T}$ transforms under cluster $\cluX$-mutation,
when $T$ mutates to $T'=\mu_j(T)$.

To this end, we take $\cluX$-mutation to be the birational coordinate transformation 
\[ 
  \xmut{j}\colon \CC[\Grot(\fd\algA')]\to \CC(\Grot(\fd\algA)),
\] 
where $\CC(\Grot(\fd\algA))$ is the field of fractions of $\CC[\Grot(\fd\algA)]$,
given by the following formula
\[
\xmut{j}(\yvar^{s'_i}) =\left\{ \begin{tabular}{ll} $\yvar^{-s_i}$, & {if } $i=j$; \\
$\yvar^{s_i} (1+ \yvar^{s_j})^{b_{ij}}$, & if  $b_{ij}>0$;\\
$\yvar^{s_i} (1+ \yvar^{-s_j})^{b_{ij}}$, & if  $b_{ij}<0$;\\
$\yvar^{s_i}$, & otherwise,      \end{tabular}\right. 
\]
where, as in \S\ref{subsec:projres2}, $b_{ji}$ is the number of arrows from $j$ to $i$ in $Q$
(cf. \cite[Def~6.14]{RW}).
Here $\{s_i=[S_i]\st i\in Q_0\}$ is the standard basis of $\Grot(\fd\algA)$
and $\{s'_i=[S'_i]\st i\in Q_0\}$ is the standard basis of $\Grot(\fd\algA')$.

\begin{theorem}\label{thm:Xflow} 
Let $T$ be a cluster tilting object in $\CM\algC$ and $T'=\mu_j(T)$.
Then
\[ \xmut{j}\compo \pbfun{\wt}{\Xi^{T'}} = \pbfun{\wt}{\Xi^T} \]
and thus,  for any $M\in \CM C$, 
\[\xmut{j}\bigl( \fp^{T'}_M \bigr) = \fp^{T}_M,\]
that is, (generalised) flow polynomials are related by $\cluX$-mutation.
\end{theorem}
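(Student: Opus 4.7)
The plan is to reduce both statements to a comparison of ring homomorphisms. The second, flow-polynomial identity $\xmut{j}(\fp^{T'}_M) = \fp^T_M$ follows from the first by applying both sides to $\cluschar{M}$ and invoking $\pbfun{\wt}{\Xi^T}(\cluschar{M}) = \fp^T_M$ from \Cref{cor:partfun-map}. For the first, both $\pbfun{\wt}{\Xi^T}$ and $\xmut{j} \circ \pbfun{\wt}{\Xi^{T'}}$ are ring maps from $\CC[\Gr(k,n)]$ into (the fraction field of) $\CC[\Nstar]$, and by the Laurent phenomenon for cluster algebras any such map is determined by its values on an initial cluster. We verify agreement on $\{\cluschar{T_i}\}_{i\in Q_0}$, which is algebraically independent in the ambient fraction field by \Cref{Prop:algind} and \Cref{Lem:upsilon}.

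It remains to check $\fp^T_{T_i} = \xmut{j}(\fp^{T'}_{T_i})$ for each $i\in Q_0$. For $i \neq j$ the summand $T_i$ is common to $T$ and $T'$, and we use the quotient $F$-polynomial formula $\fp^T_M = \yvar^{\kappadv(T,M)}\Fpolyd{\sHom(T,M)}(\yvar)$ of \Cref{rem:fp-F-poly}: the $\kappadv$-exponent transforms by the tropical $\cluA$-mutation $\tropAmut{Q}{j}$ (\Cref{Cor:mutkappa}), which is precisely the tropical shadow of $\xmut{j}$, while the $F$-polynomial factor transforms by the classical Fomin--Zelevinsky mutation identity, applied via the stable equivalence $\sEnd T \isom \sEnd T'$ that produces a bijection between the relevant submodule Grassmannians of $\sHom(T,T_i)$ and $\sHom(T',T_i)$. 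The remaining case $i = j$ is then bootstrapped via the exchange relation $\cluschar{T_j}\cluschar{T_j^*} = \cluschar{E_j} + \cluschar{F_j}$: since $E_j, F_j \in \add(T/T_j)$, both sides already agree on $\cluschar{E_j}$ and $\cluschar{F_j}$, and the parallel exchange relation arising in the $T'$-seed pins down $\cluschar{T_j}$ in terms of quantities where agreement has already been established, completing the verification.

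The main obstacle will be the $F$-polynomial comparison in the $i \neq j$ case, that is, verifying that the stable equivalence between $\sEnd T$-modules and $\sEnd T'$-modules implements the correct transformation of submodule Grassmannians to match the classical mutation formula. A potentially cleaner route is to use \Cref{thm:PF-zeta-Phi} to factor $\ptfn^T = \CC[\zeta]\circ\Phi^{\CTOcosyz{T}}$, thereby reducing to the well-studied mutation behaviour of the Fu--Keller character $\Phi$, where $\cluA$-mutation at the $x$-variable level inside $\Grot(\CM\algA)$ gets converted by $\wt$ into the desired $\cluX$-mutation at the $y$-variable level. The technical content then shifts to verifying that $\CTOcosyz{\mu_j T}$ is an appropriate mutation of $\CTOcosyz{T}$ and that $\zeta$ intertwines these two $\cluA$-mutations correctly, after which the established mutation behaviour of $\Phi$ propagates through $\CC[\wt\circ\zeta]$ to yield the desired $\cluX$-mutation for $\fp^T$.
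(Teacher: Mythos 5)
Your overall shape (reduce, via the Laurent phenomenon, to checking the two ring homomorphisms on a finite set of cluster characters, or alternatively factor through $\Phi^{\CTOcosyz{T}}$ using \Cref{thm:PF-zeta-Phi}) is reasonable, and your ``cleaner route'' is in fact essentially the paper's proof; but as written the proposal has genuine gaps. First, in the case $i\neq j$ you invoke ``the stable equivalence $\sEnd T\isom \sEnd T'$'' to produce a bijection between submodule Grassmannians of $\sHom(T,T_i)$ and $\sHom(T',T_i)$. No such equivalence exists: for $T'=\mu_j(T)$ the stable endomorphism algebras are related by mutation, not isomorphic (the isomorphism \eqref{eq:end-iso} used in the paper is between $\sEnd(T)$ and $\sEnd(\CTOcosyz{T})$, induced by the cosyzygy auto-equivalence $\Sigma$ --- a different pair), and $\sHom(T,T_i)$ and $\sHom(T',T_i)$ generally have different dimensions, so their Grassmannians are not in bijection and their $F$-polynomials are different. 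Showing that the change of $F$-polynomial combines with the tropical change of $\kappadv$ (\Cref{Cor:mutkappa}) to give exactly $\xmut{j}$ applied to the whole expression is precisely the non-trivial content of the theorem, and it is not supplied. Second, the $i=j$ bootstrap is circular: the exchange relation $\cluschar{T_j}\cluschar{T_j^*}=\cluschar{E_j}+\cluschar{F_j}$ pins down the value on $\cluschar{T_j}$ only if the two maps are already known to agree on $\cluschar{T_j^*}$, and $T_j^*$ is neither a common summand of $T,T'$ nor in your chosen initial cluster, so that case has not been established. Note also that under the flow chart neither $\fp^{T}_{T_j}$ nor $\fp^{T'}_{T'_j}$ is a monomial (\Cref{Thm:char-flopol}\itmref{itm:flo5} requires $\Syz M\in\add T$), which is why checking on the cluster $\{\cluschar{T_i}\}$ gives you no monomial anchor anywhere.

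The paper avoids both problems by checking on different objects: it defines $\amut{j}$ on $\CC[\Grot(\CM\algA')]$ using the cosyzygy cluster, where $\ptfn^{T'}_{\CTOcosyz{T}'_i}=\xvar^{[T',\CTOcosyz{T}'_i]}$ \emph{are} monomials (\Cref{Thm:char-parfun}\itmref{itm:pfun5}), so that the left-hand triangle of \eqref{eq:diag} commutes by construction, and then proves that the right-hand square commutes by evaluating both composites on the lattice basis $[T',T'_i]$: this is \Cref{lem:mapalp2} together with the explicit $\kappadv$-computation resting on \Cref{Prop:kappafacts}, \Cref{Prop:kappadv} and \eqref{eq:kappaEF}. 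That computation is exactly the step you describe in your alternative route as ``verifying that $\zeta$ intertwines the two $\cluA$-mutations correctly'' and that $\wt$ converts the $\cluA$-exchange into $\xmut{j}$ --- you correctly identify it as the technical content, but you do not carry it out, and there is no off-the-shelf ``known mutation behaviour of Fu--Keller characters'' in the paper to cite for it. (A minor slip: the images $\fp^T_{T_i}$ are not algebraically independent, since $\fp^T_{\modJ}=1$; your reduction only needs that cluster monomials map to invertible elements of the Laurent field, which does hold.)
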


\begin{proof}
The second part follows from the first by \Cref{cor:partfun-map},
which implies that
\[ \pbfun{\wt}{\Xi^T}(\Psi_M)=\pbfun{\wt}\ptfn^{T}_M=\fp^{T}_M, \]
for any $M\in\CM\algC$.

The first part follows from the existence of the following commutative diagram
of algebra homomorphisms
\begin{equation}\label{eq:diag}
\begin{tikzpicture}[xscale=2.2, yscale=1.2, baseline=(bb.base)]
\coordinate (bb) at (0,2);
 \draw (-0.5,2) node (a) {$\CC[\Gr(k, n)]$};
 \draw (1,1) node (b2) {${\CC(\Grot(\CM\algA))}$};
 \draw (3,1) node (b3) {${\CC(\Grot(\fd\algA))},$};
 \draw (1,3) node (c2) {$\CC[\Grot(\CM\algA')]$};
 \draw (3,3) node (c3) {${\CC[\Grot(\fd\algA')]}$};
 \draw[cdarr] (b2) to node[above] {$\CC[\wt]$}  (b3);
 \draw[cdarr] (c2) to node[above] {$\CC[\wt]$} (c3);
 \draw[cdarr] (c3) to node[left] {$\xmut{j}$} (b3);
 \draw[cdarr] (c2) to node[right] {$\amut{j}$} (b2);
 \draw[cdarr] (a) to node[below left=-2pt] {$\Xi^{T}$} (b2); 
 \draw[cdarr] (a) to node[above left=-3pt] {$\Xi^{T'}$} (c2); 
\end{tikzpicture}
\end{equation}
where $\CC(\Grot(\CM\algA))$ is the field of fractions of $\CC[\Grot(\CM\algA)]$
and $\amut{j}$ is defined, initially, so that 
the left-hand triangle in \eqref{eq:diag} commutes. 

This can be achieved, and in a unique way, 
because $\CC[\Grot(\CM\algA')]$ has algebraically independent `generators'
in the image of~$\Xi^{T'}$,
namely $\xvar^{[T', \CTOcosyz T'_i]}=\ptfn^{T'}_{\CTOcosyz T'_i}$,
in the notation of \Cref{rem:SigmaT}.
But \Cref{cor:partfun-map} means that the commuting of the triangle requires
that $\amut{j}\bigl( \ptfn^{T'}_M \bigr) = \ptfn^{T}_M$, for all $M\in\CM\algC$.
Hence we can and must define $\amut{j}$ as follows.
\begin{equation}\label{eq:mapalp}
  \amut{j} \bigl( \xvar^{[T', \CTOcosyz T'_i]} \bigr) = \ptfn^{T}_{\CTOcosyz T'_i}
  = \begin{cases} 
  \xvar^{-[T, \Sigma T_j]} \bigl( \xvar^{[T, \Sigma E_j]} + \xvar^{[T, \Sigma F_j]} \bigr), 
  & \text{if $i=j$}, \medskip \\
  \xvar^{[T, \CTOcosyz T_i]}, 
  & \text{if $i\neq j$,}
  \end{cases} 
\end{equation}
where $\Sigma E_j$ and $\Sigma F_j$ are as in \eqref{eq:two-squares},
and noting that $\CTOcosyz T'_j= (\Sigma T_j)^*$, by  \eqref{eq:Sigma-mut}.
Note also that the $\xvar^{[T', \CTOcosyz T'_i]}$ are strictly not all the generators 
of $\CC[\Grot(\CM\algA')]$ as an algebra.
Implicitly, we also set 
\[ 
  \amut{j} \bigl( \xvar^{-[T', \CTOcosyz T'_i]} \bigr) 
= \amut{j} \bigl( \xvar^{[T', \CTOcosyz T'_i]} \bigr)^{-1}.
\]
This explains why the codomain of $\amut{j}$ must be the field of fractions.

Thus, to prove the theorem, we must show that the right-hand square in \eqref{eq:diag} commutes,
which we will do below in \Cref{lem:mapalp2} and \Cref{prop:sq-commutes}.
\end{proof}

First we give an equivalent definition of $\alpha_j$ using
the monomials of the standard basis elements $[T', T'_i]$ of $\Grot(\CM\algA')$.

\begin{lemma}\label{lem:mapalp2}
The map $\alpha_j$ in \eqref{eq:mapalp} can also be defined as follows.
\begin{equation}\label{eq:mapalp2}
  \amut{j} \bigl( \xvar^{[T', T'_i]} \bigr)
  = \begin{cases} 
  \xvar^{-[T,  T_j]} \bigl( \xvar^{-[T, E_j]} + \xvar^{-[T, F_j]} \bigr)^{-1}, & \text{if $i=j$}, \medskip \\
  \xvar^{[T,  T_i]}, & \text{if $i\neq j$, }
  \end{cases}
\end{equation}
where $E_j$ and $F_j$ are as in \eqref{eq:mut-two}.
\end{lemma}

\begin{proof}
By \eqref{eq:TSigmaT} for boundary $i$ (necessarily $\neq j$), 
we have $[T, T_i]=[T, \CTOcosyz T_i]$ (and similar for $T'$), 
so \eqref{eq:mapalp} gives
\[
  \amut{j}(\xvar^{[T', T'_i]})
  = \amut{j}(\xvar^{[T', \CTOcosyz T'_i]}) 
  = \xvar^{[T, \CTOcosyz T_i]}
  = \xvar^{[T, T_i]}.
\]

On the other hand, by \eqref{eq:TSigmaT} for interior $i$, 
we have $[T, T_i]=[T, \procov{\Sigma T_i} ]-[T, \Sigma T_i]$.
Hence, when $i\neq j$, so $T'_i=T_i$, \eqref{eq:mapalp} gives
\[
  \amut{j}(\xvar^{[T', T'_i]})
  = \amut{j}(\xvar^{[T', \procov{\Sigma T'_i}]}) \amut{j}(\xvar^{-[T', \Sigma T'_i]})
  = \xvar^{[T, \procov{\Sigma T_i}]} \xvar^{-[T, \Sigma T_i]}
  =\xvar^{[T, T_i]},
\]
because all summands of $\procov{\Sigma T'_i}=\procov{\Sigma T_i}$ are projective, 
that is, boundary summands.

Now consider the case $i=j$. 
Then \eqref{eq:mapalp} and \eqref{eq:two-squares} give
\begin{align*}
\amut{j} \bigl( \xvar^{[T', T'_j]} \bigr)
  &= \amut{j} \bigl( \xvar^{[T', \procov{\Sigma T'_j}]} \bigr) \amut{j} \bigl( \xvar^{-[T', \Sigma T'_j]} \bigr) \\
  &= \xvar^{ [T, \procov{\Sigma T'_j}]} \xvar^{[T, \Sigma T_j]} 
       \bigl( \xvar^{[T, \Sigma E_j]} + \xvar^{[T, \Sigma F_j]} \bigr)^{-1} \\
  &= \xvar^{ [T, \procov{\Sigma T'_j}]} \xvar^{[T, \Sigma T_j]} 
       \bigl( \xvar^{[T, \procov{\Sigma E_j]}-[T, E_j]} + \xvar^{[T,\procov{\Sigma F_j]} -[T, F_j]} \bigr)^{-1}.
\end{align*}
Recall, from \eqref{eq:procovSigEF}, that
$ 
\procov{\Sigma F_j}=\procov{\Sigma T_j}\oplus \procov{\Sigma T'_j}=\procov{\Sigma E_j}
$. 
Therefore, 
\begin{align*}
\amut{j} \bigl( \xvar^{[T', T'_j]} \bigr) 
  &= \xvar^{[T, \Sigma T_j]} \xvar^{-[T, \procov{\Sigma T_j}]} 
       \bigl( \xvar^{-[T, E_j]} + \xvar^{-[T, F_j]} \bigr)^{-1} \\
  &=   \xvar^{-[T, T_j]} 
       \bigl( \xvar^{-[T, E_j]} + \xvar^{-[T, F_j]} \bigr)^{-1}.          
\end{align*}
This completes the proof of the lemma.
\end{proof}

\goodbreak
\begin{proposition}\label{prop:sq-commutes}
For $\alpha_j$ as in \eqref{eq:mapalp2}, the right-hand square in \eqref{eq:diag} commutes.
\end{proposition}

\begin{proof}
It suffices to show that the square commutes when applied to the standard basis monomials, that is,
for all $i\in Q_0$,
\begin{equation}\label{eq:sqr-commutes}
  \xmut{j}(\CC[\wt] (\xvar^{[T', T'_i]})) =\CC[\wt](\amut{j}(\xvar^{[T', T'_i]})).
\end{equation}
In other words, we must show that 
\begin{equation}\label{eq:diag1} 
  \xmut{j}\bigl( \yvar^{\kapvec(T', T'_i)} \bigr)
  = \left\{ \begin{array}{ll} 
  \yvar^{ -\kapvec(T, T_j)}  
 \bigl( \yvar^{-\kapvec(T, E_j)}+\yvar^{-\kapvec(T, F_j)} \bigr)^{-1}, 
 & \text{if $i=j$,} \medskip \\
  \yvar^{\kapvec(T, T_i)}, 
 & \text{if $i\neq j$.}
  \end{array} \right.
\end{equation}
Write 
\begin{equation*}
\kapvec(T', T'_i)= \kappa(T'_j, T'_i) s'_j +
\sum_{\substack{b_{tj}=0\\ t\neq j}} \kappa(T'_t, T'_i) s'_t +
\sum_{b_{tj}>0} \kappa(T'_t, T'_i) s'_t  + 
\sum_{b_{tj}<0} \kappa(T'_t, T'_i) s'_t , 
\end{equation*}
which is also true without the dashes.
From the definition of $\xmut{j}$, we have
\begin{equation}\label{eq0}
  \xmut{j}\bigl( \yvar^{\kappa(T'_j, T'_i) s'_j} \bigr) 
  = \yvar^{- \kappa(T'_j, T'_i) s_j}
\end{equation}
\begin{equation}\label{eq1}
  \xmut{j}\bigl( \prod_{\substack{b_{tj}=0\\ t\neq j}} \yvar^{\kappa(T'_t, T'_i) s'_t} \bigr)
  = \prod_{\substack{b_{tj}=0\\ t\neq j}}\yvar^{\kappa(T'_t, T'_i) s_t}
\end{equation}
\begin{equation}\label{eq2}
\begin{aligned}
  \xmut{j}\bigl( \prod_{b_{tj}>0} \yvar^{\kappa(T'_t, T'_i) s'_t} \bigr)
 &= \bigl( \prod_{b_{tj}>0}\yvar^ {\kappa(T'_t, T'_i) s_t} \bigr) 
   (1+\yvar^{s_j})^{\sum_{b_{tj}>0} b_{tj}\kappa(T'_t, T'_i)} \\ 
 &= \bigl( \prod_{b_{tj}>0}\yvar^{\kappa(T'_t, T'_i) s_t} \bigr) 
   (1+\yvar^{s_j})^{\kappa(F_j, T'_i)} 
\end{aligned}
\end{equation}
\begin{equation}\label{eq3}
\begin{aligned}
  \xmut{j}\bigl( \prod_{b_{tj}<0} \yvar^{\kappa(T'_t, T'_i) s'_t} \bigr)
 &= \bigl( \prod_{b_{tj}<0}\yvar^{\kappa(T'_t, T'_i) s_t} \bigr) 
   (1+\yvar^{-s_j})^{-\kappa(E_j, T'_i)} \\
 &= \bigl( \prod_{b_{tj}<0}\yvar^{\kappa(T'_t, T'_i) s_t} \bigr) 
   (1+\yvar^{s_j})^{-\kappa(E_j, T'_i)} \yvar^{\kappa(E_j, T'_i)s_j}.
\end{aligned}
\end{equation}

Hence, in the case $i\neq j$, noting that $T'_t=T_t$ when $t\neq j$, we get
\begin{align*}
  \xmut{j}\bigl( \yvar^{\kapvec(T', T'_i)} \bigr)
 &= \yvar^{- \kappa(T'_j, T_i) s_j} 
   \bigl( \prod_{t\neq j}\yvar^{\kappa(T_t, T_i) s_t} \bigr) (1+\yvar^{s_j})^{\kappa(F_j, T_i)}
   (1+\yvar^{s_j})^{-\kappa(E_j, T_i)} \yvar^{\kappa(E_j, T_i)s_j} \\
 &=  \yvar^{\kapvec(T, T_i)},
\end{align*}
since $\kappa(F_j, T_i)=\kappa(E_j, T_i)$ and 
$\kappa(E_j, T_i)-\kappa(T'_j, T_i) =\kappa(T_j, T_i)$,
by \Cref{Prop:kappafacts}\itmref{itm:kapfac2}.
So \eqref{eq:diag1} holds in this case.

Now consider the case $i=j$. 
By \Cref{Prop:kappafacts}\itmref{itm:kapfac3} for $T'$
(so $E_j'=F_j$ and $F_j'=E_j$), we have 
\begin{equation} \label{eq:kappaEF} 
  \kappa (F_j, T'_j) =  \kappa(T_j, T'_j) +  \kappa(T'_j, T'_j)= \kappa(E_j, T'_j) - 1.
\end{equation}
Using \eqref{eq0}--\eqref{eq3} again, we get
\begin{align*}
  \xmut{j} \bigl( \yvar^{ \kapvec(T', T'_j) } \bigr)
 &= \yvar^{-\kappa(T'_j, T'_j)s_j}
   \bigl( \prod_{t\neq j}\yvar^{\kappa(T'_t, T'_j) s_t} \bigr) 
  (1+\yvar^{s_j})^{\kappa(F_j, T'_j) -\kappa(E_j, T'_j)} \yvar^{\kappa(E_j, T'_j) s_j} \\
 &= 
  \bigl( \prod_{t\neq j}\yvar^{\kappa(T_t, T'_j) s_t} \bigr) 
  (1+\yvar^{s_j})^{-1} \yvar^{\kappa(T_j, T'_j) s_j} \yvar^{s_j } ,
 \quad\text{by \eqref{eq:kappaEF},} \\ 
  &= \yvar^{\kapvec(T, T'_j) } (\yvar^{-s_j}+1)^{-1} \\
  &= \yvar^{ -\kapvec(T, T_j)} 
    \bigl( \yvar^{-\kapvec(T, E_j)}+\yvar^{-\kapvec(T, F_j)} \bigr)^{-1}, 
\end{align*}
since $\kapvec(T, T'_j) = \kapvec(T, F_j)-\kapvec(T, T_j)$
and $\kapvec(T, F_j)+s_j = \kapvec(T, E_j)$,
by \Cref{Prop:kappadv}\itmref{itm:kapdv1}.
This completes the proof of the proposition, and thus the proof of \Cref{thm:Xflow}. 
\end{proof}

\begin{remark}\label{rem:gen-B}
The definitions of $\alpha_j$ in \eqref{eq:mapalp} and \eqref{eq:mapalp2}
and the proofs of \Cref{lem:mapalp2} and \Cref{prop:sq-commutes}
work more generally for cluster tilting objects in $\GP\algB$.
This is because they only depend on results from \Cref{Sec:6} and \Cref{Sec:11}.
However, in this general case, it is hard to know what is the right replacement for $\Xi^{T}$.
\end{remark}

\begin{remark}\label{rem:beta-sqr}
In the $\CM\algC$ case,
by using the explicit projective resolutions in \Cref{Prop:projres} and \Cref{thm:projresolb},
we can show that the square in \eqref{eq:diag} also commutes
with the two horizontal maps replaced by $\CC[\beta]$ in the opposite direction,
that is, 
\[ 
  \CC[\beta]\compo\xmut{j}=\amut{j}\compo \CC[\beta].
\]
\end{remark}

\section{Newton--Okounkov cones and bodies for Grassmannians}\label{Sec:12}

\subsection{Network charts and valuations}\label{subsec:plab-vals}

Let $G$ be a plabic graph of type $\perm_{k,n}$ 
and $\maxNC$ be the collection of face labels of $G$.
Let $T=T_\maxNC=\bigoplus_{J\in \maxNC} M_J$ be 
the corresponding cluster tilting object in $\CM\algC$.
By \cite[Thm.~6.8]{RW}, there is a network torus $\torus_G\subset\Gr(k, n)$,
whose character lattice can be identified with $\Nstar$,
and a corresponding network chart
\begin{equation}\label{eq:net-chart-rec}
  \net_G\colon \CC[\Gr(k, n)]/(\minor{I_\vstar}-1)\to \CC[\Nstar] 
\end{equation}
where $\net_G(\minor{I})= \flowp_I$, the flow polynomial as in \eqref{eq:class-flow-poly}.

If $G$ and $G'$ are plabic graphs related by a square move, so that their dual quivers $Q$ and $Q'$ are related by mutation,
then the network charts $\net_G$ and $\net_{G'}$ are related by $\cluX$-mutation
(see \cite[Lemma 6.15]{RW}).
In other words, $\net_G$ is an $\cluX$-cluster chart, or $\torus_G$ is an $\cluX$-cluster torus.

Hence, as in \cite[Remark 6.17]{RW}, one can write any $\cluX$-cluster chart,
as a map $\net_G$, as in \eqref{eq:net-chart-rec},
where now $G$ is just a symbol for an $\cluX$-seed $\bigl(Q, \{\yvar_i \st i\in Q_0\} \bigr)$,
related to an initial plabic seed by some sequence of mutations.
Note that $N_\vstar\subspc\ZZ^{Q_0}$ still consists of vectors vanishing at the boundary vertex $\vstar\in Q_0$, 
which, being `frozen', can be followed consistently through any sequence of mutations.

On the other hand, since we can mutate cluster tilting objects arbitrarily,
we can still associate some $T$ to such a generalised $G$, 
by following the same sequence of mutations from a plabic $G^0$ and corresponding $T^0=T_\maxNC$.
The main difference is that summands of $T$ may now have arbitrary rank.

By ordering the basis $\{[S_i]\st i\in Q_0^\vstar \}$ to define a lexicographic order on $\Nstar$
(as in \Cref{rem:total-order}),
Rietsch--Williams \cite[Def.~8.1]{RW} define 
a valuation
\begin{equation}\label{eq:def-valG}
  \val{G}\colon \CC[\Gr(k, n)]\setminus 0 \,\lra\, \Nstar, 
\end{equation}
by setting $\val{G}(f)$ to be minimal exponent of $\net_G(f)$.

\begin{theorem}\label{Thm:KVal}
Let $G$ be any $\cluX$-seed and $T$ be the corresponding (reachable) cluster tilting object in $\CM\algC$.
Then, for any $M\in \CM\algC$,
\begin{equation}\label{eq:KVal1}
  \val{G}(\cluschar{M}) = \kapvec (T, M).
\end{equation}
In particular, 
\begin{equation}\label{eq:KVal2}
  \val{G}(\minor{I}) = \kapvec(T, M_I).
\end{equation}
\end{theorem}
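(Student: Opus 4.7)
The plan is to translate the valuation $\val{G}(\cluschar{M})$ directly into the minimum exponent of the generalised flow polynomial $\fp^T_M$, and then read off the answer from \Cref{Thm:char-flopol}\itmref{itm:flo3}. The main stepping stone, which was arranged in \S\ref{subsec:plab-vals}, is the identification $\net_G = \pbfun{\wt}{\Xi^T}$ for an arbitrary $\cluX$-seed $G$ (not merely a plabic one): in the plabic case this is \Cref{rem:net-chart}, and the general case follows because both sides mutate compatibly, by construction for $\net_G$ and by \Cref{thm:Xflow} for $\pbfun{\wt}{\Xi^T}$.

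First I would apply \Cref{cor:partfun-map}, which gives $\Xi^T(\cluschar{M}) = \ptfn^T_M$ for every $M\in\CM\algC$, and then push through $\CC[\wt]$ to obtain
\[
 \net_G(\cluschar{M}) \;=\; \CC[\wt]\bigl(\Xi^T(\cluschar{M})\bigr) \;=\; \fp^T_M.
\]
Then by \Cref{Thm:char-flopol}\itmref{itm:flo3}, the polynomial $\fp^T_M\in\CC[\Nstar]$ has a unique term $\yvar^{\kappadv(T,M)}$ that is strictly minimal, in the partial order on $\Nstar$ of \Cref{def:part-ord}, with coefficient~$1$. Any total order refining that partial order, such as the one used to define $\val{G}$ (cf.~\Cref{rem:total-order}), preserves strict inequalities, so $\kappadv(T,M)$ remains the minimum exponent for the chosen total order and hence $\val{G}(\cluschar{M}) = \kappadv(T,M)$, giving \eqref{eq:KVal1}. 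The specialisation \eqref{eq:KVal2} is then immediate from $\cluschar{M_I} = \minor{I}$, recorded in \eqref{eq:Psi-minor}.

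There is no real obstacle here; essentially all the substantive work has been done in \Cref{cor:partfun-map}, \Cref{thm:Xflow}, and \Cref{Thm:char-flopol}\itmref{itm:flo3}, and the present proof is just a matter of chaining these together. The one point worth flagging is that the minimality statement in \Cref{Thm:char-flopol}\itmref{itm:flo3} is strict in the partial order and the minimum term carries coefficient one, which is exactly what is needed to make the choice of total refinement irrelevant; without this, one could in principle worry that the leading term cancelled or that a different exponent overtook $\kappadv(T,M)$ in the refined order, but neither can occur.
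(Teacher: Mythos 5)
Your argument is exactly the paper's proof: it uses the identification $\net_G=\pbfun{\wt}{\Xi^T}$ from \S\ref{subsec:plab-vals} (plabic case via \Cref{rem:net-chart}, general case via \Cref{thm:Xflow}), then \Cref{cor:partfun-map} to get $\net_G(\cluschar{M})=\fp^T_M$, and finally \Cref{Thm:char-flopol}\itmref{itm:flo3} to identify the leading exponent as $\kappadv(T,M)$, independently of the choice of total order. Your extra remark about the strict minimality and coefficient~$1$ making the refinement irrelevant is a correct and welcome elaboration of the paper's brief aside, but it is not a different route.
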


\begin{proof}
We start by observing that
\begin{equation}\label{eq:net=wtXi}
\net_G=\pbfun{\wt}{\Xi^T},
\end{equation} 
where $\Xi^T$ is as in \Cref{cor:partfun-map}.
When $G$ is a plabic graph and $T= T_\maxNC$, this is \Cref{rem:net-chart}.
But \Cref{thm:Xflow} shows that the right-hand side undergoes $\cluX$-mutation,
when $T$ mutates, while the left-hand side does also, by construction.
Hence \eqref{eq:net=wtXi} is true for all $G$ and the corresponding $T$.

Now, by \eqref{eq:net=wtXi} and \Cref{cor:partfun-map}, and recalling \S\ref{subsec:gen-flo-pol}, we have
\[
 \net_G(\cluschar{M})=\pbfun{\wt}{\Xi^T}(\cluschar{M}) = \pbfun{\wt}{\ptfn^T_M}=\fp^T_M.
\]
But we know by \Cref{Thm:char-flopol}\itmref{itm:flo3}
(see also \Cref{rem:fp-F-poly}),
that the leading exponent of $\fp^T_M$ is $\kapvec(T, M)$, as required.
Note that this is, after all, independent of the choice of total ordering.
The last part follows immediately, because $\cluschar{M_I}=\minor{I}$.
\end{proof}

This theorem gives a new way of proving the following result \cite[Theorem 15.1]{RW}.
\begin{corollary}
  $\val{G}(\minor{I})  = (\maxdiag(\partn{J} \setminus \partn{I}))_{J\in\maxNC}$.
\end{corollary}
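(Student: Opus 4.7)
The plan is to specialise Theorem \ref{Thm:KVal} to the case where $G$ is a plabic seed (so that $\maxNC$ genuinely labels the faces of $G$) and then unpack the definition of $\kappadv$ in terms of the componentwise invariants $\kappa(T_i, M_I)$. When $G = G_\maxNC$, the corresponding reachable cluster tilting object is $T = T_\maxNC = \bigoplus_{J \in \maxNC} M_J$, so the vertices of the Gabriel quiver of $\algA = \End(T)\op$ are canonically indexed by $\maxNC$ and the indecomposable summands of $T$ are precisely the rank one modules $\{M_J \st J \in \maxNC\}$.

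By Theorem \ref{Thm:KVal}, we have $\val{G}(\minor{I}) = \kappadv(T_\maxNC, M_I)$. Identifying $\Grot(\fd\algA) \isom \ZZ^{Q_0} \isom \ZZ^{\maxNC}$ using the basis of simples (as in \Cref{rem:gab-quiv}), the definition of $\kappadv$ following \Cref{def:KTM} reads
\[
  \kappadv(T_\maxNC, M_I) = \bigl( \kappa(M_J, M_I) \bigr)_{J \in \maxNC}.
\]
Now I would invoke formula \eqref{eq:kappa-maxdiag}, which is the identification $\kappa(M_J, M_I) = \maxdiag(\partn{J} \setminus \partn{I})$ established in \cite[Lem.~7.1]{JKS2} and already recorded in \Cref{rem:max-diag}. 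Plugging this component by component yields precisely the asserted equality
\[
  \val{G}(\minor{I}) = \bigl( \maxdiag(\partn{J} \setminus \partn{I}) \bigr)_{J \in \maxNC}.
\]

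There is no genuine obstacle here: the proof is a two-step concatenation of Theorem \ref{Thm:KVal} (which does the categorical heavy lifting, expressing valuations via the $\kappadv$-invariant) with the combinatorial identity \eqref{eq:kappa-maxdiag} between $\kappa$ of rank one modules and maximum diagonal lengths of skew Young diagrams. The only thing worth being careful about is the indexing convention: one must check that the component of $\kappadv(T_\maxNC, M_I)$ corresponding to the summand $M_J$ is $\kappa(M_J, M_I)$ (not $\kappa(M_I, M_J)$), and that the convention for $\partn{J}$ versus $\partn{I}$ in the skew diagram matches the orientation fixed in \cite{JKS2,RW}; both are immediate from the definitions recalled in the paper.
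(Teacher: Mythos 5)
Your proof is correct and is essentially the paper's own argument: the paper likewise deduces the corollary immediately from \eqref{eq:KVal2} (i.e. \Cref{Thm:KVal} applied to $T_\maxNC=\bigoplus_{J\in\maxNC}M_J$) together with the identity \eqref{eq:kappa-maxdiag} from \cite[Lem.~7.1]{JKS2}. Your added care about the indexing convention $\kappa(M_J, M_I)$ versus $\kappa(M_I, M_J)$ is a reasonable check but does not change the argument.
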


\begin{proof}
Follows immediately from \eqref{eq:KVal2} and \eqref{eq:kappa-maxdiag}, 
since $T_\maxNC=\bigoplus_{J\in\maxNC} M_J$.
\end{proof}
 
In fact, \cite[Theorem 15.1]{RW} together with the elementary formula \eqref{eq:kappa-maxdiag}
was how we originally knew \eqref{eq:KVal2} and thus conjectured \eqref{eq:KVal1} 
(see \cite[Remark~7.3]{JKS2}).

\begin{remark}\label{rem:BCMN}
The formula \eqref{eq:KVal2} is closely related to \cite[Thm~7.9 \& Rem~7.10]{BCMN}.
To see this, first identify their map $\psi\colon N\to M$ with
$\beta\colon \Grot(\fd\algA)\to \Grot(\CM\algA)$ from \eqref{eq:def-beta},
written in the basis of simples and projectives as in \Cref{rem:beta-stuff}.
Note that the rectangles cluster quiver in their Fig~1 is 
oriented opposite to ours (cf.~\Cref{fig:gab-quiv}).
Their hook formula for the $g$-vector $\mathrm{g}_{\square\op}(p_J)$ \cite[Cor~7.6]{BCMN}
is a computation of $[\rectCTO, M_J]$ from an $\add \rectCTO$-presentation of $M_J$,
while 
\[ 
  \overline{\mathrm{g}}_{\square\op}(p_J)
 = [\rectCTO, M_J]-[\rectCTO,\modJ] 
 = -\beta\, \kapvec(\rectCTO, M_J),
\]
by \eqref{eq:wtTM} and \eqref{eq:beta-wt}.
Thus \eqref{eq:KVal2} amounts to the generalisation of \cite[(7.6)]{BCMN}
to an arbitrary seed, since $\beta$ is injective on $\Nstar$, 
where $\kapvec$ and $\val{G}$ take their values.
\end{remark}

\subsection{Newton--Okounkov bodies}\label{subsec:12.3}

Let $L_r=\CC[\Gr(k, n)]_r$ be the subspace of homogeneous functions of degree $r$
and $\CC[\Gr(k, n)]_\bullet$ be the subset of all non-zero homogeneous functions.
Define 
\begin{equation}\label{eq:valGLr}
  \val{G}(L_r) = \bigl\{ \val{G}(f)  \st  f\in L_r \setminus 0 \bigr\} \subset \Nstar.
\end{equation}
Rietsch--Williams \cite[(8.2)]{RW} defined the \emph{Newton--Okounkov body} 
in $\Nstar\tensR$ by
\begin{equation}\label{eq:nob-def}
  \nob{G} = \ConvHullBar\, \bigcup_r\frac{1}{r} \val{G}(L_r).
\end{equation}
In \cite[\S17.3]{RW}, they also considered the NO-monoid/cone, that is, 
\begin{align*}
  \nom{G} &= \bigl\{ \bigl(\deg(f), \val{G}(f)\bigr) \st f\in\CC[\Gr(k, n)]_\bullet \bigr\} \subset \ZZ\oplus\Nstar, \\
  \noc{G} &= \overline{\Rspan}\, \nom{G} \subset (\ZZ\oplus\Nstar)\tensR.
\end{align*}
In particular, $\noc{G}$ is the cone whose \level~1 slice is $\bigl(1,\nob{G}\bigr)$.
In \cite[Lem.~17.6]{RW}, they explained why $\nom{G}$ consists of the integral points of
$\noc{G}$, which is a rational polyhedral cone.

For comparison, we can rewrite $\gvc{T}$ from \Cref{Def:noc-etc} using the isomorphism $\wthat$ from 
\eqref{eq:wt-hat}, or strictly $\wthat \tensR$, as 
\[
  \wthat\gvc{T} = \overline\Rspan \bigl\{ \bigl(\rnk{\algC} M, \kapvec(T,M)\bigr) \st M\in\CM\algC \bigr\}
\]
whose \level~1 slice is $\bigl(1,\kab{T}\bigr)$, where
\begin{equation}\label{eq:kab-def}
  \kab{T} = \ConvHullBar\, \bigcup_r\frac{1}{r} 
  \bigl\{ \kapvec(T,M) \st M\in\CM\algC,\, \rnk{\algC} M = r \bigr\}.
\end{equation}

\begin{remark}\label{rem:GVbody}
We can also define a canonical \level~1 body in $\Grot(\CM\algA)\tensR$
\[
  \gvb{T} = \ConvHullBar\, \bigcup_r\frac{1}{r} 
  \bigl\{ [T,M]  \st M\in\CM\algC,\, \rnk{\algC} M = r \bigr\},
\]
which is the \level~1 slice of $\gvc{T}$ and is identified with $\kab{T}$ under $\wt$,
so that, explicitly, $\gvb{T}= [T,\modJ]-\beta\bigl( \kab{T} \bigr)$.
\end{remark}

\begin{theorem} \label{prop:nob=kab}
Let $G$ be any $\cluX$-seed and $T$ be the corresponding (reachable) cluster tilting object in $\CM\algC$.
Then 
\begin{equation}\label{eq:nob=kab}
  \nob{G}=\kab{T}.
\end{equation}  
\end{theorem}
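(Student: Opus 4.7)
The plan is to establish the equality of convex bodies by proving two inclusions separately, leveraging the chart formula $\val{G}(\Psi_M) = \kappadv(T,M)$ from Theorem \ref{Thm:KVal} together with the basis of cluster characters from Theorem \ref{Thm:GrassGeneric}.

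For the inclusion $\kab{T} \subset \nob{G}$, I would argue directly: for any $M \in \CM\algC$ with $\rnk{\algC} M = r$, the cluster character $\Psi_M$ is a nonzero element of $L_r = \CC[\Gr(k,n)]_r$, and Theorem \ref{Thm:KVal} gives $\val{G}(\Psi_M) = \kappadv(T,M)$. Hence $\tfrac{1}{r}\kappadv(T,M) \in \tfrac{1}{r}\val{G}(L_r)$, and since $\nob{G}$ is defined as a closed convex hull containing all such points, this inclusion follows immediately on taking the closed convex hull of the corresponding set defining $\kab{T}$.

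For the reverse inclusion $\nob{G} \subset \kab{T}$, the key input is that $L_r$ has a basis, namely $\{\Psi_{M_\compC} : \compC \in \IrrCM(r)\}$ from Theorem \ref{Thm:GrassGeneric}, whose leading exponents under $\val{G}$ are exactly $\{\kappadv(T, M_\compC) : \compC \in \IrrCM(r)\}$ by Theorem \ref{Thm:KVal}. I would first verify these leading exponents are pairwise distinct: by Proposition \ref{Thm:genericb} the map $\compC \mapsto [T, M_\compC]$ is injective into $\Grot(\CM\algA)$, and composing with the isomorphism $\wthat = (\rkk, \wt)$ from \eqref{eq:wt-hat} shows that restricting to fixed rank $r$ the map $\compC \mapsto \kappadv(T,M_\compC)$ is still injective. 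Lemma \ref{Lem:uniqueleading} then applies to show that for any $f \in L_r \setminus 0$, the valuation $\val{G}(f)$ equals $\kappadv(T, M_\compC)$ for some $\compC \in \IrrCM(r)$, and consequently $\tfrac{1}{r}\val{G}(f) \in \kab{T}$. Taking closed convex hulls and unions over $r$ delivers the inclusion $\nob{G} \subset \kab{T}$.

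The main step requiring care is the distinctness of leading exponents in fixed degree $r$, since we need to rule out cancellation of leading terms when expanding $f$ in the generic basis. This is the reason for building in Proposition \ref{Thm:genericb} and the fact that $\wthat$ is a lattice isomorphism: together these ensure that at each fixed rank the $\kappadv$-values of distinct generic modules are genuinely different elements of $\Nstar$. Once that distinctness is secured, Lemma \ref{Lem:uniqueleading} makes the proof routine, and no further analytic input is needed because everything reduces to finite-dimensional statements inside each graded piece $L_r$.
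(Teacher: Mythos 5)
Your proof is correct and follows essentially the same route as the paper: the generic basis of Theorem \ref{Thm:GrassGeneric}, the identity $\val{G}(\Psi_M)=\kappadv(T,M)$ from Theorem \ref{Thm:KVal}, and the distinctness of leading exponents via Proposition \ref{Thm:genericb} combined with Lemma \ref{Lem:uniqueleading}. The only (inessential) difference is organisational: the paper proves the set equality $\val{G}(L_r)=\{\kappadv(T,M)\st \rnk{\algC}M=r\}$ by citing Theorem \ref{Thm:NOcone} for the passage from generic to arbitrary modules, whereas you split into two inclusions and obtain the easy one directly from Theorem \ref{Thm:KVal}, which amounts to the same argument.
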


\begin{proof}
By \Cref{Thm:GrassGeneric}, we know that, if we choose
$N_\compC\in\compC_{\gen}$, for $\compC\in \IrrCM(r)$,
then the $\Psi_{N_\compC}$ are a basis of $L_r$. 
Moreover, from \Cref{Thm:KVal} and its proof, we have 
\begin{equation}\label{eq:netPsi=fp}
  \net_G(\Psi_{M}) = \fp^T_{M},
\end{equation}
with leading exponent $\val{G}(\Psi_{M})=\kapvec(T,M)$.
Thus the $\net_G(\Psi_{N_\compC})$ have leading exponents 
$\kapvec(T,N_\compC)$
and, since $(r, \kapvec(T,N_\compC))=\wthat[T,N_\compC]$,
these are distinct, by \Cref{prop:ghgen-inj}.
Hence, by \Cref{Lem:uniqueleading},
\begin{equation}\label{eq:nob-generic}
 \val{G}(L_r) = \bigl\{ \kapvec(T,N_\compC)\st \compC\in \Irr_{\CM}(r) \bigr\}. 
\end{equation}
But we also know from \Cref{thm:subQComp-too} 
that
\[
 \bigl\{ \kapvec(T,N_\compC)\st \compC\in \Irr_{\CM}(r) \bigr\}
 = \bigl\{ \kapvec(T,M) \st M\in\CM\algC,\, \rnk{\algC} M = r \bigr\},
\]
which completes the proof by applying $\ConvHullBar\, \bigcup_r\frac{1}{r} (-)$.
\end{proof}

\begin{remark}\label{rem:noc=gvc}
\Cref{prop:nob=kab} is equivalent to the fact that 
\begin{equation}\label{eq:noc=gvc}
  \noc{G}=\wthat\gvc{T}.
\end{equation}
We can also prove this via \Cref{Thm:NOcone}, by showing that
\begin{equation}\label{eq:noc=noc}
\begin{aligned}
  \nom{G} &= \wthat\nom{T},\\ 
  \noc{G} &= \wthat\noc{T}. 
\end{aligned}
\end{equation}
Indeed, by \Cref{cor:partfun-map}, we know that $\Xi^T$ gives an isomorphism 
$\CC[\Gr(k, n)]\isom\clualgA_T$, the cluster algebra that determines $\noc{T}$ as in \Cref{Def:noc-etc}.
Hence, by \Cref{Rem:HomogCone}, what we need to show is that, for 
all $f\in \CC[\Gr(k, n)]_\bullet$,
\[
\bigl(\deg(f), \val{G}(f)\bigr) = \wthat \Val{T}\bigl( \Xi^T(f) \bigr).
\]
But we know that $\deg(f)=\rkk \Xi^T(f)$ and also that $\val{G}(f)= \wt \Val{T}\bigl( \Xi^T(f) \bigr)$,
since $\net_G=\pbfun{\wt}{\Xi^T}$, by \eqref{eq:net=wtXi}, and we can choose the same order on $\Nstar$
to define the leading exponents. 
This completes the proof of \eqref{eq:noc=noc}.
\end{remark}

Note that \Cref{prop:nob=kab} and/or \Cref{rem:noc=gvc} imply that $\nob{G}$ and $\noc{G}$ 
do not depend on the choice of total order implicit in their definition (cf. \cite[Rem.~8.7]{RW}),
just as \Cref{Thm:NOcone} implies this for $\noc{T}$.

\subsection{Alternative proof}\label{subsec:alt-proof}
We now give a different proof of \Cref{prop:nob=kab}, that is $\nob{G}=\kab{T}$,
that doesn't use \Cref{thm:Xflow} to identify a general network chart, 
i.e.~prove \eqref{eq:net=wtXi} for all associated $(G,T)$,
but uses \cite[Lem.~16.17]{RW} and \Cref{Cor:mutkappa} to track how the two bodies mutate.

\begin{proof}[Another proof of \Cref{prop:nob=kab}]
When $G$ is a plabic graph, with $T= T_\maxNC$, 
then \eqref{eq:net=wtXi} 
is just \Cref{rem:net-chart}.
This gives \eqref{eq:netPsi=fp} and then \eqref{eq:nob-generic}
as in the proofs of \Cref{Thm:KVal} and \Cref{prop:nob=kab}.
That is, for plabic $G$, we have, for any $N_\compC\in\compC_{\gen}$,
\begin{equation}\label{eq:nob-generic2}
 \val{G}(L_r) = \bigl\{ \kapvec(T,N_\compC)\st \compC\in \Irr_{\CM}(r) \bigr\},  
\end{equation}
without the use of \Cref{thm:Xflow}.
This is the key to proving $\nob{G}=\kab{T}$, 
so we need to prove \eqref{eq:nob-generic2} for a general $\cluX$-seed $G$.

For a general $G$, with associated $T$ and $Q$, 
if $G'=\mu_i G$, then, by \cite[Lemma 16.17]{RW},
\[ 
  \val{G'} (L_r) = \tropAmut{Q}{i}\, \val{G}(L_r),
\]
where $\tropAmut{Q}{i}$ is tropical $\cluA$-mutation, as in \Cref{def:tropAmut}.
Similarly, if $T'=\mu_i T$, so $T'$ is associated to $G'$, then, 
by \Cref{Cor:mutkappa}, we have 
\[ 
  \kapvec (T', N_\compC)=\tropAmut{Q}{i}\, \kapvec (T, N_\compC), 
\]
because, as in the proof of \Cref{rem:rat-pol-cone2}, we can choose $N_\compC$ 
to have a generic $\add T$-presentation.
Hence \eqref{eq:nob-generic2} holds for $(G',T')$ provided it holds for $(G,T)$,
and thus it holds by induction for all associated $(G,T)$, 
starting from any plabic case.
\end{proof}

Note that \cite[Lemma 16.17]{RW} uses a theta basis for $L_r$,
which is in bijection with $\val{G} (L_r)$ because the basis has distinct leading exponents.
The fact that the basis is parameterised by tropical points of a cluster $\cluA$-variety
is behind why $\val{G} (L_r)$ undergoes tropical $\cluA$-mutation.

\begin{remark}\label{rem:finite-type}
When $\Gr(k, n)$ is of finite cluster type, there are only finitely many indecomposable modules $M$
in $\CM\algC$, all of which are rigid (and hence generic),
so that their cluster characters $\Psi_M$ are all the cluster variables.
Since any module in $\CM\algC$ is a direct sum of indecomposable modules,
we can write
\[
  \kab{T}
  = \ConvHull \Bigl\{ \frac{ \kappa(T, M) }{ \rnk{\algC} M }
   \st \text{$M$ is indecomposable} \Bigr\},
\]
that is, the convex hull of a finite set of rational points,
without taking any closure.
Hence we have
\[
  \nob{G}
  = \ConvHull \Bigl\{ \frac{ \val{G}(\Psi) }{ \deg\Psi }
   \st \text{$\Psi$ is a cluster variable} \Bigr\}.
\]
which would also follow from the fact that cluster monomials are a basis of $\CC[\Gr(k,n)]$
in this case, since they have distinct leading exponents,
by \cite[Thm~5.5]{FK} (cf.~\Cref{rem:rigid-gvec}).
\end{remark}

\section{Superpotential and tropicalisation}\label{Sec:13}

\subsection{Superpotential}\label{subsec:rect-sp}

Marsh--Rietsch \cite[Def.~6.1]{MR} gave the following formula for a superpotential
on the Grassmannian $\Gr(k, n)$
\begin{equation}\label{eq:superpot}
\superpot = q  \frac{ \minor{\MRnum{n-k}} }{ \minor{\MRden{n-k}} } 
+ \sum_{i\neq n-k} \frac{ \minor{\MRnum{i}} }{ \minor{\MRden{i}} },
\end{equation}
where
\[
  \MRden{i}=[i+1,  i+k] 
  \quadand
  \MRnum{i}=[i+1, i+k-1]\cup \{i+k+1\}
\]
and indices are taken mod $n$.

In terms of representations, $M_{\MRden{i}}$ is the projective module at (boundary) vertex $i$
and $M_{\MRnum{i}}$ is the unique nontrivial extension of $M_{\MRden{i}}$  
by the simple $\algC$-module $S_{i+k}$.
In terms of Young diagrams, this means that, for $i\neq n-k$, 
the diagram for $\MRnum{i}$ is obtained by
adding a box to the diagram for $\MRden{i}$.
However, for $\MRnum{n-k}$ one instead removes a hook,
because the cyclic structure is hidden from the Young diagram point of view.

Marsh--Rietsch \cite[Prop.~6.1]{MR} also gave an expression 
for the superpotential $\superpot$ in terms of rectangle cluster variables. 
In this section, we will give a new derivation of this expression 
using the rectangles cluster tilting object $T=\rectCTO$ 
and the Fu-Keller cluster character $\Phi^T$ from \eqref{eq:FK-CC}.

Recall, from \Cref{def:rect-clus}, that the rectangles cluster tilting object is defined as
\[
T=\rectCTO=T_\vempty\oplus \bigoplus_{ij\in\grid} T_{ij},
\]
where $T_\vempty = M_{[1,k]}$ and $T_{ij}=M_{K_{ij}}$ for $K_{ij}=[1, k-i]\cup [k-i+j+1, k+j]$.
The Gabriel quiver of $A=\End(T)\op$ has a quite uniform structure, 
illustrated in \Cref{fig:gab-quiv}.
This quiver is dual to a particular plabic graph, as in \cite[\S4, Fig.~5]{RW},
although some labelling conventions may differ 
(see \S\ref{subsec:nota-conv} and \Cref{rem:compare-RW}).

\begin{figure} [h]
\begin{tikzpicture}[xscale=1.6, yscale=1.6]
\draw (0.25, 0.25) node (d) {$T_\vempty$};
\draw  (1, 1) node (a1) {$T_{11}$};
\draw  (2, 1) node (a2) {$T_{12}$};
\draw  (3, 1) node (a3) {$T_{13}$};
\draw  (4, 1) node (a4) {$T_{14}$};
\draw  (1, 2) node (b1) {$T_{21}$};
\draw  (2, 2) node (b2) {$T_{22}$};
\draw  (3, 2) node (b3) {$T_{23}$};
\draw  (4, 2) node (b4) {$T_{24}$};
\draw  (1, 3) node (c1) {$T_{31}$};
\draw  (2, 3) node (c2) {$T_{32}$};
\draw  (3, 3) node (c3) {$T_{33}$};
\draw  (4, 3) node (c4) {$T_{34}$};
\foreach \t/\h in {a1/a2, a2/a3, a3/a4, b1/b2, b2/b3, b3/b4, c1/c2, c2/c3, c3/c4, 
 a1/b1, a2/b2, a3/b3, a4/b4, b1/c1, b2/c2, b3/c3, b4/c4, b2/a1, b3/a2, b4/a3, c2/b1, c3/b2, c4/b3, d/a1}
  \draw[quivarr]  (\h) to (\t);
\draw[quivarr] (d) to [out=90, in=-132] (c1);
\draw[quivarr] (d) to [out=0, in=-142] (a4);
\end{tikzpicture} 
\caption{The Gabriel quiver $Q$ of $\End\bigl(\rectCTO\bigr)\op$, for $(k,n)=(3,7)$.} \label{fig:gab-quiv}
\end{figure}

Observe that $M_{\MRden{s}}=P_s=\algC e_s$ is projective, so always in $\add T$.
In particular, $M_{\MRden{s}}=T_{ks}$, if $0\leq s\leq n-k$, 
and $M_{\MRden{s}}=T_{(n-s)(n-k)}$, if $n-k\leq s\leq n$,
where, by convention, $T_{k0}=T_\vempty=T_{0(n-k)}$.
On the other hand, $M_{\MRnum{i}}\in \add T$ if and only if $i=0$ or $i=n-k$,
in which case
\begin{equation}\label{eq:M-in-addT}
  M_{\MRnum{0}}=T_{11} 
  \quadand
  M_{\MRnum{n-k}}=T_{(k-1) (n-k-1)}.
\end{equation}

\begin{lemma} \label{Lem:topMIhat}
$\top M_{\MRnum{s}}=S_s\oplus S_{s+k}$,
where $S_i$ are the simple $\algC$-modules.
\end{lemma}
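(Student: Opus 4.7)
The plan is to read off the top of $M_{\MRnum{s}}$ directly from its description as a representation of the circular double quiver \eqref{eq:circle-quiv}, using the conventions recalled in the Notation and conventions paragraph.

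First I would establish a general formula for $\top M_I$ for any $k$-subset $I$. Recall that, as a representation, $M_I$ has $\ring$ at each vertex, while the clockwise arrow $x_j\colon j-1\to j$ acts as $1$ when $j\in I$ and as $t$ when $j\notin I$, with $y_j$ acting dually so that $x_jy_j=y_jx_j=t$. Since $\rrad \algC$ is generated by the arrows, the $\ring$-submodule $(\rrad M_I)_j$ at a vertex $j$ is the sum of the images of $x_j$ and $y_{j+1}$. Both of these images equal $\ring$ unless $j\notin I$ and $j+1\in I$, in which case both maps act as $t$ and the image is $t\ring$. Thus
\[
  \top M_I \,\isom\, \bigoplus_{j\,:\,j\notin I,\ j+1\in I} S_j,
\]
i.e.\ $\top M_I$ is indexed by the cyclic descents of $I$.

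Next I would specialise to $I=\MRnum{s}=[s+1,s+k-1]\cup\{s+k+1\}$. Scanning cyclically, the positions $j\notin \MRnum{s}$ are $s$, $s+k$, and $s+k+2,s+k+3,\ldots,s-1$. Checking which of these satisfy $j+1\in \MRnum{s}$, only $j=s$ (since $s+1\in \MRnum{s}$) and $j=s+k$ (since $s+k+1\in \MRnum{s}$) qualify. Hence the descent set of $\MRnum{s}$ is exactly $\{s,s+k\}$, yielding $\top M_{\MRnum{s}}=S_s\oplus S_{s+k}$.

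There is no serious obstacle here; the only mild subtlety is to keep the labelling convention from the Notation and conventions paragraph consistent (so that $j\in I$ corresponds to $x_j$ acting as $1$, not $t$), and to treat the indices cyclically when $s$ lies near the wrap-around (e.g.\ when $s+k+1\equiv s\pmod n$ this would force $k\equiv -1\pmod n$, which does not occur in our range $1\leq k\leq n-1$, so no degenerate cases arise).
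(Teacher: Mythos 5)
Your proof is correct and is essentially the argument the paper leaves implicit: the lemma is stated without proof as being immediate from the explicit description of rank\nobreakdash-1 modules, and your descent formula $\top M_I\isom\bigoplus_{j\,:\,j\notin I,\ j+1\in I}S_j$ is exactly the observation that top summands sit at the peaks of the profile of $M_I$, specialised to $\MRnum{s}=[s+1,s+k-1]\cup\{s+k+1\}$. The only nitpick is your closing parenthetical: the degenerate situation is not excluded by $1\leq k\leq n-1$ (e.g.\ $k=n-1$ gives $s+k+1=s$, and $k=1$ loses the $S_s$ summand since then $s+1\notin\MRnum{s}$), but these extremes lie outside the standing setting $2\leq k\leq n-2$ in which the lemma is used, so the argument stands where it matters.
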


\begin{proof}
Note that $M_{\MRnum{s}}$ is an extension by $S_{s+k}$ of 
$M_{\MRden{s}}=P_s$, which has simple top $S_s$.
\end{proof}

Denote the minimal syzygy of $T_{ij}$ by $\syzT_{ij}$, for all $ij\in\grid$. 
If $i=k$ or $j=n-k$, then $T_{ij}$ is projective and so $\syzT_{ij}=0$.
The following properties of the non-projective syzygies are straightforward.

\begin{lemma}\label{lem14.2}
If $1\leq i<k$ and $1\leq j<n-k$, then
\begin{enumerate}
\item\label{itm:14.2.2}
$\syzT_{ij}=M_{L_{ij}}$ with $L_{ij}=[k-i+1, k]\cup [k+j+1, 2k+j-i ]$. 
\item\label{itm:14.2.3}
$\top \syzT_{ij}=S_{k-i}\oplus S_{k+j}$. 
\item\label{itm:14.2.4}
The irreducible map $T_{ij}\to T_{i(j+1)}$ induces a map
$\iota_{ij}\colon \syzT_{ij} \to \syzT_{i(j+1)}$ 
which is an isomorphism on the common summands $S_{k-i}$ of their tops.
\item\label{itm:14.2.5}
The irreducible map $T_{ij}\to T_{(i+1)j}$ induces a map
$\epsilon_{ij}\colon \syzT_{ij} \to \syzT_{(i+1)j}$ 
which is an isomorphism on the common summands $S_{k+j}$ of their tops.
\end{enumerate}
\end{lemma}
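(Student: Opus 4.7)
The plan is to carry out the whole computation via the profile description of rank~$1$ modules $M_I$ in $\CM\algC$ (\cite[\S5--6]{JKS1}), for which the top of $M_I$ and any syzygy computation can be read off directly from the lattice path that represents $M_I$. First I would record the (standard) formula that, for any $k$-subset $I\subset\labset{n}$, the top of $M_I$ decomposes as a direct sum of simples $S_a$, one for each `upper outer corner' $a$ of the profile of $I$, and the minimal projective cover is the sum of the corresponding $P_a$. For the initial rectangular case $T_{ij}=M_{K_{ij}}$, the assumption $1\leq i<k$, $1\leq j<n-k$ ensures that $K_{ij}$ consists of two disjoint non-trivial cyclic intervals, so the profile has exactly two upper corners at the vertices $k-i$ and $k+j$. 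This gives the minimal projective cover
\[
 \ShExSeq{\syzT_{ij}}{P_{k-i}\oplus P_{k+j}}{T_{ij}}
\]
with $P_{k-i}=M_{[k-i+1,2k-i]}$ and $P_{k+j}=M_{[k+j+1,2k+j]}$.

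Next I would deduce part \itmref{itm:14.2.2}. Since rank is additive on short exact sequences in $\CM\algC$ and the three modules have rank $1$, $2$, $1$ respectively, $\syzT_{ij}$ is again a rank $1$ module, hence equal to $M_L$ for some $k$-subset $L$. To identify $L=L_{ij}$, I would compute the profile of the kernel by overlaying the two profiles of $P_{k-i}$ and $P_{k+j}$ and reading off which arrows remain. The concrete verification is that the resulting subset is exactly $[k-i+1,k]\cup[k+j+1,2k+j-i]$, which is $L_{ij}$ as stated. Part \itmref{itm:14.2.3} is then immediate, since the two cyclic intervals comprising $L_{ij}$ end at~$k$ and at~$2k+j-i$, giving upper outer corners at $k-i$ and $k+j$, hence $\top\syzT_{ij}=S_{k-i}\oplus S_{k+j}$.

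For parts \itmref{itm:14.2.4} and \itmref{itm:14.2.5} I would use the functoriality of minimal projective presentations. An irreducible map $T_{ij}\to T_{i(j+1)}$ in $\add T$ lifts (uniquely up to a map factoring through $\add\algC$) to a morphism of minimal projective covers
\[
\begin{tikzpicture}[xscale=2.0,yscale=1.3,baseline=(bb.base)]
\coordinate (bb) at (0,0.5);
\draw (0,1) node (a1) {$0$};
\draw (1,1) node (a2) {$\syzT_{ij}$};
\draw (2.6,1) node (a3) {$P_{k-i}\oplus P_{k+j}$};
\draw (4,1) node (a4) {$T_{ij}$};
\draw (5,1) node (a5) {$0$};
\draw (0,0) node (b1) {$0$};
\draw (1,0) node (b2) {$\syzT_{i(j+1)}$};
\draw (2.6,0) node (b3) {$P_{k-i}\oplus P_{k+j+1}$};
\draw (4,0) node (b4) {$T_{i(j+1)}$};
\draw (5,0) node (b5) {$0$};
\foreach \t/\h in {a1/a2,a2/a3,a3/a4,a4/a5,b1/b2,b2/b3,b3/b4,b4/b5} \draw[cdarr] (\t) to (\h);
\foreach \t/\h/\lab in {a2/b2/\iota_{ij},a3/b3/,a4/b4/} \draw[cdarr] (\t) to node [right] {\small \lab} (\h);
\end{tikzpicture}
\]
in which the middle vertical map is the identity on the $P_{k-i}$ summand (because the $S_{k-i}$-components of the tops of $T_{ij}$ and $T_{i(j+1)}$ are matched by the irreducible map) and is an irreducible map $P_{k+j}\to P_{k+j+1}$ on the other summand. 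Passing to the tops of the syzygies (using the profile description of $L_{ij}$ and $L_{i(j+1)}$) shows that $\iota_{ij}$ restricts to an isomorphism between the $S_{k-i}$-summands in their tops. The same argument with $i\mapsto i+1$ and the roles of the two summands swapped gives~\itmref{itm:14.2.5}.

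The main obstacle is really just bookkeeping: keeping the cyclic intervals, the mod-$n$ arithmetic, and the two upper corners consistent throughout, especially in checking that the subset $L_{ij}$ extracted from the overlaid profiles is exactly $[k-i+1,k]\cup[k+j+1,2k+j-i]$. Once this is done cleanly for $T_{ij}$, both the description of $\top\syzT_{ij}$ and the behaviour of the induced maps $\iota_{ij},\epsilon_{ij}$ on the appropriate summands follow from standard naturality of minimal projective covers, with no additional input.
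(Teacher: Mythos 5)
Your overall strategy (profiles of rank~$1$ modules, additivity of rank to see that the syzygy has rank one, functoriality of minimal projective covers for the maps $\iota_{ij},\epsilon_{ij}$) is exactly the kind of argument the paper intends — it offers no written proof beyond ``straightforward, by considering the profiles'' — but your execution contains a genuine error at the very first step: the identification of $\top T_{ij}$ and hence of the minimal projective cover. Under the paper's labelling convention (clockwise arrows $x_a$ act as $1$ for $a\in I$), the simple $S_a$ is a summand of $\top M_I$ precisely when $a\notin I$ and $a+1\in I$, i.e.\ when $a+1$ is the \emph{start} of one of the cyclic intervals of $I$. For $K_{ij}=[1,k-i]\cup[k-i+j+1,k+j]$ the interval starts are $1$ and $k-i+j+1$, so $\top T_{ij}=S_n\oplus S_{k-i+j}$ and the minimal projective cover is $P_n\oplus P_{k-i+j}$ (note $P_n=M_{[1,k]}=\modP$), not $P_{k-i}\oplus P_{k+j}$ as you claim; you have read off the \emph{ends} of the intervals, which amounts to using the opposite (CKP-style) convention. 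Your claimed top already contradicts \Cref{Lem:rect}\itmref{itm:rect2}, which forces $S_n$ to be a summand of the top of every non-projective rectangle module. Concretely, for $(k,n)=(2,5)$, $i=j=1$ one has $T_{11}=M_{13}$ with $\top M_{13}=S_5\oplus S_2$, so there is no surjection $P_1\oplus P_3=M_{23}\oplus M_{45}\to M_{13}$ at all (a surjection would induce a surjection $S_1\oplus S_3\to S_5\oplus S_2$ on tops), whereas the correct cover $P_5\oplus P_2=M_{12}\oplus M_{34}$ has kernel $M_{24}=M_{L_{11}}$, as the lemma asserts. Note also the internal inconsistency in your write-up: when reading off $\top M_{L_{ij}}$ you correctly locate the corners at $k-i$ and $k+j$ (the starts of $[k-i+1,k]$ and $[k+j+1,2k+j-i]$ shifted by one), but for $K_{ij}$ you used the ends — the two readings cannot both be right.

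The error propagates: the overlay-of-profiles computation of the kernel in part \itmref{itm:14.2.2} must be redone with the cover $P_n\oplus P_{k-i+j}$ (with which it does produce $L_{ij}=[k-i+1,k]\cup[k+j+1,2k+j-i]$), and your justification of the middle vertical map in the diagram for \itmref{itm:14.2.4} — ``the identity on the $P_{k-i}$ summand because the $S_{k-i}$-components of the tops of $T_{ij}$ and $T_{i(j+1)}$ are matched'' — is vacuous, since $S_{k-i}$ is not a summand of $\top T_{ij}$ at all; it is a summand of the top of the \emph{syzygy}. With the corrected covers, the lift of the irreducible map $T_{ij}\to T_{i(j+1)}$ is the identity on the common summand $P_n$ and a map $P_{k-i+j}\to P_{k-i+j+1}$ on the other, and one must then check directly (on generators, via the profiles of $L_{ij}$ and $L_{i(j+1)}$) that the restriction to the kernels is an isomorphism on the $S_{k-i}$ summands of their tops, and dually for $\epsilon_{ij}$. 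So the architecture of your argument is fine, but as written the key computations rest on a projective cover that is not even a surjection, and parts \itmref{itm:14.2.2}--\itmref{itm:14.2.5} all need to be reworked with the correct top $S_n\oplus S_{k-i+j}$.
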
 

\begin{proof}
These follow by considering the profiles of the relevant modules. 
More precisely, if the two projectives in the minimal cover are taken to be
submodules of $T_{ij}$, then $\syzT_{ij}$ is their intersection
(see \Cref{fig:syzTij} for examples).
\end{proof}

\begin{figure}[h]
\begin{tikzpicture} [scale=0.4,
lowbdry/.style={thick, gray},
boxes/.style={thick, blue},
ridge/.style={very thick, purple}]
\pgfmathsetmacro{\epsstep}{0.5}
\begin{scope} [shift={(-8+\epsstep,0)},rotate=135]
\draw [boxes] (0,-1)--(2,-1) (1,0)--(1,-2);
\draw [lowbdry] (0,-5)--(0,0)--(4,0);
\draw [ridge] (4,0)--++(-2,0)--++(0,-2)--++(-2,0)--++(0,-3);
\draw (3.5,0) node [above right=-3pt] {\tiny 1};
\draw (2.5,0) node [above right=-3pt] {\tiny 2};
\draw (1.5,-2) node [above right=-3pt] {\tiny 5};
\draw (0.5,-2) node [above right=-3pt] {\tiny 6};
\end{scope}
\begin{scope} [shift={(0,0)},rotate=135]
\draw [ridge, densely dotted] (4,0)--++(-2,0)--++(0,-2)--++(-2,0)--++(0,-3);
\draw [ridge] (2,2)--++(0,-2)--++(-2,0)--++(0,-2)--++(-2,0)--++(0,-1);
\draw (1.5,0) node [above right=-3pt] {\tiny 3};
\draw (0.5,0) node [above right=-3pt] {\tiny 4};
\draw (-0.5,-2) node [above right=-3pt] {\tiny 7};
\draw (-1.5,-2) node [above right=-3pt] {\tiny 8};
\end{scope}
\begin{scope} [shift={(9,0)},rotate=135]
\draw [boxes] (0,-1)--(3,-1) (2,0)--(2,-2) (1,0)--(1,-2);
\draw [lowbdry] (0,-5)--(0,0)--(4,0);
\draw [ridge] (4,0)--++(-1,0)--++(0,-2)--++(-3,0)--++(0,-3);
\draw (3.5,0) node [above right=-3pt] {\tiny 1};
\draw (2.5,-2) node [above right=-3pt] {\tiny 4};
\draw (1.5,-2) node [above right=-3pt] {\tiny 5};
\draw (0.5,-2) node [above right=-3pt] {\tiny 6};
\end{scope}
\begin{scope} [shift={(17-\epsstep,0)},rotate=135]
\draw [ridge, densely dotted] (4,0)--++(-1,0)--++(0,-2)--++(-3,0)--++(0,-3);
\draw [ridge] (3,1)--++(0,-1)--++(-3,0)--++(0,-2)--++(-1,0)--++(0,-2);
\draw (2.5,0) node [above right=-3pt] {\tiny 2};
\draw (1.5,0) node [above right=-3pt] {\tiny 3};
\draw (0.5,0) node [above right=-3pt] {\tiny 4};
\draw (-0.5,-2) node [above right=-3pt] {\tiny 7};
\end{scope}
\end{tikzpicture}
\caption{The modules $T_{22}=M_{1256}$, with syzygy $\syzT_{22}=M_{3478}$,
and $T_{32}=M_{1456}$, with syzygy $\syzT_{32}=M_{2347}$; for $(k,n)=(4,9)$.}
\label{fig:syzTij}
\end{figure}

Next we compute $\Ext^1(T_{ij}, M_{\MRnum{s}})$, noting that this automatically vanishes
when $i=k$ or $j=n-k$, as $T_{ij}$ is projective, and when $s=0$ or $n-k$, as $M_{\MRnum{s}}\in\add T$.

\begin{lemma}\label{lem14.4}  
If $1\leq i< k$ and  $1\leq j< n-k$. 
\begin{enumerate}
\item\label{itm14.4.1}
If $0< s< n-k$, then
$
\dim\Ext^1(T_{ij}, M_{\MRnum{s}})=
\left\{ \begin{tabular}{ll} 
$1$ & if $j=s$, \\ 
$0$ & otherwise.
\end{tabular} \right.
$
\medskip
\item\label{itm14.4.2}
If $n-k<s<n$, then
$
\dim\Ext^1(T_{ij}, M_{\MRnum{s}})=
\left\{ \begin{tabular}{ll} 
$1$ & if $i=n-s$, \\ 
$0$ & otherwise.
\end{tabular} \right.
$
\end{enumerate}
\end{lemma}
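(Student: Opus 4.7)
The plan is to reduce the computation to one of $\Ext^1(S_{s+k}, T_{ij})$ via the defining extension of $M_{\MRnum{s}}$, and then to evaluate this using the $\add T$-approximation of $\rrad P_{s+k}$ provided by \Cref{prop:minappradP}.

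First, I would apply the $2$-Calabi--Yau property of $\CM\algC$ (\Cref{Thm:GPB3}\itmref{itm:GP3-3}) to swap arguments and write $\dim\Ext^1(T_{ij}, M_{\MRnum{s}})=\dim\Ext^1(M_{\MRnum{s}}, T_{ij})$. Applying $\Hom(-, T_{ij})$ to the defining sequence $0\to S_{s+k}\to M_{\MRnum{s}}\to M_{\MRden{s}}\to 0$ and observing that $M_{\MRden{s}}=P_s$ is projective (so its $\Ext^i(-,T_{ij})$ vanish for $i\geq 1$), while $T_{ij}\in\CM\algC$ is $\ring$-torsion-free (so $\Hom(S_{s+k}, T_{ij})=0$), I see that the long exact sequence collapses to $\Ext^1(M_{\MRnum{s}}, T_{ij})\isom \Ext^1(S_{s+k}, T_{ij})$. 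Applying $\Hom(-, T_{ij})$ to $0\to\rrad P_{s+k}\to P_{s+k}\to S_{s+k}\to 0$ then identifies this Ext with the cokernel of the restriction $\Hom(P_{s+k}, T_{ij})\to\Hom(\rrad P_{s+k}, T_{ij})$.

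Next, to compute this cokernel, I would let $u\in Q_0$ be the boundary vertex of the rectangles quiver in \Cref{fig:gab-quiv} with $T_u=P_{s+k}$, and use the minimal $\add T$-approximation $0\to\Fint{u}\to E_u\to\rrad P_{s+k}\to 0$ from \Cref{prop:minappradP} (whose summands are read off directly from the arrows into $u$). Applying $\Hom(-,T_{ij})$ and using the rigidity of $T$ (so $\Ext^1(E_u, T_{ij})=0$), I can identify $\Hom(\rrad P_{s+k}, T_{ij})$ with the kernel of $\Hom(E_u, T_{ij})\to\Hom(\Fint{u}, T_{ij})$. The desired cokernel thereby becomes an explicit quantity built from $\Hom$-spaces between pairs of rank-$1$ rectangles, each of which is a free $\ring$-module of rank~$1$ whose generator has a $t$-valuation computable from the relative positions of the two rectangles.

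The hard part will be the combinatorial analysis showing that the induced map fails to be surjective modulo~$t$ precisely in the stated cases: in case \itmref{itm14.4.1} when $j=s$, giving a $1$-dimensional cokernel, and analogously in case \itmref{itm14.4.2} when $i=n-s$, with vanishing otherwise. A more elegant alternative would be to bypass $S_{s+k}$ altogether by directly constructing an $\add T$-presentation of $M_{\MRnum{s}}$: its projective cover must be $P_s\oplus P_{s+k}$ by \Cref{Lem:topMIhat}, and identifying the explicit label of the rank-$1$ syzygy module would yield the conditions in the lemma via a single application of $\Hom(T_{ij},-)$ together with the rigidity of $T$.
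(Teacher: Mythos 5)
Your reduction begins from a short exact sequence that does not exist: $S_{s+k}$ cannot be a submodule of $M_{\MRnum{s}}$, because $M_{\MRnum{s}}\in\CM\algC$ is free over $\ring$ while the simple is torsion. The defining extension \eqref{eq:PPS} goes the other way, $0\to P_s\to M_{\MRnum{s}}\to S_{s+k}\to 0$. With the correct orientation, applying $\Hom(-,T_{ij})$ (after the 2-CY swap, which is fine) gives
\[
\Ext^1(M_{\MRnum{s}},T_{ij})\;\isom\;\cok\bigl(\Hom(P_s,T_{ij})\to\Ext^1(S_{s+k},T_{ij})\bigr),
\]
so the claimed collapse to $\Ext^1(S_{s+k},T_{ij})$ is not automatic: you must also show the connecting map vanishes, i.e.\ that every map $P_s\to T_{ij}$ lifts along $P_s\hookrightarrow M_{\MRnum{s}}$. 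This does hold for the non-projective summands $T_{ij}$ considered here (it is the assertion $\cok f_s^*=S_s\opmod$ made in the proof of \Cref{thm:Wformula}, cf.\ \eqref{eq:diam-seq}), but it is a statement of the same nature as the lemma itself, and your torsion-freeness remark does not supply it.

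More importantly, the proposal stops exactly where the content of the lemma lies: the case analysis showing the relevant cokernel is one-dimensional precisely when $j=s$ (resp.\ $i=n-s$) and zero otherwise is announced as ``the hard part'' but never carried out. The paper does this work on the other side of the 2-CY symmetry: it uses $\Ext^1(T_{ij},M_{\MRnum{s}})\isom\sHom(\Syz T_{ij},M_{\MRnum{s}})$, the explicit rank-one syzygy $\Syz T_{ij}=M_{L_{ij}}$ of \Cref{lem14.2}, and a profile comparison showing that every map $\Syz T_{ij}\to M_{\MRnum{s}}$ factors through $P_s$ or $P_0$ unless $j=s$ (resp.\ $i=n-s$), in which case the tops share the summand $S_{s+k}$ and the stable Hom is spanned by a single map. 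Your ``more elegant alternative'' hits the same wall: the rank-one syzygy of $M_{\MRnum{s}}$ is in general not in $\add\rectCTO$ (for $(k,n)=(2,5)$ and $s=1$ it is $M_{35}$, which is not a rectangle module), so rigidity of $T$ cannot finish the computation and one is again reduced to a stable-Hom/profile analysis between rank-one modules. A small referencing point: the explicit minimal $\add\rectCTO$-approximation of $\rrad P_{s+k}$ with kernel $\Fint{u}$ that you invoke is established in the proof of \Cref{thm:projresolb}; \Cref{prop:minappradP} only says that this shape is preserved under mutation.
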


\begin{proof}
\itmref{itm14.4.1}
Note that $\Ext^1(N, M)\isom \sHom(\Syz N, M)$. 
Comparing profiles of the modules $\syzT_{ij}$ and $M_{\MRnum{s}}$ we can see that,
if $j<s$, then any map $\syzT_{ij}\to M_{\MRnum{s}}$ factors through $P_s$, 
while, if $j>s$, then any such map factors through $P_0$.
Thus $\sHom(\Syz T_{ij}, M_{\MRnum{s}})=0$.

If $j=s$, then $\top \syzT_{ij}$ and $\top M_{\MRnum{s}}$ 
share a common summand $S_{s+k}$,
by \Cref{Lem:topMIhat} and \Cref{lem14.2}
and there is a map $g_{ij}\colon \syzT_{ij}\to M_{\MRnum{s}}$ 
which is an isomorphism on these summands.
On the other hand, any map with image in $tM_{\MRnum{s}}$ factors through $P_0$. 
Thus $\sHom(\Syz T_{ij}, M_{\MRnum{s}})$ is 1-dimensional, spanned by (the class of) $g_{ij}$.

\itmref{itm14.4.2}
Similar to \itmref{itm14.4.1}.
If $i\neq n-s$, then any map $\syzT_{ij}\to M_{\MRnum{s}}$ factors through $P_s$ or~$P_0$.
If $i=n-s$, then $\top \syzT_{ij}$ and $\top M_{\MRnum{s}}$ 
share a common summand $S_{k-i}=S_{s+k}$ 
and there is a map $f_{ij}\colon \syzT_{ij}\to M_{\MRnum{s}}$ 
which is an isomorphism on these summands.
Any map with image in $tM_{\MRnum{s}}$ factors through $P_0$. 
\end{proof}

We denote the simple $\algA$-module at vertex $ij\in\grid$ by $S_{ij}$. 

\begin{proposition} \label{Prop:ExtTM} 
Let $X=\Ext^1(T, M_{\MRnum{s}})$.
\begin{enumerate}
\item\label{itm:prETM1}
If $0< s<n-k$, then $X$ is a uniserial  $A$-module, 
supported at all the vertices in column $s$ but the top-most one. 
In particular, 
\[ 
  \dim X=k-1,\quad  \top X=S_{(k-1)s} \quadand \soc X= S_{1s}.
\]
\item\label{itm:prETM2}
If $n-k< s< n$, then $X$ is a uniserial  $A$-module, 
supported at all the vertices in row $s'=n-s$ but the right-most one. 
In particular,
\[ 
  \dim X=n-k-1,\quad \top X=S_{s'(n-k-1)} \quadand \soc X= S_{s'1}.
\]
\end{enumerate}
\end{proposition}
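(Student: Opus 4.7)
The plan is to first pin down the dimension vector of $X = \Ext^1(T, M_{\MRnum{s}})$ using \Cref{lem14.4}, and then identify the $A$-module structure by computing how the arrows of the Gabriel quiver $Q$ act on $X$. Throughout I will use the natural isomorphism $\Ext^1(T_{ij}, M_{\MRnum{s}}) \isom \sHom(\syzT_{ij}, M_{\MRnum{s}})$ from \Cref{Lem:ExtSHom}\itmref{itm:ESH3}, already employed in the proof of \Cref{lem14.4}, together with the fact that precomposition in the first argument corresponds to precomposition by the induced map on syzygies.

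For part \itmref{itm:prETM1}, with $0 < s < n-k$, \Cref{lem14.4}\itmref{itm14.4.1} gives that $e_{(i,j)} X$ is one-dimensional if $j = s$ and $1 \leq i \leq k-1$, and zero otherwise (noting that $T_{ks} = M_{\MRden{s}}$ and $T_\vempty$ are projective, so contribute zero as well). Hence $X$ is supported on column $s$ at precisely the $k-1$ vertices $(1,s), \ldots, (k-1,s)$, giving $\dim X = k-1$. Inspecting the structure of $Q$ (see \Cref{fig:gab-quiv}), the only arrows of $Q$ entirely within this support are the vertical arrows $(i+1, s) \to (i, s)$ for $1 \leq i \leq k-2$; every other arrow incident to these vertices either issues from $(k, s)$ (where the space is zero) or has target outside the support (and so acts trivially on $X$).

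The key step is to show that each such arrow acts as an isomorphism $e_{(i+1, s)} X \to e_{(i, s)} X$, which then forces $X$ to be uniserial with $\top X = S_{(k-1)s}$ and $\soc X = S_{1s}$. The arrow $(i+1, s) \to (i, s)$ corresponds to the irreducible map $T_{is} \to T_{(i+1)s}$ in $\add T$ and acts by pullback of extensions, which under the syzygy identification becomes precomposition with the induced map $\epsilon_{is}\colon \syzT_{is} \to \syzT_{(i+1)s}$ from \Cref{lem14.2}\itmref{itm:14.2.5}. By the proof of \Cref{lem14.4}\itmref{itm14.4.1}, the generator $g_{(i+1)s}$ of the one-dimensional space $\sHom(\syzT_{(i+1)s}, M_{\MRnum{s}})$ is characterised as being an isomorphism on the common $S_{s+k}$ summand of the two tops, and $\epsilon_{is}$ is likewise an isomorphism on the $S_{s+k}$ summand by \Cref{lem14.2}\itmref{itm:14.2.5}. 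Therefore the composite $g_{(i+1)s} \circ \epsilon_{is}$ is again an isomorphism on the $S_{s+k}$ summand of $\top\syzT_{is}$, hence non-zero in $\sHom(\syzT_{is}, M_{\MRnum{s}})$, and so a non-zero scalar multiple of $g_{is}$.

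Part \itmref{itm:prETM2} is entirely analogous, with columns and rows interchanged: by \Cref{lem14.4}\itmref{itm14.4.2}, $X$ is supported on the row $s' = n-s$ at vertices $(s', 1), \ldots, (s', n-k-1)$, and the relevant arrows are the horizontal arrows $(s', j+1) \to (s', j)$ in $Q$, whose non-triviality follows from \Cref{lem14.2}\itmref{itm:14.2.4} and \Cref{lem14.4}\itmref{itm14.4.2} by the identical argument. The main technical hurdle is precisely the compatibility of the generators $g_{ij}$ (respectively $f_{ij}$) with the induced syzygy maps $\epsilon_{ij}$ (respectively $\iota_{ij}$) on their shared $S_{s+k}$ top summands, but this is already packaged in \Cref{lem14.2}, so the argument goes through smoothly.
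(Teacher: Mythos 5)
Your proof is correct and takes essentially the same route as the paper: after reading off the support and dimension vector from \Cref{lem14.4}, the paper's own proof likewise concludes uniseriality by observing that the arrows inside the column (resp.\ row) act as isomorphisms, via exactly the compatibilities $g_{is}=g_{(i+1)s}\,\epsilon_{is}$ and $f_{s'j}=f_{s'(j+1)}\,\iota_{s'j}$ that you extract from \Cref{lem14.2} and \Cref{lem14.4}. The only thing you omit is the paper's shortcut, which it records first: $X$ is \emph{a priori} indecomposable because $M_{\MRnum{s}}$ is and $\Ext^1(T,-)$ induces an equivalence $\underline{\CM}\,\algC/\add T \to \mmod \sEnd(T)\op$ (Keller--Reiten), so uniseriality with the stated top and socle already follows from the one-dimensional support on a linear $\dyn{A}$-type subquiver; your arrow-by-arrow verification coincides with the paper's alternative ``direct'' argument.
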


\begin{proof}
The support and dimension are determined by \Cref{lem14.4}. 
Thus $X$ is supported on a linear sub-quiver of the 
Gabriel quiver $Q$ (cf. \Cref{fig:gab-quiv}).

We know \emph{a priori} the $X$ must be indecomposable, 
because $M_{\MRnum{s}}$ is indecomposable
and $\Ext^1(T, -)$ is an equivalence $\underline\CM\algC / \add T \to \mmod \sEnd(T)\op$,
by \cite[Prop.~2.1(c)]{KeRe}.
Thus $X$ is uniserial, with the stated top and socle.

We can also see this directly by observing that all arrows in the sub-quiver act as isomorphisms,
because $f_{(n-s)j}=f_{(n-s)(j+1)}\iota_{(n-s)j}$ and $g_{is}=g_{(i+1)s}\,\epsilon_{is}$,
where $\iota,\epsilon$ are as in \Cref{lem14.2} and $f,g$ are as in \Cref{lem14.4}.
\end{proof}

The cluster variables for $\rectCTO$ are
$\Psi_{T_{ij}}= \minor{K_{ij}}$, for $ij\in\grid$,
and $\Psi_{T_\vempty}=\minor{\MRden{0}}$.
We call them \emph{rectangle cluster variables}
and the goal is to express the superpotential $\superpot$
in terms of them, as in \cite[Prop.~6.10]{MR} and \cite[Prop.~10.5]{RW}.

To simplify notation, as in \emph{loc. cit.}, we write the rectangle cluster variables as
\[
 p_{ij} = 
  \begin{cases}
    \minor{K_{ij}} & \text{for $ij\in\grid$,} \\
    \minor{\MRden{0}} & \text{if $i=0$ or $j=0$.}
  \end{cases}
\]  
Thus the cluster variable $p_\vempty=\minor{\MRden{0}}$
can also appear as $p_{00}$, $p_{10}$, $p_{01}$, \emph{etc}.

\begin{proposition}\label{Prop:superpot}
The fractions in \eqref{eq:superpot} can be written in rectangle cluster variables as follows.
\begin{enumerate}
\item\label{itm:prSP1}
If $0< s< n-k$, then
$\displaystyle
  \frac{ \minor{\MRnum{s}} }{ \minor{\MRden{s}} }
  = \sum_{t=0}^{k-1}  \frac{ p_{(t+1) (s+1)} p_{t (s-1)}}{p_{t s}\, p_{(t+1) s}}.
$
\medskip
\item\label{itm:prSP2}
If $n-k< s< n$ and $s'=n-s$, then
$\displaystyle
  \frac{ \minor{\MRnum{s}} }{ \minor{\MRden{s}} }
  = \sum_{t=0}^{n-k-1}  \frac{ p_{(s'+1) (t+1)} p_{(s'-1) t}}{p_{s' t}\, p_{s' (t+1)}}.
$
\item\label{itm:prSP3}
$\displaystyle
  \frac{ \minor{\MRnum{0}} }{ \minor{\MRden{0}} }=\frac{p_{11}}{p_{\vempty}} 
$
and
$\displaystyle
  \frac{ \minor{\MRnum{n-k}} }{ \minor{\MRden{n-k}} }=\frac{p_{(k-1)(n-k-1)}}{p_{k(n-k)}}.
$
\end{enumerate}
Thus the Marsh--Rietsch superpotential from \eqref{eq:superpot} can be written
\begin{equation}\label{eq:MRsp-rect}
\superpot = 
  \frac{p_{11}}{p_{\vempty}}
 + \sum_{i=1}^{k} \sum_{j=2}^{n-k} \frac{ p_{ij}\, p_{(i-1)(j-2)}}{p_{(i-1)(j-1)} p_{i(j-1)}}
 + q\frac{p_{(k-1)(n-k-1)}}{p_{k(n-k)}}
 + \sum_{i=2}^{k}\sum_{j=1}^{n-k}  \frac{ p_{ij} \, p_{(i-2)(j-1)}}{p_{(i-1)(j-1)}p_{(i-1)j}}
\end{equation}
recovering \cite[Prop.~10.5]{RW}, i.e.~\cite[Prop.~6.10]{MR}.
\end{proposition}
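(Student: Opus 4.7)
The strategy is to apply the algebra isomorphism $\Upsilon^{T}$ of \Cref{Thm:PsiandTheta} for $T=\rectCTO$ (the rectangles cluster tilting object of \Cref{def:rect-clus}) and then evaluate each fraction $\minor{\MRnum{s}}/\minor{\MRden{s}}$ using the Fu--Keller formula \eqref{eq:FK-CC}. Under $\Upsilon^{T}$ the rectangle cluster variables $p_{pq}=\Psi_{T_{pq}}$ correspond to the monomials $\xvar^{[T,T_{pq}]}$, so it suffices to prove the asserted identities in $\CC[\Grot(\CM\algA)]$ and then invert.

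For part~\itmref{itm:prSP3}, both $M_{\MRnum{0}}=T_{11}$ and $M_{\MRnum{n-k}}=T_{(k-1)(n-k-1)}$ lie in $\add T$ by \eqref{eq:M-in-addT}, so by \Cref{lem:Phi-monom} the Fu--Keller characters $\Phi^{T}_{M_{\MRnum{0}}}$ and $\Phi^{T}_{M_{\MRnum{n-k}}}$ are monomials, and dividing by the monomials for the projectives $M_{\MRden{0}}=T_{\vempty}$ and $M_{\MRden{n-k}}=T_{k(n-k)}$ immediately yields the two formulas.

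For parts~\itmref{itm:prSP1} and \itmref{itm:prSP2}, $M_{\MRden{s}}=P_{s}$ is projective so $\Phi^{T}_{M_{\MRden{s}}}$ is again a monomial. By \Cref{Prop:ExtTM}, $X=\Ext^{1}(T,M_{\MRnum{s}})$ is uniserial with composition factors $S_{1s},\ldots,S_{(k-1)s}$ from socle to top in case~\itmref{itm:prSP1} (and the analogous chain in case~\itmref{itm:prSP2}). Hence the submodules form a chain $0=V_{0}\subsetneq V_{1}\subsetneq\cdots\subsetneq V_{k-1}=X$ with $[V_{t}]=\sum_{i=1}^{t}[S_{is}]$, every Grassmannian $\qvGr{d}X$ is empty or a single point, and \eqref{eq:FK-CC} collapses to a sum of exactly $k$ (respectively $n-k$) monomials, each with coefficient~$1$, matching the number of summands in the claim.

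The remaining task is to match the $t$-th term. From the Gabriel quiver of \Cref{fig:gab-quiv} I read off, for each interior vertex $(i,s)$, the incoming summand $E_{is}=T_{i(s-1)}\oplus T_{(i-1)s}\oplus T_{(i+1)(s+1)}$ and the outgoing summand $F_{is}=T_{i(s+1)}\oplus T_{(i+1)s}\oplus T_{(i-1)(s-1)}$ (with the boundary convention $T_{0j}=T_{i0}=T_{\vempty}$, and $T_{kj}$, $T_{i(n-k)}$ interpreted as the frozen projectives), and apply \Cref{Prop:projres}\itmref{itm:pr1} to get $\beta[S_{is}]=[T,F_{is}]-[T,E_{is}]$. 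Summing gives a telescoping along column $s$:
\[
\sum_{i=1}^{t}\beta[S_{is}]=[T,T_{1(s+1)}]-[T,T_{(t+1)(s+1)}]+[T,T_{ts}]+[T,T_{(t+1)s}]-[T,T_{0s}]-[T,T_{1s}]+[T,T_{0(s-1)}]-[T,T_{t(s-1)}].
\]
Combined with the leading-term identity
\[
[T,M_{\MRnum{s}}]-[T,P_{s}]=[T,T_{1(s+1)}]-[T,T_{1s}],
\]
obtained by applying $\Hom_{\algC}(T,-)$ to the short exact sequence $0\to P_{s}\to M_{\MRnum{s}}\to S_{s+k}\to 0$ (using $\Ext^{1}_{\algC}(T,P_{s})=0$ and \Cref{Lem:classvsresol}, and identifying the summands of $T$ whose $\algC$-top contains $S_{s+k}$), this yields exactly the claimed four-monomial summand $p_{(t+1)(s+1)}p_{t(s-1)}/(p_{ts}p_{(t+1)s})$. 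Case~\itmref{itm:prSP2} follows by the symmetry swapping rows and columns of the grid. The combined formula \eqref{eq:MRsp-rect} is then obtained by substituting into \eqref{eq:superpot}. The main obstacle is the careful quiver bookkeeping at the corners of the grid (where neighbouring indices collapse to $\vempty$ or to frozen projectives), and the verification of the leading-term shift; everything else is a routine telescoping.
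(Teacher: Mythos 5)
Your overall route is the paper's: reduce via $\Upsilon^T$ (equivalently \Cref{rem:FK-applic}) to an identity in $\CC[\Grot(\CM\algA)]$ with $p_{ij}=\xvar^{[T,T_{ij}]}$, use \Cref{Prop:ExtTM} to see that $\Ext^1(T,M_{\MRnum{s}})$ is uniserial so each fraction is a sum of $k$ (resp.\ $n-k$) coefficient-one monomials, and pin down the monomials from $\beta[S_{ts}]=[T,F_{ts}]-[T,E_{ts}]$ (\Cref{Prop:projres}\itmref{itm:pr1}) together with the explicit quiver; part \itmref{itm:prSP3} is handled identically. Your closed-form telescoping of $\sum_{i\leq t}\beta[S_{is}]$ is just the paper's induction on $t$ written out at once, and it is correct: after cancelling $T_{0s}=T_{0(s-1)}=T_\vempty$ it yields exactly the exponent $[T,T_{(t+1)(s+1)}]+[T,T_{t(s-1)}]-[T,T_{ts}]-[T,T_{(t+1)s}]$. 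Your boundary convention is also sound, because at first-row and first-column vertices the arrows missing from the generic six-neighbour picture involve $T_\vempty$ equally in $E$ and $F$, which is precisely the paper's border-case remark.

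The one step that does not work as written is your justification of the leading-term identity $[T,M_{\MRnum{s}}]-[T,P_s]=[T,T_{1(s+1)}]-[T,T_{1s}]$. Applying $\Hom(T,-)$ to $0\to P_s\to M_{\MRnum{s}}\to S_{s+k}\to 0$ and \Cref{Lem:classvsresol} gives $[T,M_{\MRnum{s}}]-[T,P_s]=\beta\bigl[\Hom_\algC(T,S_{s+k})\bigr]$, but $\Hom_\algC(T,S_{s+k})$ is not concentrated near the vertices $1(s+1)$ and $1s$: since $\top T_{ij}=S_n\oplus S_{k-i+j}$, it is one-dimensional at every rectangle on the whole diagonal $j-i=s$ (whose last vertex is frozen), and it vanishes at $T_{1s}$. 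So ``identifying the summands of $T$ whose top contains $S_{s+k}$'' only gives the dimension vector; evaluating $\beta$ of this module is a further computation, and at the frozen vertex it needs \Cref{Prop:projres}\itmref{itm:pr2}, so the identity is not immediate from what you wrote. The quick repair is the paper's: the $\add T$-presentation $0\to T_{1s}\to T_{ks}\oplus T_{1(s+1)}\to M_{\MRnum{s}}\to 0$ (equivalently, the inclusion $T_{1s}\hookrightarrow T_{1(s+1)}$ has cokernel $S_{s+k}$, read off from the profiles) stays exact under $\Hom(T,-)$ because $\Ext^1(T,T_{1s})=0$, and gives $[T,M_{\MRnum{s}}]-[T,T_{ks}]=[T,T_{1(s+1)}]-[T,T_{1s}]$ directly. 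With that inserted, your argument is complete and coincides with the paper's proof.
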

\begin{proof}
By \Cref{rem:FK-applic}, we can use \eqref{eq:FK-CC}, to write
\begin{equation}\label{eq:coeff}
\minor{\MRnum{s}}=\Psi_{M_{\MRnum{s}}}
 =\xvar^{[T, M_{\MRnum{s}}]} \sum_d \euler \bigl( \Gr_d \Ext^1(T, M_{\MRnum{s}}) \bigr) \xvar^{-\beta(d)},
\end{equation}
after making the identification $\xvar^{[T, T_{ij}]}=p_{ij}$.

\itmref{itm:prSP1} 
By \Cref{Prop:ExtTM}\itmref{itm:prETM1}, $X_s=\Ext^1(T, M_{\MRnum{s}})$ is 
a uniserial module with top $S_{(k-1)s}$ and socle $S_{1s}$. 
For each $0\leq t< k$, $X_s$ has a unique submodule $X_{ts}$ of dimension~$t$. 
So the non-zero coefficients $\euler( \Gr_d X_s)$ 
in \eqref{eq:coeff} are all~1.
Thus  
\[
  \minor{\MRnum{s}}=\xvar^{[T, M_{\MRnum{s}}]} \sum_{t=0}^{k-1} \xvar^{-\beta[X_{ts}]}.
\]
Thus $\minor{\MRnum{s}}/\minor{\MRden{s}}$ is a sum of terms
$\xvar^{[T, M_{\MRnum{s}}] - \beta[X_{ts}]} / \minor{\MRden{s}}
= \xvar^{ [T, M_{\MRnum{s}}] - [T, T_{ks}] - \beta[X_{ts}] }$.  

Observe that the following sequence is exact.
\[
0\to T_{1s}\to T_{ks}\oplus T_{1(s+1)}\to M_{\MRnum{s}}\to 0. 
\]
Hence
$
[T, M_{\MRnum{s}}] -[T, T_{ks}] = [T, T_{1(s+1)}] - [T, T_{1s}]
$ 
and the $t=0$ term of $\minor{\MRnum{s}} / \minor{\MRden{s}}$ is 
\begin{equation}\label{eq:t0term}
  \xvar^{ [T, M_{\MRnum{s}}] - [T, T_{ks}] } 
  = \frac{p_{1(s+1)}}{p_{1s}}
  = \frac{ p_{1 (s+1)} p_{0 (s-1)}}{p_{0 s}\, p_{1 s}},
\end{equation}
as required, because $p_{0 (s-1)}=p_\vempty= p_{0 s}$.

Observe that $X_{ts}/X_{(t-1)s}\isom S_{ts}$, so $[X_{ts}]=[X_{(t-1)s}]+[S_{ts}]$.
Hence we can try to prove the formula for the general term
by induction on $t$ starting from \eqref{eq:t0term}.
Assuming the case $t-1$ is proven, we would have
\begin{align*}
 \xvar^{[T, M_{\MRnum{s}}] - [T, T_{ks}] - \beta[X_{ts}] }
 &=\xvar^{[T, M_{\MRnum{s}}] - [T, T_{ks}] - \beta[X_{(t-1)s}] - \beta[S_{ts}]} \\
 &=  \frac{ p_{t (s+1)} p_{(t-1) (s-1)}}{p_{(t-1) s}\, p_{t s}} \xvar^{- \beta[S_{ts}]}
 =  \frac{ p_{(t+1) (s+1)} p_{t (s-1)}}{p_{t s}\, p_{(t+1) s}},
 \end{align*}
provided we can prove that 
\begin{equation}\label{eq:req-for-beta}
  \xvar^{\beta[S_{ts}]} = \frac
  { p_{t (s+1)} p_{(t-1) (s-1)} p_{(t+1) s} }
  { p_{(t-1) s}\, p_{(t+1) (s+1)} p_{t (s-1)} }. 
\end{equation}
Now, by \Cref{Prop:projres}, 
\[
  \beta[S_{ts}]=[T, F_{ts}]-[T, E_{ts}],
\]
where $E_{ts}$ and $F_{ts}$ are the middle terms of the mutation sequences for $T_{ij}$
from \eqref{eq:mut-two}.
At a general vertex $ts$ in the Gabriel quiver, there are three incoming arrows from
$t (s+1)$,  $(t-1) (s-1)$ and $(t+1) s$ and three outgoing arrows to
$(t-1) s$, $(t+1) (s+1)$ and $t (s-1)$, giving
\[
 F_{ts} = T_{t (s+1)}\oplus T_{(t-1) (s-1)} \oplus T_{(t+1) s}
 \quadand
 E_{ts} = T_{(t-1) s}\oplus T_{(t+1) (s+1)}\oplus T_{t (s-1)}
\]
and thus, by the substitution $p_{ij}=\xvar^{[T,T_{ij}]}$, 
we obtain the required expression \eqref{eq:req-for-beta} to complete the induction.

In the border cases, where $s=1$ or $t=1$, there is a cancellation in \eqref{eq:req-for-beta},
due to the term $p_\vempty$ in both the numerator and denominator.
In the quiver, there is one incoming and one outgoing arrow missing, 
which precisely compensates for this cancellation 
and so (a slight variant of) the above argument is still valid.

\itmref{itm:prSP2} is similar to \itmref{itm:prSP1}, so we omit the details.
\itmref{itm:prSP3} is immediate because $\minor{\MRnum{0}}$ and $\minor{\MRnum{n-k}}$
are rectangle cluster variables (cf.~\eqref{eq:M-in-addT}).
To write the sums as in \eqref{eq:MRsp-rect} make the change of variables 
$(i,j)=(t+1,s+1)$ for \itmref{itm:prSP1} and $(i,j)=(s'+1,t+1)$ for \itmref{itm:prSP2}.
\end{proof}

\begin{remark}\label{rem:trop-GT}
As noted in \cite[Lem.~16.2]{RW},
tropicalising the superpotential $\superpot$,
written in rectangle cluster variables as in \Cref{Prop:superpot} (and with $p_\vempty=1$), 
yields the inequalities \eqref{eq:CGT-pattern}
that define the cone $\gtc$ of cumulative GT-patterns.
Specifically, $p_{ij}$ tropicalises to $v_{ij}$ and $q$ to $r$.
In other words,
\begin{equation}\label{eq:GT=SP}
  \gtc  = \cone_W(\rectCTO)
\end{equation}
where the \emph{superpotential cone} associated to any $T$ by $\superpot$ is,
as in \cite[Def.~10.10]{RW}, 
\[
  \cone_W(T) = \bigl\{ (r,v)\in (\ZZ\oplus\Nstar)\tensR \st \trop_T(\superpot)(r,v)\geq 0 \bigr\}.
\]
Here we write $\trop_T$, as $T$ indexes a cluster for us, while \cite{RW} writes $\trop_G$.
Moreover, we define the cone itself, while they define its level $r$ slices.
Note that, since it is given by integral inequalities, $\cone_W(T)$ is always a rational polyhedral cone.
\end{remark}

This leads to the following categorical incarnation of Grassmannian mirror symmetry,
in the sense of \cite{RW}.

\begin{theorem}\label{thm:trop-GT}
For any reachable cluster tilting object $T$ in $\CM\algC$, we have
\begin{equation}\label{eq:wtGV=SPcone}
  \wthat\gvc{T} = \cone_W(T).
\end{equation}
\end{theorem}

\begin{proof}
In the case $T=\rectCTO$, we know that $\wthat\gvc{\rectCTO}=\gtc$, by \Cref{Prop:kappaGT},
so this is just \eqref{eq:GT=SP}.

We can then follow the inductive strategy of \cite{RW} 
to see that \eqref{eq:wtGV=SPcone} holds for all~$T$,
by considering what happens when $T$ is mutated to $T'$.
On one hand, $\cone_W(T)$, and hence its set of integer points, 
undergoes tropical $\cluA$-mutation (e.g.~\cite[Cor.11.16]{RW}).
On the other hand, $\wthat\gvc{T}$ also undergoes tropical $\cluA$-mutation, 
by \Cref{Cor:mutkappa}. 
Hence $\wthat\gvc{T'}$ is the set of integral points of $\cone_W(T')$,
whenever the same holds for $T$, as required for the induction.
\end{proof}

Note that Rietsch--Williams use this inductive strategy to show (in effect) that 
\begin{equation}\label{eq:RW-coneSP}
 \noc{G}=\cone_W(G), 
\end{equation}
for all $\cluX$-seeds $G$. In particular, they have to prove that $\val{G}(L_r)$, 
as in \eqref{eq:valGLr}, undergoes tropical $\cluA$-mutation \cite[Lemma 16.17]{RW}
and, for the rectangles cluster, coincides with the cumulative GT-polytope at \level~$r$ 
\cite[Lemma 16.6]{RW}.

We know, from \eqref{eq:noc=noc}, that $\noc{G}=\wthat\noc{T}$,
when $G$ and $T$ are associated, so $\cone_W(G)$ is just another name for $\cone_W(T)$.
Thus \eqref{eq:wtGV=SPcone} is equivalent to \eqref{eq:RW-coneSP}.

\newcommand{\spb}[1]{\Gamma_{#1}}
\begin{remark}\label{rem:body-vers}
In \cite{RW}, \eqref{eq:RW-coneSP} is actually stated as an equality of polytopes 
\begin{equation}\label{eq:RW-bodSP}
 \nob{G}=\spb{W}(G)
\end{equation}
and we can write \eqref{eq:wtGV=SPcone} similarly as
\begin{equation}\label{eq:kap=SPbod}
 \kab{T}=\spb{W}(T)
\end{equation}
where $\nob{G}$ and $\kab{T}$ are as in \eqref{eq:nob-def} and \eqref{eq:kab-def},
and $\spb{W}$ is the level 1 slice of $\cone_W$, that is,
\[
 \spb{W}(T) = \bigl\{ v\in \Nstar\tensR \st \trop_T(\superpot)(1,v)\geq 0 \bigr\}.
\]
Then $\spb{W}(G)$ is just another name for this, as $\trop_G$ is another name for $\trop_T$.
Thus \eqref{eq:RW-bodSP} is equivalent to \eqref{eq:kap=SPbod} by \Cref{prop:nob=kab}.
\end{remark}

\subsection{Categorical interpretation of the superpotential}\label{subsec:cat-superpot}

To describe $\gvc{T}$ itself in $\Grot(\CM\algA)$ in a similar way to \eqref{eq:wtGV=SPcone},
we want an expression for the superpotential with exponents in the dual lattice $\Grot(\fd\algA)$.
We will find such an expression in what follows.

We can write the Marsh--Rietsch superpotential $\superpot=\sum_{i=1}^n \superpot_i$
in terms of cluster characters by
\begin{equation}\label{eq:superpot-term}
  \superpot_i = q^{m_i} \cluschar{\extd{P_i}} / \cluschar{P_i}
\end{equation}
where~$m_i=1$, if $i=n-k$, and $m_i=0$, otherwise, 
while $\extd{P_i}$ is an extension of~$P_i$ by a simple $\algC$-module~$\extd{S_i}=S_{i+k}$, 
that is, we have a short exact sequence
\begin{equation}\label{eq:PPS}
0\lra P_i \lraa{f_i} \extd{P_i} \lra \extd{S_i} \lra 0.
\end{equation}
In the notation of \S\ref{subsec:rect-sp}, we have $P_i=M_{\MRden{i}}$ and $\extd{P_i}=M_{\MRnum{i}}$.

\newcommand{\Ex}[1]{\Ext^1(T,#1)} 

A key step in \cite[\S18]{RW} is to write the superpotential $\superpot$ in cluster coordinates 
(and then tropicalise, i.e.~look at the exponents).
In the notation of \Cref{Lem:upsilon} and \Cref{Thm:PsiandTheta},
this means computing $\Upsilon^{T}\superpot$ by substituting $\Phi^T$ for $\cluschar{}$ in \eqref{eq:superpot-term}.
More precisely, we will use the co-$g$-vector variant of the CC formula
\eqref{eq:FK-CC}, namely
\begin{equation}\label{eq:cog-CC}
 \Phi^T_{N} = \xvar^{\cogvec{N}} \Fpolyd{\Ex{N}} (\xvar^\beta)
\end{equation}
where 
the co-$g$-vector $\cogvec{N}=[T,N]-\beta[\Ex{N}]$ and 
\[
  \Fpolyd{E} (\yvar)= \sum_{d} \euler \bigl( \quotGr{d} E \bigr)\yvar^{d}
\]
is the quotient F-polynomial (cf. \Cref{rem:fp-F-poly}),
that is, $\quotGr{d} E$ is the Grassmannian of $d$-dimensional quotients of $E$.  
Note also that writing $\Fpolyd{\Ex{N}} (\xvar^\beta)$ is notation 
for making a monomial change of variables,
equivalent to writing $\CC[\beta] \Fpolyd{\Ex{N}} (\yvar)$.

Thus the substitution yields
\begin{equation}\label{eq:UpsW-A}
  \Upsilon^{T}\superpot_i = q^{m_i}\Phi^T_{\extd{P_i}} / \Phi^T_{P_i} 
  = q^{m_i} \xvar^{\cogvec{\extd{P_i}}-[T,P_i]} \Fpolyd{\Ex{\extd{P_i}}} (\xvar^\beta).
\end{equation}
The right-hand side can be interpreted as a formal Laurent polynomial in $\CC[\ZZ\oplus \Mzero]$,
with the exponent of $q$ in the $\ZZ$ summand.
We can then rewrite \eqref{eq:UpsW-A} as follows.

\begin{proposition}\label{thm:Wformula}
\begin{equation}\label{eq:UpsW-B}
  \Upsilon^{T}\superpot_i 
  = \CC \bigl[ \betahatdual \bigr] \Bigl( \yvar^{[S_i]} \Fpolyd{\Ext^1(T,\extd{P_i})} (\yvar) \Bigr), 
\end{equation}
where $S_i$ is the simple $\algA$-module at the boundary vertex $i\in Q_0$ and 
\begin{equation}\label{eq:wt-hat-inv-dual}
  \betahatdual \colon \Grot(\fd\algA) \to \ZZ\oplus \Mzero \colon [X] \mapsto ([T,\modJ][X], -\beta\dual[X])
\end{equation}
is dual (as a lattice map) to the isomorphism 
$\betahat\colon \ZZ\oplus \Nstar \to \Grot(\CM\algA)$ in \eqref{eq:wt-hat-inv}. 
\end{proposition}

\begin{proof}
Note that we can effectively treat $\beta\dual\colon \Grot(\fd\algA)\to \Grot(\CM\algA)$ just as
the dual of $\beta\colon \Grot(\fd\algA)\to \Grot(\CM\algA)$ and then restrict the codomain
of $\beta\dual$ to $\Mzero$, into which it maps.
Since $\beta$ appears in $\betahat$ restricted to $\Nstar$,
we are thus identifying $\Nstar\dual=\Mzero$, as in \Cref{rem:dual}.

If we write $\beta$ as a matrix using the dual bases of simples and projectives,
then $\beta\dual$ is given by the transpose matrix.
Thus $\beta=-\beta\dual$ on $Q_0^{int}$, by \Cref{Prop:projres}\itmref{itm:pr1}.

Hence
\[
  \CC \bigl[ \betahatdual \bigr] \Fpolyd{\Ex{\extd{P_i}}} (\yvar) = \Fpolyd{\Ex{\extd{P_i}}} (\xvar^\beta),
\]
which lies in $\CC[\Mzero]$ because $[T,\modJ]\colon \Grot(\fd\algA) \to \ZZ$ vanishes on $Q_0^{int}$.
This is because $\modJ=P_{n-k}$, so $[T,\modJ]$ is the projective $\algA$-module
at a boundary vertex of $Q_0$.
This also means that the $q^{m_i}$ factor is correctly encoded, because $m_i=[T,\modJ][S_i]$.

Finally, we need to see that
\[
  \beta\dual [S_i] = [T,P_i]-\cogvec{\extd{P_i}}.
\]
Note that we can compute $\cogvec{\extd{P_i}}=[T,T']-[T,T'']$, where 
\begin{equation}\label{eq:addTappEP}
  0\to \extd{P_i}\to T' \to T''\to 0,
\end{equation}
is an $\add T$-approximation, as in  \eqref{eq:addT->M-other}.

Applying $\Hom(-,T)$ to \eqref{eq:PPS} yields an exact sequence of right $\algA$-modules
\begin{equation}\label{eq:diam-seq}
0\lra \Hom(\extd{P_i},T)\lraa{f_i^*} \Hom(P_i,T)
\lra \Ext^1(\extd{S_i},T) \lra \Ext^1(\extd{P_i},T)\lra 0.
\end{equation}
Observe that $\cok f_i^*=S_i\opmod$, the simple right $\algA$-module at $i$,
because $\idmap\colon P_i\to P_i$ is the only map $P_i\to T$ that doesn't lift to $\extd{P_i}$.

Applying $\Hom(-,T)$ to \eqref{eq:addTappEP} yields the short exact sequence
\[
\ShExSeq{\Hom(T'',T)}{\Hom(T',T)}{\Hom(\extd{P_i},T)}
\]
which can be spliced into the first part of \eqref{eq:diam-seq} 
to give a projective resolution of $S_i\opmod$
\begin{equation}\label{eq:right-proj-res}
  0\lra \Hom(T'',T) \lra \Hom(T',T) \lra \Hom(P_i,T) \lra S_i\opmod \lra 0.
\end{equation}
By \Cref{rem:beta-stuff},
we can identify $\beta\dual$ with $\beta\op\colon \Grot(\fd\algA\op)\to \Grot(\CM\algA\op)$.
Thus
\[
  \beta\dual [S_i] = [T,P_i]-[T,T']+[T,T''],
\]
as required to complete the proof.
\end{proof}

We can now adapt Rietsch--Williams mirror symmetry result \eqref{eq:RW-coneSP},
describing the NO-cone as a superpotential cone, to our context.

\begin{theorem}\label{thm:coneGV-SP}
The equations defining $\gvc{T}$ in $\Grot(\CM\algA)$ can be obtained by 
tropicalising the superpotential
\begin{equation}\label{eq:WinX}
  W_\cluX(T)= \sum_{i=1}^n \yvar^{[S_i]} 
  \Fpolyd{\Ext^1(T,\extd{P_i})} (\yvar) 
  \in \CC[\Grot(\fd\algA)].
\end{equation}
Explicitly, these equations are
\begin{equation}\label{eq:gv-faces1}
  ([S_i]+[V])(x) \geq 0, \quad\text{for all quotients $V$ of $\Ext^1(T,\extd{P_i})$, for $i=1,\ldots,n$.} 
\end{equation}
Alternatively, we can write these equations as
\begin{equation}\label{eq:gv-faces2}
  [U](x) \geq 0, \quad\text{for all $U\subspc\Ext^1(\extd{S_i},T)$, for $i=1,\ldots,n$,} 
\end{equation}
where $[U]\in\Grot(\fd\algA\op)$, which we can also view as dual to $\Grot(\CM\algA)$.
\end{theorem}

\begin{proof}
By \cite[Thm.~16.18]{RW}, the tropicalisation of $\Upsilon^{T}\superpot$ yields the equations defining the cone
\[ 
  \noc{G}\subset (\ZZ\oplus\Nstar)\tensR,
\]
as in \S\ref{subsec:12.3}.
We also know from \Cref{prop:nob=kab}, specifically \eqref{eq:noc=gvc}, 
since $\betahat$ is the inverse of  $\wthat$, that
\[ 
  \betahat \noc{G}=\gvc{T}\subset \Grot(\CM\algA)\tensR.
\]
But \Cref{thm:Wformula} shows precisely that the superpotential in \eqref{eq:WinX}
transforms into $\Upsilon^{T}\superpot$ under the dual isomorphism, as required.

The equations \eqref{eq:gv-faces1} follow immediately, because the exponents in 
$\Fpolyd{\Ext^1(T,\extd{P_i})} (\yvar)$ 
are precisely $[V]$, for the quotients $V$ of $\Ext^1(T,\extd{P_i})$.

The equations \eqref{eq:gv-faces2} come from the observation that $\Ext^1(T,\extd{P_i})$ is the (linear) dual of 
$\Ext^1(\extd{P_i},T)$, while the last half of \eqref{eq:diam-seq} shows that 
submodules of $\Ext^1(\extd{P_i},T)$ can be extended by $S_i\opmod$
to give submodules of $\Ext^1(\extd{S_i},T)$.
\end{proof}

It would be very interesting to find a categorical proof that $g$-vectors satisfy these inequalities.
One easy case is that $[S_i] [T,M]\geq 0$ for all $i$ and $M$,
because all the projectives in an $\add T$-presentation of $M$ appear in the first term.

\begin{remark}\label{rem:right-vers}
The argument deriving \eqref{eq:gv-faces2} from \eqref{eq:gv-faces1}
also shows that adding~$1$ to the $i$-summand of \eqref{eq:WinX} gives the 
usual (sub-module) F-polynomial for $\Ext^1(\extd{S_i},T)$.
Note that adding $1$ does not affect its use as a superpotential.
This also provides an alternative approach to calculating
$\Ext^1(T, \extd{P_i})$, compared to the one in \S\ref{subsec:rect-sp}.
\end{remark}

\begin{remark}\label{rem:LFS}
Similar expressions to \eqref{eq:WinX} and \Cref{rem:right-vers} 
for the potential in terms of F-polynomials are derived in \cite[Thm.~6.5]{LFS}.
The description there holds in a more general context and the relevant modules 
are expressed just in terms of the quiver (with potential).
It would be interesting to understand why our description of these modules as
$\Ext^1(T,\extd{P_i})$ or (the dual of) $\Ext^1(\extd{S_i},T)$ gives the same answer.
\end{remark}

\begin{remark}\label{rem:Fei}
Conditions defining the $g$-vector cone similar to \eqref{eq:gv-faces2} are found by 
Fei \cite[Thm 5.16, Rem 5.17]{Fei3}.
The context is a little different, in that $g$-vectors are defined intrinsically 
and for a general quiver with potential, but which is required to be `frozen-Jacobi finite',
which ours are not.
The conditions come from modules that are essentially the same as those from \cite{LFS}.
It seems likely that our $g$-vector cone could be obtained by omitting the frozen arrows from our quiver
(with potential) and using Fei's conditions.
\end{remark}


\end{document}